\newcounter{qcounter}
\newcommand\define{\newcommand}
\define\isoto{\xrightarrow{\sim}}
\define\onto{\twoheadrightarrow}
\define\coker{\mathrm{coker}}
\DeclareMathOperator{\Spec}{Spec}
\define\cC{\mathcal{C}}
\newcommand{\dia}[1]{{\langle #1 \rangle}}
\newcommand{\ttmat}[4]{\left( \begin{array}{cc}
#1 & #2 \\
#3 & #4
\end{array}
\right)}
\newcommand{\Z}{\mathbb{Z}}
\newcommand{\Q}{\mathbb{Q}}
\newcommand{\F}{\mathbb{F}}
\newcommand{\zp}[1]{\Z/{p^{#1}\Z}}
\newcommand{\sO}{\mathcal{O}}
\newcommand{\m}{\mathfrak{m}}
\newcommand{\cL}{\mathcal{L}}
\newcommand{\Hom}{\mathrm{Hom}}
\newcommand{\Gal}{\mathrm{Gal}}
\newcommand{\Aut}{\mathrm{Aut}}
\newcommand{\Ext}{\mathrm{Ext}}
\newcommand{\End}{\mathrm{End}}
\newcommand{\Fr}{\mathrm{Fr}}
\newcommand{\lb}{{[\![}}
\newcommand{\rb}{{]\!]}}
\newcommand{\red}{\mathrm{red}}
\define\cT{\mathcal{T}}
\define\bFp{\overline{\F}_p}
\define\ord{{\mathrm{ord}}}
\define\Cl{{\mathrm{Cl}}}
\define\GL{{\mathrm{GL}}}
\define\kcyc{\kappa_{\mathrm{cyc}}}
\define{\Fitt}{\mathrm{Fitt}}
\define{\Ann}{\mathrm{Ann}}
\newtheorem{thm}{Theorem}[subsection] 
\newtheorem*{thm*}{Theorem}
\newtheorem*{claim}{Claim}
\newtheorem{cor}[thm]{Corollary}
\newtheorem{prop}[thm]{Proposition}
\newtheorem{lem}[thm]{Lemma}
\newtheorem{conj}[thm]{Conjecture}
\theoremstyle{definition}
\newtheorem{defn}[thm]{Definition}
\newtheorem{eg}[thm]{Example}
\theoremstyle{remark}
\newtheorem{rem}[thm]{Remark}
\newtheorem{rems}[thm]{Remarks}
\newcommand{\ra}{\rightarrow}
\newcommand{\lra}{\longrightarrow}
\newcommand{\lrisom}{\buildrel\sim\over\lra}
\newcommand{\risom}{\buildrel\sim\over\ra}
\newcommand{\rinj}{\hookrightarrow}
\newcommand{\rsurj}{\twoheadrightarrow}
\newcommand{\bQ}{\mathbb{Q}}
\newcommand{\bR}{\mathbb{R}}
\newcommand{\bT}{\mathbb{T}}
\newcommand{\cE}{\mathcal{E}}
\newcommand{\Db}{{\bar D}}
\newcommand{\cG}{{\mathcal{G}}}
\newcommand{\bQp}{{\overline{\Q}_p}}
\newcommand{\Jm}{{J^{\min{}}}}
\newcommand{\Bm}{{B^{\min{}}}}
\newcommand{\Cm}{{C^{\min{}}}}
\newcommand{\fl}{{\mathrm{flat}}}
\newcommand{\nfl}{{\text{non-flat}}}
\newcommand{\loc}{{\mathrm{loc}}}
\newcommand{\tr}{{\mathrm{tr}}}
\newcommand{\sm}[4]{\ensuremath{\big(\begin{smallmatrix}#1 & #2 \\ #3 & #4\end{smallmatrix}\big)}}
\newcommand{\Zr}{\Z/p^r\Z}
\newcommand{\oQ}{{\overline{\mathbb{Q}}}}
\newcommand{\et}{\text{\'et}}
\let\c@equation\c@thm
\numberwithin{equation}{subsection}
\title{The rank of Mazur's Eisenstein ideal}
\author{Preston Wake}
\address{Institute for Advanced Study\\
1 Einstein Drive \\
Princeton, NJ 08540}
\email{pwake@ias.edu}
\author{Carl Wang-Erickson}
\address{Department of Mathematics, Imperial College London \\
	London SW7 2AZ, UK}
\email{c.wang-erickson@imperial.ac.uk}
\begin{document}

\begin{abstract}
We use pseudodeformation theory to study Mazur's Eisenstein ideal. Given prime numbers $N$ and $p>3$, we study the Eisenstein part of the $p$-adic Hecke algebra for $\Gamma_0(N)$. We compute the rank of this Hecke algebra (and, more generally, its Newton polygon) in terms of Massey products in Galois cohomology, answering a question of Mazur and generalizing a result of Calegari--Emerton. We also also give new proofs of Merel's result on this rank and of Mazur's results on the structure of the Hecke algebra.
\end{abstract}

\dedicatory{To Barry Mazur, on his 80th birthday}

\maketitle

\tableofcontents

\section{Introduction}

Let $N$ and $p>3$ be prime numbers.  Let $\bT$ denote the completion of the Hecke algebra with weight $2$ and level $\Gamma_0(N)$ at the Eisenstein maximal ideal with residual characteristic $p$, and let $\bT^0$ denote the cuspidal quotient of $\bT$. In his influential paper \cite{mazur1978}, Mazur studied $\bT^0$ and showed that $\bT^0 \ne 0$ if and only if $p \mid (N-1)$. In that same paper, he posed the question ``Is there anything general that can be said about the Newton polygon of $\bT^0$, or even about $\mathrm{rank}_{\Z_p}(\bT^0)$?''\ [pg.\ 140, \emph{loc.~cit.}]. In this paper, we give a complete answer to his question, showing that this Newton polygon can be computed exactly in terms of arithmetic invariants present in Galois cohomology. 

It remains an interesting avenue of research to systematically compute these new arithmetic invariants. This might be achieved by relating them to more analytic invariants coming from the theory of $L$-functions. We do this in the case of the lowest-order invariants, by relating our invariant to one studied by Merel \cite{merel1996}, thus giving a new proof of Merel's result. The higher-order invariants remain mysterious; however, see Remark \ref{rem:lecouturier} about a recent preprint of Lecouturier that extends Merel's approach. Also, for results along these lines when $p=2$ or $p=3$, see \cite{CE2005}. 

\subsection{Galois cohomology} 
\label{subsec:GC}
In order to state the main theorems, we need to establish some notation for certain Galois cohomology groups; full definitions are found in Appendix \ref{sec:galois cohomology appendix}. Let $(\zp{s})_{/\Z_p}$ and $(\mu_{p^s})_{/\Z_p}$ denote the constant and multiplicative group schemes of order $p^s$ over $\Z_p$, respectively. We let 
\begin{gather*}
H^1_{p,\fl}(\zp{s}) = \Ext^1((\zp{s})_{/\Z_p}, (\zp{s})_{/\Z_p}), \\  H^1_{p,\fl}(\zp{s}(1)) = \Ext^1((\zp{s})_{/\Z_p}, (\mu_{p^s})_{/\Z_p}), \\ \text{and }  
H^1_{p,\fl}(\zp{s}(-1)) = \Ext^1((\mu_{p^s})_{/\Z_p}, (\zp{s})_{/\Z_p}),
\end{gather*}
where the extension groups are taking place in the category of finite flat $p^s$-torsion group schemes over $\Z_p$.

For each $i=0,1,-1$, we have $H^1_{p,\fl}(\zp{s}(i)) \subset H^1(\Q_p,\zp{s}(i))$, an inclusion into local Galois cohomology. Let 
\[
H^1_\fl(\Z[1/Np],\zp{s}(i)) = \ker \left( H^1(\Z[1/Np],\zp{s}(i)) \to  \frac{H^1(\Q_p,\zp{s}(i))}{H^1_{p,\fl}(\zp{s}(i))} \right)
\]
be the resulting global cohomology groups, which are instances of Selmer groups. We will see that, if $p^s\mid (N-1)$, each of the spaces $H^1_\fl(\Z[1/Np],\zp{s}(i))$ is a free $\zp{s}$-module of rank 1 (Corollary \ref{cor:flat cohom of ad}). If $p^t\mid\mid(N-1)$, choose generators
\begin{align*}
a\in H^1_\fl(\Z[1/Np],\zp{t}), \quad b\in H^1_\fl(\Z[1/Np],\zp{t}(1)), \\
 c\in H^1_\fl(\Z[1/Np],\zp{t}(-1)). \qquad \qquad \qquad
\end{align*}
Below we consider these elements as being in $H^1(\Z[1/Np],\zp{t}(i))$.

\subsection{Criterion for rank $1$} We can now state the first main theorem.

\begin{thm}
\label{thm:cup products and rank 1}
Suppose that $p\mid(N-1)$. The following are equivalent:
\begin{enumerate}
\item $\mathrm{rank}_{\Z_p}(\bT^0) \ge 2$
\item The cup product $b \cup c$ vanishes in $H^2(\Z[1/Np],\F_p)$
\item The cup product $a \cup c$ vanishes in $H^2(\Z[1/Np],\F_p(-1))$.
\end{enumerate}
\end{thm}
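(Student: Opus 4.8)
This theorem is proved within the pseudodeformation-theoretic framework of the paper; I take as given the comparison isomorphism $\bT \cong R$, where $R$ is the universal pseudodeformation ring for deformations of $\bar\rho = 1 \oplus \omega$ with determinant $\kcyc$, unramified outside $Np$, finite-flat at $p$, and satisfying the local condition at $N$ that cuts out level $\Gamma_0(N)$; write $E = \sm{A}{B}{C}{D}$ for the associated Cayley--Hamilton algebra over $R$. The plan is to reduce the statement to an obstruction computation for $R$.

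First, some bookkeeping. Since $M_2(\Gamma_0(N),\Z_p)_\m \otimes \Q_p$ is the direct sum of the Eisenstein line and the cuspidal part, $\mathrm{rank}_{\Z_p}(\bT) = 1 + \mathrm{rank}_{\Z_p}(\bT^0)$, and the reducibility ideal $\I \subseteq R$ — which coincides with the Eisenstein ideal under $\bT \cong R$ — satisfies $R/\I \cong \Z_p$: the only deformation of the trivial character unramified outside $Np$, finite-flat at $p$, and (forced by the $\Gamma_0(N)$-condition together with the fixed determinant) unramified at $N$ is the trivial one, so the unique reducible deformation is $1 \oplus \kcyc$. In $E$ one has $\I = \mathrm{image}(B \otimes_R C \to A)$, and the injections $B/\m_R B \hookrightarrow H^1_\fl(\Z[1/Np],\F_p(1))$, $C/\m_R C \hookrightarrow H^1_\fl(\Z[1/Np],\F_p(-1))$ are isomorphisms onto these lines — one-dimensionality is Corollary~\ref{cor:flat cohom of ad}, and non-vanishing holds because $p \mid N-1$ forces $\bT^0 \neq 0$ and hence $B, C \neq 0$. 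Thus $B = R\mathbf b$ and $C = R\mathbf c$ are cyclic with $\mathbf b, \mathbf c$ lifting $b$ and $c$; the ideal $\I = (\mathbf{b}\mathbf{c})$ is principal; and $\bar R := R \otimes_{\Z_p}\F_p$ is local Artinian with maximal ideal generated by $\bar t := \overline{\mathbf{b}\mathbf{c}}$, so $\bar R \cong \F_p[t]/(t^n)$. As $\bT$ is $\Z_p$-torsion-free, $\mathrm{rank}_{\Z_p}(\bT^0) = \dim_{\F_p}\bar R - 1 = n - 1$; hence statement (1) is equivalent to $\bar t^{\,2} \neq 0$.

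For the equivalence $\bar t^{\,2} \neq 0 \iff b \cup c = 0$ I would use obstruction theory for lifting $\bar\rho$. The first-order deformation attached to $\bar t$ is off-diagonal and non-split in both directions (the diagonal directions being killed by $R/\I = \Z_p$), so it has the form $\rho_1 = \sm{0}{*}{*}{0}$ with off-diagonal entries cocycles representing $b$ and $c$; it lifts through the length-three quotient of $\bar R$ precisely when the obstruction $[\rho_1 \smile \rho_1] \in H^2(\Z[1/Np],\ad^0\bar\rho)$ vanishes in the obstruction group of the conditioned problem. A direct matrix computation shows $\rho_1 \smile \rho_1$ is diagonal, lies in the trace-zero diagonal ($\cong \F_p$) summand of $\ad^0\bar\rho$, and equals $b \cup c \in H^2(\Z[1/Np],\F_p)$ up to sign (by graded-commutativity of the cup product). \textbf{The main obstacle is here}: one must show that the local conditions at $p$ and $N$ do not shrink the relevant obstruction group, so that the conditioned lifting exists exactly when $b \cup c$ vanishes in $H^2(\Z[1/Np],\F_p)$ itself. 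I would do this by a Poitou--Tate/Greenberg--Wiles analysis — $H^2(\Q_p,\F_p) = 0$ since $p > 2$, so there is no local obstruction at $p$ in this summand, and one checks the condition at $N$ imposes nothing on the $\F_p$-direction — which gives (1) $\iff$ (2).

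For (2) $\iff$ (3), I would compare the two cup products locally at $N$. Since $p \mid N - 1$, the character $\omega$ is trivial on $G_{\Q_N}$, so $\F_p(i)|_{G_{\Q_N}} \cong \F_p$ for all $i$ and the local cup product $H^1(\Q_N,\F_p) \times H^1(\Q_N,\F_p) \to H^2(\Q_N,\F_p) \cong \F_p$ is a nondegenerate alternating pairing with $H^1_{\mathrm{ur}}(\Q_N,\F_p)$ Lagrangian. Both $b \cup c$ and $a \cup c$ are unramified at $p$ and vanish at $\infty$, and the localization maps $H^2(\Z[1/Np],\F_p) \to H^2(\Q_N,\F_p)$ and $H^2(\Z[1/Np],\F_p(-1)) \to H^2(\Q_N,\F_p(-1))$ are injective (their kernels being dual to certain groups of everywhere-locally-trivial classes, which one checks vanish here). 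So it is enough to compare the restrictions to $G_{\Q_N}$; and since $a$ and $b$ are each ramified at $N$, unramified away from $Np$, and finite-flat at $p$, their restrictions to $G_{\Q_N}$ span the same line of $H^1(\Q_N,\F_p)$, so $(a \cup c)|_{G_{\Q_N}}$ and $(b \cup c)|_{G_{\Q_N}}$ are unit multiples of one another and vanish together. Controlling the unramified components in this comparison and the injectivity of the localization maps with twisted coefficients is the delicate point in this step.
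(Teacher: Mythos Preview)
Your identification of the reducibility ideal with the Eisenstein ideal is incorrect, and this derails the entire first half of the argument. The $\Gamma_0(N)$ condition on a \emph{pseudorepresentation} $D$ is $D|_{I_N} = \psi(1 \oplus 1)$; for a reducible $D = \psi(\chi \oplus \kcyc\chi^{-1})$ this means $\chi|_{I_N} + \chi^{-1}|_{I_N} = 2$, which kills $(\chi-1)^2|_{I_N}$ but not $(\chi-1)|_{I_N}$. Consequently the reducible quotient is $R^\red \cong \Z_p[X]/(X^2,(N-1)X)$ (Proposition~\ref{prop:presentation of Rred}), not $\Z_p$, and the reducibility ideal $J$ satisfies $J = \Jm^2$ where $\Jm$ is the augmentation (Eisenstein) ideal. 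In particular the unique first-order tangent direction of $R/p$ is \emph{reducible}, parametrized by the cocycle $a$: the representation $\rho_1$ has matrix cocycle $M = \sm{a}{b}{c}{-a}$, not the purely off-diagonal $\sm{0}{b}{c}{0}$ you wrote down. Your element $\bar t = \overline{\mathbf b \mathbf c}$ generates $\overline{\Jm^2}$, not the maximal ideal of $\bar R$, so it cannot be used to read off the rank as you do.

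With the correct $M$, the obstruction to a second-order lift is $M \cup M$, whose four entries involve $a \cup a + b \cup c$, $a \cup b$, $a \cup c$, and $c \cup b$; one then needs to show that vanishing of the full matrix is equivalent to vanishing of the single class $b \cup c$ (or $a \cup c$). The paper does this via a local-at-$N$ argument (Proposition~\ref{prop:cup selmer equiv}): one shows $a \cup b = 0$ unconditionally (Lemma~\ref{lem:upper-triangular equivs}), so $a|_N$ and $b|_N$ span the same line $\cL_N \subset H^1_N(\F_p)$, and then the symplectic Tate pairing reduces everything to whether $c|_N \in \cL_N$. Your sketch of $(2) \Leftrightarrow (3)$ is close to this in spirit, but note that the key input $a|_N \in \cL_N$ is not automatic from ``$a$ and $b$ are both ramified at $N$'' (a two-dimensional space can contain two ramified lines); it requires the prior vanishing $a \cup b = 0$, which the paper obtains from the structure of $B$ as a free $R$-module. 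You also need the injectivity of $H^2(\F_p(i)) \to H^2_N(\F_p(i))$ for $i=0,-1$, which the paper verifies by a Euler-characteristic count (Lemma~\ref{lem:H2 to H2N injective}).
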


Theorem \ref{thm:cup products and rank 1} is the special case $n=s=1$ of Theorem \ref{thm:main in reorg} below (see also the remarks following that theorem). This theorem can also be interpreted in terms of class groups. We denote the ideal class group of a number field $K$ by $\Cl(K)$. 

\begin{cor}
\label{cor:class groups and rank 1}
Suppose that $p\mid(N-1)$. Let $\Q(\zeta_N^{(p)},\zeta_p)$ denote the degree $p$ subextension of $\Q(\zeta_N,\zeta_p)/\Q(\zeta_p)$. Consider the following conditions:
\begin{enumerate}
\item The group $\Cl(\Q(N^{1/p}))[p]$ is cyclic. 
\item The group $\Cl(\Q(\zeta_N^{(p)},\zeta_p))[p]_{(-1)}$ is cyclic. Here the subscript ``${(-1)}$'' refers to the $\omega^{-1}$-eigenspace for the action of $\Gal(\Q(\zeta_p)/\Q)$.
\item The rank of the $\Z_p$-algebra $\bT^0$ is one.
\end{enumerate}
Then (1) implies (2), and (2) is equivalent to (3).
\end{cor}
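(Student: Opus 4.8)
The plan is to deduce this from Theorem~\ref{thm:cup products and rank 1} by converting the vanishing of the cup products into class-group conditions. Since $p\mid(N-1)$ already forces $\mathrm{rank}_{\Z_p}(\bT^0)\ge 1$ by Mazur's theorem, it is enough to prove that hypothesis~(1) implies $b\cup c\ne 0$ in $H^2(\Z[1/Np],\F_p)$ and that hypothesis~(2) implies $a\cup c\ne 0$ in $H^2(\Z[1/Np],\F_p(-1))$: in either case Theorem~\ref{thm:cup products and rank 1} then yields $\mathrm{rank}_{\Z_p}(\bT^0)<2$, hence $=1$.

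I would establish both implications by contraposition, via the dictionary
\[
b\cup c=0\ \Longrightarrow\ \Cl(\Q(N^{1/p}))[p]\ \text{is non-cyclic},\qquad a\cup c=0\ \Longrightarrow\ \Cl(\Q(\zeta_N^{(p)},\zeta_p))[p]_{(-1)}\ \text{is non-cyclic}.
\]
To set this up I would base change along $\Q(\zeta_p)/\Q$: since $[\Q(\zeta_p):\Q]=p-1$ is prime to $p$, restriction is injective on the relevant $\Gal(\Q(\zeta_p)/\Q)$-eigenspaces of $H^2$, and over $\Q(\zeta_p)$ the classes $a$ and $b$ (the latter being the flat Kummer class of $N$) become surjections onto $\F_p$ cutting out, respectively, $K:=\Q(\zeta_N^{(p)},\zeta_p)$ and $F:=\Q(\zeta_p,N^{1/p})$, while $c$ restricts to a class $\bar c\in H^1(\Z[\zeta_p,1/Np],\F_p(-1))$. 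For hypothesis~(2): $\bar a\cup\bar c$ is the obstruction to lifting $\bar c$ along the canonical surjection $\F_p(-1)\otimes E_{\bar a}\to\F_p(-1)$, where $E_{\bar a}$ is the non-split self-extension of $\F_p$ classified by $\bar a$; running inflation--restriction for the normal subgroup $G_K\triangleleft G_{\Q(\zeta_p)}$ (with quotient $\Gal(K/\Q(\zeta_p))\cong\Z/p$) shows this obstruction is governed by the restriction $\bar c|_K\in H^1(G_K,\F_p(-1))$. The crucial point is that $\bar c|_K$ is unramified everywhere: it is unramified outside $Np$ by construction; its ramification at the primes over $N$ dies on restriction to the degree-$p$ subextension $\Q(\zeta_N^{(p)})$ (a tame local computation at $N$: a ramified $\F_p$-valued character of $I_N\cong\Z_p$ vanishes on the index-$p$ subgroup $p\Z_p$); and the flat condition at $p$ makes it unramified at $p$ as well. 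Hence $\bar c|_K$ exhibits $\F_p(-1)$ as a $\Gal(K/\Q)$-equivariant quotient of $\Cl(K)[p]$, and it is nonzero (since $c$ has nonzero Tate twist, it cannot factor through $\Gal(K/\Q(\zeta_p))$), so $\Cl(K)[p]_{(-1)}\ne 0$; the inflation--restriction bookkeeping then gives that $\bar a\cup\bar c$ vanishes only if this copy of $\F_p(-1)$ is a proper subquotient of $\Cl(K)[p]_{(-1)}$, i.e.\ only if the latter is non-cyclic. For hypothesis~(1) the argument is parallel with $F$ in place of $K$ (and $E_{\bar b}$, whose sub is $\mu_p$, in place of $E_{\bar a}$), together with a Leopoldt-type reflection over $\Q(\zeta_p)$ that converts the resulting eigenspace condition on $\Cl(F)[p]$ into one on $\Cl(\Q(N^{1/p}))[p]\cong\Cl(F)[p]^{\Gal(F/\Q(N^{1/p}))}$ (the isomorphism again valid because $p\nmid p-1$).

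The step I expect to be the main obstacle is the middle one: making the dictionary precise, i.e.\ matching the local conditions defining $a$, $b$, $c$ (unramified outside $Np$; flat at $p$) with an \emph{honest} ideal class group and with exactly the eigenspace in the statement, rather than with a larger ray-class or Selmer group. The delicate inputs are (i) that $c$'s ramification at $N$ truly vanishes after restriction to the relevant degree-$p$ extension, (ii) that the flat condition at $p$ is strong enough to force unramifiedness at $p$ over that extension, and (iii) for hypothesis~(1), the reflection step; together with the $\Gal(\Q(\zeta_p)/\Q)$-eigenspace and transfer bookkeeping, these are what single out the correct class group and eigenspace for each cup product. Granting the dictionary, the corollary is immediate from Theorem~\ref{thm:cup products and rank 1} as in the first paragraph.
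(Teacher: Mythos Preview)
Your high-level strategy matches the paper's exactly: argue by contraposition via Theorem~\ref{thm:cup products and rank 1}, reducing to the implications ``$b\cup c=0\Rightarrow\Cl(\Q(N^{1/p}))[p]$ non-cyclic'' and ``$a\cup c=0\Rightarrow\Cl(\Q(\zeta_N^{(p)},\zeta_p))[p]_{(-1)}$ non-cyclic.'' The paper packages these as Proposition~\ref{prop:cup gives class}.

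Where you diverge is in the implementation, and this is where your sketch has genuine gaps. The paper does \emph{not} base-change to $\Q(\zeta_p)$, invoke inflation--restriction, or use any reflection argument. Instead, for part~(1) it chooses a global cochain $F$ with $dF=b\smile c$ (adjusted so that $F|_{I_p}=0$), assembles the $\GL_3(\F_p)$-valued function
\[
\nu=\begin{pmatrix}1&c&F\\0&\kcyc&b\\0&0&1\end{pmatrix},
\]
and then works directly over $\Q(N^{1/p})$. Two points do all the work: (a) the image of $\nu|_{I_N}$ is unipotent and hence \emph{cyclic} of order $p$, so $\nu|_{G_{\Q(N^{1/p})}}$ is unramified at $N$; (b) since $b|_{G_{\Q(N^{1/p})}}=0$, the cochain $F|_{G_{\Q(N^{1/p})}}$ becomes an honest unramified cocycle, and it cannot be a global cocycle (because $dF\neq 0$), so the extension it cuts out is disjoint from the genus field $\Q(\zeta_N^{(p)},N^{1/p})$. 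This gives two independent surjections $\Cl(\Q(N^{1/p}))\twoheadrightarrow\F_p$ with no descent step at all.

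Your proposal is missing exactly the content of (a) and (b). Saying ``inflation--restriction bookkeeping gives that $\bar a\cup\bar c$ vanishes only if $\Cl(K)[p]_{(-1)}$ is non-cyclic'' is not a proof: inflation--restriction tells you about restriction and transgression, but producing a \emph{second} unramified class still requires picking a cochain trivializing the cup product and checking it becomes unramified over $K$ and independent of $\bar c|_K$---which is the paper's argument in disguise. More seriously, for hypothesis~(1) your plan requires a reflection step to pass from an eigenspace condition on $\Cl(\Q(\zeta_p,N^{1/p}))[p]$ down to $\Cl(\Q(N^{1/p}))[p]$. Since $\Gal(\Q(\zeta_p,N^{1/p})/\Q)$ is the nonabelian semidirect product $\F_p\rtimes\F_p^\times$, the subgroup fixing $\Q(N^{1/p})$ is a (non-normal) complement, and matching its invariants with the $\Gal(\Q(\zeta_p)/\Q)$-eigenspace your construction lands in is not automatic. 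The paper's direct approach over $\Q(N^{1/p})$ avoids this entirely.
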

In this paper, we relate (1) and (2) to the cup products $b \cup c$ and $a \cup c$, respectively, in Proposition \ref{prop:cup gives class}. Then it is a corollary of Theorem \ref{thm:cup products and rank 1} that either of (1) and (2) imply (3). The remaining implication that (3) implies (2) is due to Lecouturier \cite{lecouturier2018}.

\begin{rems} 
We note the following relations with other works. 
\begin{itemize}[leftmargin=2em]
\item The implication (1) $\Rightarrow$ (3) of this corollary was first obtained by Calegari--Emerton, and is the main theorem of \cite{CE2005} (for $p > 3$). 
\item The converse implication (3) $\Rightarrow$ (1) is false in general, but, for regular $p$, a partial converse is provided by Schaefer--Stubley \cite[Thm.\ 1.1.2]{SS2018}. In particular, they prove that the converse is true for $p=5$.
\item  The implication (3) $\Rightarrow$ (2) was proven by Lecouturier \cite[Thm.\ 1.7]{lecouturier2018} using Merel's theorem (Theorem \ref{thm:merel} below).
Also using that theorem, he proves a partial converse of (1) $\Rightarrow$ (3): he proves that
\[
\dim_{\F_p}(\Cl(\Q(N^{1/p}))[p]) = 1+ \sum_{i=1}^{p-2} r_i(\chi_0)
\]
 for some non-negative integers $r_i(\chi_0)$, and proves that $r_1(\chi_0)=1$ if and only if $\mathrm{rank}_{\Z_p}(\bT^0)=1$ (see the proof of \cite[Thm.\ 1.1]{lecouturier2018} on pg.~54).
\item It would interesting to see if these finer results can be deduced from Sharifi's theory relating class groups of Kummer extensions to cup products and Massey products \cite{sharifi2007}.
\end{itemize}
\end{rems}
\subsection{Higher rank and Massey products} 
Consider the following matrix of cocycles:
\[
M = \ttmat{a}{b}{c}{-a} \in \ttmat{H^1(\Z[1/Np],\zp{t})}{H^1(\Z[1/Np],\zp{t}(1))}{H^1(\Z[1/Np],\zp{t}(-1))}{H^1(\Z[1/Np],\zp{t})}.
\]
We have the ``matrix cup product" $M \cup M$ given by
\[
M \cup M = \ttmat{a\cup a + b \cup c}{a \cup b - b \cup a}{c \cup a - a \cup c}{c \cup b + a \cup a}.
\]
Using the skew-commutativity of (scalar) cup products, we can see that, if $M \cup M=0$, then $b \cup c = c \cup a = 0$. In fact, one can show that $M \cup M= 0$ if and only if $b \cup c = c \cup a = 0$. This suggests that, in order to generalize Theorem \ref{thm:cup products and rank 1} to higher rank, one should consider ``higher cup powers" of $M$.

We can formalize this by considering $M$ as an element of
\[
 H^1(\Z[1/Np],\End(\zp{t}(1)\oplus \zp{t})) 
 \]
and using the product on $\End(\zp{t}(1)\oplus \zp{t})$. Roughly, for $s\le t$, we define Massey product powers $\dia{M}^k \in  H^2(\Z[1/Np],\End(\zp{s}(1)\oplus \zp{s}))$ of $M$ inductively, assuming $\dia{M}^{k-1}=0$. The base case is the cup product $\dia{M}^2=M \cup M$.

\begin{thm}
\label{thm:intro main 2}
Let $k>1$ and suppose that $\mathrm{rank}_{\Z_p}(\bT^0) \ge k-1$. The following are equivalent:
\begin{enumerate}
\item $\mathrm{rank}_{\Z_p}(\bT^0) \ge k$
\item $\dia{M}^k =0$ in $H^2(\Z[1/Np],\End(\F_p(1)\oplus \F_p))$.
\end{enumerate}
\end{thm}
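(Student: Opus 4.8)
The plan is to translate the statement into pseudodeformation theory, where the Massey powers $\dia{M}^k$ will appear as the successive obstruction classes for a one‑parameter deformation problem. Let $\fR$ be the universal pseudodeformation ring of the residual pseudorepresentation $\bar\psi = \F_p(1)\oplus\F_p$ of $\Gal(\overline{\Q}/\Q)$, taken among pseudorepresentations that are unramified outside $Np$, finite‑flat at $p$, and have cyclotomic determinant; the main input, established earlier in the paper, is an identification $\fR \isoto \bT$ (an ``$R = \bT$'' theorem). Let $J = \ker(\fR\isoto\bT\to\Z_p)$ be the Eisenstein ideal; the flatness and determinant conditions rigidify the reducible locus so that $J$ is the reducibility ideal of $\fR$ and $\fR/J = \Z_p$. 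Since $\bT$ is finite flat over $\Z_p$ and is the fiber product of $\bT^0$ and $\Z_p$ over their common congruence module, which is finite and nonzero because $p\mid(N-1)$ (Mazur), one gets $\mathrm{rank}_{\Z_p}(\bT) = 1 + \mathrm{rank}_{\Z_p}(\bT^0)$, and after reducing mod $p$,
\[
\mathrm{rank}_{\Z_p}(\bT^0) \;=\; \lth_{\F_p}(\bar J), \qquad \bar J := J\cdot(\fR/p\fR),
\]
the reducibility ideal of $\fR/p\fR$. So it suffices to show that $\lth_{\F_p}(\bar J)\ge k$ is equivalent to $\dia{M}^k = 0$.

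Next I would analyze $\fR/p\fR$ via a Cayley--Hamilton representation valued in a $\GMA$ $\ttmat{\fR/p\fR}{B}{C}{\fR/p\fR}$, so that $\bar J = B\cdot C$. Because each of $H^1_\fl(\Z[1/Np],\F_p(i))$, $i = 0,1,-1$, is one‑dimensional (Corollary \ref{cor:flat cohom of ad}), the reduced tangent space of $\fR/p\fR$ is one‑dimensional — a reducible first‑order pseudodeformation can move only in the direction $a$, since the determinant is fixed — and $B$, $C$ are cyclic $\fR/p\fR$‑modules whose generators reduce to classes dual to $b$ and $c$. Hence $\fR/p\fR$ is a quotient of $\F_p[[x]]$ (which also recovers Mazur's local principality of the Eisenstein ideal), and $\lth_{\F_p}(\bar J)\ge k$ holds if and only if $\fR/p\fR$ admits a surjection onto $\F_p[x]/(x^{k+1})$, that is, if and only if $\bar\psi$ admits a deformation (unramified outside $Np$, finite‑flat at $p$, cyclotomic determinant) to $\F_p[x]/(x^{k+1})$ with nonzero first‑order term — necessarily $x\cdot M$ after rescaling $x$, where $M = \ttmat{a}{b}{c}{-a}$ is regarded as a class in $H^1(\Z[1/Np],\ad^0\bar\psi)$.

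The heart of the argument is the obstruction computation, carried out by induction on $k\ge 2$. Assuming $\mathrm{rank}_{\Z_p}(\bT^0)\ge k-1$, induction gives $\dia{M}^2 = \cdots = \dia{M}^{k-1} = 0$, so $\dia{M}^k \in H^2(\Z[1/Np],\ad^0\bar\psi) \subseteq H^2(\Z[1/Np],\End(\F_p(1)\oplus\F_p))$ is defined, and a deformation $\psi_{k-1}$ of $\bar\psi$ to $\F_p[x]/(x^k)$ with first‑order term $x\cdot M$ exists. The obstruction to lifting $\psi_{k-1}$ across $0 \to (x^k)/(x^{k+1}) \to \F_p[x]/(x^{k+1}) \to \F_p[x]/(x^k) \to 0$ lies in $H^2(\Z[1/Np],\ad^0\bar\psi)$, and the inductive definition of the ``matrix Massey product'' $\dia{M}^k$ — formed from iterated products using the noncommutative multiplication on $\End(\F_p(1)\oplus\F_p)$ and bounding cochains for the vanishing lower powers, with base case $\dia{M}^2 = M\cup M$ — is engineered precisely so that this obstruction class is $\dia{M}^k$. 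Thus $\dia{M}^k = 0$ if and only if $\psi_{k-1}$ lifts, if and only if $\fR/p\fR\onto\F_p[x]/(x^{k+1})$, if and only if $\mathrm{rank}_{\Z_p}(\bT^0)\ge k$; the case $k=2$, where $\dia{M}^2$ is literally a cup product, simultaneously recovers Theorem \ref{thm:cup products and rank 1} via the relations $M\cup M = 0 \Leftrightarrow b\cup c = c\cup a = 0$ noted above. Along the way one must check: (a) that the obstruction cocycle produced by the $\GMA$/Cayley--Hamilton formalism is literally the iterated matrix product defining $\dia{M}^k$ — here the one‑dimensionality of the $H^1$'s is used again, to pin down the lift being extended and to exclude spurious tangent directions; (b) that the finite‑flat and cyclotomic‑determinant conditions are compatible with $x\cdot M$ and all of its lifts, so that no local conditions are needed in the target $H^2(\Z[1/Np],\End(\F_p(1)\oplus\F_p))$ — this uses that $a,b,c$ are flat classes together with a Poitou--Tate/local‑duality check that this obstruction group is insensitive to the flat condition for these particular Tate twists; and (c) that $\dia{M}^k$ is well‑defined modulo an indeterminacy that does not affect its vanishing, again a consequence of the smallness of the relevant $H^1$'s and of the space of admissible bounding cochains.

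The step I expect to be the main obstacle is (a), together with making the inductive definition of $\dia{M}^k$ robust: faithfully translating a deformation of the \emph{pseudorepresentation} $\bar\psi$ over $\F_p[x]/(x^k)$ into cochain data for a reducible, non‑split $\GMA$‑valued representation, tracking how the reducibility ideal controls the products of the off‑diagonal cochains built from $b$ and $c$, and recognizing the resulting degree‑$k$ obstruction cocycle as the iterated matrix product $\dia{M}^k$ — all while choosing the bounding cochains coherently at each stage so that the induction closes. The remaining ingredients (the $R=\bT$ identification, the length bookkeeping, and the local‑condition verifications) are either available from earlier in the paper or reduce to finite computations.
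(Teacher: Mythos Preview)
Your overall architecture matches the paper's: invoke $R \cong \bT$, translate $\mathrm{rank}_{\Z_p}(\bT^0) \geq k$ into the existence of a surjection $R \onto \F_p[\epsilon]/(\epsilon^{k+1})$, and relate the obstruction to $\dia{M}^k$. One minor slip: the augmentation kernel $\Jm = \ker(R \to \Z_p)$ and the reducibility ideal $J$ are not equal --- in fact $R/J = R^\red \cong \Z_p[X]/(X^2,(N-1)X)$ and $J = \Jm^2$ --- but this only shifts indices and does not damage the argument.

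The real gap is in your step (a), which you rightly flag as the crux but whose content you misdescribe. Pseudorepresentations do not carry an $H^2(\ad^0)$ obstruction theory, so it is not true that ``the obstruction to lifting $\psi_{k-1}$ is $\dia{M}^k$.'' What the GMA formalism actually yields is this: the surjection $R \onto \F_p[\epsilon_k]$ exists iff $C \otimes_R \F_p[\epsilon_{k-1}]$ is free of rank one (using the known structure $C \simeq R^0$), and this is equivalent --- via adapted lower-triangular deformations of $\rho_0^c$ --- to the Massey relation in the $(2,1)$-coordinate \emph{only} (Lemma~\ref{lem:lower-trianglar equivalences}). The $(1,2)$-coordinate relation holds unconditionally because $B$ is free over $R$ (Lemma~\ref{lem:upper-triangular equivs}), but the diagonal coordinates are not free. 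Upgrading the single $(2,1)$-relation to the full vanishing $\dia{M}^k = 0$ is a separate, genuinely arithmetic step (Proposition~\ref{prop:massey equiv}): one restricts all cochains to $G_N$, uses that $a|_N, b|_N, c|_N$ all lie in the same Lagrangian line $\cL_N \subset H^1_N(\F_p)$ for the symplectic Tate pairing (this for $c|_N$ already uses $a \cup c = 0$, i.e.\ the base case), and then shows inductively, via the normalization $a(\gamma)^2 + b(\gamma)c(\gamma) = 0$, that the four coordinate obstructions at $N$ coincide up to units. Without this local analysis at $N$ your argument would establish only ``$(2,1)$-coordinate vanishes $\Leftrightarrow \mathrm{rank} \geq k$,'' not the theorem as stated.
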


This theorem is morally correct, but not quite precise: we actually need to choose the extra data of a \emph{defining system} for the Massey product $\dia{M}^k$ to be defined. See Theorem \ref{thm:main in reorg} for a precise statement, and Remark \ref{rem:defining systems} for the fact that vanishing behavior does not depend on the choice of defining system. As with Theorem \ref{thm:cup products and rank 1} in the case $k=2$, for general $k$ the matrix Massey product vanishing $\dia{M}^k =0$ is equivalent to the vanishing of one of its coordinates. See Appendix \ref{sec:massey appendix} for the definition of Massey products and their coordinates, and see Proposition \ref{prop:massey equiv} for the equivalence.

\subsection{Newton polygons}
\label{subsec:newton poly} 

For this subsection, we let $e=\mathrm{rank}_{\Z_p}(\bT^0)$ and recall that $t=v_p(N-1)$. As Mazur noted, there is an isomorphism
\[
\bT^0 \simeq \Z_p\lb y \rb /(F(y)) 
\]
where $F(y)$ is a polynomial of the form
\[
F(y)=\alpha_1+\alpha_2 y + \dots \alpha_{e}y^{e-1}+ y^e, \ F(y) \equiv y^e \pmod{p}, \quad v_p(\alpha_1)=t.
\]
The polynomial $F(y)$ is not determined canonically, but its Newton polygon is. Mazur's original question addressed this Newton polygon, which influences the factoring behavior of $F(y)$. This is interesting because one can read off partial (but, often, complete) information about the number of Eisenstein-congruent cusp forms and the ``depth'' of these congruences from the Newton polygon -- see \cite{BKK2014} for a careful discussion. In particular, one knows that the normalization $\tilde{\bT^0}$ of $\bT^0$ is of the form
\[
\tilde{\bT^0} = \prod_{i=1}^m \sO_{f_i},
\]
where the product is over the normalized eigenforms $f_i$ congruent to the Eisenstein series, and $\sO_{f_i}$ is the valuation ring in the $p$-adic field $\Q_p(f_i)$ generated by the coefficients of $f_i$. In particular, $\mathrm{rank}_{\Z_p}(\sO_{f_i})=[\Q_p(f_i):\Q_p]$, and $m$ equals the number of factors of $F(y)$.

\begin{thm}
\label{thm:newton poly}
The Newton polygon of $\bT^0$ is completely and explicitly determined by the list of integers $t=t_1 \ge t_2 \ge \dots \ge t_{e}>0$, where, for $i>1$, $t_i$ is the maximal integer $s \le t_{i-1}$ such that $\dia{M}^i=0$ in $H^2(\Z[1/Np],\End(\zp{s}(1)\oplus \zp{s}))$.
\end{thm}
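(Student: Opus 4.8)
The plan is to transfer the recursive structure of the Massey products $\dia{M}^i$ onto the coefficients of Mazur's polynomial $F(y)$, so that the Newton polygon becomes a purely combinatorial function of the list $(t_1,\dots,t_e)$. I would begin by recalling Mazur's presentation $\bT^0 \simeq \Z_p\lb y \rb/(F(y))$ with $F(y) = \alpha_1 + \alpha_2 y + \dots + \alpha_e y^{e-1} + y^e$, $F(y)\equiv y^e \pmod p$, and $v_p(\alpha_1)=t$. By definition the Newton polygon of $\bT^0$ is the lower convex hull of the points $(i, v_p(\alpha_{i+1}))$ for $0\le i \le e$ (with $\alpha_{e+1}=1$, so the rightmost point is $(e,0)$); the polygon is canonical even though $F$ itself, and the non-vertex valuations $v_p(\alpha_i)$, are not. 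The target identity is: \emph{the Newton polygon of $\bT^0$ is the lower convex hull of $\{(i-1,t_i):1\le i\le e\}\cup\{(e,0)\}$.} I will in fact deduce this from the stronger statement that the model of $F$ produced by the construction below satisfies $v_p(\alpha_i)=t_i$ for each $1\le i\le e$; since the Newton polygon does not depend on the model, that suffices. It is also convenient to set $t_{e+1}:=0$, so that the target hull is the hull of $\{(i-1,t_i):1\le i\le e+1\}$; this is consistent with the recipe of the theorem, since $\mathrm{rank}_{\Z_p}(\bT^0)=e$ forces $\dia{M}^{e+1}\ne 0$ in $H^2(\Z[1/Np],\End(\F_p(1)\oplus\F_p))$ by Theorem \ref{thm:intro main 2}, so the maximal $s\le t_e$ appearing in the definition of ``$t_{e+1}$'' is $0$.

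Next I would establish that the list $t_1\ge t_2\ge\dots\ge t_e>0$ is well defined. The case $k=1$ is Mazur's input $t_1=v_p(\alpha_1)=t$. For $2\le k\le e$: the set $\{s\ge 1 : \dia{M}^k=0 \text{ in } H^2(\Z[1/Np],\End(\zp{s}(1)\oplus\zp{s}))\}$ is downward closed under the reduction maps $H^2(\zp{s})\to H^2(\zp{s'})$, is bounded above by $t_{k-1}$ by construction (and $\dia{M}^k$ is indeed defined modulo $p^{t_{k-1}}$, since $\dia{M}^{k-1}$ vanishes modulo $p^{t_{k-1}}$ by the definition of $t_{k-1}$), and is nonempty because it contains $s=1$: indeed $\mathrm{rank}_{\Z_p}(\bT^0)=e\ge k$, so $\dia{M}^k=0$ modulo $p$ by Theorem \ref{thm:intro main 2}. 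Hence $t_k$ is a well-defined positive integer and the list is non-increasing, with $t_e\ge 1$.

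The heart of the proof is the claim that $t_k = v_p(\alpha_k)$ for $1\le k\le e$. This should come out of the precise form of Theorem \ref{thm:intro main 2}, namely Theorem \ref{thm:main in reorg}, by running its argument with $\zp{s}$-coefficients instead of $\F_p$-coefficients: the construction presents $\bT^0$ by successively lifting a reducible pseudodeformation, with $F$ obtained as the limit of its degree-$k$ truncations $F_k$; at stage $k$ the obstruction to lifting one step further modulo $p^{t_{k-1}}$ is, for the defining system assembled from the previous stages, exactly $\dia{M}^k$, and this obstruction is realized as a unit multiple of the coefficient $\alpha_k$. Therefore $\dia{M}^k$ vanishes modulo $p^s$ if and only if $v_p(\alpha_k)\ge s$, i.e.\ $t_k=v_p(\alpha_k)$; Remark \ref{rem:defining systems} supplies the needed independence of the vanishing behavior, and hence of the list, from the choices of defining system. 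I expect this step --- making the inductive presentation of $\bT^0$ explicit modulo $p^s$ and simultaneously matching the successive obstructions with the Massey products $\dia{M}^k$ \emph{and} with the coefficients $\alpha_k$ --- to be the main obstacle; the rest is formal.

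Finally I would assemble the pieces: with $v_p(\alpha_{i+1})=t_{i+1}$ for $0\le i\le e$ (using $v_p(\alpha_{e+1})=0=t_{e+1}$), the Newton polygon of $\bT^0$, being the lower convex hull of $\{(i,v_p(\alpha_{i+1})):0\le i\le e\}$, is the lower convex hull of $\{(i,t_{i+1}):0\le i\le e\}$, which is an explicit convex polygon depending only on the non-increasing list $t_1\ge\dots\ge t_e>0$. One does not need the list itself to be convex: passing to the hull is harmless and loses no information, since the leftmost point $(0,t_1)=(0,t)$ and the rightmost point $(e,0)$ are forced. This proves the theorem.
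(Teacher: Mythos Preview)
Your overall strategy matches the paper's: the Newton polygon is recovered by identifying the integers $t_i$ defined via Massey vanishing with certain valuation data attached to a presentation $\bT^0\simeq\Z_p[y]/(F(y))$, and then taking the lower convex hull. But the central claim you aim for, namely $t_k=v_p(\alpha_k)$ for a specific model of $F$, is not what holds and is not what the paper proves. Your own equivalence (``$\dia{M}^k$ vanishes modulo $p^s$ if and only if $v_p(\alpha_k)\ge s$'', valid only in the regime $s\le t_{k-1}$ you are already working in) gives
\[
t_k=\max\{s\le t_{k-1}:v_p(\alpha_k)\ge s\}=\min\bigl(t_{k-1},v_p(\alpha_k)\bigr),
\]
i.e.\ the running minimum $t_k=\min\{v_p(\alpha_1),\dots,v_p(\alpha_k)\}$, not $v_p(\alpha_k)$ itself. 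Your ``i.e.'' drops the constraint $s\le t_{k-1}$. The appeal to ``the model of $F$ produced by the construction below'' does not help: you never actually build such a model, and the paper's natural generator $Y=1-d_\gamma$ has no reason to make the $v_p(\alpha_i)$ non-increasing.

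The paper's route avoids this altogether. It first proves a purely ring-theoretic lemma (Lemma~\ref{lem:NPcontrol}): for \emph{any} generator $y$ of $\bT$ (equivalently of $I$), the maximal $r$ admitting a surjection $\bT\twoheadrightarrow(\Z/p^r\Z)[\epsilon_i]$ with $y\mapsto\epsilon$ is exactly the running minimum $z_i=\min_{j\le i}v_p(\alpha_j)$, and the Newton polygon is the lower convex hull of the $(i,z_i)$. Then Theorem~\ref{thm:main in reorg} identifies this same quantity with the Massey-defined $t_i$. No special model is needed, and your final hull computation goes through unchanged once you replace $v_p(\alpha_i)$ by $\min_{j\le i}v_p(\alpha_j)$, since taking running minima from the left does not change the lower convex hull.
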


For the precise result, see Theorem \ref{thm:main in reorg}. Here is a precise consequence.

\begin{cor}
\label{cor:newton poly}
Assume that $\max(e,t) > 2$ and $\min(e,t) > 1$ and that $M \cup M$ is non-zero in $H^2(\Z[1/Np],\End(\zp{2}(1)\oplus \zp{2}))$.

Then the vertices of the Newton polygon of $\bT^0$ are $\{(0,t),(1,1),(e,0)\}$. In particular, $\bT^0$ is not irreducible, and, moreover, there is a cuspidal eigenform $f$ with coefficients in $\Z_p$ that is congruent modulo $p$ to the Eisenstein series of weight $2$ and level $N$. 
\end{cor}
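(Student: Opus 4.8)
The plan is to deduce the Newton polygon from Theorem \ref{thm:newton poly} by pinning down the list $t = t_1 \ge t_2 \ge \dots \ge t_e > 0$ under the stated hypotheses. First I would observe that the hypothesis ``$M \cup M$ is non-zero in $H^2(\Z[1/Np], \End(\zp{2}(1) \oplus \zp{2}))$'' says precisely that $\dia{M}^2 \ne 0$ at level $s = 2$, while $\dia{M}^2 = 0$ automatically holds at level $s = 1$ (this is the case $k = 2$ of Theorem \ref{thm:cup products and rank 1} combined with the hypothesis $e = \mathrm{rank}_{\Z_p}(\bT^0) \ge 2$, which follows from $\min(e,t) > 1$; indeed $e \ge 2$ forces $b \cup c = 0$ in $H^2(\Z[1/Np], \F_p)$, hence $M \cup M = 0$ mod $p$). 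Therefore the maximal $s \le t_1 = t$ with $\dia{M}^2 = 0$ at level $s$ is exactly $s = 1$, i.e. $t_2 = 1$. Since the sequence is non-increasing and bounded below by $1$, this forces $t_2 = t_3 = \dots = t_e = 1$.

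Next I would feed this list into the explicit description of the Newton polygon. Recall from Section \ref{subsec:newton poly} that $\bT^0 \simeq \Z_p\lb y \rb / (F(y))$ with $F(y) = \alpha_1 + \alpha_2 y + \dots + \alpha_e y^{e-1} + y^e$, $F(y) \equiv y^e \pmod p$, and $v_p(\alpha_1) = t$. Theorem \ref{thm:newton poly} (in the precise form of Theorem \ref{thm:main in reorg}) should say that the Newton polygon of $F$ is the lower convex hull determined by the partial sums of the $t_i$: concretely, the vertex set is obtained from the points $(i, t_1 + t_2 + \dots + t_i)$ read off the list, together with $(0,0)$ and $(e,0)$ after the appropriate normalization. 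With $t_1 = t$ and $t_2 = \dots = t_e = 1$, the relevant break points are at $x = 0$ (height $t$), at $x = 1$ (height $1$, since after the first step the polygon drops by $t - 1 > 0$ because $\max(e,t) > 2$ and $\min(e,t) > 1$ guarantee $t \ge 2$ and $e \ge 2$, with at least one of them $\ge 3$ so the two segments genuinely have different slopes), and then the polygon decreases by slope $1/(e-1)$ down to height $0$ at $x = e$. This gives vertices $\{(0,t), (1,1), (e,0)\}$, and this hull has at least two distinct slopes, so the polygon is not a single segment.

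From the multi-slope Newton polygon I would conclude non-irreducibility of $\bT^0$: a Newton polygon with more than one slope forces $F(y)$ to factor over $\Z_p$ (by the standard Newton-polygon factorization, slope segments of integer width correspond to factors), so $\bT^0 = \Z_p\lb y\rb/(F(y))$ is not a domain, hence not irreducible. For the final assertion, I would extract the factor of $F$ corresponding to the segment from $(0,t)$ to $(1,1)$: this segment has horizontal width $1$, so it yields a degree-one factor $y - \beta$ of $F$ with $v_p(\beta) = t - 1 \ge 1$, i.e. a $\Z_p$-point of $\Spec \bT^0$ at which $y \equiv 0 \bmod p$. Via $\tilde{\bT^0} = \prod_{f_i} \sO_{f_i}$, a $\Z_p$-valued point of $\bT^0$ reducing to the Eisenstein point mod $p$ is the same as an eigenform $f$ with $\Q_p(f) = \Q_p$, i.e. with coefficients in $\Z_p$, congruent mod $p$ to the weight $2$ level $N$ Eisenstein series.

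The main obstacle I expect is the bookkeeping in the second paragraph: extracting from Theorem \ref{thm:main in reorg} the exact dictionary between the list $(t_i)$ and the vertices of the Newton polygon, and verifying that under the inequalities $\max(e,t) > 2$, $\min(e,t) > 1$ the two candidate segments do not accidentally have equal slope (which would collapse $(1,1)$ and leave a single segment). Everything else — the reduction $t_2 = 1$, the Newton-polygon factorization, and the translation of a $\Z_p$-point into a rational eigenform — is standard once that dictionary is in hand.
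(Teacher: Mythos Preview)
Your overall strategy is the right one and matches the paper's: use the hypothesis to pin down $t_2 = 1$ (hence $t_2 = \dots = t_e = 1$), read off the Newton polygon, and extract the $\Z_p$-eigenform from the width-$1$ segment. The first and third paragraphs are essentially correct.

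The gap is in the second paragraph: your guessed dictionary between the list $(t_i)$ and the Newton polygon is wrong. The Newton polygon is \emph{not} built from partial sums $t_1 + \dots + t_i$. The precise statement, which is Lemma~\ref{lem:NPcontrol} and Proposition~\ref{prop:NPcontrol of T} in the paper, is that the Newton polygon of $\bT^0 \simeq \Z_p[y]/(f(y))$ is the lower convex hull of the points
\[
\{(0,t_1),\ (1,t_2),\ \dots,\ (e-1,t_e),\ (e,0)\},
\]
where the $t_i$ are the running minima of the valuations of the coefficients of $f$ (these are exactly the integers of Theorem~\ref{thm:newton poly}). Plugging in $t_1 = t$ and $t_2 = \dots = t_e = 1$, the candidate points are $(0,t),(1,1),(2,1),\dots,(e-1,1),(e,0)$, and the lower convex hull is $\{(0,t),(1,1),(e,0)\}$ provided the slope $1-t$ of the first segment is strictly less than the slope $-1/(e-1)$ of the second, i.e.\ $(t-1)(e-1) > 1$. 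Since $\min(e,t) > 1$ gives $(t-1)(e-1) \ge 1$, and $\max(e,t) > 2$ rules out the equality case $e = t = 2$, this inequality holds and $(1,1)$ is genuinely a vertex. (You gestured at this check but did not carry it out; it is where the hypothesis $\max(e,t) > 2$ is actually used.)

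With that correction and that slope check made explicit, your argument is complete and coincides with how the paper derives the corollary from Theorem~\ref{thm:main in reorg}.
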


Note that if $\max(e,t) \le 2$ or $\min(e,t) \le 1$, then there is only one possibility for the Newton polygon. Next, we consider an analytic interpretation of $M \cup M$.

\subsection{Relation to Merel's work} Mazur give a different criterion for $\mathrm{rank}_{\Z_p}(\bT^0)=1$ in terms of the geometry of modular curves \cite[Prop.\ II.19.2, pg.~140]{mazur1978}, and Merel \cite{merel1996} gave a number-theoretic interpretation of this criterion.

\begin{thm}[Merel]
\label{thm:merel}
Assume that $p \mid (N-1)$. The following are equivalent:
\begin{enumerate}
\item $\mathrm{rank}_{\Z_p}(\bT^0)=1$
\item The element of $(\Z/N\Z)^\times$ given by the formula
\[
\prod_{i=1}^{\frac{N-1}{2}} i^i \pmod{N}
\]
is not a $p$-th power.
\end{enumerate}
\end{thm}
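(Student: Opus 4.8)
The plan is to use Theorem~\ref{thm:cup products and rank 1} to convert the statement into the non-vanishing of a cup product, and then to evaluate that cup product by restricting to the prime $N$, where Merel's product will appear. First, Theorem~\ref{thm:cup products and rank 1} says that $\mathrm{rank}_{\Z_p}\bT^0 = 1$ (i.e.\ $\mathrm{rank}_{\Z_p}\bT^0 \not\ge 2$) holds if and only if $b \cup c \neq 0$ in $H^2(\Z[1/Np],\F_p)$. Recall from \S\ref{subsec:GC} that the mod-$p$ reduction of $b$ is the Kummer class of $N$, i.e.\ the element of $H^1(\Z[1/Np],\F_p(1)) = \Z[1/Np]^\times/(\Z[1/Np]^\times)^p$ corresponding to $N$ (this is forced: the ``flat at $p$'' condition picks out the $p$-adic units, and $p$ itself is not one). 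So it suffices to show that $b \cup c \neq 0$ if and only if $\prod_{i=1}^{(N-1)/2} i^i$ is not a $p$-th power modulo $N$.

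Next I would reduce this to a purely local question at $N$. Since $p > 3$ and $p \mid (N-1)$ we have $\mu_p \subset \Q_N$ but $\mu_p \not\subset \Q_p$, so $H^2(\Q_N,\F_p) \cong \Z/p$ while $H^2(\Q_p,\F_p) = H^2(\R,\F_p) = 0$; a global Euler-characteristic count gives $\dim_{\F_p} H^2(\Z[1/Np],\F_p) = 1$; and in the nine-term Poitou--Tate sequence the cokernel term $H^0(\Z[1/Np],\F_p(1))^\vee$ vanishes. Hence the localization map $H^2(\Z[1/Np],\F_p) \to H^2(\Q_N,\F_p) \cong \Z/p$ is an isomorphism, so $b \cup c \neq 0$ if and only if $\mathrm{res}_N(b) \cup \mathrm{res}_N(c) \neq 0$ in $H^2(\Q_N,\F_p)$.

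Now the local computation. Over $\Q_N$ the Tate twists become trivial, so $\mathrm{res}_N(b), \mathrm{res}_N(c) \in H^1(\Q_N,\F_p) = \Hom(\widehat{\Q_N^\times},\F_p)$, and after fixing a primitive $p$-th root of unity in $\Q_N$ the cup product becomes the perfect alternating local Tate pairing. By the tame symbol formula for a uniformizer, $\mathrm{res}_N(b)$ is the homomorphism that is trivial on $N$ and restricts on $\mathcal{O}_{\Q_N}^\times = \F_N^\times$ to $u \mapsto \log_g(u) \bmod p$, where $g$ is a fixed primitive root modulo $N$. The pairing formula then shows that $\mathrm{res}_N(b) \cup \mathrm{res}_N(c)$ equals, up to a fixed nonzero scalar, the value of $\mathrm{res}_N(c)$ on the uniformizer $N$ --- that is, it measures the ``unramified-at-$N$ part'' of the global class $c$. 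To finish, I would compute this $\F_p$-valued invariant of $c$ by unwinding the reciprocity law together with the explicit description of $c$ coming from the arithmetic of $\Q(\zeta_N)$; the result is a Stickelberger/$p$-adic-$L$ type expression that a classical manipulation rewrites as $\log_g\big(\prod_{i=1}^{(N-1)/2} i^i\big) \bmod p$. Since $x \in (\Z/N\Z)^\times$ is a $p$-th power if and only if $\log_g(x) \equiv 0 \bmod p$, this yields the desired equivalence.

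The hard part will be this last step: extracting the ``unramified-at-$N$ part'' of $c$ and matching it \emph{on the nose} with Merel's product. This is exactly the input that in Merel's original proof came from the geometry of $X_0(N)$ (the winding element $\{0,\infty\}$), and here it has to be obtained either from an explicit cocycle and reciprocity computation or through the modular-symbol side of the Hecke--pseudodeformation dictionary developed in this paper. In particular, the emergence of the half-range sum $\sum_{i \le (N-1)/2}$ --- which is really the derivative of a $p$-adic $L$-function at a trivial zero in disguise --- is the feature demanding the most care; by contrast, the reductions in the first three steps are formal or classical.
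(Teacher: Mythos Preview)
Your reduction is correct and matches the paper: you invoke Theorem~\ref{thm:cup products and rank 1} to translate the rank statement into the non-vanishing of $b \cup c$, then use the isomorphism $H^2(\Z[1/Np],\F_p) \isoto H^2_N(\F_p)$ (this is Lemma~\ref{lem:H2 to H2N injective}) to reduce to the local pairing at $N$, and correctly identify that $b\vert_N$ spans the ramified line $\cL_N$, so that $b\vert_N \cup c\vert_N$ detects the unramified component of $c\vert_N$. This is exactly the content of Proposition~\ref{prop: cup commutativity} and the equivalence $(2)\Leftrightarrow(3)$ of Proposition~\ref{prop:cup selmer equiv}.

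The gap is precisely where you say it is, and your proposed method for filling it is not the one that works. You suggest computing the unramified component of $c\vert_N$ directly from an ``explicit description of $c$ coming from the arithmetic of $\Q(\zeta_N)$'', but no such explicit cocycle is available: $c$ is only specified as a generator of the one-dimensional space $H^1_{(p)}(\F_p(-1))$, and there is no evident Kummer-type construction on the $(-1)$-twist side. The paper instead \emph{dualizes} at this point. By the Selmer-group duality in Proposition~\ref{prop:cup selmer equiv}, the condition $c\vert_N \in \cL_N$ is equivalent to the existence of a nonzero class $x \in H^1(\F_p(2))$ with $x\vert_N \in \cL_N$. On the $(2)$-twist side there \emph{is} an explicit generator: via Hochschild--Serre (Lemma~\ref{lem:Hoth-Serre}) and Kummer theory over $\Q(\zeta_p)$, one has $H^1(\F_p(2)) \cong (\Z[1/Np,\zeta_p]^\times \otimes \F_p)_{\omega^{-1}}$, and Lecouturier's element $\mathcal{G}$ built from Gauss sums (Proposition~\ref{prop:lecouturiers}) gives a generator. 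The condition $\mathcal{G}\vert_N \in \cL_N$ then reads $\Lambda(\mathcal{G}_{\mathfrak{n}}) = 0$, and the Gross--Koblitz formula for Gauss sums converts $\Lambda(\mathcal{G}_{\mathfrak{n}})$ into $-\tfrac{2}{3}\sum_{i=1}^{(N-1)/2} i\log(i)$, which is (up to a unit) the discrete logarithm of Merel's product. This is where the half-range sum you flagged actually appears. So the missing ingredient is not a reciprocity computation for $c$ but a passage to the dual twist and an appeal to Gross--Koblitz; see Theorem~\ref{thm:cup merel equiv} for the assembled argument.
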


There is an alternate formulation, by Calegari and Venkatesh, of this theorem in terms of zeta values, which was explained to us by Venkatesh. Assume that $p^s \mid (N-1)$, and let $G=(\Z/N\Z)^\times$  and $I_G=\ker(\zp{s}[G] \to \zp{s})$ be the augmentation ideal. Consider the element
\[
\zeta = \sum_{i \in (\Z/N\Z)^\times}B_2(\lfloor i/N \rfloor)  [i] \in \zp{s}[G],
\]
where $B_2(x)=x^2-x+1/6$ is the second Bernoulli polynomial and where $\lfloor i/N \rfloor \in [0,1) \cap \frac{1}{N}\Z$ is the fractional part of $i/N$. This element comes from considering the function
\[
\{\chi:(\Z/N\Z)^\times \to \bFp\} \to \bFp, \quad \chi \mapsto L(-1,\chi)
\]
where $\chi$ is a character and $L(s,\chi)$ is the Dirichlet $L$-function. One knows that $L(-1,\mathrm{triv})=\frac{1-N}{12}$, which vanishes in $\zp{s}$ when $p^s \mid (N-1)$, and that $L(-1,\chi) =-\frac{1}{2}B_{2,\chi}$. We use $\zeta$ to give meaning to the ``order of vanishing of $L(-1,\chi)$ at $\chi=\mathrm{triv}$.'' 

Following Mazur and Tate \cite{MT1987}, we let $\mathrm{ord}_s(\zeta) \in \Z$ be the maximal integer $r$ such that $\zeta \in I_G^r$. Then Merel's theorem can be restated as follows.

\begin{thm}[Merel]
\label{thm:merel and zeta}
Assume that $p \mid (N-1)$, and take $s=1$ in the above discussion. Then $\mathrm{ord}_1(\zeta)\ge 1$, and the following are equivalent:
\begin{enumerate}
\item $\mathrm{rank}_{\Z_p}(\bT^0)=1$
\item $\mathrm{ord}_1(\zeta)=1$.
\end{enumerate}
\end{thm}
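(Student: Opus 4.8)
The plan is to deduce Theorem~\ref{thm:merel and zeta} from Theorem~\ref{thm:merel} by an explicit computation in $\F_p[G]$. First, the inequality $\mathrm{ord}_1(\zeta)\ge 1$ says $\zeta\in I_G$, i.e.\ that the augmentation $\sum_{i=1}^{N-1}B_2(i/N)$ vanishes in $\F_p$; by the distribution relation $\sum_{j=0}^{N-1}B_2(j/N)=N^{-1}B_2$ this equals $\tfrac{1-N}{6N}$, which is $0$ in $\F_p$ because $p\mid(N-1)$ and $6N$ is a unit mod $p$ (this uses $p>3$), consistent with the stated vanishing of $L(-1,\triv)=\tfrac{1-N}{12}$. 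Since also $\mathrm{rank}_{\Z_p}\bT^0\ge 1$, Theorem~\ref{thm:merel} reduces the equivalence $(1)\Leftrightarrow(2)$ to a single claim: $\zeta\in I_G^2$ if and only if $\prod_{i=1}^{(N-1)/2}i^i$ is a $p$-th power in $(\Z/N\Z)^\times$.

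To prove this claim I would use the standard isomorphism $I_G/I_G^2\isoto(\Z/N\Z)^\times\otimes\F_p$, $[g]-1\mapsto g$. As the augmentation of $\zeta$ is $0$, the image of $\zeta$ in $I_G/I_G^2$ is the class of $\prod_{i=1}^{N-1}i^{B_2(i/N)}$, where an exponent in $\F_p$ means raising to any integer lift (well defined modulo $p$-th powers); thus $\zeta\in I_G^2$ iff this product is a $p$-th power mod $N$, equivalently (since $\gcd(6,p)=1$) iff its sixth power $\prod_i i^{6B_2(i/N)}$ is. Now $6B_2(i/N)\equiv 6i^2-6i+1\pmod p$ because $N\equiv 1\pmod p$, so modulo $p$-th powers $\prod_i i^{6B_2(i/N)}=\bigl(\prod_{i=1}^{N-1}i^{i^2}\bigr)^6\bigl(\prod_{i=1}^{N-1}i^i\bigr)^{-6}(N-1)!$; by Wilson's theorem $(N-1)!\equiv-1$, which is a $p$-th power mod $N$ (as $\tfrac{N-1}{2}$ is divisible by $p$), so the claim reduces to whether $\prod_{i=1}^{N-1}i^{i^2-i}$ is a $p$-th power. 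Fixing a generator of $(\Z/N\Z)^\times$ and letting $\log$ be the associated discrete logarithm, $\log(-1)=\tfrac{N-1}{2}\equiv 0\pmod p$, so $\log(N-i)\equiv\log i$; combining this with $N\equiv 1\pmod p$, the reflection $i\leftrightarrow N-i$ gives $\prod_{i=1}^{N-1}i^{i^2-i}=\bigl(\prod_{i=1}^{(N-1)/2}i^{i^2-i}\bigr)^2$, so the claim becomes $\nu-\mu\not\equiv 0\pmod p$, where $\mu=\sum_{i=1}^{(N-1)/2}i\log i=\log\prod_{i=1}^{(N-1)/2}i^i$ and $\nu=\sum_{i=1}^{(N-1)/2}i^2\log i$.

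The crux is then the congruence $\mu\equiv 3\nu\pmod p$, which I would prove by evaluating $\Sigma:=\sum_{i=1}^{N-1}i^2\log i$ two ways. Reflection $i\leftrightarrow N-i$, together with $\sum_{i=1}^{(N-1)/2}\log i\equiv 0\pmod p$ (which follows from $(((N-1)/2)!)^2\equiv\pm 1\pmod N$), gives $\Sigma\equiv 2\nu-2\mu$; applying instead the permutation $i\mapsto 2i$ of $(\Z/N\Z)^\times$ and tracking the carry ($2i$ is replaced by $2i-N$ exactly when $i>\tfrac{N-1}{2}$), using $\sum_{j=1}^{N-1}j^2\equiv 0\pmod p$, gives $\Sigma\equiv 8\nu-4\mu$. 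Comparing and dividing by $2$ (here $p$ is odd) gives $\mu\equiv 3\nu$, hence $\nu-\mu\equiv-2\nu$; therefore $\zeta\in I_G^2\iff\nu\equiv 0\iff\mu\equiv 0\iff\prod_{i=1}^{(N-1)/2}i^i$ is a $p$-th power mod $N$, which by Theorem~\ref{thm:merel} means $\mathrm{rank}_{\Z_p}\bT^0\ge 2$. This proves the claim and hence the theorem. I expect the one genuinely delicate point to be the carry bookkeeping in the $i\mapsto 2i$ computation — keeping exponents (modulo $N-1$) and coefficients (modulo $p$) separate; the rest is routine manipulation with Wilson's theorem and the repeated fact that quantities divisible by $\tfrac{N-1}{2}$ vanish mod $p$.
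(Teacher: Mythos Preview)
Your proposal is correct and follows essentially the same route as the paper. The paper deduces Theorem~\ref{thm:merel and zeta} from Theorem~\ref{thm:merel} via Lemma~\ref{lem:merel and zeta}, whose proof sends $\zeta$ under $I_G/I_G^2\xrightarrow{\log}\Z/p\Z$ to $\sum_{i=1}^{N-1}(i^2-i+1/6)\log i$, observes $\sum\log i=\sum i\log i=0$, and then invokes Lecouturier's identity $\sum_{i=1}^{N-1}i^2\log i=-\tfrac{4}{3}\sum_{i=1}^{(N-1)/2}i\log i$ (Lemma~\ref{lem:lecouturiers}, cited from \cite{lecouturier2016}) to finish. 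Your argument is the same in substance: your detour through the half-range sums $\mu,\nu$ and the two evaluations of $\Sigma=\sum_{i=1}^{N-1}i^2\log i$ (reflection giving $\Sigma\equiv 2\nu-2\mu$, the $i\mapsto 2i$ carry computation giving $\Sigma\equiv 8\nu-4\mu$, hence $\mu\equiv 3\nu$) is precisely a self-contained proof of Lecouturier's identity, so the only difference is that you reprove what the paper cites.
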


We give a new proof of this theorem, combining Theorem \ref{thm:cup products and rank 1} with the following proposition.

\begin{prop}
\label{prop:cup equivalence with zeta}
Assume that $p^s \mid (N-1)$. Then $\mathrm{ord}_s(\zeta)\ge 1$, and the following are equivalent:
\begin{enumerate}
\item $M \cup M$ is non-zero in $H^2(\Z[1/Np],\End(\zp{s}(1)\oplus \zp{s}))$.
\item $\mathrm{ord}_s(\zeta)=1$.
\item Merel's number $\prod_{i=1}^{\frac{N-1}{2}} i^i$ is a not a $p^s$-th power modulo $N$.
\end{enumerate}
\end{prop}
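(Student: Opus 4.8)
The plan is to split the cycle of equivalences into two parts: the elementary equivalence (2) $\Leftrightarrow$ (3), proved by manipulating Bernoulli polynomials, and the arithmetic equivalence (1) $\Leftrightarrow$ (2), proved by an explicit reciprocity computation of $M\cup M$ combined with Stickelberger's theorem. Throughout set $G=(\Z/N\Z)^\times$, and use the hypothesis $p^s\mid(N-1)$ to identify $I_G/I_G^2\cong G\otimes\zp{s}\cong\zp{s}$, so that $\ord_s(\zeta)\ge 2$ becomes the statement that the image $\bar\zeta\in I_G/I_G^2$ of $\zeta$ vanishes.

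For the bound $\ord_s(\zeta)\ge 1$ one computes the augmentation of $\zeta$, namely $\sum_{i=1}^{N-1}B_2(i/N)$; by the distribution relation $\sum_{a=0}^{N-1}B_2(a/N)=N^{-1}B_2(0)$ this equals $\frac{1-N}{6N}$, which vanishes in $\zp{s}$ since $p^s\mid(N-1)$ and $6N\in\Z_p^\times$. For (2) $\Leftrightarrow$ (3) one then computes $\bar\zeta$ under $I_G/I_G^2\cong G/G^{p^s}$: the apparent denominators are harmless because $6N^2B_2(i/N)=6i^2-6iN+N^2\in\Z$ and $6N^2\in(\zp{s})^\times$, and reducing the exponent modulo $p^s$ via $N\equiv 1$ turns it into $6i^2-6i+1$. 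Folding the product over the involution $i\leftrightarrow N-i$, invoking Wilson's theorem, and using that $-1\in G^{p^s}$ (as $p$ is odd) then expresses $\bar\zeta$ as a prime-to-$p$ power of the class of Merel's number $\prod_{i=1}^{(N-1)/2}i^i$ in $G/G^{p^s}$; hence $\bar\zeta=0$ exactly when Merel's number is a $p^s$-th power mod $N$, which together with $\ord_s(\zeta)\ge1$ is (2) $\Leftrightarrow$ (3).

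For (1) $\Leftrightarrow$ (2) I would first identify the three classes: by Corollary \ref{cor:flat cohom of ad} and Kummer theory, $b$ is the class of $N$ in $\Z[1/Np]^\times\otimes\zp{t}$ (the contributions of $-1$ and of $p$ being removed by the oddness of $p$ and by flatness at $p$), $a$ is the tame character $\Gal(\barQ/\Q)\onto G\to G\otimes\zp{t}\cong\zp{t}$ cutting out the $\zp{t}$-subextension of $\Q(\zeta_N)$, and $c$ is a generator of the rank-one group $H^1_\fl(\Z[1/Np],\zp{t}(-1))$, which can be described explicitly by class field theory over $\Q(\zeta_p)$ (equivalently, via the arithmetic of Gauss sums for characters of $G$ of $p$-power order). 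As recalled in the introduction, $M\cup M=0$ if and only if $b\cup c=0$ and $c\cup a=0$ in the corresponding $H^2$'s (the same reasoning gives the $\zp{s}$-coefficient version), so (1) asserts that one of these is non-zero. Now by Poitou--Tate duality the localization maps out of $H^2(\Z[1/Np],\zp{s})$ and $H^2(\Z[1/Np],\zp{s}(-1))$ are injective, and the local $H^2$'s vanish at $\infty$ (since $p$ is odd) and at $p$ (since $\mu_{p^s}(\Q_p)=0$), so both cup products are detected at the single place $N$; since $p^s\mid(N-1)$, the modules $\zp{s}(\pm 1)$ are trivial on the decomposition group at $N$ and $H^2(\Q_N,\zp{s})\cong\zp{s}$. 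A direct computation of these local cup products (a tame-symbol calculation) shows that $b\cup c$ and $c\cup a$ are each, up to a $p$-adic unit, the unramified component of the restriction $c|_{\Q_N}$. Finally, the class-field-theoretic (Gauss-sum) description of $c$ together with Stickelberger's theorem identifies this unramified component with the image $\bar\zeta$ of the Stickelberger element in $I_G/I_G^2$; hence $M\cup M\ne 0$ if and only if $\bar\zeta\ne0$, i.e.\ $\ord_s(\zeta)=1$, which is (1) $\Leftrightarrow$ (2). The whole argument should be carried out uniformly in $s\le t$, compatibly with the reductions $\zp{s+1}\onto\zp{s}$.

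The main obstacle is this last identification: one must pin down the distinguished generator $c$ of the flat line precisely enough --- including its normalization --- and then carefully track the Tate twists, the normalization of the local invariant map at $N$, and the various sign conventions, so that the tame symbol equals $\bar\zeta$ on the nose rather than merely up to an undetermined unit, uniformly in $s$. The elementary half (2) $\Leftrightarrow$ (3) is routine but still demands some care with the fractional Bernoulli exponents and with the passage from $\zp{s}[G]$ to $G/G^{p^s}$.
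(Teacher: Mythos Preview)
Your overall architecture is sound and matches the paper's: the equivalence $(2)\Leftrightarrow(3)$ is exactly the paper's Lemma~\ref{lem:merel and zeta} (the Bernoulli manipulation you sketch, combined with Lecouturier's identity $\sum_{i=1}^{N-1} i^2\log(i)=-\tfrac{4}{3}\sum_{i=1}^{(N-1)/2} i\log(i)$), and your reduction of $(1)$ to a local question at $N$ is the content of Lemma~\ref{lem:H2 to H2N injective} and Proposition~\ref{prop: cup commutativity}. Your observation that $a\cup c$ and $b\cup c$ detect the ``unramified component'' of $c\vert_N$ is also correct: since $a\vert_N,b\vert_N\in\cL_N$ (the line spanned by the Kummer class of $N$), the symplectic Tate pairing says these cups vanish exactly when $c\vert_N\in\cL_N$, i.e.\ when $c\vert_N$ has no unramified part.

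Where your proposal diverges from the paper --- and where it is genuinely incomplete --- is the identification of that unramified component with $\bar\zeta$. You invoke ``Stickelberger's theorem'' and a ``class-field-theoretic (Gauss-sum) description of $c$'', but the classical Stickelberger element is built from $B_1$, whereas $\zeta$ is a $B_2$-Stickelberger element encoding $L(-1,\chi)$; the link is not the usual Stickelberger relation. More seriously, there is no evident direct description of the generator $c\in H^1_\fl(\zp{s}(-1))$ explicit enough to read off its Frobenius-at-$N$ value. The paper circumvents this by \emph{dualizing}: rather than computing $c\vert_N$, Proposition~\ref{prop:cup selmer equiv} shows the condition $c\vert_N\in\cL_N$ is equivalent (via Poitou--Tate with local conditions) to the existence of a global class $x\in H^1(\zp{s}(2))$, nonzero mod $p$, with $x\vert_N\in\cL_N$. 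The twist-$2$ side is accessible because Hochschild--Serre identifies $H^1(\zp{s}(2))$ with $(\Z[1/Np,\zeta_{p^s}]^\times\otimes\zp{s}(1))^\Delta$, and Lecouturier's Gauss-sum element $\mathcal{G}$ (computed via the Gross--Koblitz formula, Proposition~\ref{prop:lecouturiers}) furnishes an explicit generator whose $N$-local logarithm is \emph{exactly} $-\tfrac{2}{3}\sum i\log(i)$. This dualization step and the appeal to Gross--Koblitz are what you are missing; ``Stickelberger'' alone will not produce the $B_2$-value, and without an explicit model for $c$ your direct route has no obvious way forward. You correctly flagged this as the main obstacle, but the resolution requires changing sides of the duality rather than refining the description of $c$.
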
 

Combining this result with Corollary \ref{cor:newton poly}, we see that if Merel's number is not a $p^2$-th power modulo $N$, then there is only one possibility for the Newton polygon of $\bT^0$. The proof is a variant of Stickelberger theory, and is inspired by the work of Lecouturier \cite{lecouturier2018} and unpublished work of Calegari and Emerton.

In Iwasawa-theoretic parlance, one could see the condition $\mathrm{rank}_{\Z_p}(\bT^0) =1$ as an intermediary between the \emph{algebraic side} (the non-vanishing of cup products) and the \emph{analytic side} ($\zeta$ vanishes to order 1). Based on this, one might conjecture that $\mathrm{rank}_{\Z_p}(\bT^0) = \mathrm{ord}_1(\zeta)$. This is not quite correct, as the examples below show, but it is true strikingly often. In particular, we optimistically conjecture the following, for which our only evidence is a computation for $N<10000$. 
\begin{conj}
\label{conj: rank 2}
Assume that $\mathrm{rank}_{\Z_p}(\bT^0) \ge 2$. Then the following are equivalent:
\begin{enumerate}
\item $\mathrm{rank}_{\Z_p}(\bT^0)=2$
\item $\mathrm{ord}_1(\zeta)=2$.
\end{enumerate}
\end{conj}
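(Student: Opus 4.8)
The conjecture concerns only the threshold $\mathrm{rank}_{\Z_p}\bT^0 = 2$, so the plan is to bootstrap from the machinery already developed rather than to study $\bT^0$ directly. Write $e = \mathrm{rank}_{\Z_p}\bT^0$; since $p\mid(N-1)$ we have $e\ge 1$ (Mazur), and by hypothesis $e\ge 2$. By the $k=2$ case of Theorem \ref{thm:intro main 2} this is equivalent to $\dia{M}^2 = M\cup M = 0$ in $H^2(\Z[1/Np],\End(\F_p(1)\oplus\F_p))$, so the triple Massey product $\dia{M}^3$ is defined, and the $k=3$ case of Theorem \ref{thm:intro main 2} gives $e\ge 3 \iff \dia{M}^3 = 0$. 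On the analytic side, the $s=1$ case of Proposition \ref{prop:cup equivalence with zeta} already yields $\mathrm{ord}_1(\zeta)\ge 2$ under our hypothesis, and $\mathrm{ord}_1(\zeta)=2$ is equivalent to $\zeta\notin I_G^3$ in $\F_p[G]$. Combining these with the known equivalence $e\ge 2 \iff \mathrm{ord}_1(\zeta)\ge 2$, the conjecture becomes equivalent to the single assertion
\[
\dia{M}^3 = 0 \ \text{ in } \ H^2(\Z[1/Np],\End(\F_p(1)\oplus \F_p)) \iff \zeta \in I_G^3 \ \text{ in } \ \F_p[G],
\]
which is precisely the $k=3$ instance of the pattern that Proposition \ref{prop:cup equivalence with zeta} (or rather its proof) establishes for $k=2$, namely $\dia{M}^2 = 0 \iff \zeta\in I_G^2$.

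\textbf{The main step: one more rung of Stickelberger theory.} The plan is then to extend the Stickelberger-theoretic proof of Proposition \ref{prop:cup equivalence with zeta} one further step along the augmentation filtration. That argument should identify nonvanishing of the scalar coordinate $b\cup c$ underlying $\dia{M}^2$ with nonvanishing of the image of $\zeta$ in $I_G/I_G^2$, via an explicit class-field-theoretic reciprocity over $\Q(\zeta_N)$. To treat $\dia{M}^3$, I would start from a trivializing cochain for $M\cup M$, construct from a partial Stickelberger element witnessing the membership $\zeta\in I_G^2$ an explicit \emph{defining system} for $\dia{M}^3$ --- in the spirit of Sharifi's identities relating Massey products to cyclotomic-unit-type elements \cite{sharifi2007} --- and then compute that, for this defining system, $\dia{M}^3$ is a nonzero multiple of the image of $\zeta$ in $I_G^2/I_G^3$. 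Working throughout with a single coordinate of the matrix Massey product (legitimate by Proposition \ref{prop:massey equiv}) should keep the bookkeeping manageable. If this identification holds verbatim, the displayed equivalence, and hence the conjecture, follows.

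\textbf{The obstacle.} The difficulty is located exactly where the naive guess $\mathrm{rank}_{\Z_p}\bT^0 = \mathrm{ord}_1(\zeta)$ is known to break down. A trivialization of $M\cup M$ is unique only up to a class in $H^1(\Z[1/Np],\End(\F_p(1)\oplus\F_p))$, so the defining system manufactured from $\zeta$ need not be the one that actually governs $e$ in the pseudodeformation-theoretic proof of Theorem \ref{thm:intro main 2}; the two computations of $\dia{M}^3$ can then differ by a cup product of an $H^1$-class with a correction term. That correction is a secondary arithmetic invariant --- a deeper graded piece of the relevant class group of $\Q(\zeta_N,\zeta_p)$ --- which $\zeta$, being one element of $\F_p[G]$ with $G$ cyclic, cannot record. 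At the cup-product level ($k=2$) no defining system is needed and the subtlety is invisible, which is why the clean equivalence holds there; at the triple-product level it first becomes a genuine issue, and it must eventually be nonzero, since that is exactly what the counterexamples to $e=\mathrm{ord}_1(\zeta)$ detect. A complete proof therefore seems to need either a new second-order reciprocity law pinning this correction to a refinement of $\zeta$, or a structural reason why it vanishes precisely in the window $\dia{M}^2=0$, $\dia{M}^3\ne 0$ (equivalently $e\le 2$) while being free to be nonzero once $e\ge 3$. Absent such an input, one is left with the numerical verification for $N<10000$, which can of course be pushed further but does not settle the conjecture.
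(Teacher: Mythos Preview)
This statement is a \emph{conjecture} in the paper, not a theorem: the authors explicitly say their only evidence is the computation for $N<10000$, and they give no proof. So there is nothing in the paper to compare your argument against.

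Your write-up is not a proof either, and to your credit you say so. Your reduction is correct: under the standing hypothesis $e\ge 2$, the conjecture is equivalent to the assertion that $\dia{M}^3$ vanishes (for a very good defining system) in $H^2(\End(\F_p(1)\oplus\F_p))$ if and only if $\zeta\in I_G^3$. Your diagnosis of the obstacle is also on target and aligns with the paper's own framing: the Stickelberger/Lecouturier argument behind Proposition~\ref{prop:cup equivalence with zeta} identifies $b\cup c$ with the image of $\zeta$ in $I_G/I_G^2$, but for the triple product one must choose a defining system, and there is no known reason why the one produced by a Stickelberger-type construction should agree with the very good defining system that governs $e$ via Theorem~\ref{thm:main in reorg}. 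The paper's Remark~\ref{rem:defining systems} shows that the \emph{vanishing} of $\dia{M}^{n+1}$ is independent of the choice among very good defining systems, but it does not rule out dependence on an arbitrary analytically-constructed defining system, which is exactly the gap you identify.

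In short: you have correctly located where a proof would have to go and why it does not presently exist. Labeling this as a ``proof proposal'' is a bit misleading; it is an informed discussion of why the conjecture is open.
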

\begin{rem}
\label{rem:lecouturier}
After posting an earlier version of this paper on arXiv, we learned that Lecouturier was working on an approach to some of the problems considered in this paper using an ``analytic side'' approach. In particular, he gives another new proof of Merel's Theorem \ref{thm:merel} and proves Conjecture \ref{conj: rank 2}. Lecouturier's work has since appeared as a preprint \cite{lecouturier2018b}. It would be interesting to study the connections between our work and his.
\end{rem}

More generally, Theorem \ref{thm:intro main 2} relates $\mathrm{rank}_{\Z_p}(\bT^0)$ to an ``algebraic side" (vanishing of Massey products). It is natural to ask whether there is a corresponding object on the analytic side -- is there a zeta element $\tilde{\zeta}$ such that $\mathrm{ord}(\tilde{\zeta}) =\mathrm{rank}_{\Z_p}(\bT^0)$?

Finally, we remark that, although we give a new proof of Merel's theorem, it is intriguing to consider the possibility of, in a different context, doing the opposite. That is, Theorem \ref{thm:cup merel equiv} relates an algebraic side (vanishing of cup product) to an analytic side (order of vanishing of zeta element). In a different context where one wants to prove the same type of result (e.g.\ BSD conjecture, Bloch--Kato conjecture), it is interesting to consider if there is an analog of $\mathrm{rank}_{\Z_p}(\bT^0)$ that can serve as an intermediary: on one hand being related to the algebraic side via deformation theory, and on the other hand being related to the analytic side via geometry.

\subsection{Examples} We give some explicit examples, computed using the SAGE computer algebra software. See \cite[Table, pg.~40]{mazur1978} for some relevant computations of $\mathrm{rank}_{\Z_p} (\bT^0)$ (denoted $e_p$ there).

\subsubsection{An example witnessing Corollary \ref{cor:class groups and rank 1}(2)} Take $p=5$ and $N=31$. In this case we have $\mathrm{rank}_{\Z_p} (\bT^0) =2$. One can compute that 
\[
\mathrm{Cl}(\Q(\zeta_N^{(p)},\zeta_p)) \simeq \Z/2\Z \oplus \Z/2\Z \oplus \Z/10\Z \oplus \Z/10\Z.
\]
We see that the $p$-torsion subgroup is non-cyclic, as predicted by Corollary \ref{cor:class groups and rank 1} (2).

\subsubsection{An example where the converse to Calegari--Emerton's result is false}
\label{subsubsec:C-E converse} Take $p=7$ and $N=337$ and note that $7 \mid 336$. One can compute that $\mathrm{Cl}(\Q(N^{1/p})) \simeq \Z/7\Z \oplus \Z/7\Z$. One also checks Merel's number is
\[
\prod_{i=1}^{\frac{N-1}{2}} i^i \equiv 227 \pmod{337}
\]
which is not a $7$th power modulo $337$. In particular, we have $\mathrm{rank}_{\Z_p} (\bT^0) =1$ even though $\mathrm{Cl}(\Q(N^{1/p}))[p]$ is not cyclic. This example was found independently by Lecouturier \cite{lecouturier2018} and in unpublished work of Calegari--Emerton.

\subsubsection{Examples of higher order vanishing} We computed $\mathrm{rank}_{\Z_p}(\bT^0)$ and $\mathrm{ord}(\zeta)$ for every value of $(N,p)$ with $N<10000$. In Table 1, we give a list of all the examples with $\mathrm{rank}_{\Z_p}(\bT^0)>2$. In the $\mathrm{ord}(\zeta)$ column, we only list the result if $\mathrm{rank}_{\Z_p}(\bT^0) \ne \mathrm{ord}(\zeta)$. Note that for all examples with $\mathrm{rank}_{\Z_p}(\bT^0)=2$, we found that $\mathrm{ord}(\zeta)=2$, and vice versa, confirming Conjecture \ref{conj: rank 2} for $N<10000$.
\begin{table}[t]
\begin{center}
\label{table:big}
\begin{tabular}{||c|c|c|c||}
\hline \hline
$N$ & $p$ &  $\mathrm{rank}_{\Z_p}(\bT^0)$ & $\mathrm{ord}(\zeta)$ \\ \hline 
181 & 5 &  3 & \\
 1321 & 11 & 3 & \\
 1381 & 23 & 3 & \\
 1571 & 5 & 3 & \\
 2621 & 5 & 3 & \\
 3001 & 5 & 6 & 7 \\ 
 3671 & 5 & 5 &  3\\ 
 4159 & 7 & 4 & 5\\ 
 4229 & 7 & 3 &  4\\ 
 4931 & 5 & 3 & \\
 4957 & 7 & 3 & \\
 5381 & 5 & 3 & \\
 5651 & 5 & 4 & 5 \\ 
 5861 & 5 & 4 & \\
 6451 & 5 & 3 & \\
 6761 & 13 & 3 &  4 \\
 7673 & 7 & 3 &  4\\ 
 9001 & 5 & 4 & \\
 9521 & 5 & 3 & \\
 \hline \hline
\end{tabular}
\end{center}
\bigskip
\caption{All examples with rank at least 3, $N<10000$}
\end{table}

\subsubsection{Examples where $\bT^0$ is not irreducible} We also computed the ranks of the irreducible components of $\bT^0$ for for every value of $(N,p)$ with $N<10000$. (See \S \ref{subsec:newton poly} for the significance of these ranks.)

In Table 2, we give all examples where $\bT^0$ is not irreducible and list the ranks of the components. For each example having either $v_p(N-1)>2$ or $\mathrm{rank}_{\Z_p}>2$, except for $(N,p)=(3001,5)$, we computed that Merel's number is not a $p^2$-th power modulo $N$, and so the Newton polygon is given by Theorem \ref{thm:newton poly}. In the case $(N,p)=(3001,5)$, Merel's number is a $p^2$-th power modulo $N$, and the Newton polygon has vertices $\{(0,3),(1,2),(3,1),(6,0)\}$.

\begin{table}[t]
\begin{center}
\begin{tabular}{||c|c|c|c||}
\hline \hline
$N$ & $p$ &  $\mathrm{rank}_{\Z_p}(\bT^0)$ & Ranks \\ \hline 
751 & 5 & 2 & (1, 1) \\
2351 & 5 & 2 & (1, 1) \\
3001 & 5 & 6 & (1, 2, 3) \\
3251 & 5 & 2 & (1, 1) \\
3631 & 11 & 2 & (1, 1) \\
3701 & 5 & 2 & (1, 1) \\
4001 & 5 & 2 & (1, 1) \\
5651 & 5 & 4 & (1, 3) \\
6451 & 5 & 3 & (1, 2) \\
6761 & 13 & 3 & (1, 2) \\
7253 & 7 & 2 & (1, 1) \\
9001 & 5 & 4 & (1, 3) \\
9901 & 5 & 2 & (1, 1) \\
\hline \hline
\end{tabular}
\end{center}
\bigskip
\caption{Ranks of irreducible components of $\bT^0$}
\label{table:irr}
\end{table}

\subsection{Statistics} In the previous subsection, we gave examples of pairs $(N,p)$ where $\bT^0$ exhibits exceptional behavior. In this subsection, we analyze the statistical behavior of the examples we computed. This discussion was influenced by discussions with Ravi Ramakrishna. We will consider the situation for $p$ fixed and $N$ varying. To emphasize the dependence on $N$, in this subsection we will write $\bT^0_N$, instead of $\bT^0$, for the Hecke algebra associated to the pair $(N,p)$. 

For fixed $p$, let $P(x)=\{N \ | \ N \text{ is prime}, \ N<x, N \equiv 1 \pmod{p}\}$. Consider the function $r(d,x): \mathbb{N} \times \mathbb{N} \to [0,1]$ given by
\[
r(d,x) = \frac{\#\{N \in P(x) \ | \ \mathrm{rank}_{\Z_p}(\bT^0_N)=d\}}{ \#P(x)}.
\]
Since we computed examples for all $N <10000$, we let $r(d)=r(d,10000)$, and give the values of $r(d)$ for various $p$ and $d$. Before doing this, we explain a heuristic guess for $r(d,x)$ for comparison.

For $N \in P(x)$, we know that $\mathrm{rank}_{\Z_p}(\bT^0_N)=\dim_{\F_p}(\bT^0_N/p)$, and that 
\[
\bT^0_N/p \cong \F_p \lb y \rb / (a_1(N)y + a_2(N) y + \dots ),
\]
where $a_i(N) \in \F_p$. In particular, $\dim_{\F_p}(\bT^0_N/p)= \min\{i \ | \ a_i(N) \ne 0\}$. Our main Theorem \ref{thm:intro main 2} may be interpreted as saying that the numbers $a_i(N)$ can be extracted from values of certain Massey products. If we make the guess that the values $a_i(N)$ are distributed uniformly randomly in $\F_p$ as $N$ varies, we arrive at the following heuristic guess $g(d)$ for $r(d,x)$,
\[
g(d) = \left(\frac{1}{p}\right)^{d-1} \left(\frac{p-1}{p}\right).
\]
Indeed, this is the probability that, for a uniformly randomly chosen sequence $b_1, b_2, \dots, b_d, \dots$ of elements of $\F_p$, we have $b_1=b_2=\cdots=b_{d-1}=0$ and $b_d \ne 0$.

In Table 2, we give our computed values of $r(d)$ for $p=5, 7, 11, 13$, and the relevant values of $g(d)$, to three decimals of precision. In each, case, we let $n=\#P(10000)$, the size of the ``sample space.''
\begin{table}
\begin{center}
\begin{tabular}[t]{|c|c|c|}
    \multicolumn{3}{c}{$\mathbf{p=5}$} \\ \hline \hline
    \multicolumn{3}{|l|}{${n=306}$} \\ \hline
    $d$ & $r(d)$ & $g(d)$ \\ \hline
    1 & 0.745 & 0.800 \\
    2 &  0.216 & 0.160 \\
    3 &  0.023 & 0.032 \\
    4 &  0.010 & 0.006 \\
    5 & 0.003  & 0.001 \\
    6 & 0.003  &0.000 \\ \hline
\end{tabular}
\begin{tabular}[t]{|c|c|c|}
    \multicolumn{3}{c}{$\mathbf{p=7}$} \\\hline \hline
    \multicolumn{3}{|l|}{${n=203}$} \\ \hline
    $d$ & $r(d)$ & $g(d)$ \\ \hline
    1& 0.892 &0.857 \\
    2& 0.089 &0.122\\
    3& 0.015 &0.017\\
    4& 0.005 &0.002\\ \hline
\end{tabular}
\begin{tabular}[t]{|c|c|c|}
    \multicolumn{3}{c}{$\mathbf{p=11}$} \\\hline \hline
    \multicolumn{3}{|l|}{${n=125}$} \\ \hline
    $d$ & $r(d)$ & $g(d)$ \\ \hline
    1& 0.912 &0.909 \\
    2& 0.080 &0.083\\
    3& 0.008 &0.008\\ \hline
\end{tabular}
\begin{tabular}[t]{|c|c|c|}
    \multicolumn{3}{c}{$\mathbf{p=13}$} \\\hline \hline
    \multicolumn{3}{|l|}{${n=99}$} \\ \hline
    $d$ & $r(d)$ & $g(d)$ \\ \hline
    1& 0.929 &0.923 \\
    2& 0.061 &0.071\\
    3& 0.010 &0.005\\ \hline
\end{tabular}
\end{center}
\bigskip
\caption{Distribution of ranks $r(d)$ versus heuristic distribution $g(d)$}
\end{table}

Although the sample size is too small to be convincing, the data seems to align with the heuristic guess. This leads to the question: can one determine the statistical behavior of the Massey products $\dia{M}^k$? Are they uniformly random as $N$ varies?

\subsection{Outline of the proof} The proofs of our main theorems follow the basic strategy of Wiles \cite{wiles1995}: Hecke algebras are related to Galois deformation rings, which are related to Galois cohomology. This is also the strategy used by Calegari--Emerton \cite{CE2005}, but whereas they study ``rigidified" deformations of Galois representations, we use deformation theory of pseudorepresentations, as in our previous work \cite{WWE1}.

\subsubsection{The definition of $R$}
\label{sssec:defR} 

Let $G_\Q$ be an absolute Galois group of $\Q$, and let $G_{\Q,S}$ be its quotient ramified only at the places $S$ supporting $Np\infty$. Let $\Db=\psi(1\oplus \omega)$, a $\F_p$-valued 2-dimensional pseudorepresentation of $G_{\Q,S}$ -- here $\omega$ is the mod $p$ cyclotomic character, and $\psi$ means ``take the associated pseudorepresentation." This is the residual representation modulo $p$ associated to the Eisenstein series of weight 2 and level $N$. We consider deformations $D: G_{\Q,S} \to A$ of $\Db$ subject to the following constraints:
\begin{enumerate}
\item $\det(D)=\kcyc$
\item $D\vert_{I_N} =\psi(1\oplus 1)$, i.e.\ $D$ is trivial on an inertia group $I_N$ at $N$
\item $D\vert_{G_p}$ is ``finite-flat,'' where $G_p$ is a decomposition group at $p$. That is, it arises from the $\oQ_p$-points of a finite flat group scheme over $\Z_p$.
\end{enumerate}
Condition (1) is related to ``weight $2$" and condition (2) is related to ``level $\Gamma_0(N)$" (note that a pseudorepresentation being trivial is analogous to a representation being unipotent). 

Condition (3) is a kind of ``geometricity" condition, and is the most delicate to define. There is a well-known finite-flat deformation theory of representations, due to Ramakrishna \cite{ramakrishna1993}. The difficulty is transferring the notation of ``finite-flat" from representations to pseudorepresentations. We addressed a similar difficulty in our previous work \cite{WWE1} on the \emph{ordinary} condition. In \cite{WWE4}, which started as a companion paper to this one, we present an axiomatic approach to go from properties of representations to properties of pseudorepresentations. This allows us to construct pseudodeformation rings satisfying any ``deformation condition" (in the sense of Ramakrishna). In \S \ref{sec:ffps}, we overview the results of \cite{WWE4} as they apply to finite-flat pseudorepresentations. 

\subsubsection{Proving $R=\bT$} 

Once we have defined $R$, the pseudorepresentation attached to modular forms gives a map $R \to \bT$, and a standard argument shows that it is surjective. We use (a variant of) Wiles's numerical criterion \cite[Appendix]{wiles1995} to prove that the map is an isomorphism. To verify the criterion, we have to compare the $\eta$-invariant to the size of a relative tangent space of $R$. The $\eta$-invariant has been computed by Mazur \cite{mazur1978} using the constant term of the Eisenstein series. 

To study the relative tangent space of $R$, we first consider reducible deformations. These are the simplest deformations, arising as $D=\psi(\chi_1\oplus \chi_\omega)$ where $\chi_1$ and $\chi_\omega$ are characters deforming $1$ and $\omega$, respectively. We show that the ``size" of the space of reducible deformations is equal to the $\eta$-invariant. Next, we use computations in Galois cohomology to show, first, that any square-zero deformation is reducible, and, second, that the space of reducible deformations is cut out by a single equation. This allows us to conclude that the size of the relative tangent space of $R$ is equal to the size of the space of reducible deformations, which we know is the $\eta$-invariant. The numerical criterion then lets us conclude that $R=\bT$ and that both are complete intersections.

As a consequence of our $R=\bT$ theorem, we give new proofs of the results on Mazur on the structure of $\bT^0$, including the Gorenstein property, the principality of the Eisenstein ideal, and the classification of generators of the Eisenstein ideal in terms of ``good primes.''

\subsubsection{Studying deformations} 

Having proven $R=\bT$, we can reduce questions about $\mathrm{rank}_{\Z_p}(\bT^0)$ to questions about $\mathrm{rank}_{\Z_p}(R)$. As a consequence of the proof, we see that the tangent space of $R$ is $1$-dimensional -- in other words, there is a unique (up to scaling) mod $p$ first order deformation $D_1$ of $\Db$. The question of computing $\mathrm{rank}_{\Z_p}(R)$ is reduced to computing to what order $D_1$ can be further deformed. 

Using more detailed Galois cohomology computations, we show that $D_1$ and each of its further deformations (if they exist) arise as the pseudorepresentation associated to a representation. Then we can relate obstruction theory for representations, which is controlled by cup products (and, more generally, Massey products), to obstructions to deforming $D_1$. As explained in \cite{CarlAinf}, the formula of Theorem \ref{thm:intro main 2} determines the highest order unrestricted global deformation of a unique first order deformation corresponding to $M$. Our proofs imply that the local constraints do not contribute additional obstructions. 

\subsection{Acknowledgements} 

We thank Akshay Venkatesh for suggesting that we study this question. We have benefited greatly from conversations with Kevin Buzzard, Frank Calegari, Matt Emerton, Haruzo Hida, Rob Pollack, Ravi Ramakrishna, Barry Mazur, Romyar Sharifi, and Akshay Venkatesh. We thank Frank Calegari for sharing his unpublished notes and for pointing out the related preprint of Lecouturier \cite{lecouturier2018}. We thank Rob Pollack for advice on computing examples. We thank Tony Feng and the anonymous referee for their careful reading of an earlier draft.

The intellectual debt owed to the work of Mazur \cite{mazur1978} and Calegari--Emerton \cite{CE2005} will be obvious to the reader.

P.W.\ was supported by the National Science Foundation under the Mathematical Sciences Postdoctoral Research Fellowship No.~1606255 and Grant No. DMS-1638352. C.W.E.\ was supported by the Simons Foundation under an AMS-Simons travel grant and by the Engineering and Physical Sciences Research Council grant EP/L025485/1.

\subsection{Notation and conventions} 
\label{subsec:conv} 

\

\begin{itemize}[leftmargin=2em]
\item Rings are commutative and algebras are associative but not necessarily commutative. 
\item A representation of an $R$-algebra $E$ is the following data: a commutative $R$-algebra $A$, a finitely generated projective $A$-module $V$ of constant rank, and $R$-algebra homomorphism $\rho: E \to \End_A(V)$. We sometimes write this data as $\rho$ or as $V$, when the meaning is clear from context. 
\item A representation of a group $G$ is a representation of $R[G]$.
\item A character is a representation of constant rank 1.
\item The symbol $\psi(\rho)$ denotes the pseudorepresentation associated to a representation $\rho$. 
\item If $G$ is a profinite group, we let $G^{\mathrm{pro}\text{-}p}$ be the maximal pro-$p$ quotient. If $G$ is finite and abelian, we write $G^{p\mathrm{\text{-}part}}$ instead of $G^{\mathrm{pro}\text{-}p}$.
\item We use the symbol ``$\smile$" for the multiplication in the differential graded algebra of group cochains valued in an algebra, and ``$\cup$" for the cup product of cohomology classes. We sometimes use $[-]$ to denote the cohomology class of a cocycle. If $x,y$ are cocycles, then $[x \smile y] = [x] \cup [y]$, and we often denote this cohomology class by $x \cup y$.
\item Throughout the paper, we abbreviate the cohomology groups $H^i(\Z[1/Np],-)$ (resp.~ $H^i(\Q_\ell,-)$) to $H^i(-)$ (resp. $H^i_\ell(-)$). For further Galois cohomology notation, including the definition of the groups $H^i_{(c)}(-)$, $H^i_\fl(-)$, $H^i_{\fl,p}(-)$, and $H^i_{(N)}(-)$, see Appendix \ref{sec:galois cohomology appendix}.
\item For an integer $i\ge 0$ and a ring $A$, we abbreviate $A[\epsilon]/(\epsilon^{i+1})$ to $A[\epsilon_i]$.
\item We write $v_p(x) \in \Z \cup \{\infty\}$ for the $p$-adic valuation for $x \in \Q_p$.
\item We write $\kcyc:G_{\Q} \to \Z_p^\times$ or $\Z_p(1)$ for the $p$-adic cyclotomic character. When it cannot cause confusion, we abuse notation and write $\kcyc$ for $\kcyc \otimes_{\Z_p} \zp{s}$ or $\zp{s}(1)$.
\end{itemize}

\subsubsection{Notation for Galois groups}
\label{subsec:notation for inertia}
Recall that $N$ and $p$ are prime numbers. We fix algebraic closures $\oQ$ and $\oQ_\ell$, and embeddings $\oQ \rinj \oQ_\ell$, for $\ell = p,N$. This determines decomposition subgroups $G_N \subset G_{\Q}$ and $G_p \subset G_\Q$. We also have the quotient $G_{\Q,S}$ of $G_\Q$ discussed above, the Galois group of the maximal extension of $\Q$ ramified only at the set of places $S$ that support $Np\infty$. The cohomology groups above are the cohomology of continuous cochains on these Galois groups. 

Let $I_N \subset G_N$ and $I_p \subset G_p$ denote the inertia subgroups. We let $I_N^{\mathrm{pro}\text{-}p}$ denote the maximal pro-$p$ quotient of $I_N$. We let $I_N^{\mathrm{non}\text{-}p}$ denote the kernel of the map $I_N \to I_N^{\mathrm{pro}\text{-}p}$.

As is well-known, there is a non-canonical isomorphism $I_N^{\mathrm{pro}\text{-}p} \simeq \Z_p$. We fix, once and for all, a topological generator $\bar{\gamma}$ of $I_N^{\mathrm{pro}\text{-}p}$, and an element $\gamma \in I_N$ mapping to $\bar{\gamma}$.

\part{Pseudo-modularity and the Eisenstein Hecke algebra}

We first recall the results of \cite{WWE4} and construct a pseudodeformation ring with the ``finite-flat" property at $p$. We recall some results of Mazur on modular curves and the Eisenstein Hecke algebra $\bT$, and construct a map $R \to \bT$. We compute Galois cohomology groups to control the structure of $R$, and use the numerical criterion to prove $R \risom \bT$.

\section{Finite-flat pseudodeformations}
\label{sec:ffps}
This section is a summary of \cite{WWE4}. In that paper, we develop the deformation theory of pseudorepresentations with a prescribed property. Presently, we only consider the case that is needed in this paper, where the property is the ``flat'' condition of Ramakrishna \cite{ramakrishna1993}. (To avoid confusion with flat modules over a ring, we refer to this condition as ``finite-flat" in this paper.)

We only give a brief summary of the parts of the theory that are needed in this paper. We assume that the reader has some familiarity with pseudorepresentations and generalized matrix algebras. For a more detailed treatment, see \cite{WWE4}. Other references for pseudorepresentations and generalized matrix algebras include \cite[\S1]{BC2009}, \cite{chen2014}, and \cite[\S\S2-3]{WE2018}. Proofs or references for all of the results in this section are given in \cite{WWE4}; we only give specific references here to the results that are new to \cite{WWE4}.

In this section, we will work in a slightly more general setup than in the rest of the paper. Let $\F$ be a finite field of characteristic $p$. Let $\chi_1,\chi_2:  G_\Q \to \F^\times$ be characters such that $\chi_1|_{G_p} \ne \chi_2|_{G_p}$ and such that $\chi_i|_{G_p}$ are finite-flat representations in the sense defined below. Let $\bar{D}: G_\Q \to \F$ be $\psi(\chi_1 \oplus \chi_2)$, the associated pseudorepresentation. Let $S$ be a finite set of places of $\Q$ including $p$, the infinite places and any primes at which $\chi_i$ are ramified, and let $G_{\Q,S}$ be the Galois group of the maximal unramified-outside-$S$ extension of $\Q$. 

\subsection{Finite-flat representations} We have $G_p\cong \Gal(\overline{\Q}_p/\Q_p)$. Let $\mathrm{Mod}_{\Z_p[G_p]}^\mathrm{tor}$ denote the category of $\Z_p[G_p]$-modules of finite cardinality. Let $\mathrm{ffgs}_{\Z_p}$ denote the category of finite flat group schemes over $\Z_p$ of $p$-power rank. Via the generic fiber functor $\mathrm{ffgs}_{\Z_p} \to \mathrm{Mod}_{\Z_p[G_p]}^\mathrm{tor}$ given by $\cG \mapsto \cG(\overline{\Q}_p)$, which is known to be fully faithful, we can consider $\mathrm{ffgs}_{\Z_p}$ as a subcategory of $\mathrm{Mod}_{\Z_p[G_p]}^\mathrm{tor}$. We call objects in the essential image of this functor \emph{finite-flat} $G_p$-modules. 

Let $\cG_1, \cG_2 \in \mathrm{ffgs}_{\Z_p}$ and let $V_i = \cG_i(\overline{\Q}_p)$ be the associated finite-flat $G_p$-modules. The generic fiber functor defines a homomorphism
\[
\Ext_{\mathrm{ffgs}_{\Z_p}}^1(\cG_2,\cG_1) \to \Ext_{G_p}^1(V_2,V_1).
\]
We define $\Ext_{G_p,\fl}^1(V_2,V_1)$ to be the image of this homomorphism. If $\tilde{V}_i$ are $G_{\Q,S}$-modules such that $\tilde{V}_i|_{G_p}=V_i$, then we define
\[
\Ext_{G_{\Q,S},\fl}^1(\tilde{V}_2,\tilde{V}_1) = \ker \left( \Ext_{G_{\Q,S}}^1(\tilde{V}_2,\tilde{V}_1) \to \frac{\Ext_{G_p}^1(V_2,V_1)}{\Ext_{G_p,\fl}^1(V_2,V_1)}\right).
\]

Let $(A,\m_A)$ be a Noetherian local $\Z_p$-algebra, and let $M$ be a finitely generated $A$-module with a commuting action of $G_p$. Then $M/\m^i_A M \in \mathrm{Mod}_{\Z_p[G_p]}^\mathrm{tor}$ for all $i>0$, and we say $M$ is \emph{finite-flat} if $M/\m^i_AM$ is a finite-flat $G_p$-module for all $i>0$.

\subsection{Generalized matrix algebras}
\label{subsec:GMAs}

Let $(A,\m_A)$ be a Noetherian local $W(\F)$-algebra with residue field $\F$.

See \cite[\S2.1]{WWE4} for the definition of a pseudorepresentation. Let $E$ be an associative $A$-algebra. As noted in \emph{loc.\ cit.}, we may and do think of a pseudorepresentation of dimension $d$ on $E$, written $D:E \to A$ (or, if $E=A[G]$ for a group $G$, as $D:G \to A$), as a rule that assigns to an element $x \in E$ a degree $d$ polynomial $\chi_D(x)(t) \in A[t]$. These $\chi_D(x)$ satisfy many conditions as if they were characteristic polynomials of a representation $E \to M_d(A)$. The \emph{Cayley--Hamilton property} of a pseudorepresentation (defined in \cite[\S1.17]{chen2014}) implies that $\chi_D(x)(x)=0$ in $E$ for all $x \in E$. 

 A \emph{generalized matrix $A$-algebra} or \emph{$A$-GMA} (of type $(1,1)$) is an associative $A$-algebra $E$ equipped with an isomorphism 
\begin{equation}
\label{eq:GMA coords}
\Phi_\cE: E \isoto \ttmat{A}{B}{C}{A}.
\end{equation}
This means an isomorphism of $A$-modules $E \isoto A \oplus B  \oplus C \oplus A$ for some $A$-modules $B$ and $C$, such that the multiplication of $E$ is given by $2 \times 2$-matrix multiplication for some $A$-linear map $B \otimes_A C \to A$. We refer to the isomorphism \eqref{eq:GMA coords} as the \emph{matrix coordinates} of $E$. A morphism of GMAs $(E,\Phi_\cE) \to (E',\Phi_{\cE'})$ is an algebra morphism $\phi: E \to E'$ preserving idempotents; that is, it satisfies $\phi \Phi_\cE^{-1}(\sm{1}{0}{0}{0})=\Phi_{\cE'}^{-1}(\sm{1}{0}{0}{0})$ and $\phi \Phi_\cE^{-1}(\sm{0}{0}{0}{1})=\Phi_{\cE'}^{-1}(\sm{0}{0}{0}{1})$. Forming the trace and determinant as functions $E \ra A$ in the usual way from these coordinates, we have a Cayley--Hamilton pseudorepresentation denoted $D_\cE: E \ra A$. 

An \emph{GMA representation with residual pseudorepresentation $\Db$} is a homomorphism $\rho:G_\Q \to E^\times$ such that, in matrix coordinates, $\rho$ is given as
\[
\rho: \sigma \mapsto \ttmat{\rho_{11}(\sigma)}{\rho_{12}(\sigma)}{\rho_{21}(\sigma)}{\rho_{22}(\sigma)}
\]
with $\rho_{ii}(\sigma) \equiv \chi_i(\sigma) \pmod {\m_A}$. There is an associated pseudorepresentation $\psi_\mathrm{GMA}(\rho):G_\Q \to A$ given by $\tr(\psi_\mathrm{GMA}(\rho)) = \rho_{11} +\rho_{22}$ and $\det(\psi_\mathrm{GMA}(\rho)) = \rho_{11}\rho_{22} - \rho_{12}\rho_{21}$.

A \emph{Cayley--Hamilton representation} of $G_{\Q,S}$ over $A$ with residual pseudorepresentation $\Db$ is a triple $(E,\rho:G_{\Q,S} \to E^\times,D:E \to A)$ where $E$ is an associative $A$-algebra that is finitely generated as an $A$-module, $D$ is a Cayley--Hamilton pseudorepresentation, and $\rho$ is a homomorphism such that $D'=D \circ \rho$ is a pseudorepresentation deforming $\Db$.

\begin{prop}
\label{prop:existence of R_Db}
\ 
\begin{enumerate}
\item The functor sending a complete Noetherian local $W(\F)$-algebra $A$ with residue field $\F$ to the set of deformations $D: G_{\Q,S} \to A$ of $\Db$ is represented by a ring $R_\Db$ and universal pseudodeformation $D^u:G_{\Q,S} \to R_\Db$.
\item There is an $R_\Db$-GMA representation $\rho^u:G_\Q \to E_\Db^\times$ with residual representation $\Db$ such that $(E_\Db,\rho_u,D_\cE)$ is the universal Cayley--Hamilton representation with residual pseudorepresentation $\Db$, and $D^u =D_{\cE} \circ \rho^u$.
\end{enumerate}
\end{prop}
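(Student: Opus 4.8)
The plan is to obtain both parts from the general machinery of pseudodeformations and Cayley--Hamilton algebras, as developed in \cite{chen2014} and \cite{PG4} (with the GMA input as in \cite[\S1]{BC2009}); the only hypothesis-specific facts I would use are that $G_{\Q,S}$ satisfies Mazur's finiteness condition (it is topologically finitely generated and its Galois cohomology with finite coefficients is finite, because $S$ is finite) and that $\Db = \psi(\chi_1 \oplus \chi_2)$ is \emph{multiplicity-free}, i.e.\ $\chi_1 \ne \chi_2$, which is forced by $\chi_1|_{G_p} \ne \chi_2|_{G_p}$.

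For part (1) I would first reduce to Artinian coefficients: a continuous pseudodeformation to a complete Noetherian local $W(\F)$-algebra $A$ is precisely a compatible system of pseudodeformations to the finite rings $A/\m_A^n$, so it suffices to pro-represent the restriction of the deformation functor to Artinian local $W(\F)$-algebras with residue field $\F$ and then invoke Schlessinger. Over an Artinian $A$ a $2$-dimensional pseudorepresentation $D$ amounts to the finite package of data $\Lambda^1_D(g), \Lambda^2_D(g)$ for $g$ ranging over a fixed finite topological generating set of $G_{\Q,S}$, subject to the polynomial identities axiomatizing a pseudorepresentation; verifying Schlessinger's (H1)--(H4) then reduces to two points. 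The gluing condition holds because those axioms are polynomial identities that can be checked componentwise in a fibre product $A_1 \times_{A_0} A_2$. Finiteness of the tangent space on $\F[\epsilon_1]$ holds because it is built from Galois cohomology groups of $G_{\Q,S}$ with finite coefficients (of $\ad$-type), all of which are finite by finiteness of Galois cohomology over $\Z[1/S]$. Schlessinger then yields the complete Noetherian local $W(\F)$-algebra $R_\Db$ together with the universal $D^u : G_{\Q,S} \to R_\Db$, and the extension of the representability assertion to arbitrary Noetherian local $A$ is formal.

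For part (2) I would let $E_\Db$ be the universal Cayley--Hamilton quotient of the completed group algebra $R_\Db \lb G_{\Q,S} \rb$ attached to $D^u$, that is, the quotient by the closed two-sided ideal generated by the Cayley--Hamilton relations of $D^u$, equipped with the induced Cayley--Hamilton pseudorepresentation $D_\cE$ and the tautological map $\rho^u : G_{\Q,S} \to E_\Db^\times$, which satisfies $D_\cE \circ \rho^u = D^u$. Since $\Db$ is multiplicity-free, $E_\Db$ is module-finite over $R_\Db$ and its residual Cayley--Hamilton algebra is $\F \times \F$; lifting the two residual orthogonal idempotents through the (Henselian) surjection $E_\Db \onto \F \times \F$ to orthogonal idempotents $e_1, e_2$ with $e_1 + e_2 = 1$ produces a GMA structure $\Phi_\cE : E_\Db \isoto \ttmat{e_1 E_\Db e_1}{e_1 E_\Db e_2}{e_2 E_\Db e_1}{e_2 E_\Db e_2}$. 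One then checks that each diagonal block $e_i E_\Db e_i$ is isomorphic to $R_\Db$ (it is commutative local with residue field $\F$, finite over $R_\Db$, and the structure map $R_\Db \to e_i E_\Db e_i$ is split by a retraction coming from the pseudorepresentation), that $\rho^u$ is a GMA representation with residual pseudorepresentation $\Db$ in these coordinates, and that $D_\cE$ is the trace/determinant pseudorepresentation of the GMA. Universality is then a direct check: for any Cayley--Hamilton representation $(E, \rho, D)$ of $G_{\Q,S}$ over $A$ with residual pseudorepresentation $\Db$, the pseudorepresentation $D \circ \rho$ deforms $\Db$ and so factors uniquely through a map $R_\Db \to A$; base-change together with the defining property of the Cayley--Hamilton quotient then yields a unique morphism $E_\Db \otimes_{R_\Db} A \to E$ intertwining $\rho^u$ and $\rho$.

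The step I expect to be the main obstacle is pinning down the GMA structure in part (2): showing that the diagonal blocks of $E_\Db$ are \emph{exactly} $R_\Db$, and that, although the idempotents $e_i$ are not canonical, the resulting Cayley--Hamilton representation satisfies the stated universal property independently of their choice. This is precisely where multiplicity-one ($\chi_1 \ne \chi_2$) is indispensable, as it is what makes the residual algebra split with rank-one blocks. A secondary, more technical, obstacle is maintaining continuity throughout --- ensuring the Cayley--Hamilton quotient of the profinite group algebra is module-finite over $R_\Db$ and that $\rho^u$ is continuous --- which is again governed by the finiteness condition on $G_{\Q,S}$.
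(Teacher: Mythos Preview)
Your proposal is correct and follows the standard machinery of pseudodeformation theory as in \cite{chen2014}, \cite[\S1]{BC2009}, and \cite{PG4}. The paper itself does not prove this proposition in-text: as stated at the beginning of \S\ref{sec:ffps}, all proofs in that section are deferred to the companion paper \cite{PG4} and the earlier references, and your sketch (Schlessinger for part (1) using the $\Phi_p$-finiteness of $G_{\Q,S}$; Cayley--Hamilton quotient plus idempotent-lifting from multiplicity-freeness for part (2)) is exactly the argument those references carry out.
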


\begin{rem}
\label{rem:idems}
Whenever $\Db$ is multiplicity-free (i.e.\ $\chi_1 \neq \chi_2$, which we have assumed), any Cayley--Hamilton representation $(E,\rho:G_{\Q,S} \to E^\times,D:E \to A)$ with residual pseudorepresentation $\Db$ admits an orthogonal lift $(e_1, e_2)$ of the idempotents $(1,0), (0,1)$ over the kernel of $\chi_1 \oplus \chi_2 : E \ra \F_p \times \F_p$. See e.g.\ \cite[Lem.\ 5.6.8]{WWE1}. We always order the idempotents so that $e_1$ lifts $\chi_1$ and $e_2$ lifts $\chi_2$. It is these idempotents that specify the coordinate decomposition: for example $B = e_1 E e_2$ and $\rho_{i,j}(\gamma) = e_j \rho(\gamma) e_i$ for $i,j \in \{1,2\}$. We also refer to a choice of these idempotents by the corresponding choice of matrix coordinates. 
\end{rem}

\subsection{Finite-flat pseudorepresentations} We retain the notation of the previous subsection.

\begin{defn}
\label{defn:ff pseudo}
Let $(E,\rho,D)$ be a Cayley--Hamilton representation of $G_{\bQ,S}$ over $A$ with residual pseudorepresentation $\Db$. Then $E$ is a finitely generated $A$-module, and it has an action of $G_p$ via $\rho|_{G_p}$ and the left action of $E$ on itself by multiplication. We say that $(E,\rho,D)$ is \emph{finite-flat} if $E/\m_A^i E$ is a finite-flat $G_p$-module for all $i \geq 1$. 

We say a pseudorepresentation $D': G_{\Q,S} \to A$ is \emph{finite-flat} if $D '= D \circ \rho$ for some finite-flat Cayley--Hamilton representation $(E,\rho,D)$.
\end{defn}

We show that there is a universal finite-flat Cayley--Hamilton representation of $G_{\Q,S}$ with residual pseudorepresentation $\Db$. 

\begin{thm}[{\cite[\S2.5]{WWE4}}]
\label{thm:existence of R_flat}
\ 
\begin{enumerate}
\item  There is a universal finite-flat Cayley--Hamilton representation $(E_{\Db,\fl},\rho_\fl: G_{\Q,S} \to E_{\Db,\fl}^\times, D_\fl: E_{\Db,\fl} \to R_{\Db,\fl})$ of $G_{\Q,S}$ over $R_{\Db,\fl}$ with residual pseudorepresentation $\Db$. The algebra $E_{\Db,\fl}$ is a quotient of $E_{\Db}$.
\item The algebra $R_{\Db,\fl}$ is the quotient of $R_\Db$ such that, for any deformation $D:G_{\Q,S} \to A$ of $\Db$, the corresponding map $R_\Db \to A$ factors through $R_{\Db,\fl}$ if and only if $D$ is a finite-flat pseudorepresentation.
\end{enumerate}
\end{thm}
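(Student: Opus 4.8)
The plan is to construct $R_{\Db,\fl}$ and $E_{\Db,\fl}$ as universal quotients, mimicking the standard construction of deformation rings with a local condition (\`a la Ramakrishna), but carried out at the level of Cayley--Hamilton representations rather than honest representations. First I would observe that the universal Cayley--Hamilton algebra $E_\Db$ from Proposition \ref{prop:existence of R_Db} is a finitely generated $R_\Db$-module, and it carries a $G_p$-action combining $\rho^u|_{G_p}$ with left multiplication. For each ideal $I \subset R_\Db$ of finite colength, the quotient $E_\Db/(I E_\Db + \m_{R_\Db}^i E_\Db)$ lies in $\mathrm{Mod}_{\Z_p[G_p]}^{\mathrm{tor}}$, so it makes sense to ask whether it is a finite-flat representation. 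The key point (which I would isolate as a lemma, and which is presumably where the substance of \cite{PG4} lies) is that the finite-flat condition is \emph{closed under quotients and extensions} in $\mathrm{Mod}_{\Z_p[G_p]}^{\mathrm{tor}}$ in the appropriate sense --- more precisely, that $\mathrm{ffgs}_{\Z_p}$ is closed under taking sub-objects, quotients, and extensions inside $\mathrm{Mod}_{\Z_p[G_p]}^{\mathrm{tor}}$ (this is Raynaud's theorem on finite flat group schemes over a base with small ramification, using $p > 2$; here the base is $\Z_p$ which is absolutely unramified). Granting this, the collection of quotients $E_\Db \onto E$ that are finite-flat Cayley--Hamilton representations is filtered, and one can form the universal such quotient.

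Concretely, I would define $E_{\Db,\fl}$ to be the quotient of $E_\Db$ by the intersection of all two-sided ideals $J \subset E_\Db$ such that $E_\Db/J$ is a finite-flat Cayley--Hamilton representation over its image $R_\Db/(J \cap R_\Db)$ (here one uses that a two-sided ideal of $E_\Db$ meeting $R_\Db$ in an ideal $I$ gives a Cayley--Hamilton representation over $R_\Db/I$ provided $J \supseteq I E_\Db$; one restricts to such $J$). Setting $R_{\Db,\fl} = R_\Db / (J_{\mathrm{univ}} \cap R_\Db)$ where $J_{\mathrm{univ}}$ is this intersection, the map $\rho^u$ descends to $\rho_\fl: G_{\Q,S} \to E_{\Db,\fl}^\times$, and $D_\cE$ descends to $D_\fl$. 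The closure-under-extensions property is exactly what is needed to show that $E_{\Db,\fl}/\m_{R_{\Db,\fl}}^i E_{\Db,\fl}$ is \emph{itself} finite-flat (not merely a quotient of finite-flat things by finite-flat things in a way that might not be visibly finite-flat) --- one filters by powers of the maximal ideal and induces.

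For part (2), the universal property: given a deformation $D: G_{\Q,S} \to A$ of $\Db$ with $D$ finite-flat, by definition there is \emph{some} finite-flat Cayley--Hamilton representation $(E,\rho,D_E)$ with $D = D_E \circ \rho$. Pulling back the universal Cayley--Hamilton representation, the map $R_\Db \to A$ is induced by a homomorphism $E_\Db \to E$ of Cayley--Hamilton representations, and the kernel $J$ is among the ideals in the intersection defining $J_{\mathrm{univ}}$ (since $E_\Db/J \rinj E$ or at least maps to the finite-flat $E$, and sub-objects of finite-flat are finite-flat), so $R_\Db \to A$ factors through $R_{\Db,\fl}$. Conversely, if $R_\Db \to A$ factors through $R_{\Db,\fl}$, then pushing forward $(E_{\Db,\fl},\rho_\fl,D_\fl)$ along $R_{\Db,\fl} \to A$ --- i.e. forming $E_{\Db,\fl} \otimes_{R_{\Db,\fl}} A$, or rather the image of $E_{\Db,\fl}$ in $\End$ of the relevant module --- gives a finite-flat Cayley--Hamilton representation (finite-flatness is preserved under base change $R_{\Db,\fl} \to A$ because $E_{\Db,\fl}/\m_{R_{\Db,\fl}}^i \onto E_{\Db,\fl} \otimes_{R_{\Db,\fl}} A / \m_A^i$ is a quotient, invoking closure under quotients again), witnessing that $D = D^u \bmod (\ker)$ is finite-flat.

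The main obstacle I expect is the closure of the finite-flat condition under extensions at the level of $\mathrm{Mod}_{\Z_p[G_p]}^{\mathrm{tor}}$ --- ensuring that the formation of $E_{\Db,\fl}$ genuinely lands in finite-flat representations when we pass to $\m$-adic quotients, rather than something that is only a successive extension of finite-flat pieces. This is where the hypothesis $p > 2$ (really $p > 3$ in the body of the paper) and the fact that $\Z_p$ is absolutely unramified enter decisively, via Raynaud's results; making this precise in the GMA/Cayley--Hamilton framework (and checking it interacts correctly with the condition $\chi_1|_{G_p} \ne \chi_2|_{G_p}$, which guarantees the idempotents of Remark \ref{rem:idems} exist and hence that the module-theoretic structure is well-behaved) is the technical heart, and is exactly what is carried out in \cite[\S2.5]{PG4}; here I would simply cite it.
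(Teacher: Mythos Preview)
The paper does not give a proof of this theorem in its body: as stated in the opening of \S\ref{sec:ffps}, the section is a summary of the companion paper \cite{PG4}, and Theorem \ref{thm:existence of R_flat} is simply cited from \cite[\S2.5]{PG4}. So there is no proof here to compare against, and your final sentence --- that you would ultimately cite \cite{PG4} for the technical heart --- is exactly what the paper does.

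That said, your sketch of the construction is on the right track and worth commenting on. The ``intersection of all two-sided ideals $J$'' approach is the natural one, and you correctly identify the crux: checking that the resulting $E_{\Db,\fl}$ is itself finite-flat (modulo powers of the maximal ideal), not merely a limit of finite-flat quotients. Your invocation of Raynaud's theorem (full faithfulness of the generic fiber functor and closure under sub-objects and extensions over an absolutely unramified base with $p$ odd) is the right ingredient. One point you glide over: to see that the universal quotient is finite-flat, the cleanest argument works level-by-level --- for each $i$, the set of finite-flat quotients of the finite module $E_\Db/\m_{R_\Db}^i E_\Db$ is closed under fiber products inside $E_\Db/\m_{R_\Db}^i E_\Db$ (because the image of $E_\Db$ in a product of two finite-flat quotients is a sub-object of a finite-flat module, hence finite-flat), so there is a unique maximal such quotient; then one checks these are compatible as $i$ varies. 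Your ``filter by powers of $\m$ and use closure under extensions'' gestures at this but conflates two separate closure properties (sub-objects for the fiber-product step, extensions for the compatibility-in-$i$ step). The remark about $\chi_1\vert_{G_p} \neq \chi_2\vert_{G_p}$ is somewhat tangential here --- that hypothesis is used for the GMA structure (Remark \ref{rem:idems}) and for Proposition \ref{prop:red GMA and exts}, not for the bare existence of the finite-flat quotient.
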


We let
\[
E_{\Db,\fl} = \ttmat{R_{\Db,\fl}}{B_{\Db,\fl}}{C_{\Db,\fl}}{R_{\Db,\fl}}
\]
represent a choice of matrix coordinates of $E_{\Db,\fl}$ induced by those of $E_{\Db}$. 

Finite-flat Cayley--Hamilton representations can arise from endomorphism algebras of modules. The following theorem shows that the notion of finite-flat Cayley--Hamilton representation behaves as expected in this case. 

\begin{thm}[{\cite[\S2.6]{WWE4}}]
\label{thm:ffgs GMA is ff}
Let $(E,\rho,D:E \to A)$ be a Cayley--Hamilton representation of $G_p$, and let $M$ be a faithful $E$-module that is finitely generated as an $A$-module. Consider $M$ as a $A[G_p]$-module via the map $\rho: A[G_p] \to E$. Then $M$ is a finite-flat $G_p$-module if and only if $(E,\rho,D)$ is a finite-flat Cayley--Hamilton representation.
\end{thm}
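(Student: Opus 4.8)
The plan is to reduce the equivalence to one structural fact about the finite-flat condition plus formal module-theoretic bookkeeping. The structural fact — which I would quote from \cite{PG4,ramakrishna1993} rather than reprove — is that, inside the category of finite $\Z_p[G_p]$-modules, the finite-flat representations are closed under $G_p$-submodules, $G_p$-quotients, and finite direct sums; this is exactly where the absolute unramifiedness of $\Z_p$ and the hypothesis $p$ odd (via Raynaud) enter. Throughout I would fix that $G_p$ acts on $E$ by $g\cdot x=\rho(g)x$ and on $M$ via $\rho$, and check (one line each) that all the maps below are $G_p$-equivariant for these actions.

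For the ``if'' direction, suppose $(E,\rho,D)$ is finite-flat. Since $M$ is finitely generated over $A$ it is finitely generated over $E$, so I would choose a surjection of $E$-modules $E^{\oplus n}\onto M$ (standard basis to a set of generators), which is $G_p$-equivariant, and reduce it modulo $\m_A^i$ to get a $G_p$-equivariant surjection $(E/\m_A^iE)^{\oplus n}\onto M/\m_A^iM$. By hypothesis each $E/\m_A^iE$ is finite-flat, hence so is the finite direct sum, hence so is its quotient $M/\m_A^iM$; as $i$ is arbitrary, $M$ is a finite-flat representation. Note faithfulness of $M$ is not used here.

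For the ``only if'' direction, suppose $M$ is a finite-flat representation. Faithfulness gives a $G_p$-equivariant injection of $A$-algebras $E\hookrightarrow \End_A(M)$, where $G_p$ acts on $\End_A(M)=\Hom_A(M,M)$ through the target copy of $M$. Choosing a finite presentation $A^{\oplus a}\to A^{\oplus b}\to M\to 0$ of the $A$-module $M$ and applying $\Hom_A(-,M)$ exhibits $\End_A(M)$ as a $G_p$-submodule of $\Hom_A(A^{\oplus b},M)=M^{\oplus b}$; composing, $E$ is a $G_p$-submodule of $M^{\oplus b}$, which is finite-flat because $M$ is. It remains to transfer the finite-flat property from $M^{\oplus b}$ down to the submodule $E$, and this is the one step that is not purely formal: reduction modulo $\m_A^i$ need not preserve the injection $E\hookrightarrow M^{\oplus b}$. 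I would get around this with Artin--Rees: given $j$, choose $i$ large enough that $\m_A^iM^{\oplus b}\cap E\subseteq \m_A^jE$; then $E/(\m_A^iM^{\oplus b}\cap E)$ is a $G_p$-submodule of the finite-flat module $M^{\oplus b}/\m_A^iM^{\oplus b}$, hence finite-flat, and $E/\m_A^jE$ is a $G_p$-quotient of it, hence finite-flat. Letting $j$ vary shows $(E,\rho,D)$ is finite-flat.

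The only points I expect to need care are the appeal to closure of finite-flat representations under subobjects and quotients (which genuinely uses the hypotheses on $\Z_p$ and $p$, and which \cite{PG4} sets up) and the Artin--Rees step just described; everything else is the routine equivariance checks and diagram-chasing indicated above.
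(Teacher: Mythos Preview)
The paper does not supply a proof of this theorem: it is quoted from the companion paper \cite[\S2.6]{PG4} as part of the summary in \S\ref{sec:ffps}. So there is no in-paper argument to compare against.

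Your proposal is correct and is the natural argument. Both directions rest on the structural input you identify --- that over $\Z_p$ with $p$ odd (so $e=1<p-1$, Raynaud) the class of finite-flat $G_p$-modules inside $\Mt$ is closed under subobjects, quotients, and finite direct sums --- and this is exactly the kind of fact \cite{PG4} develops. Your ``if'' direction is immediate from a $G_p$-equivariant surjection $(E/\m_A^iE)^{\oplus n}\onto M/\m_A^iM$. In the ``only if'' direction you correctly flag the one nonformal point: reducing the inclusion $E\hookrightarrow M^{\oplus b}$ modulo $\m_A^i$ need not stay injective, and your use of Artin--Rees to squeeze $E/\m_A^jE$ between a $G_p$-submodule and a $G_p$-quotient of $(M/\m_A^iM)^{\oplus b}$ is the right fix. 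One small remark: you invoke ``$M$ is finitely generated over $A$'' at the outset; in the paper's conventions this is implicit in the phrase ``$M$ is a finite-flat representation'' (see the definition just above Definition \ref{defn:ff pseudo}), so you may want to say that explicitly rather than present it as an extra hypothesis.
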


The following example illustrates the utility of this theorem. It is exactly the situation coming from the Jacobian $J_0(N)$, as encountered by Mazur in \cite[\S\S II.7-8]{mazur1978}, which we apply in \S\ref{subsec: tate module of Jacobian}. 

\begin{eg}
\label{eg:ffgs GMA is ff} 
Let $\cG=\{\cG_i\}$ be a $p$-divisible group with good reduction outside $S$. Then the Tate module $V=T_p\cG = \varprojlim \cG_i(\overline{\Q})$ is a finitely generated, free $\Z_p$-module with an action of $G_{\Q,S}$. In particular, $V$ is a finite-flat representation. 

Now assume that $V$ has a commuting action of $A$, where $A$ is a finite flat $\Z_p$-algebra, and that there is an isomorphism of $A$-modules 
\[
V \cong X_1 \oplus X_2
\]
where $X_i$ are $A$-modules satisfying $\End_{A}(X_i)=A$ (but $X_i$ may not be free as $A$-modules). This decomposition induces a decomposition
\[
\End_A(V) \cong \ttmat{A}{\Hom_A(X_1,X_2)}{\Hom_A(X_2,X_1)}{A}
\]
giving $\End_A(V)$ the structure of an $A$-GMA, where the idempotents arise from projection onto each summand. Let $\rho_V: G_{\Q,S} \to \Aut_A(V)$ be the action map, and let $D_V: \End_A(V) \to A$ be the GMA-pseudorepresentation. Then the theorem implies that $(\End_A(V), \rho_V,D_V)$ is a finite-flat Cayley--Hamilton representation.
\end{eg}

\subsection{Reducibility}
\label{subsec:reducible}
 We say that a pseudorepresentation $D$ is \emph{reducible} if $D=\psi(\nu_1 \oplus \nu_2)$ for characters $\nu_i$. 
\begin{prop}
\label{prop:reducibility}
Let $D:G_{\Q,S} \to R$ be a pseudorepresentation deforming $\Db$.
\begin{enumerate}
\item There is a quotient $R^\red$ of $R$ characterized as follows. For any homomorphism $\phi: R \to R'$, the map $\phi$ factors through $R^\red$ if and only if the composite pseudorepresentation $D'=\phi \circ D:G_{\Q,S} \to R'$ is reducible.
\item Let
\[
E = \ttmat{R}{B}{C}{R}
\]
be a choice of matrix coordinates of $E=E_{\Db} \otimes_{R_{\Db}} R$. Then the image of the $R$-linear map $B \otimes_A  C \to R$ equals the kernel of $R \to R^\red$.
\end{enumerate}
\end{prop}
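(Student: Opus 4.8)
The plan is to take $R^\red := R/J$, where $J := \mathrm{im}(B \otimes_R C \to R)$ is exactly the ideal appearing in (2), and to verify that this quotient enjoys the universal property asserted in (1). Since that universal property determines $R^\red$ up to unique isomorphism, part (2) then follows automatically from the construction, $\ker(R\to R^\red) = J$. Two preliminary observations make the bookkeeping painless. First, we may take the matrix coordinates of $E = E_\Db \otimes_{R_\Db} R$ to be the base change of a choice on $E_\Db$, so that $B = B_\Db \otimes_{R_\Db} R$ and $C = C_\Db \otimes_{R_\Db} R$; moreover $J$ does not depend on the choice, as the idempotents $e_1, e_2$ are essentially canonical (Remark \ref{rem:idems}). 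Second, the formation of $J$ commutes with base change: for $\phi: R \to R'$, writing $B' = B\otimes_R R'$ and $C' = C\otimes_R R'$, one has $\phi(J)R' = \mathrm{im}(B'\otimes_{R'}C' \to R')$, because $B'\otimes_{R'}C' = (B\otimes_R C)\otimes_R R'$ surjects onto $J\otimes_R R' = \phi(J)R'$.

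First I would show that $D \bmod J$ is reducible, so that the quotient $R \to R/J$ is indeed one of the maps $\phi$ occurring in (1). Base-changing the GMA along $R \to R/J$ gives $E/JE \cong \ttmat{R/J}{B/JB}{C/JC}{R/J}$, in which the structural multiplication $B/JB \otimes_{R/J} C/JC \to R/J$ is the zero map by the very definition of $J$. Writing the induced GMA representation $\bar\rho: G_{\Q,S}\to (E/JE)^\times$ in matrix coordinates, the identity $\bar\rho(gh) = \bar\rho(g)\bar\rho(h)$ forces $\bar\rho_{11}(gh) = \bar\rho_{11}(g)\bar\rho_{11}(h)$, since the cross term $\bar\rho_{12}(g)\bar\rho_{21}(h)$ lies in the image of the zero multiplication; likewise for $\bar\rho_{22}$. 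Hence $\bar\rho_{11}$ and $\bar\rho_{22}$ are characters deforming $\chi_1$ and $\chi_2$, and $D \bmod J = \psi_{\mathrm{GMA}}(\bar\rho) = \psi(\bar\rho_{11}\oplus\bar\rho_{22})$ is reducible.

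The crux is the converse: if $\phi: R \to R'$ is such that $D' := \phi\circ D$ is reducible, then $\phi(J) = 0$. Write $D' = \psi(\nu_1\oplus\nu_2)$, ordering the $\nu_i$ so that $\nu_i \equiv \chi_i \pmod{\m_{R'}}$ --- possible and unique since $\Db$ is multiplicity-free. The key device is a \emph{diagonal} realization of $D'$: give the commutative $R'$-algebra $\mathcal{D} := R'\times R'$ its evident GMA structure $\ttmat{R'}{0}{0}{R'}$ and the representation $\rho_0 := \nu_1\oplus\nu_2: G_{\Q,S}\to\mathcal{D}^\times$. One readily checks that $(\mathcal{D},\rho_0,D_{\mathcal{D}})$ is a Cayley--Hamilton representation of $G_{\Q,S}$ over $R'$ with residual pseudorepresentation $\Db$ and $D_{\mathcal{D}}\circ\rho_0 = D'$, and its off-diagonal blocks vanish. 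Since $D$ classifies a map $R_\Db \to R$, the composite $R_\Db \to R \xrightarrow{\phi} R'$ is the map classifying $D'$, and the universal property of $(E_\Db, \rho^u, D_{\mathcal{E}})$ (Proposition \ref{prop:existence of R_Db}(2)) supplies an $R'$-algebra morphism of Cayley--Hamilton representations $\theta: E' := E\otimes_R R' = E_\Db\otimes_{R_\Db}R' \longrightarrow \mathcal{D}$. As $R'$ is local and $\mathcal{D}$ is commutative, the only idempotents of $\mathcal{D}$ are $0$, $1$, $(1,0)$, $(0,1)$; since $\theta$ carries the canonical idempotents $e_1, e_2$ of $E'$ (Remark \ref{rem:idems}) to orthogonal idempotents of $\mathcal{D}$ lifting $(1,0), (0,1) \in \F\times\F$, it must send them to $(1,0), (0,1)$ themselves. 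Consequently $\theta$ kills $B' = e_1 E' e_2$ and $C' = e_2 E' e_1$, while restricting to an injection $e_1 E' e_1 \hookrightarrow \mathcal{D}$ on the diagonal corner (it sends $r e_1 \mapsto r\cdot(1,0)$). The product $B'\cdot C'$ lies in $e_1 E' e_1$ and is killed by this injection, so $B'\cdot C' = 0$; that is, $\phi(J) = \mathrm{im}(B'\otimes_{R'}C'\to R') = 0$.

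Combining the last two steps, $R/J$ has exactly the universal property of (1), which (by uniqueness of universal objects) proves (1) and, since $R^\red = R/J$, simultaneously proves (2). The one genuine obstacle is the converse direction --- that reducibility of $D'$ forces the off-diagonal product to vanish --- which the argument dispatches by feeding the diagonal realization $\mathcal{D}$ into the universality of $E_\Db$; the only points demanding care are the verification that $\mathcal{D}$ is genuinely Cayley--Hamilton with residual pseudorepresentation $\Db$ and the rigidity of its idempotents (Remark \ref{rem:idems}), both routine.
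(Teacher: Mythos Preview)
Your proof is correct and is essentially the standard argument (as in Bella\"iche--Chenevier \cite[\S1.5]{BC2009}): define the reducibility ideal as the image of $B\otimes C \to R$, check that the quotient pseudorepresentation is reducible because the diagonal entries become multiplicative, and for the converse feed a diagonal Cayley--Hamilton realization of the reducible $D'$ into the universality of $E_{\Db}$ to force the off-diagonal product to vanish. The paper itself does not prove this proposition; Section~\ref{sec:ffps} is explicitly a summary of results whose proofs are deferred to \cite{PG4} and the references listed there, so there is no ``paper's own proof'' to compare against---but your argument is exactly the one those references supply.

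One small point worth making explicit in your write-up: the identification of $\theta(e_1)$ with $(1,0)$ rather than $(0,1)$ is not merely a matter of ``lifting'' idempotents but follows from the compatibility $\theta\circ\rho' = \rho_0$ together with $\chi_1 \neq \chi_2$; you gesture at this via Remark~\ref{rem:idems}, but the actual reason is that $\theta(e_1)\rho_0(g)\theta(e_1) = \rho'_{11}(g)\theta(e_1)$ forces $\rho'_{11} \equiv \chi_1$ to agree residually with whichever $\nu_i$ sits in the slot $\theta(e_1)$ picks out. Otherwise the argument is complete.
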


We call the ideal $\ker(R \to R^\red)$ the \emph{reducibility ideal} of $D$. For the finite-flat pseudodeformation ring, we can describe the reducible quotient.

\begin{prop}[{\cite[\S4.3]{WWE4}}]
\label{prop:flat red}
For $i=1,2$, let $R_i$ denote Ramakrishna's finite-flat deformation ring of the character $\chi_i$, and let $\nu_i:G_{\Q,S} \to R_i^\times$ denote the universal character. Then there is an isomorphism $R_{\Db,\fl}^\red \isoto R_1 \hat{\otimes}_{W(\F)} R_2$ identifying $\psi(\nu_1 \oplus \nu_2)$ as the universal reducible finite-flat deformation of $\Db$.
\end{prop}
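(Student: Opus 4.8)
The plan is to identify the functors represented by $R_{\Db,\fl}^\red$ and by $R_1 \hat\otimes_{W(\F)} R_2$. By Proposition \ref{prop:reducibility}(1) applied to the universal finite-flat pseudodeformation $D_\fl : G_{\Q,S} \to R_{\Db,\fl}$, together with Theorem \ref{thm:existence of R_flat}(2), the ring $R_{\Db,\fl}^\red$ represents the functor $A \mapsto \{\text{reducible finite-flat deformations of }\Db\text{ over }A\}$. Since $\Db = \psi(\chi_1 \oplus \chi_2)$ is multiplicity-free, a reducible deformation $D$ of $\Db$ over $A$ is uniquely of the form $\psi(\nu_1 \oplus \nu_2)$ for characters $\nu_i : G_{\Q,S} \to A^\times$ deforming $\chi_i$ (the ordering being fixed by the residual characters). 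On the other hand $R_1 \hat\otimes_{W(\F)} R_2$ represents $A \mapsto \{(\nu_1,\nu_2) : \nu_i\text{ a finite-flat deformation of }\chi_i\}$. Hence the whole proposition is equivalent to the statement
\[
\psi(\nu_1 \oplus \nu_2)\ \text{is a finite-flat pseudorepresentation} \iff \nu_i|_{G_p}\ \text{is finite-flat for}\ i = 1,2:
\]
this identifies the two functors via the identity on pairs $(\nu_1,\nu_2)$, and the resulting isomorphism $R_{\Db,\fl}^\red \isoto R_1\hat\otimes_{W(\F)}R_2$ tautologically matches $\psi(\nu_1^u \oplus \nu_2^u)$ with the universal pair of characters, which is the final assertion.

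For the implication $(\Leftarrow)$, given finite-flat deformations $\nu_i$ of $\chi_i$ over $A$, set $V = A \oplus A$ with $G_{\Q,S}$ acting by $\nu_1 \oplus \nu_2$; then $V|_{G_p} = \nu_1|_{G_p} \oplus \nu_2|_{G_p}$ is finite-flat because the generic fiber functor $\mathrm{ffgs}_{\Z_p} \to \mathrm{Mod}^\mathrm{tor}_{\Z_p[G_p]}$ preserves finite direct sums. Giving $\End_A(V)$ the GMA structure attached to the decomposition $V = A \oplus A$ exactly as in Example \ref{eg:ffgs GMA is ff}, Theorem \ref{thm:ffgs GMA is ff} (applied to the faithful $\End_A(V)$-module $V$) shows $(\End_A(V), \rho_V, D_V)$ is a finite-flat Cayley--Hamilton representation, so $D_V\circ\rho_V = \psi(\nu_1 \oplus \nu_2)$ is a finite-flat pseudorepresentation.

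For the implication $(\Rightarrow)$ --- the crux --- it suffices to treat the universal case: I must show that the universal reducible finite-flat deformation $\psi(\nu_1^u\oplus\nu_2^u)$ over $R^\red := R_{\Db,\fl}^\red$ has each $\nu_i^u|_{G_p}$ finite-flat. Base-change the finite-flat Cayley--Hamilton representation $E_{\Db,\fl}$ along $R_{\Db,\fl}\to R^\red$ to obtain a finite-flat Cayley--Hamilton representation $(E,\rho,D_E)$ over $R^\red$, with matrix coordinates $E = \ttmat{R^\red}{B}{C}{R^\red}$ and $D_E\circ\rho = \psi(\nu_1^u\oplus\nu_2^u)$. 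Since this pseudorepresentation is already reducible, its reducibility ideal is zero, so by Proposition \ref{prop:reducibility}(2) (applied over $R^\red$, using that $E_{\Db,\fl}$ is a quotient of $E_\Db$ respecting coordinates) the product $B\cdot C$ vanishes in $E$; it follows that $\rho_{11}$ and $\rho_{22}$ are homomorphisms, hence (by multiplicity-freeness) equal to $\nu_1^u$ and $\nu_2^u$. Now fix the idempotents $e_1,e_2$ of Remark \ref{rem:idems}. The left ideal $Ee_1$ is a $G_p$-equivariant direct summand of $E$ for the left-multiplication action (the projection $x \mapsto xe_1$ is $G_p$-linear), hence $Ee_1$ is a finite-flat $G_p$-module; and because $B\cdot C = C\cdot B = 0$, a direct computation shows that $e_2Ee_1 \subseteq Ee_1$ is a $G_p$-stable submodule on which $G_p$ acts through $\nu_2^u$, with quotient $Ee_1/e_2Ee_1 \cong R^\red$ carrying the $\nu_1^u$-action. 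As finite-flat $G_p$-modules are closed under quotients (the scheme-theoretic closure of a $G_p$-submodule in a finite flat model is flat, so the fppf quotient is again finite flat over $\Z_p$), $\nu_1^u|_{G_p}$ is finite-flat; the symmetric argument with $Ee_2$ handles $\nu_2^u$. With both implications in hand, the universal property of Ramakrishna's rings $R_i$ produces maps $R_i \to R^\red$, hence $R_1\hat\otimes_{W(\F)}R_2 \to R^\red$, which is inverse to the map of the first paragraph by uniqueness of the universal objects involved.

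The main obstacle is the implication $(\Rightarrow)$: one must extract finite-flatness of the individual characters $\nu_i^u$ from the abstract finite-flatness of the pseudorepresentation $\psi(\nu_1^u\oplus\nu_2^u)$, and this is exactly where the reducibility-ideal computation (Proposition \ref{prop:reducibility}(2)) and the closure of the finite-flat condition under quotients --- a feature of finite flat group schemes over $\Z_p$, available since $p > 3$ --- are needed. Everything else is formal bookkeeping with the relevant universal properties.
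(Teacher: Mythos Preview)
Your argument is correct and essentially complete; the paper itself does not prove this proposition but defers entirely to \cite[\S4.3]{PG4}, so there is no in-paper proof to compare against. Your strategy---match the two functors by showing $\psi(\nu_1\oplus\nu_2)$ is finite-flat if and only if each $\nu_i|_{G_p}$ is---is the natural one, and the extraction of $\nu_i^u$ from $E_{\Db,\fl}\otimes R^\red$ via idempotents and the vanishing of the reducibility ideal is exactly the right mechanism.

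Two small points are worth tightening. First, you invoke $B\cdot C = C\cdot B = 0$, but Proposition~\ref{prop:reducibility}(2) only literally gives you the vanishing of the image of $B\otimes C\to R^\red$. The symmetric vanishing $C\cdot B = 0$ follows from the Cayley--Hamilton identity: for $b\in B$, $c\in C$ the element $\sm{0}{b}{c}{0}$ has trace $0$ and determinant $-\Phi(b,c)$, so squaring and applying Cayley--Hamilton gives $\Phi'(c,b)=\Phi(b,c)$, whence the two product maps have the same image in $R^\red$. You need this to know $\rho_{22}$ is a genuine character (not just a function) and that $e_2Ee_1$ is $G_p$-stable, so it deserves a sentence. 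Second, the closure of finite-flat $G_p$-modules under quotients over $\Z_p$ holds for all $p$ (scheme-theoretic closure in a finite flat group scheme over a DVR is automatically flat, and the fppf quotient is then finite flat); the hypothesis $p>3$ plays no role here, so your parenthetical is harmless but misleading.
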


\subsection{Reducible GMAs and extensions} 
For this section, we fix a surjective homomorphism $R_{\Db,\fl}^\red \rsurj R'$. By Proposition \ref{prop:flat red}, this homomorphism determines finite-flat characters $\nu_i':G_{\Q,S} \to R'^\times$ deforming $\chi_i$ for $i=1,2$. We can determine the structure of $B_{\Db,\fl} \otimes_{R_{\Db,\fl}}  R'$ and $C_{\Db,\fl} \otimes_{R_{\Db,\fl}}  R'$ in terms of Galois cohomology. 

\begin{prop}[{\cite[\S4.3]{WWE4}}]
\label{prop:red GMA and exts}
Let $M$ be a finitely generated $R'$-module. Then there are canonical isomorphisms
\[
\Hom_{R'}(B_{\Db,\fl} \otimes_{R_{\Db,\fl}}  R',M) \isoto \Ext^1_{G_{\Q,S},\fl}(\nu_2',\nu_1'\otimes_{R'} M)
\]
and 
\[
\Hom_{R'}(C_{\Db,\fl} \otimes_{R_{\Db,\fl}}  R',M) \isoto \Ext^1_{G_{\Q,S},\fl}(\nu_1',\nu_2'\otimes_{R'} M).
\]
\end{prop}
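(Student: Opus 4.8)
The plan is to exhibit the claimed isomorphisms by the standard dictionary between off-diagonal entries of a reducible GMA and extension classes, and then to match up the ``flat'' conditions on both sides. First I would reduce to the universal case. Since $B_{\Db,\fl} \otimes_{R_{\Db,\fl}} R'$ is an $R'$-module and $\Hom_{R'}(-,M)$ is left exact, and since by Proposition \ref{prop:flat red} the ring $R'$ is a quotient of $R_{\Db,\fl}^\red$ classifying the reducible finite-flat deformation $\psi(\nu_1' \oplus \nu_2')$, it suffices to understand the ``universal extension'' carried by the GMA entry $B' := B_{\Db,\fl}\otimes_{R_{\Db,\fl}} R'$. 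Base-changing the universal finite-flat Cayley--Hamilton representation $(E_{\Db,\fl},\rho_\fl,D_\fl)$ along $R_{\Db,\fl} \to R'$ gives a GMA $E' = \ttmat{R'}{B'}{C'}{R'}$ with a homomorphism $\rho': G_{\Q,S} \to (E')^\times$, and the diagonal entries of $\rho'$ are $\nu_1', \nu_2'$ because $R'$ factors through $R_{\Db,\fl}^\red$. The off-diagonal entry $\rho'_{21}: G_{\Q,S} \to B'$ (in the notation of Remark \ref{rem:idems}, the component $e_2 \rho' e_1$) is then a cocycle for the action of $G_{\Q,S}$ on $B'$ twisted by $\nu_1'(\nu_2')^{-1}$, or more precisely, for any $R'$-module $M$, composing with an $R'$-linear map $B' \to M$ produces a class in $Z^1(G_{\Q,S}, \Hom(\nu_2',\nu_1'\otimes M))$, and the cocycle relation for $\rho'$ (matrix multiplication) forces exactly this. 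This is where the map $\Hom_{R'}(B',M) \to \Ext^1_{G_{\Q,S}}(\nu_2',\nu_1'\otimes M)$ comes from.

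Next I would prove this map is an isomorphism at the level of $\Ext^1_{G_{\Q,S}}$ (ignoring flatness for the moment), by a universality/representability argument: a class in $\Ext^1_{G_{\Q,S}}(\nu_2',\nu_1'\otimes M)$ is, up to the coboundary ambiguity that corresponds to conjugating by upper-triangular units, the same as a reducible deformation of $\Db$ to the square-zero extension $R' \oplus M$ (with $M$ in the $B$-slot), and the universal property of $E_{\Db}$ (Proposition \ref{prop:existence of R_Db}(2)) identifies such data with $R'$-linear maps $B_{\Db}\otimes R' \to M$. Concretely this is the content of Proposition \ref{prop:reducibility}(2) combined with the standard GMA computation (cf.\ \cite[\S1]{BC2009}, \cite{WWE1}); since the proposition is attributed to \cite[\S4.3]{PG4}, I would cite that for the verification that the coboundary/coordinate-change ambiguities cancel correctly and that one genuinely gets an isomorphism and not merely a map. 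The symmetric statement for $C'$ is obtained by swapping the roles of the idempotents $e_1 \leftrightarrow e_2$ and $\nu_1' \leftrightarrow \nu_2'$.

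The step I expect to be the main obstacle is matching the \emph{finite-flat} refinements on the two sides, i.e.\ showing that under the isomorphism $\Hom_{R'}(B_{\Db}\otimes R',M) \isoto \Ext^1_{G_{\Q,S}}(\nu_2',\nu_1'\otimes M)$ the subspace coming from $B_{\Db,\fl}$ (the flat quotient of $B_{\Db}$) corresponds precisely to $\Ext^1_{G_{\Q,S},\fl}(\nu_2',\nu_1'\otimes M)$, the subgroup of classes whose restriction to $G_p$ lies in $\Ext^1_{G_p,\fl}$. In one direction, a $\Hom$ out of $B_{\Db,\fl}$ produces a quotient Cayley--Hamilton representation that is finite-flat by Theorem \ref{thm:existence of R_flat}, so by Theorem \ref{thm:ffgs GMA is ff} (applied to a faithful module, e.g.\ $E'$ itself or the relevant rank-$2$ module over a suitable quotient) its associated $G_p$-extension is finite-flat; the reverse inclusion uses the universal property in Theorem \ref{thm:existence of R_flat}(1), namely that a square-zero reducible deformation whose extension class is flat at $p$ must factor through $E_{\Db,\fl}$, hence its $B$-component factors through $B_{\Db,\fl}$. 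Here one must be careful that ``finite-flat'' for the Cayley--Hamilton representation is tested on all $E'/\m^i E'$, not just the square-zero piece, but for a square-zero coefficient ring $R'\oplus M$ this reduces to a single flatness condition on the extension of $G_p$-modules; again I would invoke \cite[\S2.5--2.6, \S4.3]{PG4} for the precise bookkeeping. Granting those cited results, the two isomorphisms follow by assembling the pieces above.
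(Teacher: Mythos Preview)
The paper does not give its own proof of this proposition: it is stated with a citation to \cite[\S4.3]{PG4} and the surrounding text makes explicit that all results in \S\ref{sec:ffps} are proved in that companion paper. So there is no in-paper argument to compare against.

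Your outline is the correct and expected approach, and it is essentially the Bella\"iche--Chenevier dictionary (\cite[\S1.5]{BC2009}) upgraded to the finite-flat setting as developed in \cite{PG4}. A couple of minor points to clean up: your indexing is off (in the paper's convention $B$ is the upper-right slot, so the relevant entry is $\rho'_{12}$, not $\rho'_{21}$), and the first sentence of your second paragraph does not quite parse. More substantively, in the reverse inclusion for the flat condition you should be explicit that the GMA $\ttmat{R'}{M}{0}{R'}$ carries a faithful module (e.g.\ itself via left multiplication) whose finite-flatness as a $G_p$-module follows from the flatness of the extension class together with the flatness of $\nu_1',\nu_2'$; then Theorem \ref{thm:ffgs GMA is ff} applies to conclude the Cayley--Hamilton representation is finite-flat, whence the universal property of $E_{\Db,\fl}$ gives the factorization through $B_{\Db,\fl}$. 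With those clarifications your sketch matches what one would expect the argument in \cite[\S4.3]{PG4} to be.
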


\section{The modular pseudorepresentation}
\label{sec:modular}

In this section, we recall some results of Mazur \cite{mazur1978} on modular curves and Hecke algebras. 

\subsection{Modular curves, modular forms, and Hecke algebras} The statements given here are all well-known. We review them here to fix notations. Our reference is the paper of Ohta \cite{ohta2014}.

\subsubsection{Modular curves} Let $Y_0(N)_{/\Z_p}$ be the $\Z_p$-scheme representing the functor taking a $\Z_p$-scheme $S$ to the set of pairs $(E,C)$, where $E$ is an elliptic curve over $S$ and $C \subset E[N]$ is a finite-flat subgroup scheme of rank $N$. Let $X_0(N)_{/\Z_p}$ be the usual compactification of $Y_0(N)_{/\Z_p}$, and let $\mathrm{cusps}$ denote the complement of $Y_0(N)_{/\Z_p}$ in $X_0(N)_{/\Z_p}$, considered as an effective Cartier divisor on $X_0(N)_{/\Z_p}$. Finally, let
\[
X_0(N)=X_0(N)_{/\Z_p} \otimes \Q_p.
\]

\subsubsection{Modular forms} 
\label{subsubsec:mf}
The map $X_0(N)_{/\Z_p} \to \Spec(\Z_p)$ is known to be LCI, and we let $\Omega$ be the sheaf of regular differentials. Let 
\[
S_2(N;\Z_p) = H^0(X_0(N)_{/\Z_p},\Omega), \quad M_2(N;\Z_p) = H^0(X_0(N)_{/\Z_p},\Omega(\mathrm{cusps}))
\]
There is an element $E \in M_2(N;\Z_p)$ with $q$-expansion
\begin{equation}
\label{eq:eisenstein series}
E = \frac{N-1}{24}+ \sum_{n=1}^\infty \left( \sum_{0<d \mid n,\ N \nmid d} d\right) q^n.
\end{equation}

\subsubsection{Hecke algebras} Let $\bT'$ and $\bT'^0$ be the subalgebras of 
\[
\End_{\Z_p}(M_2(N;\Z_p)), \quad \End_{\Z_p}(S_2(N; \Z_p)),
\]
respectively, generated by all Hecke operators $T_n$ with $(N,n)=1$. These are commutative $\Z_p$-algebras.

Let $I'=\Ann_{\bT'}(E)$, and let $\bT$ be the completion of $\bT'$ at the maximal ideal $(I',p)$, and let $\bT^0=\bT'^0\otimes_{\bT'}\bT$. Let $I = I'\bT$ and let $I^0$ be the image of $I$ in $\bT^0$. Let $U_p \in \bT$ be the unique unit root of the polynomial
\[
X^2-T_pX+p=0,
\]
which exists by Hensel's lemma. Since $T_p-(p+1) \in I$, we see that $U_p - 1 \in I$. For a $\bT'$-module $M$, let $M_\mathrm{Eis}=M \otimes_{\bT'} \bT$. 

There are perfect pairings of free $\Z_p$-modules
\[
M_2(N;\Z_p)_\mathrm{Eis} \times \bT \to \Z_p, \quad S_2(N;\Z_p)_\mathrm{Eis} \times \bT^0 \to \Z_p
\]
given by $(f,t) \mapsto a_1(t\cdot f)$, where $a_1(-)$ refers to the coefficient of $q$ in the $q$-expansion. In particular, $M_2(N;\Z_p)_\mathrm{Eis}$ (resp.\ $S_2(N;\Z_p)_\mathrm{Eis}$) is a dualizing (and hence faithful) $\bT$-module (resp.\ $\bT^0$-module). The map $\bT \to \Z_p$ so induced by $E$ is a surjective ring homomorphism with kernel $I$. We refer to this as the augmentation map for $\bT$.

\subsection{Congruence number} We recall the following theorem of Mazur, and related results.

\begin{thm}[Mazur]
\label{thm:congruence number}
There is an isomorphism $\bT^0/I^0 \simeq \Z_p/(N-1)\Z_p$.
\end{thm}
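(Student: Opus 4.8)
The plan is to identify $\bT^0/I^0$ with a congruence module measuring the congruence between the cuspidal part of $M_2(N;\Z_p)_{\mathrm{Eis}}$ and the Eisenstein series $E$, and then compute that module via the constant term of $E$. First I would recall the perfect pairings $M_2(N;\Z_p)_{\mathrm{Eis}} \times \bT \to \Z_p$ and $S_2(N;\Z_p)_{\mathrm{Eis}} \times \bT^0 \to \Z_p$ given by $(f,t) \mapsto a_1(t \cdot f)$, which exhibit $M_2(N;\Z_p)_{\mathrm{Eis}}$ and $S_2(N;\Z_p)_{\mathrm{Eis}}$ as dualizing modules over $\bT$ and $\bT^0$ respectively. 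Dualizing the exact sequence $0 \to S_2(N;\Z_p)_{\mathrm{Eis}} \to M_2(N;\Z_p)_{\mathrm{Eis}} \to M_2(N;\Z_p)_{\mathrm{Eis}}/S_2(N;\Z_p)_{\mathrm{Eis}} \to 0$ and using that the Eisenstein quotient is free of rank one over $\Z_p$ (spanned by the image of $E$, via the $q$-expansion principle and the fact that the space of Eisenstein series here is one-dimensional), one gets that $\bT^0$ is the quotient $\bT/(\text{annihilator-type ideal})$ and that the augmentation ideal $I$ of $\bT$ surjects onto $\bT^0$ with kernel controlled by $E$.

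Concretely, I would argue as follows. Let $e \in \bT$ be chosen so that the projection $M_2(N;\Z_p)_{\mathrm{Eis}} \to \Z_p \cdot E$ is realized by pairing against a suitable element; equivalently, use that $\Hom_{\Z_p}(\bT^0,\Z_p) \cong S_2(N;\Z_p)_{\mathrm{Eis}}$ and $\Hom_{\Z_p}(\bT,\Z_p) \cong M_2(N;\Z_p)_{\mathrm{Eis}}$ compatibly. Applying $\Hom_{\Z_p}(-,\Z_p)$ to $\bT \onto \bT^0$ identifies $S_2(N;\Z_p)_{\mathrm{Eis}}$ with the submodule of $M_2(N;\Z_p)_{\mathrm{Eis}}$ of functionals vanishing on the relevant piece; the upshot is the standard congruence-module isomorphism
\[
\bT^0/I^0 \cong \frac{\Z_p \cdot E \cap S_2(N;\Z_p)_{\mathrm{Eis}} \text{ (inside } M_2)}{(\text{integral span})} \cong \Z_p / c\,\Z_p
\]
where $c$ is the ``congruence number'': the smallest positive power of $p$ (up to units) such that $c \cdot E$ is congruent mod that power to a cusp form in $M_2(N;\Z_p)_{\mathrm{Eis}}$ that is also an eigenform for the $T_n$, $(n,N)=1$, matching $E$. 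Then I would compute $c$: the $q$-expansion of $E$ in \eqref{eq:eisenstein series} has constant term $\tfrac{N-1}{24}$ and all higher coefficients integral, while a cusp form has constant term $0$; since $p > 3$, the factor $24$ is a unit in $\Z_p$, so the obstruction to $E$ being congruent to a cusp form modulo $p^s$ is exactly that $p^s \mid (N-1)$. Hence $c$ generates $(N-1)\Z_p$, giving $\bT^0/I^0 \cong \Z_p/(N-1)\Z_p$.

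The main obstacle is making the congruence-module bookkeeping precise — in particular verifying that the Hecke-module structure is respected so that one really lands in $\bT^0/I^0$ rather than merely computing a $\Z_p$-length, and checking that the relevant space of Eisenstein series at level $\Gamma_0(N)$ localized at the Eisenstein maximal ideal is genuinely one-dimensional (so that the quotient $M_2/S_2$ is free of rank one, generated by $E$). This requires the $q$-expansion principle (injectivity of $q$-expansion on $M_2(N;\Z_p)_{\mathrm{Eis}}$, which follows from the perfect pairing and the fact that $a_1$ together with the Hecke action recovers all $a_n$ for $(n,N)=1$, plus a separate argument at $p$ and $N$) and the classification of Eisenstein series of weight $2$ and level $N$. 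Once those structural facts are in place, the computation of the constant term $\tfrac{N-1}{24}$ and the observation that $24 \in \Z_p^\times$ for $p>3$ finish the proof. I would cite Ohta \cite{ohta2014} or Mazur \cite{mazur1978} for the $q$-expansion and duality inputs and present only the constant-term computation in detail.
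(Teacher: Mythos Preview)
Your proposal is correct and takes essentially the same approach as the paper: the paper proves the theorem via Lemma~\ref{lem:bT is a pull-back}, using Ohta's exact sequence $0 \to S_2(N;\Z_p)_{\mathrm{Eis}} \to M_2(N;\Z_p)_{\mathrm{Eis}} \xrightarrow{a_0} \Z_p \to 0$ together with $a_0(E) = (N-1)/24$ to establish the fiber product $\bT \cong \bT^0 \times_{\Z_p/(N-1)\Z_p} \Z_p$, from which $\bT^0/I^0 \cong \Z_p/(N-1)\Z_p$ follows. Your sentence defining the congruence number $c$ is a bit garbled, but the key point---that the constant term $(N-1)/24$, a unit times $N-1$ since $p>3$, is the sole obstruction to $E$ being congruent to a cusp form---is exactly the paper's argument.
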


 This is \cite[Prop.\ II.9.7, pg.~ 96]{mazur1978}. We give a slightly different proof of this theorem using ideas of Ohta and Emerton \cite{emerton1999}. This should help clarify the proof of \cite[Prop.\ 3.2.5]{WWE1}, which uses the same idea but is needlessly complicated; we thank the referee for pointing this out.

 We recall that if $A \to C$ and $B \to C$ are commutative ring homomorphisms, the pullback ring $A \times_C B$ is defined and the underlying set is the same as the pullback in the category of sets.

\begin{lem}
\label{lem:bT is a pull-back}
The composition of the augmentation map $\bT \to \Z_p$ with the quotient map $\Z_p \to \Z_p/(N-1)\Z_p$ factors through $\bT^0$ and induces an isomorphism
\[
\bT \simeq  \bT^0 \times_{\Z_p/(N-1)\Z_p} \Z_p.
\]
In particular, $\ker(\bT \to \bT^0)=\Ann_\bT(I)$.
\end{lem}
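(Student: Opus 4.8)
The plan is to realize $\bT$ explicitly as the claimed fiber product by using the two Hecke modules and their perfect pairings. First I would recall the setup: the Eisenstein series $E \in M_2(N;\Z_p)_\mathrm{Eis}$ gives the augmentation $\bT \to \Z_p$ with kernel $I$, and we have the short exact sequence $0 \to S_2(N;\Z_p)_\mathrm{Eis} \to M_2(N;\Z_p)_\mathrm{Eis} \to (M/S)_\mathrm{Eis} \to 0$ of $\bT$-modules. The quotient $(M_2/S_2)_\mathrm{Eis}$ is free of rank one over $\Z_p$ and $\bT$ acts on it through the augmentation map $\bT \to \Z_p$ (this is essentially the statement that the only Eisenstein form, up to scalar and constant multiples, is $E$); concretely $E$ maps to a generator. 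So $M_2(N;\Z_p)_\mathrm{Eis}$ fits in $0 \to S_2(N;\Z_p)_\mathrm{Eis} \to M_2(N;\Z_p)_\mathrm{Eis} \to \Z_p \to 0$ with $\bT$-action on $\Z_p$ via augmentation.

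Next I would dualize. Applying $\Hom_{\Z_p}(-,\Z_p)$ to that sequence and using the perfect Hecke pairings $M_2(N;\Z_p)_\mathrm{Eis} \cong \Hom_{\Z_p}(\bT,\Z_p)$ and $S_2(N;\Z_p)_\mathrm{Eis} \cong \Hom_{\Z_p}(\bT^0,\Z_p)$ as $\bT$-modules, the sequence becomes $0 \to \Z_p \to \bT \to \bT^0 \to 0$, where the first map is dual to the augmentation (so it is $x \mapsto x \cdot (\text{something})$) and the surjection $\bT \to \bT^0$ is the natural one. Combined with Mazur's congruence number theorem (Theorem \ref{thm:congruence number}), $\bT^0/I^0 \cong \Z_p/(N-1)\Z_p$, this pins down all the arrows: the composite $\bT \to \bT^0 \to \bT^0/I^0 = \Z_p/(N-1)$ agrees with $\bT \to \Z_p \to \Z_p/(N-1)$, because both are the unique ring map $\bT \to \Z_p/(N-1)$ factoring through the augmentation (the augmentation is the only ring homomorphism $\bT \to \Z_p$, $\bT$ being local with residue field $\F_p$ and $\Z_p$-free). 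That gives a commutative square and hence a canonical ring map $\bT \to \bT^0 \times_{\Z_p/(N-1)\Z_p} \Z_p$.

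To see this map is an isomorphism, I would compare it with the exact sequence $0 \to \ker(\bT \to \bT^0) \to \bT \to \bT^0 \to 0$ obtained above: the fiber product sits in $0 \to \ker(\Z_p \to \Z_p/(N-1)) \to \bT^0 \times_{\Z_p/(N-1)} \Z_p \to \bT^0 \to 0$, i.e.\ $0 \to (N-1)\Z_p \to (\text{fiber product}) \to \bT^0 \to 0$. The map $\bT \to (\text{fiber product})$ is compatible with the projections to $\bT^0$, so by the five lemma it suffices to check it is an isomorphism on the kernels, i.e.\ that $\ker(\bT \to \bT^0) \to (N-1)\Z_p \subset \Z_p$ is an isomorphism. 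Both are free $\Z_p$-modules of rank one (the former because it is the image of $\Z_p$ under the injective dual map), and the composite $\ker(\bT\to\bT^0) \hookrightarrow \bT \to \Z_p$ lands in $(N-1)\Z_p$ by the commutativity of the square and is injective; surjectivity onto $(N-1)\Z_p$ follows because the cokernel of $\bT \to \Z_p$ is $\bT^0/I^0 \cong \Z_p/(N-1)$, forcing the image of $\bT$ in $\Z_p$ together with $\ker(\bT\to\bT^0)$... more cleanly: $\ker(\bT \to \bT^0) = \Ann_\bT(S_2(N;\Z_p)_\mathrm{Eis})$ acts by zero on $S_2$ hence $\bT$ acts on it through $\bT/\ker = \Z_p$ via augmentation, and an element of $\bT$ killing $S_2$ lies in the rank-one $\Z_p$-submodule dual to the quotient $\Z_p$; identifying everything via the pairing shows it equals $(N-1)\Z_p$. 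The last sentence of the lemma, $\ker(\bT \to \bT^0) = \Ann_\bT(I)$, then follows: $\ker(\bT \to \bT^0) = \Ann_\bT(S_2(N;\Z_p)_\mathrm{Eis})$ and $S_2(N;\Z_p)_\mathrm{Eis}$ is a faithful $\bT^0$-module, so its annihilator in $\bT$ is exactly the kernel of $\bT \to \bT^0$, while $\Ann_\bT(I)$ is the kernel of $\bT$ acting on $\bT/(\text{stuff})$... I would argue instead that $\Ann_\bT(I) \cdot I = 0$ means $\Ann_\bT(I)$ maps to $\Ann_{\bT^0}(I^0)$; since $\bT^0$ is reduced (it acts faithfully on $S_2$, which is torsion-free, and $\bT^0$ is $\Z_p$-flat) and $I^0$ is nonzero, $\Ann_{\bT^0}(I^0) = 0$, so $\Ann_\bT(I) \subseteq \ker(\bT \to \bT^0)$; conversely $\ker(\bT \to \bT^0)$ is generated over $\Z_p$ by an element mapping to $(N-1) \in \Z_p$, and one checks directly from the fiber-product description that this element is annihilated by $I = \ker(\bT \to \Z_p)$.

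The main obstacle is establishing the duality step cleanly — that the perfect Hecke pairings identify the exact sequence of modular-forms spaces with the exact sequence $0 \to \Z_p \to \bT \to \bT^0 \to 0$ of $\bT$-modules in a way compatible with all the ring structures, so that the resulting square genuinely commutes. The input is the congruence number theorem and the fact that $E$ generates the Eisenstein quotient $M_2/S_2$; everything after that is diagram-chasing with the five lemma, as sketched above. (Alternatively, one can avoid the five lemma and argue directly that the map to the fiber product is injective because $\bT$ embeds in $\bT^0 \times \Z_p$ via the two natural maps — it is $\Z_p$-flat and becomes injective after inverting $p$ since $\bT\otimes\Q_p$ is étale — and surjective by a length count using Theorem \ref{thm:congruence number}; I would present whichever is shorter.)
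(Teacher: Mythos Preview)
Your approach is close to the paper's --- both dualize the sequence $0 \to S_2 \to M_2 \xrightarrow{a_0} \Z_p \to 0$ to identify $\ker(\bT \to \bT^0)$ as rank one, check the square commutes, and reduce to showing $\bT \hookrightarrow \bT^0 \times \Z_p$ (equivalently $I \cap \ker(\bT\to\bT^0) = 0$). But there are two issues.

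The claim that ``the augmentation is the only ring homomorphism $\bT \to \Z_p$'' is false: $\bT$ being local does not preclude other $\Z_p$-points (each cuspidal eigenform with $\Z_p$-coefficients gives one). The square commutes for a simpler reason: both maps $\bT \to \Z_p/(N-1)$ kill $I$ (one by definition, the other since $I \mapsto I^0$), hence factor through $\bT/I = \Z_p$, and there is only one ring map $\Z_p \to \Z_p/(N-1)$. This is easy to fix.

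More seriously, your alternative injectivity argument (``$\bT\otimes\Q_p$ is \'etale, so by $\Z_p$-flatness $\bT \hookrightarrow \bT^0 \times \Z_p$'') and your argument that $\Ann_{\bT^0}(I^0) = 0$ both invoke reducedness of $\bT$ or $\bT^0$. In this paper that is only established later (Proposition~\ref{prop:R to T}), so as written there is a circularity; and your parenthetical justification ``acts faithfully on $S_2$, which is torsion-free'' does not by itself imply reducedness. You can break the circle --- semi-simplicity of the $T_n$ for $(n,N)=1$ on modular forms gives reducedness directly --- but you must say so. The paper sidesteps this: over $\bT[1/p]$ the map $a_0$ is split by $1 \mapsto \tfrac{24}{N-1}E$, so every $f$ satisfies $af = g + bE$ with $g$ cuspidal, whence $I \cap \Ann_\bT(S_2) = \Ann_\bT(E) \cap \Ann_\bT(S_2) = \Ann_\bT(M_2) = 0$. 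This needs no reducedness, and the final claim $\ker(\bT\to\bT^0) = \Ann_\bT(I)$ then follows formally from the fiber-product description together with faithfulness of $I^0$ over $\bT^0$ (which only needs that $\bT^0/I^0$ is finite and $\bT^0$ is $\Z_p$-flat, not that $\bT^0$ is reduced).
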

\begin{proof}
By \cite[Lem.\ 3.2.3]{ohta2014} there is an exact sequence
\[\tag{$*$}
0 \to S_2(N,\Z_p)_\mathrm{Eis} \to M_2(N,\Z_p)_\mathrm{Eis} \xrightarrow{a_0} \Z_p \to 0,
\]
where the first map is the inclusion and where $a_0(f)$ denotes the constant term in the $q$-expansion of $f$. By duality, we see that $\ker(\bT\to\bT^0) = \Ann_{\bT}(S_2(N,\Z_p)_\mathrm{Eis})$ is the free $\Z_p$-module generated by the element $T_0 \in \bT$ that satisfies $a_1(T_0f)=a_0(f)$ for all $f \in  M_2(N,\Z_p)_\mathrm{Eis}$.

Since $a_0(E)=\frac{N-1}{24}$ maps to $0$ in $\Z_p/(N-1)\Z_p$, we see that the composite $\bT \to \Z_p/(N-1)\Z_p$ factors through $\bT^0$. We have a commutative diagram with exact rows
\[
\xymatrix{
0 \ar[r] & T_0\Z_p \ar[d] \ar[r] & \bT \ar[r] \ar[d] & \bT^0 \ar[r] \ar[d] & 0 \\
0 \ar[r] & (N-1)\Z_p \ar[r] & \Z_p \ar[r] & \Z_p/(N-1)\Z_p \ar[r] & 0
}
\]
where the center vertical map is the augmentation $t \mapsto a_1(tE)$. Since $a_1(T_0E)=\frac{N-1}{24}$, the leftmost vertical map is surjective and hence an isomorphism since the domain and codomain are both free of rank 1. An easy diagram chase then shows that the map 
$\bT \to  \bT^0 \times_{\Z_p/(N-1)\Z_p} \Z_p$ is an isomorphism. The fact that $\ker(\bT \to \bT^0)=\Ann_\bT(I)$ follows formally from this and the fact that $I^0$ is a faithful $\bT^0$-module.
\end{proof}

\subsection{Trace and determinant}

\label{subsec: tate module of Jacobian}

Let $J_0(N)$ be the Jacobian of $X_0(N)$. The $p$-adic Tate module $\mathrm{Ta}_p(J_0(N)(\overline{\Q}))$ is a $\bT'^0[G_{\Q,S}]$-module. Let $\mathcal{T}=\mathrm{Ta}_p(J_0(N)(\overline{\Q}))_\mathrm{Eis}$.

\begin{lem}
The $\bT^0[1/p]$-module $\mathcal{T}[1/p]$ is free of rank $2$.
\end{lem}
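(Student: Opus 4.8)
The plan is to reduce the assertion to a rank computation over a product of fields, and then to invoke the Eichler--Shimura theory. First I would observe that $\bT'^0 \otimes_{\Z_p} \Q_p$ is reduced: the operators $T_n$ with $(n,N)=1$ act semisimply on $S_2(N;\Q_p)$, with common eigenspaces indexed by the normalized cuspidal eigenforms, so $\bT'^0 \otimes \Q_p$ embeds into a finite product of fields. Since $\bT'^0$ is a finitely generated $\Z_p$-module it is semilocal and $\mathfrak{J}$-adically complete (its Jacobson radical contains $p$), hence the product of its completions at maximal ideals; and $\bT^0 = \bT'^0 \otimes_{\bT'} \bT$ is exactly the factor attached to the Eisenstein maximal ideal. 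Therefore $\bT^0[1/p] = \bT^0 \otimes_{\Z_p}\Q_p$ is again a finite (possibly empty) product $\prod_i K_i$ of finite extensions of $\Q_p$, and over such a ring every finitely generated module is free of some locally constant rank. So it suffices to show that $\mathcal{T}[1/p]$ has constant rank $2$. Finally, writing $V = \mathrm{Ta}_p(J_0(N)(\overline{\Q})) \otimes_{\Z_p}\Q_p$, forming the Eisenstein part and inverting $p$ is just the base change $\mathcal{T}[1/p] \cong V \otimes_{\bT'^0[1/p]} \bT^0[1/p]$ (the Hecke action on the Tate module factors through $\bT'^0$), so it is enough to prove that $V$ is free of rank $2$ over $\bT'^0[1/p]$.

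For that I would use that $N$ is prime. Since $X_0(1)$ has genus $0$, there are no cusp forms of level $1$, so every form in $S_2(\Gamma_0(N))$ is a newform, and multiplicity one for the prime-to-$N$ Hecke operators gives $\bT'^0 \otimes_{\Z_p}\Q_p \cong \prod_{[f]} \Q(f) \otimes_\Q \Q_p$, the product over the Galois-conjugacy classes of normalized eigenforms $f$, with $S_2(N;\Q_p)$ free of rank $1$ over this ring. By Shimura's construction, $J_0(N)$ is isogenous over $\Q$ to $\prod_{[f]} A_f$, where $\dim A_f = [\Q(f):\Q]$, $\bT'^0$ acts on $A_f$ through $\Q(f)$, and the rational Tate module $V_p(A_f)$ is free of rank $2$ over $\Q(f)\otimes_\Q\Q_p$. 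Summing over $[f]$ identifies $V$ with a free module of rank $2$ over $\prod_{[f]}\Q(f)\otimes_\Q\Q_p = \bT'^0[1/p]$, and base change to $\bT^0[1/p]$ finishes the proof. (Alternatively, one avoids the abelian varieties: up to a Tate twist and $\Q_p$-linear duality, $V$ is isomorphic as a $\bT'^0$-module to $H^1_B(X_0(N)(\C),\Q)\otimes_\Q\Q_p$ via the comparison isomorphism, and the Eichler--Shimura isomorphism $H^1_B(X_0(N)(\C),\C)\cong S_2(N;\C)\oplus\overline{S_2(N;\C)}$, together with self-adjointness of the $T_n$ for $(n,N)=1$, shows this is free of rank $2$ over $\bT'^0 \otimes \Q_p$.)

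The step I expect to require the most care is pinning down that the rank is exactly $2$, rather than merely $2$ "on average" over the factors $K_i$: a faithful module over $\prod_i K_i$ of total $\Q_p$-dimension $2g$ need not have each local rank equal to $2$ a priori. This is precisely where primality of $N$ enters --- to kill the old subspace via $g(X_0(1))=0$, and, in the cohomological variant, to guarantee that the anti-holomorphic part of $H^1_B$ contributes the second copy uniformly across eigensystems (using that the prime-to-$N$ Hecke eigenvalues are totally real). Everything else --- reducedness and the semilocal decomposition of the Hecke algebra, the identification of $\mathcal{T}[1/p]$ as a base change of $V$, and the passage between étale and Betti cohomology --- is formal.
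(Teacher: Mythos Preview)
Your argument is correct and is the standard one. The paper does not give its own proof but simply refers to \cite[Lem.~II.7.7, pg.~92]{mazur1978}; the argument there is essentially the one you outline, hinging on the fact that for $N$ prime there are no oldforms, so multiplicity one forces the rank to be exactly $2$ at every factor of $\bT'^0[1/p]$ rather than merely $2$ on average.
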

\begin{proof}
See \cite[Lem.\ II.7.7, pg.~92]{mazur1978}, for example. 
\end{proof}

Let $\rho_{\mathcal{T}[1/p]}: G_{\Q,S} \to \Aut_{\bT^0[1/p]}(\mathcal{T}[1/p]) \simeq \GL_2(\bT^0[1/p])$ be the corresponding Galois representation.

\begin{lem}
\label{lem:I_N is unipotent}
The representation $\rho_{\mathcal{T}[1/p]} \vert_{I_N}$ is unipotent.
\end{lem}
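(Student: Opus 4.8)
The plan is to use the fact that $\rho_{\mathcal{T}[1/p]}$ is a representation coming from geometry, together with the standard facts about the action of inertia at $N$ on the Tate module of the Jacobian of a modular curve with semistable reduction at $N$. First I would recall that $X_0(N)$ has semistable (indeed, multiplicative) reduction at $N$: the minimal regular model $X_0(N)_{/\Z_N}$ has special fiber consisting of two copies of $\mathbb{P}^1$ meeting transversally at the supersingular points, so $J_0(N)$ has purely toric reduction at $N$. By the theory of the monodromy filtration (Grothendieck, Deligne–Rapoport, or the orthogonality/monodromy pairing of the Picard–Lefschetz formula for abelian varieties with semistable reduction), the action of $I_N$ on $\mathrm{Ta}_p(J_0(N)(\overline{\Q}))$ factors through its tame pro-$p$ quotient $I_N^{\mathrm{pro}-p}$, and the image of a generator $\gamma$ acts as a unipotent matrix: $(\rho(\gamma) - 1)^2 = 0$ on the whole Tate module.

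Next I would pass to the Eisenstein component. Since $\mathcal{T} = \mathrm{Ta}_p(J_0(N)(\overline{\Q}))_{\mathrm{Eis}}$ is a direct summand of $\mathrm{Ta}_p(J_0(N)(\overline{\Q}))$ as a $\bT'^0[G_{\Q,S}]$-module (it is cut out by an idempotent of the semilocal ring $\bT'^0 \otimes \Z_p$ commuting with the Galois action), the unipotence of $\rho(\gamma) - 1$ on the full Tate module is inherited by $\mathcal{T}$, hence by $\mathcal{T}[1/p]$. Because $I_N$ maps to the closed subgroup of $\GL_2(\bT^0[1/p])$ topologically generated by $\rho_{\mathcal{T}[1/p]}(\gamma)$ together with $\rho_{\mathcal{T}[1/p]}(I_N^{\mathrm{non}-p})$, and the latter is trivial (tameness forces the wild inertia, and in fact all of $I_N^{\mathrm{non}-p}$, to act trivially on a $p$-adic representation with this semistable structure), the whole image $\rho_{\mathcal{T}[1/p]}(I_N)$ is a unipotent subgroup of $\GL_2(\bT^0[1/p])$. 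This is exactly the assertion.

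The main obstacle — or rather the point requiring the most care — is making precise the input from the geometry of $X_0(N)$ at $N$, namely that $J_0(N)$ acquires purely toric reduction and that the resulting monodromy operator $N_\ell$ satisfies $N_\ell^2 = 0$ with $I_N$ acting through $\exp(t_p(\sigma) N_\ell)$ for the tame character $t_p$. This is classical (Deligne–Rapoport for the model, and the general semistable reduction picture for the inertia action), so I would simply cite the standard reference — e.g.\ the discussion in Mazur \cite{mazur1978} of the special fiber at $N$, or a standard source on the monodromy filtration — rather than reprove it. A secondary point to be careful about is that we only claim unipotence over $\bT^0[1/p]$, which is a product of $p$-adic fields, so there is no subtlety about unipotence being detected after inverting $p$; the argument above produces an honest unipotent matrix even before inverting $p$, and unipotence is preserved under the base change $\bT^0 \to \bT^0[1/p]$.
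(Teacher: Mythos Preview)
Your proposal is correct and follows essentially the same approach as the paper: semistable reduction of $J_0(N)$ at $N$ (via Deligne--Rapoport/Mazur--Rapoport) combined with Grothendieck's criterion that semistable reduction forces inertia to act unipotently on the Tate module. The paper is simply more terse, citing the \emph{crit\`ere Galoisien de r\'eduction semi-stable} \cite[Expos\'e IX, Prop.\ 3.5]{SGA7-I} directly rather than unpacking the monodromy operator and the passage to the Eisenstein component as you do.
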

\begin{proof}
This is proven in the course of the proof of \cite[Prop.\ II.14.1, pg.~113]{mazur1978}, and we recall the argument here. By the theorem of Mazur and Rapoport \cite[Thm.\ A.1, pg.~173]{mazur1978} (attributed there to Deligne), $J_0(N)$ has semi-stable reduction at $N$.  By the crit\`ere Galoisien de r\'eduction semi-stable \cite[Expos\'e IX, Prop.\ 3.5, pg.~350]{SGA7-I}, this implies the result. 
\end{proof}

\begin{lem}
\label{lem:determinant of modular representation}
Let $\ell \nmid Np$ be a prime, and let $\Fr_\ell \in G_{\Q,S}$ be a Frobenius element. Then the characteristic polynomial $\mathrm{char}(\rho_{\mathcal{T}[1/p]})(\Fr_\ell) \in \bT^0[1/p][X]$ is given by
\[
\mathrm{char}(\rho_{\mathcal{T}[1/p]})(\Fr_\ell) = X^2-T_\ell X +\ell.
\]
In particular, we have $\det(\rho_{\mathcal{T}[1/p]})=\kcyc$ and, for any $\sigma \in G_{\Q,S}$, $\tr(\rho_{\mathcal{T}[1/p]}(\sigma)) \in \bT^0$.
\end{lem}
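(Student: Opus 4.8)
The plan is to prove the identity of characteristic polynomials via the Eichler--Shimura relation, then read off the determinant and the rationality of traces as formal consequences. First I would invoke the Eichler--Shimura congruence relation for $J_0(N)$ at a prime $\ell \nmid Np$: the Frobenius endomorphism $\mathrm{Fr}_\ell$ and its dual (Verschiebung) $V_\ell$ on the reduction $J_0(N)_{/\F_\ell}$ satisfy $\mathrm{Fr}_\ell + V_\ell = T_\ell$ and $\mathrm{Fr}_\ell V_\ell = \ell$ as endomorphisms of $J_0(N)_{/\F_\ell}$; since $N \neq \ell$ the curve $X_0(N)$ has good reduction at $\ell$, so these endomorphisms act compatibly on $\mathrm{Ta}_p(J_0(N)(\overline{\Q}))$, and on this Tate module $\mathrm{Fr}_\ell$ is identified with the arithmetic Frobenius $Fr_\ell \in G_{\Q,S}$ acting through $\rho_{\mathcal{T}[1/p]}$. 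Localizing at the Eisenstein ideal and inverting $p$, this says that $\rho_{\mathcal{T}[1/p]}(Fr_\ell)$ satisfies $X^2 - T_\ell X + \ell = 0$. Because $\mathcal{T}[1/p]$ is free of rank $2$ over $\bT^0[1/p]$ and $\bT^0[1/p]$ is a product of fields (it is a finite flat reduced $\Q_p$-algebra, being a subalgebra of $\End_{\Q_p}(S_2(N)_{\Q_p})$ generated by the commuting semisimple Hecke operators $T_n$), the Cayley--Hamilton theorem forces the characteristic polynomial $\mathrm{char}(\rho_{\mathcal{T}[1/p]}(Fr_\ell))(X)$ to equal $X^2 - T_\ell X + \ell$: a monic degree-$2$ polynomial annihilating a $2\times 2$ matrix over a field, when it equals the minimal polynomial or a multiple of it of degree $2$, is the characteristic polynomial.

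The one subtlety to address is the possibility that $X^2 - T_\ell X + \ell$ is a proper multiple of the minimal polynomial, i.e. that $\rho_{\mathcal{T}[1/p]}(Fr_\ell)$ is a scalar $\lambda$ with $2\lambda = T_\ell$ and $\lambda^2 = \ell$; in that degenerate case the characteristic polynomial is still $(X-\lambda)^2 = X^2 - T_\ell X + \ell$, so the conclusion holds regardless. Hence over each factor field of $\bT^0[1/p]$ we get the stated characteristic polynomial, and since these assemble to the identity $\mathrm{char}(\rho_{\mathcal{T}[1/p]})(Fr_\ell) = X^2 - T_\ell X + \ell$ in $\bT^0[1/p][X]$, we are done with the main claim.

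For the two consequences: taking the constant term of the characteristic polynomial at $Fr_\ell$ gives $\det(\rho_{\mathcal{T}[1/p]})(Fr_\ell) = \ell = \kcyc(Fr_\ell)$ for all $\ell \nmid Np$. Since such Frobenius elements are dense in $G_{\Q,S}$ by \u{C}ebotarev and both $\det(\rho_{\mathcal{T}[1/p]})$ and $\kcyc$ are continuous characters $G_{\Q,S} \to (\bT^0[1/p])^\times$, they agree everywhere, so $\det(\rho_{\mathcal{T}[1/p]}) = \kcyc$. For the trace statement, $\tr(\rho_{\mathcal{T}[1/p]}(Fr_\ell)) = T_\ell \in \bT^0 \subset \bT^0[1/p]$ for a dense set of $\sigma = Fr_\ell$; since $\bT^0$ is a finitely generated $\Z_p$-module it is $p$-adically closed in $\bT^0[1/p]$, and the function $\sigma \mapsto \tr(\rho_{\mathcal{T}[1/p]}(\sigma))$ is continuous, so its values land in the closed subset $\bT^0$ for all $\sigma \in G_{\Q,S}$.

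The main obstacle is really just bookkeeping: making the identification of the geometric Frobenius endomorphism on the special fiber with the arithmetic Frobenius $Fr_\ell$ acting on the $p$-adic Tate module precise (including the correct normalization so that the Eichler--Shimura relation reads $X^2 - T_\ell X + \ell$ rather than a twist of it), and confirming that $\bT^0[1/p]$ is reduced so that the Cayley--Hamilton argument pins down the characteristic polynomial exactly rather than merely up to the minimal polynomial. Both points are standard (see Ohta \cite{ohta2014} or \cite[\S II]{mazur1978}), so the proof is short once they are invoked.
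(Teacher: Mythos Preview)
Your proof is correct and follows exactly the same strategy as the paper's proof, which simply cites the Eichler--Shimura relation for the characteristic polynomial formula and Chebotaryov density for the remaining parts. You have filled in more detail than the paper does (in particular the careful check that the degree-two polynomial annihilating $\rho_{\mathcal{T}[1/p]}(Fr_\ell)$ is indeed the characteristic polynomial, and the closedness argument for $\bT^0 \subset \bT^0[1/p]$), but the underlying argument is identical.
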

\begin{proof}
The formula for the characteristic polynomial follow from the Eichler--Shimura relation (see e.g.\ \cite[\S II.6, pg.~89]{mazur1978}). The remaining parts follow by Chebotaryov density.
\end{proof}

From this lemma, we see that there is a pseudorepresentation $D_\mathcal{T}: G_{\Q,S} \to \bT^0$ determined by $\det(D_\mathcal{T})=\kcyc$ and $\tr(D_\mathcal{T})(\Fr_\ell)=T_\ell$ for all $\ell \nmid Np$, and that $D_{\mathcal{T}}\otimes_{\bT^0} \bT^0[1/p] = \psi(\rho_{\mathcal{T}[1/p]})$.

\begin{prop}
\label{prop:bT0 structure of TaJ} Assume that $p \mid (N-1)$. There is a short exact sequence of $\bT^0[G_p]$-modules
\[
0 \to \mathcal{T}^{mul} \to \mathcal{T} \to \mathcal{T}^\et \to 0
\]
where $\mathcal{T}^{mul}$ is free of rank $1$ as a $\bT^0$-module and $\mathcal{T}^\et$ is a dualizing $\bT^0$-module. The $G_p$-action on 
$\mathcal{T}^\et$ is unramified, and the sequence splits as $\bT^0$-modules.
\end{prop}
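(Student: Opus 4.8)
The filtration will come from the connected–étale sequence at $p$, and the two structural claims ("free of rank $1$" versus "dualizing") from autoduality together with Mazur's analysis. First I would observe that since $p \nmid N$ the curve $X_0(N)_{/\Z_p}$ has good reduction, so the Néron model $\mathcal{J}$ of $J_0(N)$ over $\Z_p$ is an abelian scheme and $\mathcal{J}[p^\infty]$ is a $p$-divisible group over $\Z_p$, functorially a $\bT'^0$-module. Its connected–étale sequence $0 \to \mathcal{J}[p^\infty]^0 \to \mathcal{J}[p^\infty] \to \mathcal{J}[p^\infty]^{et} \to 0$ is $\bT'^0$-equivariant; taking Tate modules of generic fibres gives an exact sequence of $\bT'^0[G_p]$-modules $0 \to \mathrm{Ta}_p(\mathcal{J}[p^\infty]^0) \to \mathrm{Ta}_p(J_0(N)) \to \mathrm{Ta}_p(\mathcal{J}[p^\infty]^{et}) \to 0$ whose quotient is unramified, being the Tate module of an étale $p$-divisible group over $\Z_p$. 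Applying $-\otimes_{\bT'^0} \bT$, which is exact on finitely generated modules as a completion at the maximal ideal $(I',p)$, yields the asserted sequence $0 \to \mathcal{T}^{mul} \to \mathcal{T} \to \mathcal{T}^{et} \to 0$ with $\mathcal{T}^{et}$ unramified as a $G_p$-module.

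The remaining local assertion is that $\mathcal{T}^{mul}$ is of multiplicative type, i.e.\ that $G_p$ acts on it through $\kcyc$ times an unramified character; this is where the hypothesis $p \mid (N-1)$ enters, through ordinarity. Indeed $U_p - 1 \in I$, so $U_p$ is a unit in $\bT$ and $\mathcal{T}$ lies in the $p$-ordinary part of $\mathrm{Ta}_p(J_0(N))_{\mathrm{Eis}}$ (on which Hida's idempotent $\lim_n U_p^{n!}$ acts as the identity). Concretely, on the special fibre $\mathcal{J}_{\F_p}$ the Eichler–Shimura relation gives $\Fr_p + \mathrm{Ver}_p = T_p$ and $\Fr_p\,\mathrm{Ver}_p = p$, so $\Fr_p$ and $\mathrm{Ver}_p$ are the two roots of $X^2 - T_pX + p$, one a unit ($=U_p$) and one not; after localizing at the Eisenstein ideal the unit root is carried by the étale quotient, forcing $\mathcal{J}[p^\infty]^0\otimes_{\bT'^0}\bT$ to be of multiplicative type. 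Alternatively this is exactly the content of Mazur's analysis of $\mathrm{Ta}_p(J_0(N))$ near the Eisenstein ideal \cite[\S II.14]{mazur1978}, which I would invoke.

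For the freeness and duality claims I would use the principal polarization of $J_0(N)$: the Weil pairing gives a $\bT^0[G_{\Q,S}]$-equivariant isomorphism $\mathcal{T} \isoto \Hom_{\Z_p}(\mathcal{T},\Z_p(1))$, and since the connected–étale filtration is the unique $G_p$-stable filtration with unramified quotient, this isomorphism must interchange $\mathcal{T}^{mul}$ and $\mathcal{T}^{et}$, so $\mathcal{T}^{mul} \cong \Hom_{\Z_p}(\mathcal{T}^{et},\Z_p(1))$. Combining this with the identification of $\mathcal{T}^{mul}\otimes\Q_p$ (via the crystalline comparison, its filtration piece being $S_2(N;\Q_p)_{\mathrm{Eis}}$) with a twist of the dualizing module $S_2(N;\Z_p)_{\mathrm{Eis}}$ of \S\ref{subsubsec:mf} — or simply citing Mazur's computation \cite[\S II.15]{mazur1978} — together with the elementary fact that the $\Z_p$-linear dual of a dualizing module over the finite flat $\Z_p$-algebra $\bT^0$ is free of rank $1$, one concludes that $\mathcal{T}^{mul}$ is free of rank $1$ and hence that $\mathcal{T}^{et}\cong\Hom_{\Z_p}(\mathcal{T}^{mul},\Z_p)$ is dualizing. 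The splitting over $\bT^0$ then follows from Mazur's description of $\mathcal{T}$ as a $\bT^0$-module. I expect this last step — and more precisely the passage from the (easy) splitting after inverting $p$ to the integral statement, and the determination of which summand is free and which is dualizing — to be the main obstacle, since it is not formal and relies genuinely on Mazur's geometric input rather than on soft module theory.
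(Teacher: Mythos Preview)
Your construction of the sequence via the connected--\'etale filtration of the N\'eron model, together with the appeal to Cartier/Weil duality and to Mazur's results for the freeness of $\mathcal{T}^{mul}$, is exactly the paper's approach (the paper cites \cite[\S II.8]{mazur1978} for the sequence and \cite[Cor.~II.14.11]{mazur1978} for freeness, then deduces that $\mathcal{T}^{et}$ is dualizing by $\Z_p$-duality). Your discussion of ordinarity is more elaborate than necessary but not wrong.

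The one genuine gap is the $\bT^0$-splitting, which you flag as the main obstacle and leave to ``Mazur's description.'' In fact there is a direct and elementary argument you are missing. Since $G_p$ acts on $\mathcal{T}^{mul}$ through $\kcyc$ and on $\mathcal{T}^{et}$ trivially (unramified), choose $\tau\in I_p$ with $\kcyc(\tau)=-1$ (possible since $p>2$). Then $(\tau-1)(\tau+1)$ annihilates $\mathcal{T}$: indeed $\tau-1$ kills the quotient $\mathcal{T}^{et}$, hence maps $\mathcal{T}$ into $\mathcal{T}^{mul}$, which is then killed by $\tau+1$. As $2$ is a unit, the idempotents $\tfrac{1}{2}(1\pm\tau)$ give a $\bT^0$-module decomposition
\[
\mathcal{T}=(\tau-1)\mathcal{T}\oplus(\tau+1)\mathcal{T},
\]
and one checks immediately that the two summands map isomorphically to $\mathcal{T}^{mul}$ and $\mathcal{T}^{et}$. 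No passage from rational to integral, and no Gorenstein input, is needed; this is the content of the paper's remark that the proposition does not rely on $\bT^0$ being Gorenstein.
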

\begin{proof}
The sequence is constructed in \cite[\S II.8, pg.~93]{mazur1978}, using the connected-\'etale exact sequence for the N\'eron model of $J_0(N)$. It follows by construction that the $G_p$-action on $\mathcal{T}^\et$ is unramified. As remarked in \emph{loc.\ cit.}, the sequence is self-$\Z_p$-dual by Cartier duality. Then \cite[Cor.\ II.14.11, pg.~120]{mazur1978} implies that $\mathcal{T}^{mul}$ is a free $\bT^0$-module of rank $1$. By duality, $\mathcal{T}^\et$ is a dualizing $\bT^0$-module. 

Finally, to see that the sequence splits as $\bT^0$-modules, we note that (either by Lemma \ref{lem:determinant of modular representation} or by construction) $G_p$ acts on $\mathcal{T}^{mul}$ by the character $\kcyc$. Let $\tau \in I_p$ be an element such that $\kcyc(\tau)=-1$. Then we see that $\mathcal{T}=(\tau-1)\mathcal{T} \oplus (\tau+1)\mathcal{T}$ as $\bT^0$-modules. 
\end{proof}

\begin{rem}
Note that this proposition does not use the the fact that $\bT^0$ is a Gorenstein ring. See, for example, \cite[Thm.\ 3.5.10]{ohta2014}, where a similar statement is proven in a more general setting where the Hecke algebra need not be Gorenstein.
\end{rem}

\begin{lem}
\label{lem:T_p and Fr_p}
Let $\Fr_p \in G_p$ be a Frobenius element. Then $\Fr_p$ acts on $\mathcal{T}^\et$ by the scalar $U_p \in \bT^0$.
\end{lem}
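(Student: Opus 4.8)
The statement to prove is Lemma \ref{lem:T_p and Fr_p}: that $Fr_p$ acts on $\mathcal{T}^{et}$ by the scalar $U_p \in \bT^0$. The natural strategy is to use the exact sequence of Proposition \ref{prop:bT0 structure of TaJ} together with the Eichler--Shimura relation at $p$. First I would observe that $G_p$ acts on $\mathcal{T}^{et}$ through an unramified character, so the action of $Fr_p$ is by a well-defined scalar $\alpha \in \bT^0$, independent of the choice of Frobenius element. Applying Lemma \ref{lem:determinant of modular representation} (or rather its analogue/extension to the prime $p$ via the Eichler--Shimura congruence relation at $p$, which for $\Gamma_0(N)$ with $N \neq p$ reads $Fr_p^2 - T_p Fr_p + p = 0$ on the semisimplification, or more precisely on $\mathcal{T}$ modulo the connected part), I would note that the characteristic polynomial of $Fr_p$ acting on $\mathcal{T}[1/p]$ is $X^2 - T_p X + p$.

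**Key steps.** Concretely: (1) From Proposition \ref{prop:bT0 structure of TaJ}, the quotient $\mathcal{T}^{et}$ has unramified $G_p$-action, and the sub $\mathcal{T}^{mul}$ has $G_p$ acting via $\kcyc$; in particular $Fr_p$ acts on $\mathcal{T}^{mul}$ by $p$ (since $\kcyc(Fr_p) = p$). (2) The characteristic polynomial of $Fr_p$ on $\mathcal{T}$ is $X^2 - T_p X + p$: this follows from the Eichler--Shimura relation, exactly as in the proof of Lemma \ref{lem:determinant of modular representation} but now at $p$; one uses that $J_0(N)$ has good reduction at $p$ and the Frobenius--Verschiebung identity. (3) Since the sequence $0 \to \mathcal{T}^{mul} \to \mathcal{T} \to \mathcal{T}^{et} \to 0$ is $G_p$-stable, the characteristic polynomial factors: $X^2 - T_p X + p = (X - p)(X - \alpha)$ over $\bT^0[1/p]$, forcing $\alpha = T_p - p$ and $p\alpha = p$, hence $\alpha$ is a root of $X^2 - T_pX + p$ that is a unit modulo $p$ (because $\alpha \equiv T_p - p \equiv 1 \pmod{I + (p)}$, using $T_p - p - 1 \in I$). (4) By definition $U_p$ is the unique unit root of $X^2 - T_p X + p = 0$ in $\bT$ (and it lies in $\bT^0$ under the relevant reductions), so $\alpha = U_p$.

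**Main obstacle.** The delicate point is step (2)--(3): making the Eichler--Shimura relation at $p$ precise on the integral Tate module $\mathcal{T}$ (not just after inverting $p$ or on a semisimplification), and ensuring that the factorization of the characteristic polynomial over $\bT^0[1/p]$ descends to identify $\alpha$ with the integral element $U_p \in \bT^0$ rather than merely with $T_p - p$ in $\bT^0[1/p]$. The cleanest route is probably to work entirely after inverting $p$ to get $\alpha = T_p - p$ in $\bT^0[1/p]$, then separately verify $\alpha \in \bT^0$ (which is immediate since $Fr_p$ preserves the integral structure $\mathcal{T}^{et}$) and that $\alpha$ satisfies $X^2 - T_p X + p = 0$; uniqueness of the unit root then pins down $\alpha = U_p$. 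Alternatively, one can cite Mazur's identification of $\mathcal{T}^{et}$ with an $\ord$-component directly: the multiplicative/étale splitting is precisely the unit-root decomposition for $U_p$, so $Fr_p$ acting as $U_p$ on the étale quotient is essentially built into the construction in \cite[\S II.8]{mazur1978}. I would state it via the Eichler--Shimura computation and remark that this recovers Mazur's description.
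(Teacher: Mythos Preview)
There is a genuine gap in your step (3), and it stems from an over-reading of Proposition~\ref{prop:bT0 structure of TaJ}. That proposition (and its proof) only tells you that \emph{inertia} $I_p$ acts on $\mathcal{T}^{mul}$ via $\kcyc$; it does \emph{not} say that all of $G_p$ does. In fact it cannot: if $Fr_p$ acted on $\mathcal{T}^{mul}$ by $p$ and on $\mathcal{T}^{et}$ by $\alpha$, then your factorization $X^2 - T_pX + p = (X-p)(X-\alpha)$ would force both $\alpha = T_p - p$ (trace) and $p\alpha = p$ (norm), hence $\alpha = 1$ and $T_p = p+1$ in $\bT^0$. The latter is false in general (it holds only modulo $I^0$), so your argument is internally inconsistent. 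The correct picture is that $Fr_p$ acts on $\mathcal{T}^{mul}$ by $\kcyc(Fr_p)\cdot U_p^{-1}$, not by $\kcyc(Fr_p)$; so the ``other root'' already involves the unknown $U_p$ and the factorization gives no new information.

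A related issue is your step (2): the representation $\rho_{\mathcal{T}[1/p]}\vert_{G_p}$ is ramified, so ``the characteristic polynomial of $Fr_p$ on $\mathcal{T}[1/p]$'' depends on the lift of Frobenius and is not canonically $X^2 - T_pX + p$. What Eichler--Shimura literally gives you is the relation $T_p = F + V$, $FV = p$, as \emph{endomorphisms} of $J_0(N)_{\F_p}$. Your approach can be salvaged by arguing directly on the \'etale quotient: restrict $F$ and $V$ to the \'etale part of the $p$-divisible group over $\F_p$, where $F$ is invertible and hence $V = pF^{-1}$; then $F^2 - T_pF + p = 0$ on $\mathcal{T}^{et}$, and since the Frobenius endomorphism of an \'etale $\F_p$-group scheme coincides on $\bar\F_p$-points with the arithmetic Frobenius, $Fr_p$ itself satisfies $X^2 - T_pX + p = 0$ on $\mathcal{T}^{et}$. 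As $Fr_p$ acts invertibly, it must be the unit root $U_p$.

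For comparison, the paper takes a different and shorter route: it observes that $\mathcal{T}^{et}$ is the inertia-coinvariants $\mathcal{T}_{I_p}$, extends scalars to $\bT^0[1/p]$, and then invokes local--global compatibility for modular forms (Scholl) to identify the Frobenius action on $(\rho_{\mathcal{T}[1/p]})_{I_p}$ with $U_p$. This bypasses the Eichler--Shimura computation on the special fibre entirely.
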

\begin{proof}
By the previous proposition, we know that $\Fr_p$ acts on $\mathcal{T}^\et$ by a well-defined unit in $\bT^0$. To determine the unit, we extend scalars to $\bT^0[1/p]$. We know that $\mathcal{T}^\et = \mathcal{T}_{I_p}$ (the inertia coinvariants), so it suffices to determine the action of $\Fr_p$ on $(\rho_{\mathcal{T}[1/p]})_{I_p}$. The fact that $\Fr_p$ acts on $(\rho_{\mathcal{T}[1/p]})_{I_p}$ as $U_p$ follows from local-global compatibility for modular forms \cite[Thm.\ 1.2.4(ii)]{scholl1990}.
\end{proof}

We let $E_\mathcal{T}=\End_{\bT^0}(\mathcal{T})$,  and let $\rho_\mathcal{T}:G_{\Q,S} \to E_\mathcal{T}^\times$.  
\begin{cor}
\label{cor:GMA str on E_T}
The $\bT^0$-algebra $E_\mathcal{T}$ admits a $\bT^0$-GMA structure $\cE_\cT$ such that $D_\cT = D_{\cE_\cT} \circ \rho_\cT$, and $D_\cT$ is a finite-flat pseudorepresentation. 
\end{cor}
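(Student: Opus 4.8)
The plan is to construct the GMA structure on $E_\cT = \End_{\bT^0}(\cT)$ directly from the splitting provided by Proposition \ref{prop:bT0 structure of TaJ}, and then invoke Theorem \ref{thm:ffgs GMA is ff} to get finite-flatness. First I would recall that $p \mid (N-1)$ is in force, so Proposition \ref{prop:bT0 structure of TaJ} gives a $\bT^0$-module splitting $\cT \cong \cT^{mul} \oplus \cT^{et}$, with $\cT^{mul}$ free of rank $1$ and $\cT^{et}$ a dualizing (hence faithful, and locally free in a suitable sense) $\bT^0$-module. I would then observe that $\End_{\bT^0}(\cT^{mul}) = \bT^0$ (since $\cT^{mul}$ is free of rank $1$) and $\End_{\bT^0}(\cT^{et}) = \bT^0$ as well — the latter because a dualizing module over a local ring has endomorphism ring equal to the ring itself (it is the same computation as $\End_A(X_i) = A$ in Example \ref{eg:ffgs GMA is ff}, applied to the dualizing module). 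Writing $B = \Hom_{\bT^0}(\cT^{mul}, \cT^{et})$ and $C = \Hom_{\bT^0}(\cT^{et}, \cT^{mul})$, the decomposition $\End_{\bT^0}(\cT) \cong \sm{\bT^0}{B}{C}{\bT^0}$ is exactly the GMA structure $\cE_\cT$, with idempotents given by the two projections; this matches the setup of Example \ref{eg:ffgs GMA is ff} with $A = \bT^0$, $V = \cT$, $X_1 = \cT^{mul}$, $X_2 = \cT^{et}$.

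Next I would check that the GMA pseudorepresentation $D_{\cE_\cT} \circ \rho_\cT$ agrees with $D_\cT$. Since $\cT$ is $\bT^0$-faithful (it is a Tate module of an abelian variety with a faithful Hecke action, or alternatively it contains the dualizing module $\cT^{et}$), the map $\bT^0 \to \End_{\bT^0}(\cT) = E_\cT$ is injective, so $\bT^0 \hookrightarrow E_\cT$ and both $D_\cT$ and $D_{\cE_\cT}\circ\rho_\cT$ take values in $\bT^0$. After inverting $p$ the module $\cT[1/p]$ is free of rank $2$ over $\bT^0[1/p]$ (the lemma preceding Proposition \ref{prop:bT0 structure of TaJ} on $\cT[1/p]$), so $D_{\cE_\cT}\circ\rho_\cT$ and $D_\cT$ both restrict to $\psi(\rho_{\cT[1/p]})$ on $\bT^0[1/p]$; since $\bT^0$ is $p$-torsion-free (it is a finite free $\Z_p$-module) it embeds in $\bT^0[1/p]$, and two pseudorepresentations valued in $\bT^0$ that agree after inverting $p$ are equal. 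Hence $D_\cT = D_{\cE_\cT}\circ\rho_\cT$. One subtlety: the residual pseudorepresentation of $\rho_\cT$ must be $\Db = \psi(1 \oplus \omega)$ with the idempotents ordered compatibly — this follows because $G_p$ acts on $\cT^{mul}$ via $\kcyc$ (reducing to $\omega$) and on $\cT^{et}$ unramifiedly with $Fr_p \mapsto U_p \equiv 1 \bmod{\m}$ (Lemma \ref{lem:T_p and Fr_p}, together with $U_p - 1 \in I$), so modulo the maximal ideal the diagonal characters are $\omega$ and $1$; I would just note the ordering of $X_1, X_2$ should be chosen accordingly (or absorb a swap).

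Finally, for the finite-flatness of $D_\cT$ I would apply Theorem \ref{thm:ffgs GMA is ff} with $G = G_p$: $\cT$ arises (after the Eisenstein localization) as a direct summand of the $p$-adic Tate module of $J_0(N)$, which has good — in fact semistable, but here we only need that $J_0(N)$ is an abelian variety whose $p$-divisible group over $\Z_p$ is a finite-flat group scheme object, so that $\cT$ is a finite-flat representation of $G_p$ in the sense of \S\ref{sec:ffps} — reduction away from the relevant places; more precisely $\cT$ is the Tate module of the $p$-divisible group of the N\'eron model of $J_0(N)$ over $\Z_p$, so $\cT$ is a finite-flat $G_p$-representation exactly as in Example \ref{eg:ffgs GMA is ff}. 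Taking $M = E_\cT$ as a faithful $E_\cT$-module over itself, Theorem \ref{thm:ffgs GMA is ff} then says $(E_\cT, \rho_\cT, D_{\cE_\cT})$ is a finite-flat Cayley--Hamilton representation, and therefore $D_\cT = D_{\cE_\cT}\circ\rho_\cT$ is a finite-flat pseudorepresentation by Definition \ref{defn:ff pseudo}. The main obstacle I anticipate is the bookkeeping around the flatness input at $p$: one must be careful that $\cT_{\mathrm{Eis}}$ (not just $\mathrm{Ta}_p J_0(N)$) is genuinely a finite-flat $G_p$-representation — i.e.\ that Eisenstein localization and the good-reduction/finite-flat structure are compatible — but this is exactly what Example \ref{eg:ffgs GMA is ff} and Theorem \ref{thm:ffgs GMA is ff} are designed to handle, since $\cT_{\mathrm{Eis}}$ is a direct $\bT^0$-module summand of $\mathrm{Ta}_p(J_0(N))$, and direct summands of finite-flat representations are finite-flat.
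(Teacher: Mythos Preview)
Your approach is essentially the same as the paper's: split $\cT$ via Proposition \ref{prop:bT0 structure of TaJ}, follow Example \ref{eg:ffgs GMA is ff} to get the GMA structure, and invoke Theorem \ref{thm:ffgs GMA is ff} for finite-flatness. Your added details (why $\End_{\bT^0}(\cT^{et}) = \bT^0$ via the dualizing-module property, and the comparison of $D_\cT$ with $D_{\cE_\cT}\circ\rho_\cT$ after inverting $p$ using that $\bT^0$ is $p$-torsion-free) are correct elaborations of what the paper leaves implicit.

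There is one slip in the final step: when applying Theorem \ref{thm:ffgs GMA is ff} you write ``taking $M = E_\cT$ as a faithful $E_\cT$-module over itself.'' But you have established that $\cT$ is finite-flat, not $E_\cT$; the theorem should be applied with $M = \cT$, which is tautologically a faithful $E_\cT = \End_{\bT^0}(\cT)$-module. This is exactly how Example \ref{eg:ffgs GMA is ff} is set up (there $M = V$). With that correction your argument goes through.
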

\begin{proof}
Following Example \ref{eg:ffgs GMA is ff}, a choice of $\bT^0$-module isomorphism $\cT \risom \bT^0 \oplus (\bT^0)^\vee$ arising from Proposition \ref{prop:bT0 structure of TaJ} produces a GMA structure $\cE_\cT$ on $E_\cT$. As in that example, it follows from Theorem \ref{thm:ffgs GMA is ff} that $(E_\cT,\rho_\cT,D_{\cE_\cT})$ is a finite-flat Cayley--Hamilton representation (since $\cT\vert_p$ is a finite-flat $\Z_p[G_p]$-module). It is easy to check that $D_\cT = D_{\cE_\cT} \circ \rho_\cT$, so $D_\cT$ is a finite-flat pseudorepresentation by Definition \ref{defn:ff pseudo}.
\end{proof}

\section{The pseudodeformation ring}
\label{sec:pseudodef ring}
Let $\Db = \psi(\omega \oplus 1)$. In this section, we construct $R$, the universal pseudodeformation ring for $\Db$ satisfying the following additional conditions:
\begin{enumerate}
\item $D$ is finite-flat at $p$
\item $D|_{I_N} =\psi(1\oplus 1)$
\item $\det(D)=\kcyc$
\end{enumerate}
Let $D_\mathrm{Eis}: G_{\Q,S} \to \Z_p$ be the reducible pseudorepresentation $\psi(\Z_p(1) \oplus \Z_p)$. We will show that $D_\mathrm{Eis}$ and the pseudorepresentation $D_\mathcal{T}: G_{\Q,S}\to \bT^0$ both satisfy conditions (1)-(3), and we use this fact to produce a surjection $R \onto \bT$.

\subsection{Construction of $R$}
\label{subsec:construction of R}
Let $R_{\Db,\fl}$ be the universal finite-flat pseudodeformation ring, and let $E_{\Db,\fl}=E_{\Db} \otimes_{R_{\Db}} R_{\Db,\fl}$ be the universal finite-flat Cayley--Hamilton algebra (see Theorem \ref{thm:existence of R_flat}). 

Let $I_{\det} \subset R_{\Db,\fl}$ denote the ideal generated by the set 
\[
\{\det(D)(\sigma) - \kcyc(\sigma) \ | \ \sigma \in G_{\Q,S}\}
\]
and let $I_{ss} \subset R_{\Db,\fl}$ denote the 
ideal generated by the set 
\[
\{\tr(D)(\tau) -2 \ | \ \tau \in I_N\}.
\]
(The notation $I_{ss}$ comes from ``semi-stable at $N$" (cf.\ Lemma \ref{lem:I_N is unipotent}).) Define
\[
R=R_{\Db,\fl}/(I_{\det}+I_{ss}).
\]

\begin{prop}
\label{prop:universal property of R}
The ring $R$ pro-represents the functor sending an Artinian local $\Z_p$-algebra $A$ with residue field $\F_p$ to the set of pseudorepresentations $D: G_{\Q,S} \to A$ satisfying
\begin{enumerate}
\item $D \otimes_A \F_p = \Db$
\item $D$ is finite-flat at $p$
\item $D|_{I_N} =\psi(1\oplus 1)$
\item $\det(D)=\kcyc$.
\end{enumerate}
\end{prop}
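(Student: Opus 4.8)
The plan is to verify that $R$, as constructed in \S\ref{subsec:construction of R}, represents the stated functor by unwinding the definitions of the three ideals we quotiented by, using the universal property of $R_{\Db,\fl}$ established in Theorem \ref{thm:existence of R_flat}. The key observation is that each of conditions (2), (3), (4) in the proposition is a \emph{closed} condition on deformations, in the sense that it holds after a base change $R_{\Db,\fl} \to A$ if and only if a prescribed ideal of $R_{\Db,\fl}$ maps to zero in $A$; then the quotient by the sum of these ideals represents the subfunctor cut out by all of them simultaneously.

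First I would recall that, by Proposition \ref{prop:existence of R_Db} and Theorem \ref{thm:existence of R_flat}(2), the ring $R_{\Db,\fl}$ pro-represents the functor of deformations $D : G_{\Q,S} \to A$ of $\Db$ that are finite-flat at $p$; that is, conditions (1) and (2) are exactly what $R_{\Db,\fl}$ already encodes. So it remains to see that imposing (3) and (4) corresponds precisely to killing $I_{ss}$ and $I_{\det}$. For condition (4): given a deformation $D$ corresponding to $\phi : R_{\Db,\fl} \to A$, the equality $\det(D) = \kcyc$ of $A$-valued functions on $G_{\Q,S}$ holds if and only if $\phi(\det(D^u)(\sigma) - \kcyc(\sigma)) = 0$ for all $\sigma$, i.e.\ if and only if $\phi$ kills $I_{\det}$; here I use that $\det$ of a pseudorepresentation is functorial in the coefficient ring, so $\det(\phi \circ D^u) = \phi \circ \det(D^u)$, and that $\kcyc$ takes values in $\Z_p \subset A$. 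For condition (3): the statement ``$D|_{I_N} = \psi(1 \oplus 1)$'' needs to be translated into the single condition $\tr(D)(\tau) = 2$ for all $\tau \in I_N$ together with $\det(D)(\tau) = 1$ for $\tau \in I_N$ — but the latter is already forced once we impose (4), since $\kcyc$ is unramified at $N$ (as $N \neq p$), so $\kcyc(\tau) = 1$ for $\tau \in I_N$. Thus, modulo $I_{\det}$, the residual pseudorepresentation $\psi(1\oplus 1)$ is the unique $2$-dimensional pseudorepresentation of $I_N$ with trace identically $2$ and determinant identically $1$ lifting the trivial one, so ``$D|_{I_N} = \psi(1\oplus 1)$'' is equivalent to $\phi(I_{ss}) = 0$. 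Combining, $\phi : R_{\Db,\fl} \to A$ corresponds to a deformation satisfying (1)--(4) if and only if $\phi$ factors through $R_{\Db,\fl}/(I_{\det} + I_{ss}) = R$, which is the claim.

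The main obstacle I anticipate is the verification that conditions (3) and (4), as phrased in terms of equality of pseudorepresentations, really are detected by the trace-and-determinant identities generating $I_{ss}$ and $I_{\det}$ — i.e.\ that a $2$-dimensional pseudorepresentation is pinned down by its trace and determinant functions (so that specifying these suffices to force $D|_{I_N} = \psi(1\oplus 1)$ on the nose). For $2$-dimensional pseudorepresentations this is standard (the characteristic polynomial $\chi_D(x)(t) = t^2 - \tr(D)(x) t + \det(D)(x)$ determines $D$, and for group pseudorepresentations the polarization identities recover $D$ from $\tr$ and $\det$), and in residue characteristic $p > 3$ there are no subtleties with $2$ being invertible; but one should cite this explicitly, e.g.\ via the references \cite{chen2014} or \cite[\S1]{BC2009} given in \S\ref{sec:ffps}. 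A secondary point to be careful about is that the functor is only asserted for Artin local $\Z_p$-algebras with residue field $\F_p$ (not $\F$ more generally, and not the full category of Noetherian complete local rings), which matches the tangent-space computations to come; pro-representability on this subcategory follows from representability of $R_{\Db,\fl}$ on it together with the fact that quotienting by an ideal preserves pro-representability of the corresponding subfunctor.
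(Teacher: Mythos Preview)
Your proposal is correct and follows essentially the same approach as the paper: use that $R_{\Db,\fl}$ already represents conditions (1)--(2), then argue that (4) is equivalent to $\phi(I_{\det})=0$ and that, given (4), condition (3) reduces to $\tr(D)|_{I_N}=2$ (since $\kcyc$ is unramified at $N$), which is equivalent to $\phi(I_{ss})=0$. The paper's proof is slightly terser and does not explicitly discuss the point that a $2$-dimensional pseudorepresentation is determined by its trace and determinant, but your added remark on this is correct and harmless.
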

\begin{proof}
We already know by Theorem \ref{thm:existence of R_flat} that $R_{\Db,\fl}$ is the deformation ring for pseudorepresentations satisfying (1) and (2). We have to show that, for any $A$ as in the proposition, and any homomorphism $\phi: R_{\Db,\fl} \to A$, the corresponding pseudorepresentation $D$ satisfies (3) and (4) if and only if $\phi$ factors through $R$.

We note that, since $\kcyc$ is unramified at $N$, a pseudorepresentation $D$ satisfying (4) will also satisfy (3) if and only if $\tr(D)|_{I_N}=2$. We see that $D$ satisfies (4) if and only if $\ker(\phi) \supset I_{\det}$, and so $D$ satisfies (3) and (4) if and only if $\ker(\phi) \supset I_{\det}+I_{ss}$. This completes the proof.
\end{proof}

Let $E = E_{\Db,\fl} \otimes_{R_{\Db,\fl}} R$ and let $\rho = \rho_\fl \otimes_{R_{\Db,\fl}} R$. We fix an arbitrary choice of matrix coordinates on $E$, so that we can write $\rho$ as 
\begin{equation}
\label{eq: definition of abcds}
\rho: G_{\Q,S} \to E^\times, \quad \sigma \mapsto \ttmat{a_\sigma}{b_\sigma}{c_\sigma}{d_\sigma}.
\end{equation}
 Let $D = \psi(\rho) : G_{\Q,S} \ra R$ be the universal pseudorepresentation for the functor of Proposition \ref{prop:universal property of R}. 

\subsection{The map $R \to \bT$}
\label{subsec:R to T} First we construct a homomorphism $R \to \bT^0$. 

\begin{lem}
\label{lem:map R to T0}
The pseudorepresentation $D_\mathcal{T}: G_{\Q,S} \to \bT^0$ induces a homomorphism $R \to \bT^0$. Moreover, we have $\tr(D_\mathcal{T})(\Fr_\ell)=T_\ell$ and
\[
\tr(D_\mathcal{T})(\Fr_\ell) \equiv 1+\ell \pmod{I^0}
\]
for any $\ell \nmid Np$.
\end{lem}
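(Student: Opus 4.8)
The plan is to verify that the pseudorepresentation $D_\mathcal{T}: G_{\Q,S} \to \bT^0$ satisfies the four conditions (1)--(4) of Proposition \ref{prop:universal property of R}, with $A$ replaced by the complete local ring $\bT^0$ (or rather, checking the conditions level by level on the Artinian quotients $\bT^0/\m_{\bT^0}^i$, since $R$ pro-represents the functor). Once conditions (1)--(4) are checked, the universal property of $R$ produces the desired homomorphism $R \to \bT^0$ directly. First I would note that condition (4), $\det(D_\mathcal{T}) = \kcyc$, is exactly the content of Lemma \ref{lem:determinant of modular representation}. Condition (1), that $D_\mathcal{T} \otimes_{\bT^0} \F_p = \Db = \psi(\omega \oplus 1)$, follows from the congruence $T_\ell \equiv 1 + \ell \pmod{I^0}$ together with $\det(D_\mathcal{T}) = \kcyc$: reducing mod the maximal ideal, the characteristic polynomial of $\mathrm{Fr}_\ell$ becomes $X^2 - (1+\ell)X + \ell = (X-1)(X-\ell)$, and since $\ell \equiv \omega(\mathrm{Fr}_\ell) \pmod p$ this is the characteristic polynomial of $1 \oplus \omega$ evaluated at $\mathrm{Fr}_\ell$; by Chebotarev and the fact that a pseudorepresentation is determined by characteristic polynomials of Frobenii, $D_\mathcal{T} \otimes \F_p = \Db$.

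Next I would handle condition (3), that $D_\mathcal{T}|_{I_N} = \psi(1 \oplus 1)$, i.e.\ $\tr(D_\mathcal{T})|_{I_N} = 2$ and $\det(D_\mathcal{T})|_{I_N} = 1$. The determinant statement is immediate from $\det(D_\mathcal{T}) = \kcyc$ and the fact that $\kcyc$ is unramified at $N$. For the trace, I would use Lemma \ref{lem:I_N is unipotent}: after inverting $p$, $\rho_{\mathcal{T}[1/p]}|_{I_N}$ is unipotent, so $\tr(\rho_{\mathcal{T}[1/p]}(\tau)) = 2$ for all $\tau \in I_N$; since $\bT^0 \hookrightarrow \bT^0[1/p]$ and $D_\mathcal{T} \otimes \bT^0[1/p] = \psi(\rho_{\mathcal{T}[1/p]})$, the trace is $2$ already in $\bT^0$. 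Condition (2), the finite-flat property at $p$, is precisely Corollary \ref{cor:GMA str on E_T}, which exhibits $E_\mathcal{T}$ with a $\bT^0$-GMA structure realizing $D_\mathcal{T}$ as a finite-flat pseudorepresentation. With all four conditions verified, Proposition \ref{prop:universal property of R} gives the map $R \to \bT^0$.

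Finally, for the remaining assertions of the lemma: the identity $\tr(D_\mathcal{T})(\mathrm{Fr}_\ell) = T_\ell$ is built into the definition of $D_\mathcal{T}$ (it was defined by $\tr(D_\mathcal{T})(\mathrm{Fr}_\ell) = T_\ell$ in the paragraph following Lemma \ref{lem:determinant of modular representation}). For the congruence $T_\ell \equiv 1 + \ell \pmod{I^0}$, I would recall that $I^0$ is the image in $\bT^0$ of the Eisenstein ideal $I = \Ann_\bT(E)$, and that by definition $T_\ell$ acts on the Eisenstein series $E$ by the eigenvalue $1 + \ell$ (visible from the $q$-expansion \eqref{eq:eisenstein series}, where the coefficient of $q^\ell$ for $\ell \nmid N$ is $1 + \ell$); hence $T_\ell - (1+\ell) \in I'$, so its image lies in $I^0$. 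This is the shortest part of the argument. I do not anticipate a serious obstacle here: every ingredient is either a definition or an already-established result (Lemmas \ref{lem:I_N is unipotent} and \ref{lem:determinant of modular representation}, Corollary \ref{cor:GMA str on E_T}, Proposition \ref{prop:universal property of R}); the only mild subtlety is the bookkeeping of passing between $\bT^0$ and $\bT^0[1/p]$ when reading off the trace on $I_N$, which is harmless because $\bT^0$ is $p$-torsion-free.
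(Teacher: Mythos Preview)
Your proposal is correct and follows essentially the same approach as the paper: verify conditions (1)--(4) of Proposition \ref{prop:universal property of R} using Lemma \ref{lem:determinant of modular representation} for (4), Lemma \ref{lem:I_N is unipotent} for (3), Corollary \ref{cor:GMA str on E_T} for (2), and the Eisenstein congruence for (1), then read off the trace identity and congruence from the definition of $D_\mathcal{T}$ and the $q$-expansion of $E$. The paper's proof is terser but uses the same ingredients in the same roles.
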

\begin{proof}
To show the first statement, we have to check that $D_\mathcal{T}$ satisfies conditions (1)-(4) of Proposition \ref{prop:universal property of R}. Note that the second statement implies (1), so we prove the second statement first. The fact that $\tr(D_\mathcal{T})(\Fr_\ell)=T_\ell$ follows from Lemma \ref{lem:determinant of modular representation}. It follows from the formula \eqref{eq:eisenstein series} that $T_\ell-(1+\ell) \in I^0$, and so the second statement follows.

Condition (2) follows from Corollary \ref{cor:GMA str on E_T}, condition (3) follows from Lemma \ref{lem:I_N is unipotent}, and condition (4) follows from Lemma \ref{lem:determinant of modular representation}.
\end{proof}

\begin{lem}
\label{lem:augmentation of R}
The pseudorepresentation $D_\mathrm{Eis}=\psi(\Z_p\oplus \Z_p(1))$ induces a homomorphism $R \to \Z_p$. Moreover, we have $\tr(D_\mathrm{Eis})(\Fr_\ell)=1+\ell$ for all $\ell \nmid Np$.
\end{lem}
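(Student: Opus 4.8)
The plan is to verify that $D_\mathrm{Eis} = \psi(\Z_p \oplus \Z_p(1))$ satisfies the four conditions of Proposition \ref{prop:universal property of R}, so that it induces a homomorphism $R \to \Z_p$, and then to read off the trace formula from the definition of an associated pseudorepresentation of a direct sum.

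First I would check condition (1): reducing modulo $p$, we have $\Z_p(1) \otimes \F_p = \F_p(1)$, and the mod-$p$ cyclotomic character is $\omega$, so $D_\mathrm{Eis} \otimes_{\Z_p} \F_p = \psi(\omega \oplus 1) = \Db$. For condition (4), $\det(\psi(\nu_1 \oplus \nu_2)) = \nu_1 \nu_2$, so $\det(D_\mathrm{Eis}) = \kcyc$ because the trivial character contributes nothing. Condition (3) is immediate: both $\Z_p$ and $\Z_p(1)$ are unramified at $N$ (since $N \neq p$), so $D_\mathrm{Eis}|_{I_N} = \psi(1 \oplus 1)$. Condition (2), finite-flatness at $p$, is the only point requiring a little care: one should observe that $\Z_p$ and $\Z_p(1)$ extend to the finite flat group schemes $(\Z/p^s\Z)_{/\Z_p}$ and $(\mu_{p^s})_{/\Z_p}$ (at each torsion level), that each is individually a finite-flat representation in the sense of the section, and hence $\Z_p \oplus \Z_p(1)$ is too; then Example \ref{eg:ffgs GMA is ff} (or Theorem \ref{thm:ffgs GMA is ff}) applied to the endomorphism GMA of this module shows that the associated Cayley--Hamilton representation is finite-flat, so $D_\mathrm{Eis}$ is a finite-flat pseudorepresentation by Definition \ref{defn:ff pseudo}. (Alternatively, this follows directly from Proposition \ref{prop:flat red}, since $D_\mathrm{Eis}$ is manifestly reducible and each factor is Ramakrishna-finite-flat.)

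For the trace formula, by definition $\tr(\psi(\nu_1 \oplus \nu_2)) = \nu_1 + \nu_2$, so for $\ell \nmid Np$ with Frobenius $Fr_\ell$ we get $\tr(D_\mathrm{Eis})(Fr_\ell) = 1 + \kcyc(Fr_\ell) = 1 + \ell$, using that $\kcyc(Fr_\ell) = \ell$ for such $\ell$.

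I do not anticipate a serious obstacle here; the statement is essentially a bookkeeping check against Proposition \ref{prop:universal property of R}. If anything, the mild subtlety is making the finite-flatness verification (condition (2)) rigorous at the level of Cayley--Hamilton representations rather than just representations, but this is exactly what Example \ref{eg:ffgs GMA is ff} was set up to handle, so invoking it suffices.
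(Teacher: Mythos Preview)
Your proposal is correct and follows essentially the same approach as the paper: both verify the four conditions of Proposition \ref{prop:universal property of R}, with conditions (1), (3), (4) being immediate and condition (2) handled via Theorem \ref{thm:ffgs GMA is ff}/Example \ref{eg:ffgs GMA is ff} (the paper phrases this as $\Z_p \oplus \Z_p(1)$ being the Tate module of the $p$-divisible group $(\Q_p/\Z_p \oplus \mu_{p^\infty})_{/\Z_p}$, which is the same thing as your torsion-level description). Your alternative route for (2) via Proposition \ref{prop:flat red} is also valid and is a nice observation.
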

\begin{proof}
The second statement is clear, and implies that $D_\mathrm{Eis}$ satisfies condition (1) of Proposition \ref{prop:universal property of R}. Conditions (3) and (4) are clear, and condition (2) follows from Theorem \ref{thm:ffgs GMA is ff} and the fact that $\Z_p\oplus \Z_p(1)$ is the Tate module of the generic fiber of the $p$-divisible group
\[
(\Q_p/\Z_p \oplus \mu_{p^\infty}) _{/ \Z_p}. \qedhere
\]
\end{proof}

This map $R \to \Z_p$ gives $R$ the structure of an augmented $\Z_p$-algebra. We let $\Jm = \ker(R \to \Z_p)$, and refer to $\Jm$ as the augmentation ideal of $R$. We see that $\Jm \subset R$ is the ideal generated by the reducibility ideal $J$ of $R$ (since $D_\mathrm{Eis}$ is obviously reducible) along with lifts over $R \rsurj R/J$ of the image under $R_{\Db,\fl}^\red \rsurj R/J$ of the set 
\begin{equation}
\label{eq:gens of Jmin}
\{\nu_1(\sigma)-\kcyc(\sigma), \nu_2(\sigma)-1\mid \sigma \in G_{\Q,S}\} \subset R_{\Db,\fl}^\red,
\end{equation}
where $\nu_1, \nu_2$ (the universal finite-flat deformations of $\omega$ and $\F_p$, respectively) arise from Proposition \ref{prop:flat red}. 

Using the two maps $R \to \bT^0$ and $R \to \Z_p$, we can produce a map $R \to \bT$, as in \cite[Cor.\ 7.1.3]{WWE1}. 

\begin{prop}
\label{prop:R to T}
There is a surjective homomorphism $R \rsurj \bT$ of augmented $\Z_p$-algebras. Moreover $\bT$ and $\bT^0$ are generated as $\Z_p$-algebras by the Hecke operators $T_n$ with $(n,Np)=1$. In particular, $\bT$ and $\bT^0$ are reduced. 
\end{prop}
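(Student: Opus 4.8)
The plan is to glue the homomorphism $R\to\bT^0$ of Lemma~\ref{lem:map R to T0} and the homomorphism $R\to\Z_p$ of Lemma~\ref{lem:augmentation of R} into a homomorphism $R\to\bT$ by means of the pullback description $\bT\simeq\bT^0\times_{\Z_p/(N-1)\Z_p}\Z_p$ of Lemma~\ref{lem:bT is a pull-back}, then to prove this map is surjective together with the assertion that $\bT$ and $\bT^0$ are generated over $\Z_p$ by $\{T_\ell:\ell\nmid Np\}$, and finally to read off reducedness. The recurring input is that $R$ is topologically generated as a $\Z_p$-algebra by $\{\tr(D)(\sigma):\sigma\in G_{\Q,S}\}$ (a pseudodeformation ring is topologically generated by the coefficients of the characteristic polynomials $\chi_D(\sigma)$, and $\det(D)=\kcyc$ is scalar-valued), hence, by continuity of $\tr(D)$ and the Chebotarev density theorem, by $\{\tr(D)(Fr_\ell):\ell\nmid Np\}$.

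\emph{The map.} By Lemmas~\ref{lem:map R to T0} and~\ref{lem:augmentation of R} the element $\tr(D)(Fr_\ell)$ maps to $T_\ell\in\bT^0$ and to $1+\ell\in\Z_p$, and these agree modulo $N-1$ since $T_\ell-1-\ell\in I^0$ and $\bT^0/I^0\simeq\Z_p/(N-1)\Z_p$ (Theorem~\ref{thm:congruence number}). Because $R$ is topologically generated by such elements and $\Z_p/(N-1)\Z_p$ is finite, the two composites $R\rightrightarrows\Z_p/(N-1)\Z_p$ coincide, so the pullback property of Lemma~\ref{lem:bT is a pull-back} produces $R\to\bT$; it is a map of augmented $\Z_p$-algebras since the map $\bT\to\Z_p$ occurring in that pullback is the augmentation, and $R\to\bT\to\Z_p$ is the map attached to $D_\mathrm{Eis}$. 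Let $D_\bT:G_{\Q,S}\to\bT$ denote the push-forward of $D$, so that its composites with $\bT\to\bT^0$ and $\bT\to\Z_p$ are $D_\cT$ and $D_\mathrm{Eis}$.

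\emph{Surjectivity and generation.} The image $\bT''\subseteq\bT$ of $R\to\bT$ is a closed $\Z_p$-subalgebra (the continuous image of the compact ring $R$). It contains the image of $T_\ell$, namely $\tr(D_\bT)(Fr_\ell)$, for all $\ell\nmid Np$; hence by continuity of $\tr(D_\bT)$ and density of the $Fr_\ell$ it contains $\tr(D_\bT)(\sigma)$ for every $\sigma\in G_{\Q,S}$. Evaluating at a Frobenius lift $Fr_p\in G_{\Q,S}$, Proposition~\ref{prop:bT0 structure of TaJ} and Lemma~\ref{lem:T_p and Fr_p} give $\tr(D_\cT)(Fr_p)=\kcyc(Fr_p)+U_p$, while $D_\mathrm{Eis}=\psi(\Z_p\oplus\Z_p(1))$ gives $\tr(D_\mathrm{Eis})(Fr_p)=\kcyc(Fr_p)+1$; via the identification of Lemma~\ref{lem:bT is a pull-back} these assemble to $\tr(D_\bT)(Fr_p)=\kcyc(Fr_p)+U_p$ in $\bT$, where $U_p$ is the unit root of $X^2-T_pX+p$ (whose augmentation is $1$ because $U_p-1\in I$). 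Since $\kcyc(Fr_p)\in\Z_p\subseteq\bT''$, we get $U_p\in\bT''$; as $U_p\in1+\mathfrak m_{\bT}$ it is a unit with inverse $\sum_{n\ge0}(1-U_p)^n\in\bT''$ (the series converges and $\bT''$ is closed), so $T_p=U_p+pU_p^{-1}\in\bT''$. Thus $\bT''$ contains every $T_\ell$ with $\ell\ne N$, hence the image of $\bT'$; being closed, $\bT''=\bT$. The same argument shows the $\Z_p$-subalgebra generated by $\{T_\ell:\ell\nmid Np\}$ is all of $\bT$ (it is a finitely generated $\Z_p$-submodule, hence already closed); applying $\bT\twoheadrightarrow\bT^0$ gives the statement for $\bT^0$.

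\emph{Reducedness and the main obstacle.} Being generated over $\Z_p$ by the $T_\ell$ ($\ell\nmid Np$), the ring $\bT^0\otimes_{\Z_p}\C$ is generated by these commuting operators on $S_2(N;\C)_\mathrm{Eis}$, which are self-adjoint for the Petersson pairing and hence simultaneously diagonalizable, so $\bT^0\otimes\C$ embeds in a product of copies of $\C$ and is reduced; since $\bT^0$ is $\Z_p$-torsion-free (it acts faithfully on a free $\Z_p$-module) it embeds in $\bT^0\otimes\C$ and is reduced, and $\bT\simeq\bT^0\times_{\Z_p/(N-1)\Z_p}\Z_p\subseteq\bT^0\times\Z_p$ is a subring of a reduced ring. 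The step I expect to be the crux is the surjectivity: knowing merely that $R\to\bT^0$ and $R\to\Z_p$ are each surjective is not enough (a proper subring of a fibre product can surject onto both factors), and what actually forces $U_p$, and hence $T_p$, into the image is the explicit evaluation of $\tr(D_\bT)$ at $Fr_p$ using the local structure of the modular Tate module $\cT$ at $p$.
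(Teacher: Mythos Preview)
Your construction of the map $R\to\bT$ and the reducedness argument are fine and match the paper. The gap is in the surjectivity step, specifically in your formula for $\tr(D_\cT)(Fr_p)$.

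You claim $\tr(D_\cT)(Fr_p)=\kcyc(Fr_p)+U_p$, reading Proposition~\ref{prop:bT0 structure of TaJ} as saying that $G_p$ acts on $\cT^{mul}$ by $\kcyc$. But that statement is only used (and only correct) for the $I_p$-action: since $\det(\rho_\cT)=\kcyc$ and $Fr_p$ acts on $\cT^{et}$ by $U_p$, the diagonal character on $\cT^{mul}$ must be $\kcyc$ times the inverse of the unramified character $Fr_p\mapsto U_p$. Hence
\[
\rho_\cT(Fr_p)=\ttmat{zU_p^{-1}}{*}{0}{U_p},\qquad \tr(D_\cT)(Fr_p)=zU_p^{-1}+U_p,
\]
where $z=\kcyc(Fr_p)\in\Z_p^\times$. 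From $zU_p^{-1}+U_p\in\bT''$ you cannot isolate $U_p$: subtracting the scalar $z$ does not kill the $zU_p^{-1}$ term, and $z\ne p$ so this is not $T_p$ either.

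The paper's remedy is to use a second trace value. Pick $\sigma\in I_p$ with $x:=\kcyc(\sigma)\not\equiv 1\pmod p$; then $\sigma$ acts trivially on the unramified quotient $\cT^{et}$ and by $x$ on $\cT^{mul}$, so
\[
\tr(D_\bT)(Fr_p\sigma)-x\,\tr(D_\bT)(Fr_p)=(xzU_p^{-1}+U_p)-x(zU_p^{-1}+U_p)=(1-x)U_p.
\]
Since $1-x\in\Z_p^\times$, this puts $U_p$ (and by the analogous combination, $U_p^{-1}$) into $\bT''$, whence $T_p=U_p+pU_p^{-1}\in\bT''$. With this correction your argument goes through and agrees with the paper's.
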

\begin{proof}
We already have $R \to \bT^0$ via $D_\mathcal{T}$, and $R \to \Z_p$ via $D_\mathrm{Eis}$. By Lemma \ref{lem:bT is a pull-back}, to construct a homomorphism $R \to \bT$, is suffices to show that the composite maps
\[
R \to \bT^0 \to \bT^0/I^0 \to \Z_p/(N-1)\Z_p
\]
and
\[
R \to \Z_p \to \Z_p/(N-1)\Z_p
\]
coincide. Equivalently, we have to show that 
\[
D_\mathcal{T} \otimes_{\bT^0} \Z_p/(N-1)\Z_p =  D_\mathrm{Eis} \otimes_{\Z_p} \Z_p/(N-1)\Z_p
\]
as pseudorepresentations $G_{\Q,S} \to \Z_p/(N-1)\Z_p$. However, we have already shown in Lemma \ref{lem:map R to T0} and Lemma \ref{lem:augmentation of R} that these two pseudorepresentations agree at $\Fr_\ell$ for all $\ell \nmid Np$, hence they agree by continuity. This defines a map $R \to \bT$. By construction, we see that the composite map
\[
R \to \bT \to \bT/I \cong \Z_p
\]
coincides with the augmentation $R \to \Z_p$, and so $R \to \bT$ is a map of augmented $\Z_p$-algebras.

Under the isomorphism $\bT \cong \bT^0 \times_{\Z_p/(N-1)\Z_p} \Z_p$, we can think of an element of $\bT$ as a pair $(t,a) \in \bT^0 \times \Z_p$. Then the pseudorepresentation $D_\bT:G_{\Q,S} \to \bT$ corresponding to the map $R \to \bT$ constructed above is given by $D_\bT(\sigma)=(D_\mathcal{T}(\sigma),D_\mathrm{Eis}(\sigma))$. In this notation, for any prime $\ell \nmid Np$, the element $T_\ell \in \bT$ for $\ell \nmid N$ corresponds to the pair $(T_\ell, \ell+1)$, and we see that $\tr(D_\bT)(\Fr_\ell)=(T_\ell, \ell+1)$ for any prime $\ell \nmid Np$. By Chebotaryov density, $R$ is generated as a $\Z_p$-algebra by the elements $\tr(D_\bT)(\Fr_\ell)$ for $\ell \nmid Np$; indeed, by \cite[Cor.\ 2.39]{chen2014}, $R_\Db$ is generated by the values of the characteristic polynomial on Frobenius elements, and its quotient $R$ has determinants valued in $\Z_p$. Then we see that the image of $R \to \bT$ is generated, as a $\Z_p$-algebra, by the elements $T_\ell$.

It remains to show that the image of $R \to \bT$ contains $T_p$. Since $U_p^2-T_pU_p+p=0$, we see that the it is enough to show that the image contains $U_p$ and $U_p^{-1}$. In the notation above, the element $U_p \in \bT$ corresponds to the pair $(U_p,1) \in \bT^0 \times \Z_p$. Choose a Frobenius element $\Fr_p \in G_p$ and let $z=\kcyc(\Fr_p)$. By Proposition \ref{prop:bT0 structure of TaJ} and Lemma \ref{lem:T_p and Fr_p}, we have
\[
\rho_\mathcal{T}(\Fr_p) = \ttmat{zU_p^{-1}}{*}{0}{U_p}.
\]
Choose an element $\sigma \in I_p$ such that $\omega(\sigma) \ne 1$, and let $x=\kcyc(\sigma)$. Then we have
\[
\rho_\mathcal{T}(\Fr_p\sigma) = \ttmat{xzU_p^{-1}}{*}{0}{U_p}.
\]
We see that $\tr(D_\mathcal{T})(\Fr_p\sigma)-x\tr(D_\mathcal{T})(\Fr_p)=(1-x)U_p$. We also see easily that $\tr(D_\mathrm{Eis})(\Fr_p\sigma)-x\tr(D_\mathrm{Eis})(\Fr_p)=1-x$. Hence we see that $((1-x)U_p,1-x)\in \bT$ is in the image of $R \to \bT$. Since 
\[
x \equiv \omega(\sigma) \not \equiv 1 \pmod{p}
\]
we see that $1-x \in \Z_p^\times$, and so we have that $U_p$ is in the image of $R \to \bT$. A similar argument shows that $U_p^{-1}$ is also in the image, completing the proof. 

The operators $T_n$ for $(n, N) = 1$ are well-known to act semi-simply on the modules of modular forms and cusp forms. Since $\bT$ and $\bT^0$ are generated by these operators, we see that they are reduced. 
\end{proof}

\begin{rem}
In \cite{CE2005}, the authors present a proof of a related result. However, the proof of \cite[Lem.\ 3.16]{CE2005} contains a subtle error about the difference between $T_p$ and $U_p$. To correct that error, one would have to argue as above. Similarly, the proof of \cite[Prop.\ 3.18]{CE2005} is flawed and must be corrected as in the proof of Corollary \ref{cor:gens of I} below. 
\end{rem} 

\section{Computation of $R^\red$}

Let $R^\red$ denote the quotient of $R$ representing the pseudodeformations of $\Db$ that satisfy the conditions of Proposition \ref{prop:universal property of R} and are also reducible. Such a quotient exists in light of the theory of reducibility for pseudorepresentations reviewed in \S\ref{subsec:reducible}. In this section we give a presentation of $R^\red$. 

\subsection{Presentation of $R^\red$}
For this section, we let $R' = R_{\Db,\fl}/I_{\det}$ (recall the notation of \S\ref{subsec:construction of R}). 
\begin{lem}
We have $R'^\red \simeq \Z_p[\Gal(\Q(\zeta_N)/\Q)^{p\mathrm{\text{-}part}}]$.
\end{lem}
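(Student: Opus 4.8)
The plan is to deduce this from Proposition~\ref{prop:flat red}, which identifies $R_{\Db,\fl}^\red$ with $R_1 \hat{\otimes}_{\Z_p} R_2$ (note $W(\F)=\Z_p$ here), where $R_1, R_2$ are Ramakrishna's finite-flat deformation rings of $\chi_1=\omega$ and $\chi_2=1$, and the universal reducible finite-flat deformation of $\Db$ is $\psi(\nu_1\oplus\nu_2)$. Since reducibility is a quotient condition (Proposition~\ref{prop:reducibility}(1)), passing from $R_{\Db,\fl}$ to $R'=R_{\Db,\fl}/I_{\det}$ and then to the reducible quotient amounts to dividing $R_{\Db,\fl}^\red$ by the image of $I_{\det}$, which is generated by $\nu_1(\sigma)\nu_2(\sigma)-\kcyc(\sigma)$; thus
\[
R'^\red \;\simeq\; R_1\hat{\otimes}_{\Z_p}R_2 \big/ \big(\nu_1(\sigma)\nu_2(\sigma)-\kcyc(\sigma) : \sigma \in G_{\Q,S}\big).
\]
Equivalently, $R'^\red$ pro-represents the functor of reducible finite-flat deformations $\psi(\nu_1\oplus\nu_2)$ of $\Db$ with $\nu_1\nu_2=\kcyc$, so that $\nu_1=\kcyc\nu_2^{-1}$ is determined by $\nu_2$.

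Next I would identify $R_1$ and $R_2$ explicitly, writing $\Gamma := \Gal(\Q(\zeta_N)/\Q)^{p-\mathrm{part}}$, a cyclic group of order $p^{t}$ with $t=v_p(N-1)$. The local input is the classification of rank-one finite flat group schemes over $\Z_p$ (connected–étale sequence and Cartier duality, $p$ being odd): a deformation $\nu$ of the trivial character with $\nu|_{G_p}$ finite-flat must be unramified at $p$, since a nonzero connected part of the associated group scheme would force the residual character to be $\bar\kcyc\neq 1$; conversely every deformation of $1$ unramified at $p$ is finite-flat. As a deformation unramified outside $S=\{p,N,\infty\}$ and unramified at $p$ is unramified outside $N$, class field theory gives $R_2 \simeq \Z_p[\Gamma]$ with $\nu_2$ the tautological character $G_{\Q,S}\twoheadrightarrow ((\Z/N\Z)^\times)^{p-\mathrm{part}}=\Gamma\hookrightarrow\Z_p[\Gamma]^\times$. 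Dually, a deformation $\nu$ of $\omega=\bar\kcyc$ is finite-flat at $p$ if and only if $\nu\kcyc^{-1}$ is unramified at $p$ (the group scheme is now forced to be of multiplicative type), so $\nu\mapsto\nu\kcyc^{-1}$ yields $R_1\simeq\Z_p[\Gamma]$ with universal character $\nu_1=\kcyc\cdot\xi_1$, $\xi_1$ the tautological character of the relevant copy of $\Gamma$. These identifications may already be available in \cite{PG4} alongside Proposition~\ref{prop:flat red}, in which case I would simply cite them.

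Finally I would combine these. Under the identifications, $R_1\hat{\otimes}_{\Z_p}R_2\simeq\Z_p[\Gamma\times\Gamma]$, the universal reducible finite-flat deformation is $\psi(\kcyc\xi_1\oplus\xi_2)$ with $\xi_i$ the tautological character of the $i$-th factor, and $\det=\kcyc\,\xi_1\xi_2$. Imposing $\det=\kcyc$ is the relation $\xi_1\xi_2=1$; since $\xi_1$ and $\xi_2$ are the same character of $\Gamma$ composed with the two inclusions $\Gamma\hookrightarrow\Z_p[\Gamma\times\Gamma]^\times$, this is generated by the elements $[\delta]-1$ with $\delta$ ranging over the diagonal $\Delta\subset\Gamma\times\Gamma$. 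Hence
\[
R'^\red \;\simeq\; \Z_p[\Gamma\times\Gamma]/([\delta]-1:\delta\in\Delta) \;\simeq\; \Z_p[(\Gamma\times\Gamma)/\Delta] \;\simeq\; \Z_p[\Gal(\Q(\zeta_N)/\Q)^{p-\mathrm{part}}],
\]
the last isomorphism being e.g. $(g,h)\mapsto gh^{-1}$.

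I expect the main obstacle to be the second step: pinning down $R_1$ and $R_2$ relies on the classification of finite flat group schemes of $p$-power rank over $\Z_p$ (so that residual characters $1$ and $\bar\kcyc$ force étale, resp. multiplicative, models). This is the only genuinely nontrivial ingredient, and depending on how much of it is packaged in \cite{PG4}, the write-up will be either a short citation or a self-contained deduction from Oort–Tate/Raynaud theory; the bookkeeping with quotients and the group-algebra computation in the last step are routine.
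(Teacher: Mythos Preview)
Your proposal is correct and follows essentially the same approach as the paper: both invoke Proposition~\ref{prop:flat red} to write $R_{\Db,\fl}^\red$ as a completed tensor product of the two character deformation rings, identify each with $\Z_p[\Gal(\Q(\zeta_N)/\Q)^{p-\mathrm{part}}]$ via the observation that finite-flat deformations of $1$ (resp.\ $\omega$) are exactly those unramified at $p$ (resp.\ unramified after a $\kcyc^{-1}$-twist), and then impose the determinant condition. The paper treats the second step as ``well-known'' without invoking Oort--Tate explicitly, and presents the final quotient computation slightly more tersely than your $\Z_p[\Gamma\times\Gamma]/\Delta$ formulation, but the substance is the same.
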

\begin{proof}
By Proposition \ref{prop:flat red}, we have $R_{\Db,\fl}^\red = R_{1,\fl} \hat{\otimes}_{\Z_p} R_{\omega,\fl}$, where $R_{1,\fl}$ and $R_{\omega,\fl}$ are the finite-flat deformation rings of $1$ and $\omega$, respectively, and the universal deformation is $\psi(\nu_\omega \oplus \nu_1)$, where $\nu_\omega$ and $\nu_1$ are the universal deformation characters. Using the well-known description of the universal deformation ring of a character, and the fact that finite-flat deformations of $1$ (resp.\ $\omega$) are trivial (resp.\ trivial after a twist by $\kcyc^{-1}$) on $I_p$, we have 
\[
R_{\omega,\fl} \cong R_{1,\fl} \cong \Z_p[\Gal(\Q(\zeta_N)/\Q)^{p\mathrm{\text{-}part}}]
\]
and that $\nu_\omega =\kcyc \dia{-}$ and $\nu_1=\dia{-}$, where $\dia{-}$ is the character given by
\[
G_{\Q,S} \onto \Gal(\Q(\zeta_N)/\Q)^{p\mathrm{\text{-}part}} \subset \Z_p[\Gal(\Q(\zeta_N)/\Q)^{p\mathrm{\text{-}part}}]^\times
\]
(the quotient map, followed by map sending group element to the corresponding group-like element). 

By the definition of $I_{\det}$ we see that
\[
R'^\red \cong \frac{R_{\omega,\fl} \hat{\otimes}_{\Z_p} R_{1,\fl}}{(\nu_\omega(\sigma)\otimes\nu_1(\sigma)-\kcyc(\sigma) : \sigma \in G_{\Q,S})} \simeq \Z_p[\Gal(\Q(\zeta_N)/\Q)^{p\mathrm{\text{-}part}}]. \qedhere
\] 
\end{proof}

We fix this isomorphism so that the universal pseudodeformation $D'^\red : G_{\Q,S} \to R'^\red$ can be written as $D'^\red =\psi( \dia{-} \kcyc \oplus \dia{-}^{-1})$. 

Recall from \S \ref{subsec:notation for inertia} that we have chosen an element $\gamma \in I_N$ such that $\gamma$ topologically generates $I_N^{\mathrm{pro}\text{-}p}$. Let $g \in \Gal(\Q(\zeta_N)/\Q)^{p\mathrm{\text{-}part}}$ be the image of $\gamma$ in the quotient. Since $\Gal(\Q(\zeta_N)/\Q)^{p\mathrm{\text{-}part}}$ is the Galois group of a finite $p$-extension of $\Q$ that is totally ramified at $N$, we see that $g$ generates $\Gal(\Q(\zeta_N)/\Q)^{p\mathrm{\text{-}part}}$.

\begin{prop}
\label{prop:presentation of Rred}
There is a presentation
\[
R^\red \simeq \Z_p[X]/(X^2,(N-1)X)
\]
where the universal deformation $D^\red: G_{\Q,S} \to \Z_p[X]/(X^2, (N-1)X)$ is given by $D^\red = \psi( \bar{ \dia{-}} \kcyc \oplus \bar{ \dia{-}}^{-1})$. Here $\bar{\dia{-}}$ is the character $\sigma \mapsto (1+X)^{m_\sigma}$, where $m_\sigma \in \Z/p^n\Z$ is defined so that $\sigma$ maps to $g^{m_{\sigma}}$ in $\Gal(\Q(\zeta_N)/\Q)^{p\mathrm{\text{-}part}}$. 
\end{prop}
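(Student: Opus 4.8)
The plan is to read off the presentation from the preceding lemma, which identifies $R'^\red \simeq \Z_p[\Gal(\Q(\zeta_N)/\Q)^{p-\mathrm{part}}]$ with universal reducible deformation $D'^\red = \psi(\dia{-}\kcyc \oplus \dia{-}^{-1})$, and then to cut down by the effect of the ideal $I_{ss}$. Write $H = \Gal(\Q(\zeta_N)/\Q)^{p-\mathrm{part}}$; it is cyclic of order $p^n$, where $p^n$ is the exact power of $p$ dividing $N-1$, generated by the image $g$ of $\gamma$. I will use the coordinate $X := g-1$, which identifies $\Z_p[H] = \Z_p[X]/\bigl((1+X)^{p^n}-1\bigr)$. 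Since $R = R_{\Db,\fl}/(I_{\det}+I_{ss})$ and $R' = R_{\Db,\fl}/I_{\det}$, the argument splits into: (i) showing $R^\red$ is $R'^\red$ modulo the image of $I_{ss}$; (ii) computing that image in $\Z_p[H]$; (iii) tracking the universal deformation.

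For step (i) I would argue by corepresentability. By Proposition~\ref{prop:universal property of R}, $R$ corepresents the functor of pseudodeformations of $\Db$ satisfying conditions (1)--(4), so $R^\red$ corepresents its subfunctor of \emph{reducible} such deformations. On the other hand, a homomorphism out of $R'^\red$ killing the image of $I_{ss}$ is the same as a reducible finite-flat pseudodeformation $D$ with $\det(D)=\kcyc$ (conditions (1), (2), (4) and reducibility) such that moreover $\tr(D)(\tau)=2$ for all $\tau\in I_N$; but as noted in the proof of Proposition~\ref{prop:universal property of R}, given (4) this last condition is equivalent to (3). Hence $R'^\red/(\text{image of }I_{ss})$ corepresents the same functor as $R^\red$, so by Yoneda they agree compatibly with universal deformations. (Alternatively, one can invoke Proposition~\ref{prop:reducibility}(2) and base change of the GMA along $R'\to R$.)

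For step (ii), for $\tau\in I_N$ we have $\tr(D'^\red)(\tau)-2 = \dia{\tau}\kcyc(\tau)+\dia{\tau}^{-1}-2$, and $\kcyc(\tau)=1$ because $N\ne p$, so this equals $\dia{\tau}^{-1}(\dia{\tau}-1)^2$. As $\tau$ runs through $I_N$ the element $\dia{\tau}$ runs through all of $H\subset\Z_p[H]^\times$: the extension $\Q(\zeta_N)/\Q$ is totally ramified at $N$, so $I_N$ surjects onto $H$, and concretely $\dia{\gamma^k}=g^k$ with $g$ a generator. Since $\dia{\tau}^{-1}$ is a unit, the image of $I_{ss}$ in $\Z_p[H]$ is the ideal generated by $\{(h-1)^2 : h\in H\}$. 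In the coordinate $X$ this ideal contains $(g-1)^2=X^2$, and it is contained in $(X^2)$ since each $(g^k-1)^2=\bigl((1+X)^k-1\bigr)^2\in(X^2)$; hence it equals $(X^2)$. Therefore
\[
R^\red \cong \Z_p[X]/\bigl((1+X)^{p^n}-1,\, X^2\bigr) = \Z_p[X]/(X^2,\, p^nX) = \Z_p[X]/\bigl(X^2,\, (N-1)X\bigr),
\]
using that $(1+X)^{p^n}-1\equiv p^nX \pmod{X^2}$ and that $N-1$ and $p^n$ differ by a $p$-adic unit. For step (iii), pushing $D'^\red=\psi(\dia{-}\kcyc\oplus\dia{-}^{-1})$ through $R'^\red\rsurj R^\red$ sends $\dia{\sigma}$ to $(1+X)^{m_\sigma}$, which is well defined because $(1+X)^{p^n}=1+p^nX=1$ in $R^\red$; this yields $D^\red=\psi(\bar{\dia{-}}\kcyc\oplus\bar{\dia{-}}^{-1})$ with $\bar{\dia{\sigma}}=(1+X)^{m_\sigma}$, as claimed.

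The step I expect to require the most care is (ii): one must see that the whole family of trace relations $\{\tr(D'^\red)(\tau)-2 : \tau\in I_N\}$ collapses to the single relation $X^2=0$ (this is where total ramification at $N$ and the unit identity $h+h^{-1}-2=h^{-1}(h-1)^2$ enter), and then that reducing the ambient group-ring relation $(1+X)^{p^n}-1$ modulo $X^2$ produces exactly $p^nX$. Steps (i) and (iii) are formal once the corepresentability bookkeeping is set up correctly.
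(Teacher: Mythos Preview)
Your proof is correct and follows essentially the same route as the paper. Both arguments identify $R^\red$ as the quotient of $R'^\red \cong \Z_p[H]$ by the trace-at-$I_N$ condition, use the identity $h+h^{-1}-2 = h^{-1}(h-1)^2$ to collapse the family of relations to the single relation $X^2=0$ (with $X=g-1$), and then reduce $(1+X)^{p^n}-1$ modulo $X^2$ to obtain $p^nX$. Your step (i) is slightly more explicit about the corepresentability bookkeeping than the paper, which simply asserts that $R^\red$ is the quotient of $R'^\red$ by the condition $D'^\red|_{I_N}=\psi(1\oplus 1)$, but the substance is identical.
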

\begin{proof} 
Let $t = v_p(N-1)$, so that $\# \Gal(\Q(\zeta_N)/\Q)^{p\mathrm{\text{-}part}}=p^t$. There are isomorphisms
\[
\Z_p[x]/(x^{p^t}-1) \lrisom \Z_p[\Gal(\Q(\zeta_N)/\Q)^{p\mathrm{\text{-}part}}] \cong R'^\red,
\]
where the first sends $x$ to the group-like element $g$.  We use these isomorphisms as identifications in the rest of the proof.

The quotient $R^\red$ of $R'^\red$ corresponds to the condition that $D'^\red$ satisfy $D'^\red|_{I_N} = \psi(1 \oplus 1)$. We know that $\det(D'^\red)=\kcyc$, which satisfies $\kcyc|_{I_N}=1$. Then the only condition is that $\tr(D'^\red)|_{I_N}= 2$. We know that $\tr(D'^\red)|_{I_N} = \dia{-}+\dia{-}^{-1}$. For $\sigma \in I_N$, we have
\[
 \dia{\sigma}+\dia{\sigma}^{-1}=\dia{g^{m_\sigma}} + \dia{g^{m_\sigma}}^{-1} = x^{m_\sigma}+x^{-m_\sigma}.
\]
Since $m_{\gamma}=1$ by our choice of $g$, we see that the condition $\tr(D'^\red)|_{I_N}= 2$ is equivalent to the conditions
\[
x^{m}+x^{-m}=2
\]
for all $m=1, \dots, p^n$. This proves that $R^\red$ is the quotient of $\Z_p[x]$ by the ideal $\mathfrak{a}$ generated by the set
\[
\{x^{p^n}-1\} \cup \{ x^m+x^{-m}-2 \ : \ m= 1, \dots, p^n\}.
\]

It only remains to simplify the presentation. Notice that $x$ is a unit, and that
\[
x^m( x^m+x^{-m}-2) = x^{2m}-2x^m+1 = (x^m-1)^2.
\]
Since this is a multiple of $(x-1)^2$, we see that $\mathfrak{a}$ is generated by $\{x^{p^t}-1,(x-1)^2\}$.

Letting $X=x-1$, notice that 
\[
x^{p^t}-1 = (X+1)^{p^t}-1 \equiv p^tX \pmod{X^2}. 
\]
We see that $\mathfrak{a}$ is generated by $\{p^tX,X^2\}$, so $\mathfrak{a} = ((N-1)X, X^2)$. 
\end{proof}

\subsection{Structure of $\Jm/J$} Recall that $\Jm = \ker(R \to \Z_p)$, where $R \to \Z_p$ is the augmentation defined in Lemma \ref{lem:augmentation of R}. Let $J \subset R$ be the reducibility ideal, so that $R^\red = R/J$. Note that $J \subset \Jm$.

\begin{cor}
\label{cor: size of Jm/J}
We have $\Jm/J \simeq \Z_p/(N-1)\Z_p$.
\end{cor}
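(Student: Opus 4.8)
The plan is to compare the augmentation ideal and the reducibility ideal directly, using the presentation of $R^\red$ just obtained. Recall that $\Jm = \ker(R \to \Z_p)$ and $J = \ker(R \to R^\red)$, with $J \subset \Jm$, so $\Jm/J = \ker(R^\red \to \Z_p)$ is the augmentation ideal of $R^\red$. By Proposition \ref{prop:presentation of Rred}, $R^\red \simeq \Z_p[X]/(X^2,(N-1)X)$, and the augmentation map $R^\red \to \Z_p$ is the one sending the pseudodeformation $D^\red = \psi(\bar{\dia{-}}\kcyc \oplus \bar{\dia{-}}^{-1})$ to $D_\mathrm{Eis} = \psi(\Z_p(1) \oplus \Z_p)$; under the isomorphism this is exactly the $\Z_p$-algebra map killing $X$ (equivalently, setting $\bar{\dia{-}} = 1$). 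Hence $\Jm/J$ is the ideal $(X) \subset \Z_p[X]/(X^2,(N-1)X)$.

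Next I would identify this ideal as a $\Z_p$-module. In $\Z_p[X]/(X^2,(N-1)X)$, the element $X$ satisfies $X^2 = 0$ and $(N-1)X = 0$, and there are no further relations involving $X$, so $(X) = \Z_p \cdot X \simeq \Z_p/(N-1)\Z_p$ as a $\Z_p$-module. This gives $\Jm/J \simeq \Z_p/(N-1)\Z_p$, as claimed.

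The one point that requires a little care — and the main (minor) obstacle — is checking that the augmentation $R^\red \to \Z_p$ really does correspond to $X \mapsto 0$ under the chosen identification, rather than $X \mapsto$ (some other element reducing to it). This follows because the map $R \to \Z_p$ of Lemma \ref{lem:augmentation of R} is induced by $D_\mathrm{Eis} = \psi(\Z_p(1) \oplus \Z_p)$, and the composite $R^\red \to \Z_p$ classifies the reducible deformation of $\Db$ to $\Z_p$ with characters $\kcyc$ and $1$; comparing with $D^\red = \psi(\bar{\dia{-}}\kcyc \oplus \bar{\dia{-}}^{-1})$, this forces $\bar{\dia{-}}$ to specialize to the trivial character, i.e. $(1+X)^{m_\sigma} \mapsto 1$ for all $\sigma$, i.e. $X \mapsto 0$ (using that the $m_\sigma$ surject onto $\Z/p^t\Z$, so $X\mapsto 0$ is forced). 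With this verified, the module computation above completes the proof.
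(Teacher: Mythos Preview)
Your proof is correct and follows essentially the same approach as the paper's: both identify $\Jm/J$ with the augmentation ideal $(X)$ of $R^\red \simeq \Z_p[X]/(X^2,(N-1)X)$ and then compute this ideal as $\Z_p/(N-1)\Z_p$. The paper phrases the last step as $XR^\red \simeq R^\red/\Ann_{R^\red}(X) = R^\red/(X,N-1)$, while you compute the $\Z_p$-module $(X)$ directly from the relations; your extra care in checking that the augmentation sends $X \mapsto 0$ is a point the paper leaves implicit.
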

\begin{proof}
By Proposition \ref{prop:presentation of Rred}, we have a presentation
\[
R^\red \cong \Z_p[X]/(X^2,(N-1)X),
\]
which we will use as an identification. Then the image of $\Jm$ in $R^\red$ is $XR^\red$ and we have
\[
\Jm/J \cong XR^\red \simeq R^\red/(\Ann_{R^\red}(X)) = R^\red/(X,N-1) \cong \Z_p/(N-1)\Z_p. \qedhere
\]
\end{proof}

\begin{prop}
\label{prop:technical study of Jm}
Let $Y=1 - d_\gamma$ (here $\gamma \in I_N$ is as in \S \ref{subsec:notation for inertia} and $d_\gamma \in R$ is as in \eqref{eq: definition of abcds}). Then $Y \in \Jm$ and the image of $Y$ in $\Jm/J$ is a generator of that cyclic group. Moreover, $Y^2=-b_\gamma c_\gamma \in J$ and there is an inclusion $(\Jm)^2 \subset J$.
\end{prop}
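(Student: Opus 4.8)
The plan is to analyze the $2\times 2$ matrix $\rho(\gamma) = \sm{a_\gamma}{b_\gamma}{c_\gamma}{d_\gamma}$ using the two deformation conditions that $R$ imposes at the inertia subgroup $I_N$: the semistable condition $\tr(D)(\tau) = 2$ for $\tau \in I_N$, and the determinant condition $\det(D) = \kcyc$, noting that $\kcyc$ is unramified at $N$ so $\kcyc(\gamma) = 1$. First I would record that $\det\rho(\gamma) = a_\gamma d_\gamma - b_\gamma c_\gamma = 1$ and $\tr\rho(\gamma) = a_\gamma + d_\gamma = 2$ in $R$. Setting $Y = 1 - d_\gamma$, the trace relation gives $a_\gamma = 2 - d_\gamma = 1 + Y$, so $a_\gamma d_\gamma = (1+Y)(1-Y) = 1 - Y^2$, and substituting into the determinant relation yields $1 - Y^2 - b_\gamma c_\gamma = 1$, i.e. $Y^2 = -b_\gamma c_\gamma$. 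By Proposition~\ref{prop:reducibility}(2), the reducibility ideal $J$ is exactly the image of the map $B \otimes_R C \to R$, which contains $b_\gamma c_\gamma$; hence $Y^2 = -b_\gamma c_\gamma \in J$.

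Next I would check $Y \in \Jm$ and that its image generates $\Jm/J \cong \Z_p/(N-1)\Z_p$ (Corollary~\ref{cor: size of Jm/J}). Modulo $J$ the representation $\rho$ becomes the reducible universal deformation $D^\red = \psi(\bar{\dia{-}}\kcyc \oplus \bar{\dia{-}}^{-1})$ of Proposition~\ref{prop:presentation of Rred}, with $R^\red \cong \Z_p[X]/(X^2,(N-1)X)$ and $\bar{\dia{\sigma}} = (1+X)^{m_\sigma}$. Under a suitable choice of matrix coordinates compatible with the idempotents $(e_1,e_2)$, the diagonal entries of $\rho$ reduce modulo $J$ to the two characters $\bar{\dia{-}}\kcyc$ and $\bar{\dia{-}}^{-1}$; since $\kcyc(\gamma)=1$, the entry $d_\gamma$ reduces to $\bar{\dia{\gamma}}^{\pm 1} = (1+X)^{\pm m_\gamma} = (1+X)^{\pm 1}$ (using $m_\gamma = 1$ from the choice of $g$ in Proposition~\ref{prop:presentation of Rred}). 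So modulo $J$ we get $1 - d_\gamma \equiv 1 - (1+X)^{\pm 1} \equiv \mp X \pmod{X^2}$ in $R^\red$, and since $X^2 = 0$ there this is exactly $\mp X$, a generator of $XR^\red = \Jm/J$. In particular $Y \in \Jm$ (its image lies in the image of $\Jm$) — more carefully, $Y \in \Jm$ because $\Jm = \ker(R \to \Z_p)$ and under the augmentation $d_\gamma \mapsto 1$ (the Eisenstein deformation is unramified at $N$, so $\gamma$ acts trivially), hence $Y \mapsto 0$.

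Finally I would deduce $\Jm^2 \subset J$. Recall $\Jm$ is generated by $J$ together with the lifts of the elements $\nu_1(\sigma) - \kcyc(\sigma)$ and $\nu_2(\sigma) - 1$ for $\sigma \in G_{\Q,S}$ (the display \eqref{eq:gens of Jmin}). Since $R^\red = \Z_p[X]/(X^2,(N-1)X)$, the image of $\Jm$ in $R^\red$ is $XR^\red$, and $(XR^\red)^2 = X^2 R^\red = 0$; thus the image of $\Jm^2$ in $R^\red = R/J$ is zero, i.e. $\Jm^2 \subset J$. I expect the only delicate point to be the bookkeeping in the second paragraph: verifying that the chosen matrix coordinates on $E$ are compatible (up to the harmless swap of the two idempotents) with the ordering of characters in $D^\red$, so that $d_\gamma$ really does reduce to $(1+X)^{\pm 1}$ rather than to $(1+X)^{\pm 1}\kcyc(\gamma)$ with a nontrivial cyclotomic factor — but since $\kcyc$ is unramified at $N$ this factor is $1$ regardless, so even this ambiguity does no harm. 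Everything else is the direct matrix computation of the first paragraph together with citations to the already-established structure of $R^\red$ and $\Jm/J$.
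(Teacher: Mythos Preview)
Your proposal is correct and follows essentially the same approach as the paper: the trace--determinant computation giving $Y^2 = -b_\gamma c_\gamma$, the identification of the image of $Y$ in $R^\red \cong \Z_p[X]/(X^2,(N-1)X)$ with $\pm X$ via Proposition~\ref{prop:presentation of Rred}, and the deduction $\Jm^2 \subset J$ from $(\Jm/J)^2 = 0$. The paper phrases the last step as $\Jm = YR + J$ together with $Y^2 \in J$, which is the same argument; your discussion of the sign/idempotent ambiguity is more explicit than the paper's but unnecessary, since either sign yields a generator of $\Jm/J$.
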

\begin{proof}
The fact that $Y \in \Jm$ is immediate from the description of $\Jm$ in \eqref{eq:gens of Jmin}. By Proposition \ref{prop:presentation of Rred}, we have a presentation
\[
R^\red = \Z_p[X]/(X^2,(N-1)X).
\]
From the proof of that proposition, we see that $Y$ maps to $X$, which generates $\Jm/J$.

To see that $Y^2=-b_\gamma c_\gamma$, note that, in $R$, we have the equation $a_\gamma + d_\gamma =2$ and so $a_\gamma = 1+Y$. Then we have
\[
\rho(\gamma) = \ttmat{a_\gamma}{b_\gamma}{c_\gamma}{d_\gamma} = \ttmat{1+Y}{b_\gamma}{c_\gamma}{1-Y}.
\]
The equation $\det(\rho)(\gamma) = \kcyc(\gamma) = 1$ forces
\[
(1-Y)(1+Y)-b_\gamma c_\gamma =1.
\]
This implies $Y^2 = -b_\gamma c_\gamma$.

Finally, the fact that the image of $Y$ in $\Jm/J$ is a generator implies that $\Jm=YR + J$. Since $Y^2 \in J$, we see that $(\Jm)^2 \subset J$.
\end{proof}

We will use Galois cohomology to see that $J$ is a principal ideal and that $b_\gamma c_\gamma$ is a generator (Theorem \ref{thm:cohomology final answer}). This will imply that $J=(\Jm)^2$ and that $\Jm = YR$ (Corollary \ref{cor:Jm=YR}).

\section{Calculations in Galois cohomology}
\label{sec:cohom calculations}

In this section, our goal is to determine the structure of $E/\Jm E$. We have already determined this structure in terms of Galois cohomology. This was done in Proposition \ref{prop:reducibility}, which we will recall shortly. Therefore, we must calculate various Galois cohomology groups. Namely, certain \emph{global finite-flat cohomology} groups $H^\bullet_\fl(-)$ must be determined. This cohomology theory and other cohomological tools are defined in Appendix \ref{sec:galois cohomology appendix}. The reader will find it necessary to review Appendix \ref{sec:galois cohomology appendix} before following this section's arguments in detail. We use the notation and definitions introduced in Appendix \ref{sec:galois cohomology appendix} freely here.

The calculations of $H^1_\fl$ are crucial to our proof of $R=\bT$ and to our computation of ranks. The calculations of $H^2_\fl$, on the other hand, are not logically necessary for the proofs. We include them as a guide to understand this work in the general context of deformation theory: the groups $H^2_\fl$ are the ``correct $H^2$ groups,'' in that they are the right place to compute the obstructions to lifting a global finite-flat deformation. However, we prove an injectivity result in Proposition \ref{prop:H^2_f in H^2} that implies that it is sufficient to calculate these obstructions in the usual global cohomology $H^2$. Therefore, we can limit the amount of new technology we have to introduce, at the cost of, in places, doing ad hoc work to make a deformation finite-flat. See Remark \ref{rem:global flat massey} for more on this. 

\subsection{Main results}

Recall the notations of \S \ref{subsec:construction of R}. Let
\[
E = \ttmat{R}{B}{C}{R}
\]
be the GMA form of $E$ as in \eqref{eq: definition of abcds}, i.e.\ $B$ and $C$ are $R$-modules, and the multiplication in $E$ induces an $R$-module homomorphism $B \otimes_R C \to R$. We know from Proposition \ref{prop:reducibility} that the image of this homomorphism is the reducibility ideal $J$. 

Let $\Bm=B/\Jm B$ and $\Cm = C/\Jm C$. Since $I_{\det} + I_{ss} \subset \Jm$, the natural maps $B_{\Db,\fl}/\Jm B_{\Db,\fl} \to \Bm$ and 
$C_{\Db,\fl}/\Jm C_{\Db,\fl} \to \Cm$ are isomorphisms. By Proposition \ref{prop:red GMA and exts}, for any $\Z_p$-module $M$ we have
\[
\Hom(\Bm,M) \cong \Ext_{G_{\Q,S},\fl}^1(\Z_p, M(1)), \ \Hom(\Cm,M) \cong \Ext_{G_{\Q,S},\fl}^1(\Z_p(1), M).
\]
In the notation of Appendix \ref{sec:galois cohomology appendix}, this is
\begin{equation}
\label{eq: B and C are dual to cohomology}
\Hom(\Bm,M) = H^1_\fl(M(1)), \quad \Hom(\Cm,M) = H^1_\fl(M(-1)).
\end{equation}

In this section we compute these cohomology groups to reach our goal, the following characterizations of $\Bm$ and $\Cm$. 

\begin{thm}
\label{thm:cohomology final answer}
Let $\gamma\in I_N$ be the element chosen in \S \ref{subsec:notation for inertia}. Recall the notation of \eqref{eq: definition of abcds}.
\begin{enumerate}
\item There are isomorphisms
\[
\Bm \simeq \Z_p, \quad \Cm \simeq \Z_p/(N-1)\Z_p.
\]
\item The $R$-modules $B$ and $C$ are cyclic and $b_\gamma \in B$ and $c_\gamma \in C$ are generators.
\item The ideal $J \subset R$ is principal and $b_\gamma c_\gamma \in J$ is a generator.
\end{enumerate}
\end{thm}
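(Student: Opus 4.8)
The plan is to prove the three parts in order, deducing each from computations of the global finite-flat cohomology groups $H^1_\fl(\zp{s}(i))$ and the duality $\eqref{eq: B and C are dual to cohomology}$. First I would establish part (1). By $\eqref{eq: B and C are dual to cohomology}$ we have $\Hom(\Bm,\F_p)=H^1_\fl(\F_p(1))$ and $\Hom(\Cm,\F_p)=H^1_\fl(\F_p(-1))$, so I would compute these $\F_p$-dimensions (and, more precisely, the module structure of $H^1_\fl(\zp{s}(\pm 1))$ as $s$ varies, using the results referenced as Corollary \ref{cor:flat cohom of ad}). The expectation, matching the notation set up in \S\ref{subsec:GC}, is that $H^1_\fl(\Z[1/Np],\zp{t}(1))$ and $H^1_\fl(\Z[1/Np],\zp{t}(-1))$ are each free of rank $1$ over $\zp{t}$ with $p^t\|(N-1)$, giving $\Hom(\Bm,\zp{s})\cong\zp{s}$ for all $s\le t$ and similarly for $\Cm$. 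Since $\Bm$ and $\Cm$ are finitely generated $\Z_p$-modules (being cyclic over the Artinian-mod-$\Jm$ quotient), a module whose $\Hom$ into each $\zp{s}$ is $\zp{\min(s,t)}$ must be $\Z_p$ in the $B$ case (here one needs that $B/\Jm B$ is in fact $\Z_p$-free, which follows because $\Hom(\Bm,\Z_p)$ is free of rank $1$ and $\Hom(\Bm,\F_p)$ is $1$-dimensional) and $\zp{t}=\Z_p/(N-1)$ in the $C$ case. The asymmetry — $\Bm$ free but $\Cm$ torsion — should come out of the finite-flat local condition at $p$ together with the global Poitou–Tate duality bookkeeping in the appendix.

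Next, part (2): the modules $B$ and $C$ over $R$ are cyclic with $b_\gamma, c_\gamma$ generators. By Nakayama it suffices to show $b_\gamma$ generates $\Bm = B/\Jm B$ and $c_\gamma$ generates $\Cm = C/\Jm C$. For $\Cm\cong\Z_p/(N-1)$ this is the key point: I would argue that the class of $c_\gamma$ in $\Cm\cong H^1_\fl(\F_p(-1))^\vee$-dual is nonzero, i.e.\ that the extension class $\nu_2\to\nu_1$ recorded by the $C$-entry of $\rho$ is ramified at $N$ — indeed $c_\gamma$ is by definition the value of the lower-left cocycle entry of $\rho$ on the chosen inertia generator $\gamma$, and if it vanished then $\rho|_{I_N}$ would be reducible in a way incompatible with the universal finite-flat structure; making this precise uses that the relevant cohomology class is detected precisely by its restriction to $I_N$, which is the content of the $H^1_\fl$ computation. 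The same style of argument, now in the $B$-direction where the extension is the cyclotomic one, shows $b_\gamma$ generates $\Bm\cong\Z_p$; here one uses that the universal finite-flat deformation is genuinely non-trivial (equivalently $R\neq R^\red$ would be too strong — rather one uses that $\Bm$ is free of rank $1$ and $b_\gamma$ reduces to a generator mod $p$).

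Finally, part (3) follows formally: by Proposition \ref{prop:reducibility}(2), $J$ is the image of the multiplication map $B\otimes_R C\to R$. Since $B = Rb_\gamma$ and $C = Rc_\gamma$ by part (2), the image of $B\otimes_R C$ is exactly $R\cdot(b_\gamma c_\gamma)$, so $J$ is principal, generated by $b_\gamma c_\gamma$. This also recovers, via Proposition \ref{prop:technical study of Jm}, that $Y^2=-b_\gamma c_\gamma$ generates $J$ and hence (as promised after that proposition) $J=\Jm^2$ and $\Jm=YR$. The main obstacle I anticipate is part (1) together with the generation claim for $c_\gamma$ in part (2): everything hinges on the precise finite-flat $H^1$ computation — knowing not just the $\F_p$-dimensions but the $\zp{s}$-module structure, and knowing that these classes are "seen" by their restriction to $I_N$ — which requires carefully combining the local finite-flat condition at $p$ (Ramakrishna's theory), the unramified-outside-$Np$ condition, and Poitou–Tate duality, all of which are deferred to the appendix; the cyclicity of $B$ and $C$ over $R$ (as opposed to over $R/\Jm$) then needs a Nakayama argument that is only valid because $R$ is local, which it is.
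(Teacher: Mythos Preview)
Your proposal is essentially correct and follows the same route as the paper: part (1) via the duality \eqref{eq: B and C are dual to cohomology} and the explicit computation of $H^1_\fl(\zp{s}(\pm 1))$ (Propositions \ref{prop:computation of B} and \ref{prop:C computation}), part (2) via Nakayama plus the observation that generators of $H^1_\fl(\F_p(\pm 1))$ are ramified at $N$, and part (3) formally from part (2) and Proposition \ref{prop:reducibility}(2).

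Two small points of imprecision are worth flagging. First, you cite Corollary \ref{cor:flat cohom of ad} as input to part (1), but in the paper that corollary is a \emph{consequence} of this theorem; the actual inputs are the independent cohomology computations in \S\ref{sec:cohom calculations}. Second, your explanation of why $c_\gamma$ (and $b_\gamma$) must be nonzero in $\Cm$ (resp.\ $\Bm$) drifts: the phrase ``$\rho|_{I_N}$ would be reducible in a way incompatible with the universal finite-flat structure'' is not the operative reason. The clean argument, which you do gesture at afterward, is exactly as in the paper: a nonzero $\phi\in\Hom_R(B,\F_p)$ corresponds to a generator of $H^1_\fl(\F_p(1))$, and the associated extension cocycle evaluated at $\gamma$ is $\phi(b_\gamma)$; since every generator of $H^1_\fl(\F_p(1))$ is ramified at $N$ (this is the content of the Kummer-theoretic Proposition \ref{prop:computation of B}), $\phi(b_\gamma)\neq 0$, and the Nakayama-type lemma finishes. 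Your sentence ``$\Bm$ is free of rank $1$ and $b_\gamma$ reduces to a generator mod $p$'' is circular as stated---that is precisely what needs to be shown.
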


\begin{rem}
While the $\Z_p$-module structures of $\Bm$ and $\Cm$ suffice for the sequel, the interested reader may find it useful to know that there are canonical isomorphisms
\[
\Bm \cong H^2_{\fl^\perp}(\Z_p), \qquad \Cm \cong H^2_{(N)}(\Z_p(2)),
\]
where ``$\fl^\perp$'' refers to the dual condition to the flat condition on the cohomology of $\Z_p(1)$, in the standard sense (see e.g.\ \cite[App.\ B]{GV2018}), but will not be used in our computations. The latter isomorphism is proved in Proposition \ref{prop:C computation}. 
\end{rem}

\begin{rem}
\label{rem:only cyclicity is needed in the proof of R=T}
We note that (2) implies (3) and (2) follows easily from (the proof of) (1). For the proof of $R=\bT$ (Corollary \ref{cor:R=T} below), it is only necessary to prove part (3). To prove (3) directly, one could work exclusively with cohomology with $\F_p$-coefficients, rather than the more cumbersome $\Zr$-coefficients we use below. However, the methods are essentially the same, and the payoff of using $\Zr$-coefficients is the result (1), which is crucial to our study of the finer structure of $R$ and $\bT$ (see \S \ref{subsec:structure of R B and C} and \S \ref{subsec:GMA lemma}).
\end{rem}

The following ``dual'' result to Theorem \ref{thm:cohomology final answer} specifies the cohomology groups generated by the cohomology classes $a,b,c$ of the introduction.

\begin{cor}
	\label{cor:flat cohom of ad}
	Let $s$ be an integer such that $p^s \mid (N-1)$. Then $H^1_\fl(\zp{s}(i)) \simeq \zp{s}$ for $i = -1, 0, 1$. Moreover, 
	\begin{enumerate}
		\item $H^1_\fl(\zp{s})$ is generated by the class of the cocycle 
		\[
		G_{\Q,S} \rsurj \Gal(\Q(\zeta_N)/\Q) \rsurj \zp{s}, 
		\]
		for any choice of surjective homomorphism $\Gal(\Q(\zeta_N)/\Q) \onto \zp{s}$.
		\item $H^1_\fl(\zp{s}(1))$ is generated by the Kummer class of $N$.
		\item $H^1_\fl(\zp{s}(-1))$ is equal to $H^1_{(p)}(\zp{s}(-1))$.
	\end{enumerate}
\end{cor}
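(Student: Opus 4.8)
The plan is to extract the three computations from Theorem~\ref{thm:cohomology final answer} together with the duality \eqref{eq: B and C are dual to cohomology}, handling $i = \pm 1$ via the duality and $i = 0$ by a separate argument. Applying \eqref{eq: B and C are dual to cohomology} with $M = \zp{s}$ gives $H^1_\fl(\zp{s}(1)) = \Hom(\Bm, \zp{s})$ and $H^1_\fl(\zp{s}(-1)) = \Hom(\Cm, \zp{s})$; by Theorem~\ref{thm:cohomology final answer}(1) we have $\Bm \simeq \Z_p$ and $\Cm \simeq \Z_p/(N-1)\Z_p \simeq \zp{t}$ with $t = v_p(N-1) \geq s$, so both $\Hom$-groups are $\simeq \zp{s}$. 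This settles the isomorphisms for $i = \pm 1$.

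For the generator in (2): the Kummer class $\kappa(N) \in H^1(\Z[1/Np], \zp{s}(1))$ is unramified at $p$ because $N \in \Z_p^\times$, hence lies locally in $H^1_{p,\fl}(\zp{s}(1))$ (the peu ramifi\'ee subgroup), so $\kappa(N) \in H^1_\fl(\zp{s}(1))$. To see that it generates the free rank-one $\zp{s}$-module $H^1_\fl(\zp{s}(1))$, I would reduce modulo $p$: by \eqref{eq: B and C are dual to cohomology} and freeness of $\Bm$, the reduction map $H^1_\fl(\zp{s}(1)) \to H^1_\fl(\F_p(1))$ is surjective with kernel $p\,H^1_\fl(\zp{s}(1))$, and the image of $\kappa(N)$ is the class of $N$ in $H^1(\Z[1/Np], \F_p(1)) = \Z[1/Np]^\times \otimes \F_p$, which is nonzero because $N$ is not a $p$-th power in $\Z[1/Np]^\times$. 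Since $H^1_\fl(\F_p(1))$ is one-dimensional, Nakayama's lemma gives that $\kappa(N)$ generates.

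For (3): I would show that $H^1_{p,\fl}(\zp{s}(-1)) = \Ext^1((\mu_{p^s})_{/\Z_p}, (\zp{s})_{/\Z_p}) = 0$, the $\Ext$ being taken in finite-flat group schemes over $\Z_p$. Given such an extension $\cG$, the connected-\'etale sequence shows that $\cG^0$ meets the sub-group-scheme $(\zp{s})_{/\Z_p}$ trivially (as the latter is \'etale), so $\cG^0$ injects into the quotient $\mu_{p^s}$; then $\cG/\cG^0$ is \'etale and surjects onto $\mu_{p^s}/\cG^0$, which must therefore be \'etale, hence trivial, so $\cG^0 \isoto \mu_{p^s}$ and the extension splits. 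Consequently the local condition cutting out $H^1_\fl(\zp{s}(-1))$ inside $H^1(\Z[1/Np], \zp{s}(-1))$ is simply ``locally trivial at $p$,'' i.e. $H^1_\fl(\zp{s}(-1)) = \ker(H^1(\Z[1/Np], \zp{s}(-1)) \to H^1_p(\zp{s}(-1))) = H^1_{(p)}(\zp{s}(-1))$.

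Finally, for $i = 0$ and part (1): finite-flat deformations of the trivial character are unramified at $p$ (this is recorded in the proof of the lemma preceding Proposition~\ref{prop:presentation of Rred}; equivalently, $H^1_{p,\fl}(\zp{s})$ is the unramified subgroup of $H^1_p(\zp{s})$, since the only finite-flat group scheme over $\Z_p$ killed by $p^s$ with constant generic fibre is the constant one). Hence $H^1_\fl(\zp{s})$ consists of the classes in $H^1(\Z[1/Np], \zp{s})$ unramified at $p$, that is, unramified outside $N$; this group is $\Hom(\Gal(\Q(\zeta_N)/\Q)^{\mathrm{pro}-p}, \zp{s}) \simeq \Hom(\zp{t}, \zp{s}) \simeq \zp{s}$ by Kronecker--Weber, and since $s \le t$ any surjection $\Gal(\Q(\zeta_N)/\Q) \onto \zp{s}$ is a generator. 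The main obstacle is precisely this local finite-flat bookkeeping at $p$ --- identifying $H^1_{p,\fl}(\zp{s}(i))$ for each $i$ (the unramified subgroup for $i = 0$, a subgroup containing $\kappa(N)$ for $i = 1$, and zero for $i = -1$) --- which is where the structure theory of finite-flat group schemes over $\Z_p$ recalled in \S\ref{sec:ffps} and the appendix enters; everything else is formal once Theorem~\ref{thm:cohomology final answer} is available.
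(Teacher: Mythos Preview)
Your argument is correct and follows essentially the same route as the paper: the cases $i=\pm 1$ via \eqref{eq: B and C are dual to cohomology} and Theorem~\ref{thm:cohomology final answer}(1), the case $i=0$ and parts (1)--(2) by class field/Kummer theory, and part (3) by the vanishing $H^1_{p,\fl}(\zp{s}(-1))=0$. The only difference is packaging: the paper has already recorded the local finite-flat computations you carry out (your connected--\'etale argument for $i=-1$, the unramified identification for $i=0$, and the peu ramifi\'ee description for $i=1$) as Lemma~\ref{lem:flat subspaces}, and simply cites it together with Proposition~\ref{prop:computation of B}, so its proof is a two-line invocation rather than a rederivation.
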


Along the way, we also prove the following result, which will be used in our study of obstruction theory for $R$. 
\begin{prop}
\label{prop:H^2_f in H^2}
For any $r>0$ and $i \in \{0,1,-1\}$, the natural map
\[
H^1(\Zr(i)) \lra H^1_{p}(\Zr(i))/H^1_{p,\fl}(\Zr(i))
\]
is surjective. Equivalently, the natural map
\[
H^2_\fl(\Zr(i)) \lra H^2(\Zr(i))
\]
is injective. 
\end{prop}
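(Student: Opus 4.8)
The plan is to prove the first statement (surjectivity of the localization-at-$p$ map $H^1(\Zr(i)) \to H^1_p(\Zr(i))/H^1_{p,\fl}(\Zr(i))$); the equivalence with the injectivity of $H^2_\fl(\Zr(i)) \to H^2(\Zr(i))$ will then follow from the definition of $H^2_\fl$ via the long exact sequence of the mapping cone defining finite-flat cohomology in Appendix~\ref{sec:galois cohomology appendix}. For the surjectivity, the first step is to understand the target: I would compute the local quotient $H^1_p(\Zr(i))/H^1_{p,\fl}(\Zr(i))$ explicitly for each $i \in \{0,1,-1\}$ using local Tate duality and the known structure of finite-flat extensions over $\Z_p$. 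Since $H^1_{p,\fl}(\Zr(i)) = \Ext^1$ in the category of finite-flat group schemes, its size is governed by the dimension count for Ramakrishna's flat deformation functor: for $p > 3$ and the characters in play, one has $\dim_{\F_p} H^1_{p,\fl}(\F_p(i))$ equal to $1$ for $i=0$ (unramified-at-$p$ classes, since $1$ extends $1$ flatly only via unramified classes plus the one coming from $\mu_p$-type... ) — more precisely I expect $H^1_{p,\fl}(\F_p) $ and $H^1_{p,\fl}(\F_p(1))$ each $1$-dimensional and $H^1_{p,\fl}(\F_p(-1)) = 0$, so that the quotient $H^1_p/H^1_{p,\fl}$ is $1$-dimensional for $i = -1$ and... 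I would pin these down from the cited companion paper \cite{PG4} and from the standard fact that $\dim H^1_p(\F_p(i)) = 1 + \dim H^0_p(\F_p(i)) + \dim H^0_p(\F_p(1-i))$.

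The second and main step is to produce enough global classes to surject onto this local quotient. Here the natural tool is the nine-term Poitou–Tate exact sequence for $\Z[1/Np]$ with coefficients $\Zr(i)$, together with the dual sequence for $\Zr(1-i)$. The map in question fits into
\[
H^1(\Zr(i)) \to \bigoplus_{v \in \{N,p,\infty\}} H^1_v(\Zr(i)) \to H^1(\Zr(1-i))^\vee,
\]
and composing with projection to the $p$-component and then to the quotient by $H^1_{p,\fl}$, surjectivity is equivalent (by exactness and a dimension chase) to the statement that the classes in $H^1(\Zr(1-i))^\vee$ dual to $H^1_{p,\fl}(\Zr(i))$ are hit, i.e.\ to an injectivity statement for a Selmer-type group attached to $\Zr(1-i)$ with a flat condition at $p$ and no condition (or the trivial-on-$I_N$ condition) elsewhere. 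That dual group is exactly the kind of $H^1_\fl(\Z[1/Np], \Zr(j))$ computed in Theorem~\ref{thm:cohomology final answer} and Corollary~\ref{cor:flat cohom of ad}: it is free of rank $1$ over $\zp{s}$ (for $p^s \mid N-1$), and one tracks that its localization at $p$ is nonzero and lands outside $H^1_{p,\fl}$ in the relevant twist. So the argument reduces to: (a) the global flat cohomology in each twist has the expected rank $1$ (already available from the results cited above), and (b) the resulting global classes are \emph{non-flat at $p$}, equivalently their images in $H^1_p/H^1_{p,\fl}$ are nonzero — this is where the Kummer class of $N$ (for $i=1$), the cyclotomic/$\Gal(\Q(\zeta_N)/\Q)$ class (for $i=0$), and the class generating $H^1_{(p)}(\Zr(-1))$ (for $i=-1$) are used, since each is manifestly ramified-or-nontrivial at $p$ in the appropriate sense.

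The third step is a bookkeeping one: assemble the local and global Euler-characteristic counts (the global Euler characteristic formula for $\Z[1/Np]$, plus $H^0$ and $H^2$ computations, the latter using that the relevant $H^2(\Zr(i))$ are controlled by class groups of $\Q(\zeta_p)$ which vanish in the relevant eigenspaces because $p$ is not irregular in the ranges forced by $p > 3$ and $p \mid N-1$ — or more robustly, using Proposition~\ref{prop:reducibility} and the input from \cite{PG4} to avoid any regularity hypothesis) and check that the numbers force the cokernel of $H^1(\Zr(i)) \to H^1_p(\Zr(i))/H^1_{p,\fl}(\Zr(i))$ to vanish. The main obstacle I anticipate is Step two part (b): it is not a priori obvious that the rank-$1$ global flat class is non-trivial modulo flat-at-$p$ classes, and handling all three twists $i = 0, 1, -1$ uniformly — especially reconciling the $\zp{s}$-module structures with the $\F_p$-coefficient statement in the Proposition (one reduces mod $p$, but must check no torsion phenomena obstruct this) — will require care. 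A clean way around it is to prove the injectivity form of the statement directly: show $H^2_\fl(\Zr(i)) \to H^2(\Zr(i))$ is injective by identifying $\ker$ with a local term $H^1_p(\Zr(i))/H^1_{p,\fl}(\Zr(i))$ modulo the image of global $H^1$, and then invoke the already-established computation of $H^1_\fl$ in all twists to see this quotient is killed — in effect, Theorem~\ref{thm:cohomology final answer} and Corollary~\ref{cor:flat cohom of ad} do most of the work, and the present proposition is their local-global shadow.
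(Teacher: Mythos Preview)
Your step (b) contains a concrete error. The three classes you name --- the Kummer class of $N$ for $i=1$, the $\Gal(\Q(\zeta_N)/\Q)$ class for $i=0$, and the generator of $H^1_{(p)}(\Zr(-1))$ for $i=-1$ --- are exactly the generators of $H^1_\fl(\Zr(i))$ from Corollary~\ref{cor:flat cohom of ad}, and by the very definition of $H^1_\fl$ they are \emph{flat} at $p$, hence map to \emph{zero} in $H^1_p(\Zr(i))/H^1_{p,\fl}(\Zr(i))$. To spell it out: $N \in \Z_p^\times$, so its Kummer class lies in $\Z_p^\times \otimes \Zr = H^1_{p,\fl}(\Zr(1))$ by Lemma~\ref{lem:flat subspaces}(2); the $\Q(\zeta_N)$-class is unramified at $p$, so lies in $H^1_{p,\fl}(\Zr)$ by Lemma~\ref{lem:flat subspaces}(3); and a class in $H^1_{(p)}$ is by definition trivial at $p$. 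These classes are ramified at $N$, not at $p$. Your closing suggestion that Theorem~\ref{thm:cohomology final answer} and Corollary~\ref{cor:flat cohom of ad} ``do most of the work'' reflects the same confusion: those results compute the \emph{kernel} of $H^1 \to H^1_p/H^1_{p,\fl}$, which says nothing directly about the image.

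The paper's argument is far more direct and avoids the Poitou--Tate detour entirely for $i=0,1$ (these cases are left to the reader). For $i=1$, the Kummer class of $p$ (not $N$) has $p$-adic valuation $1$ and generates the quotient $(\Q_p^\times/\Z_p^\times) \otimes \Zr \simeq \Zr$. For $i=0$, the character $G_{\Q,S} \to \Zr$ arising from the cyclotomic $\Z_p$-extension of $\Q$ is totally ramified at $p$ and generates $H^1_p(\Zr)/H^1_{p,\fl}(\Zr) \simeq \Zr$. Only for $i=-1$, where $H^1_{p,\fl}(\Zr(-1)) = 0$ and no obvious explicit global class presents itself, does the paper use a counting argument (Lemma~\ref{lem:surj of H1->H1_p in C coord}): one computes $\#H^1_{(p)}(\Zr(-1))$ via duality with Proposition~\ref{prop:C computation}, computes $\#H^1_p(\Zr(-1)) = p^r$, and bounds $\#H^1(\Zr(-1))$ from below so that the sequence $0 \to H^1_{(p)} \to H^1 \to H^1_p$ is forced to be right-exact. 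Your step~3 gestures at precisely this, and that is the correct idea for $i=-1$; you should discard step~(b) and the imprecise dual-Selmer discussion of step~2.
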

\begin{rem} 
The equivalence is clear from the cone construction of $H^i_\fl$. See further comments in Remark \ref{rem:global flat massey}. 
\end{rem}

\subsection{Calculation of certain $H^1_{p,\fl}(V)$} 
\label{subsec: calc H1p-fl}

For this section and \S\ref{subsec: cohom compute}, we drop the convention that $N$ is prime and $p \mid (N-1)$, allowing it to be a squarefree integer $N$ such that $p \nmid N$. 

In order to begin computing, we first need to compute some extension groups in the category of finite flat group schemes. Here $\Q_p^\mathrm{nr}$ denotes the maximal unramified subextension of $\oQ_p/\Q_p$. 

\begin{lem}
\label{lem:flat subspaces}
For any $r>0$, we have:
\begin{enumerate}
\item $H^1_{p,\fl}(\Zr(-1))=0$.
\item Under the identification $H^1_p(\Zr(1)) \cong \Q_p^\times \otimes \Zr$ of Kummer theory, $H^1_{p,\fl}(\Zr(1))$ corresponds to the subgroup $\Z_p^\times \otimes \Zr$.
\item $H^1_{p,\fl}(\Zr) = \ker(H^1_p(\Zr) \to H^1(\Q_p^\mathrm{nr},\Zr))$.
\end{enumerate}
\end{lem}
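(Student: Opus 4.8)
\textbf{Proof plan for Lemma \ref{lem:flat subspaces}.}

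The plan is to compute each of the three flat subspaces $H^1_{p,\fl}(\Zr(i)) = \Ext^1_{\mathrm{ffgs}_{\Z_p}}((\cdot)_{/\Z_p}, (\cdot)_{/\Z_p})$ directly, using the classical classification of finite flat group schemes over $\Z_p$ killed by a power of $p$ (Oort--Tate for the $p$-torsion case, together with the Raynaud/Fontaine theory bounding ramification) and the interpretation of extensions via generic fibers. The common thread is that a class in $H^1_p(\Zr(i))$ lies in the flat part if and only if the corresponding extension of $G_p$-modules is the generic fiber of an extension of finite flat group schemes; since $(\Zr)_{/\Z_p}$ and $(\mu_{p^r})_{/\Z_p}$ are already defined over $\Z_p$, this is controlled by the behavior of the extension class at $p$, specifically by bounds on the conductor/ramification and by Fontaine's theorem that a finite flat group scheme over $\Z_p$ with small Hodge--Tate weights is very restricted.

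First I would handle (1). An extension of $(\mu_{p^r})_{/\Z_p}$ by $(\Z/p^r\Z)_{/\Z_p}$ in $\mathrm{ffgs}_{\Z_p}$ would be a finite flat group scheme $\cG$ over $\Z_p$ of $p$-power order sitting in $0 \to (\Z/p^r\Z)_{/\Z_p} \to \cG \to (\mu_{p^r})_{/\Z_p} \to 0$; dually this is an extension of $(\Z/p^r\Z)_{/\Z_p}$ by $(\mu_{p^r})_{/\Z_p}$, i.e. a class in $H^1_{p,\fl}(\Zr(1))$, which has the ``étale-by-multiplicative'' shape. The key input is that such an extension over $\Z_p$ must split on the special fiber and, by Fontaine's bound on ramification of finite flat group schemes over $\Z_p$, the generic fiber extension of $\mathbb{Z}/p^r$ by $\mathbb{Z}/p^r$ (after the twist) is unramified at $p$ in a way incompatible with a nontrivial class in $H^1(\mathbb{Q}_p, \mathbb{Z}/p^r(-1))$ — concretely one argues that $H^1(\mathbb{Q}_p, \mathbb{Z}/p^r(-1))$ is already killed by the local conditions because $(-1)$ is the ``wrong'' Hodge--Tate weight for a finite flat group scheme (the tangent space computation: a finite flat group scheme over $\Z_p$ has Hodge--Tate weights $0$ and $1$, so the twist by $-1$ cannot occur). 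I expect this to reduce to a short Dieudonné-module or Oort--Tate parameter computation showing the relevant $\Ext$ vanishes.

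Next, for (2) and (3) I would use Kummer theory: $H^1_p(\Zr(1)) = \Q_p^\times \otimes \Zr$, with the unramified part $\Z_p^\times \otimes \Zr$ (equivalently $\ker(H^1_p(\Zr(1)) \to H^1(\Q_p^{\mathrm{nr}},\Zr(1)))$) corresponding exactly to the classes that extend to finite flat group schemes — this is the standard fact that the flat subspace for $\Zr(1)$ is the image of $\varprojlim$ of $\cO_{\Q_p}^\times$, i.e. the ``unit'' classes, since an extension of $\mu_{p^r}$ by $\mu_{p^r}$ extends to $\Z_p$ iff the Kummer parameter is a unit. For (3), the module $\Zr$ is self-dual up to the relation with $\Zr(-1)$ via $\mu_{p^r}$-duality, and one shows $H^1_{p,\fl}(\Zr)$ is precisely the unramified classes $\ker(H^1_p(\Zr) \to H^1(\Q_p^{\mathrm{nr}},\Zr))$: the inclusion $\subseteq$ follows because an extension of $(\Z/p^r\Z)_{/\Z_p}$ by itself that is finite flat over $\Z_p$ is automatically étale (a finite flat group scheme over $\Z_p$ with étale special fiber and étale generic fiber... more precisely, being an extension of étale by étale, it is étale), hence its generic fiber is unramified; the reverse inclusion is that every unramified class is realized by an étale, hence finite flat, group scheme.

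The main obstacle I anticipate is (1): making precise, with a clean self-contained argument over $\Z_p$ rather than a black-box appeal, why no nonsplit finite flat extension of $(\mu_{p^r})_{/\Z_p}$ by $(\Z/p^r\Z)_{/\Z_p}$ exists — i.e. why the ``$(-1)$-twist'' is invisible to finite flat group schemes. The cleanest route is probably to dévissage to $r = 1$ (using that $\mathrm{ffgs}_{\Z_p}$ is closed under kernels/cokernels of multiplication by $p$, so an extension killed by $p^r$ is built from extensions killed by $p$), and then invoke the Oort--Tate classification: extensions of $\mu_p$ by $\Z/p\Z$ over $\Z_p$ are classified by a parameter in a group that one computes to be trivial, because the relevant ``$ab \in p\Z_p$'' Oort--Tate condition forces splitting. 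Once the $r=1$ case is settled, the general case follows by the dévissage together with a short exact sequence chase on $\Ext$ groups. I would also double-check the duality normalizations relating $\Ext$ of group schemes to $H^1_p$ of the various Tate twists, since a sign/twist error there would scramble which of (1)--(3) is the vanishing statement.
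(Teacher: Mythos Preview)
Your approach to (3) matches the paper exactly: an extension of the \'etale group scheme $(\Z/p^r\Z)_{/\Z_p}$ by itself is \'etale, and the equivalence of finite \'etale group schemes over $\Z_p$ with finite $\Gal(\Q_p^{\mathrm{nr}}/\Q_p)$-modules identifies the flat classes with the unramified ones. For (2) your main line is also the paper's (Kummer theory on the $fppf$-site of $\Spec(\Z_p)$, as in \cite{CE2005}), but your parenthetical is wrong: $\Z_p^\times \otimes \Zr$ is \emph{not} $\ker(H^1_p(\Zr(1)) \to H^1(\Q_p^{\mathrm{nr}},\Zr(1)))$. Since $(\Zr(1))^{I_p} = 0$, the unramified subspace of $H^1_p(\Zr(1))$ is zero; the flat subspace is strictly larger. (You also wrote ``extension of $\mu_{p^r}$ by $\mu_{p^r}$'' where you meant ``of $\Z/p^r\Z$ by $\mu_{p^r}$''.)

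For (1), your Cartier duality remark is mistaken: the dual of an extension of $\mu_{p^r}$ by $\Z/p^r\Z$ is again an extension of $\mu_{p^r}$ by $\Z/p^r\Z$, so duality does not convert the problem into (2). Your proposed d\'evissage to $r=1$ plus Oort--Tate would eventually work, but it is more labor than needed. The paper instead just invokes (via the reference to \cite{ConradBUFLT}) the canonical splitting of the connected--\'etale sequence over a henselian base with perfect residue field: any finite flat $\cG/\Z_p$ decomposes as $\cG^0 \times \cG^{\mathrm{\acute{e}t}}$, and for an extension $0 \to \Z/p^r\Z \to \cG \to \mu_{p^r} \to 0$ an order count forces $\cG^{\mathrm{\acute{e}t}} \cong \Z/p^r\Z$ and $\cG^0 \cong \mu_{p^r}$, splitting the sequence. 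This avoids both the Hodge--Tate weight heuristics and the Oort--Tate parameter computation entirely.
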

\begin{proof}
\begin{enumerate}[leftmargin=2em]
\item  Indeed, this group corresponds to extensions
\[
0 \lra \Z/p^r\Z \lra \ ? \lra \mu_{p^r} \lra 0
\]
in the category of group schemes of exponent $p^r$ over $\Z_p$, and no such non-trivial extensions exist (see e.g.\ the proof of \cite[Thm.\ 1.8]{ConradBUFLT}). 

\item This can be proven by Kummer theory as in \cite[Lem.\ 2.6]{CE2005}, working over the $fppf$-site of $\Spec(\Z_p)$ (of which the category of finite flat group schemes is an exact subcategory).

\item Indeed, this group corresponds to extensions
\[
0 \lra \Z/p^r\Z \lra \ ? \lra \Zr \lra 0
\]
in the category of group schemes of exponent $p^r$ over $\Z_p$. In such an exact sequence, all the terms must be \'etale, and the category of finite \'etale groups schemes over $\Z_p$ is equivalent to the category of finite abelian groups with $\pi_1^\et(\Z_p)\cong \Gal(\Q_p^{\mathrm{nr}}/\Q_p)$-action. \qedhere
\end{enumerate}
\end{proof}

\subsection{Cohomology computations} 
\label{subsec: cohom compute}

In this section, we state the results of our computations, continuing to allow $N$ to be squarefree where $p \nmid N$. In many cases, when the computation is particularly straightforward and standard, we leave the proofs to the reader.

\begin{prop}
\label{prop: cohom of triv}
We have $H^0_{\fl}(\Z_p)=\Z_p$,  $H^i_{\fl}(\Z_p)=0$ for $i\not\in \{0,2\}$, and 
\[
H^2_{\fl}(\Z_p) \simeq \prod_{\ell | N \mathrm{ prime}} \Z_p/(\ell-1)\Z_p.
\]
\end{prop}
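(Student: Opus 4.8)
The plan is to compute the finite-flat cohomology $H^\bullet_\fl(\Z_p)$ of the trivial coefficient module using the defining exact triangle (or cone construction) that relates $H^\bullet_\fl$ to ordinary global cohomology $H^\bullet(\Z[1/Np],-)$ and the local defect at $p$. By definition (see Appendix \ref{sec:galois cohomology appendix}), $H^\bullet_\fl(\Z_p)$ sits in a long exact sequence
\[
\cdots \to H^i_\fl(\Z_p) \to H^i(\Z[1/Np],\Z_p) \to \frac{H^i_p(\Z_p)}{H^i_{p,\fl}(\Z_p)} \to H^{i+1}_\fl(\Z_p) \to \cdots,
\]
so the whole computation reduces to (a) the ordinary cohomology $H^i(\Z[1/Np],\Z_p)$, (b) the local cohomology $H^i_p(\Z_p) = H^i(\Q_p,\Z_p)$, and (c) the finite-flat local subspace $H^i_{p,\fl}(\Z_p)$, which in degree $1$ is identified in Lemma \ref{lem:flat subspaces}(3) as $\ker(H^1_p(\Z_p) \to H^1(\Q_p^{\mathrm{nr}},\Z_p))$, i.e. the unramified classes. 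I would treat $\Z_p$ as $\varprojlim_r \Zr$ and work with $\Zr$-coefficients throughout, passing to the limit at the end; the groups involved are finite so no $\varprojlim^1$ issues arise.

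First I would record the ordinary global cohomology of $\Z_p$ over $\Z[1/Np]$. Since $\Z_p$ is the trivial module, $H^0 = \Z_p$; by class field theory / Poitou--Tate and the fact that $\Q$ has no everywhere-unramified $p$-extension, $H^1(\Z[1/Np],\Z_p) = \Hom(G_{\Q,S}^{\mathrm{pro}\text{-}p},\Z_p)$ is free of rank $1$ over $\Z_p$ (generated by a ramified-at-$N$ or ramified-at-$p$ character — after imposing the flat condition only the tame-at-$N$ part survives), and $H^2(\Z[1/Np],\Z_p)$, $H^3(\Z[1/Np],\Z_p)$ are computed by the global Euler characteristic formula together with Poitou--Tate duality against $H^i(\Z[1/Np],\Q_p/\Z_p(1))$; this is where the factor $\prod_{\ell\mid N}\Z_p/(\ell-1)\Z_p$ enters, as the local tame quotients at the primes dividing $N$ (and $p$, but the prime $p$ contributes $\Z_p/(p-1)\Z_p = 0$ after $p$-completion, or is killed by the flat condition). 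Next I would compute the local terms: $H^1_p(\Z_p) = \Hom(G_{\Q_p}^{\mathrm{pro}\text{-}p},\Z_p)$ is free of rank $2$ (unramified part plus the wild/cyclotomic part), $H^2_p(\Z_p) = 0$ (local duality against $H^0(\Q_p,\Q_p/\Z_p(1)) = \mu_{p^\infty}(\Q_p)$, which is $p$-divisible... more precisely $H^2(\Q_p,\Z_p) = 0$ since $H^2(\Q_p,\Z/p^r) \cong \mu_{p^r}(\Q_p)^\vee$ is bounded but the transition maps are multiplication by $p$), and by Lemma \ref{lem:flat subspaces}(3) the quotient $H^1_p(\Z_p)/H^1_{p,\fl}(\Z_p)$ is free of rank $1$. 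Feeding all of this into the long exact sequence: in degree $0$ the map $\Z_p = H^0_\fl \to H^0(\Z[1/Np],\Z_p) = \Z_p$ is an isomorphism (the local $H^0$ defect vanishes), giving $H^0_\fl(\Z_p) = \Z_p$; in degree $1$ the map $H^1(\Z[1/Np],\Z_p)\to H^1_p/H^1_{p,\fl}$ is (after identifying both with rank-$1$ free modules) shown to be an isomorphism by checking the generating character is ramified at $p$ in the non-flat direction — equivalently invoking Proposition \ref{prop:H^2_f in H^2} (with $i=0$) whose statement gives exactly this surjectivity — so $H^1_\fl(\Z_p) = 0$ and the connecting map $H^1_p/H^1_{p,\fl} \to H^2_\fl(\Z_p)$ is zero; then $H^2_\fl(\Z_p) \hookrightarrow H^2(\Z[1/Np],\Z_p)$ and a rank/order count using the global Euler characteristic identifies it with $\prod_{\ell\mid N}\Z_p/(\ell-1)\Z_p$; and $H^3_\fl(\Z_p) = 0$ follows since $H^3(\Z[1/Np],\Z_p) = 0$ ($\Q$ is totally imaginary... has a real place, so $\mathrm{cd}_p = 2$ for $p$ odd) and $H^2_p(\Z_p)=0$.

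I would present this by first stating and citing the ordinary-cohomology inputs (Euler characteristic formula, Poitou--Tate), then doing the local computations using Lemma \ref{lem:flat subspaces}, then assembling via the long exact sequence, being careful that "$H^i_{(c)}$", "$H^i_\fl$" etc. are used exactly as defined in Appendix \ref{sec:galois cohomology appendix}. The main obstacle I anticipate is the precise bookkeeping in degree $2$: pinning down that the image of $H^2_\fl(\Z_p)$ in $H^2(\Z[1/Np],\Z_p)$ is exactly $\prod_{\ell\mid N}\Z_p/(\ell-1)\Z_p$ and not something smaller requires knowing that the only global obstruction classes are the local tame symbols at primes dividing $N$ — i.e. that $H^2(\Z[1/Np],\Z_p)$ itself equals $\bigoplus_{\ell\mid Np} H^2(\Q_\ell,\Z_p) / (\text{global image})$ with the global $H^2$ being pure ramification-at-$N$ after the flat-at-$p$ condition removes the $p$-contribution; equivalently one must carefully verify that $H^2_\fl(\Z_p) \to H^2(\Z[1/Np],\Z_p)$ is not just injective (Prop. \ref{prop:H^2_f in H^2}) but surjective onto the claimed subgroup, which I would do by the localization sequence $0\to H^2_\fl(\Z_p) \to H^2(\Z[1/Np],\Z_p)\to \bigoplus_{\ell\mid N} H^2_\ell(\Z_p)$ hmm, more precisely comparing with the complex computing $H^\bullet_{(c)}$ and using that $H^2_c(\Z_p)$ is dual to $H^1_\fl(\Q_p/\Z_p(1))$ which by Lemma \ref{lem:flat subspaces}(2) is $\Z_p^\times\otimes\Q_p/\Z_p = 0$ — giving $H^2_\fl(\Z_p) \cong \bigoplus_{\ell\mid N} H^1(\Q_\ell,\Q_p/\Z_p(1))^\vee$, and $H^1(\Q_\ell,\Q_p/\Z_p(1)) = \Q_\ell^\times\otimes\Q_p/\Z_p$ has $p$-part with Pontryagin dual $\Z_p/(\ell-1)\Z_p$ (the tame part), which is exactly the desired answer.
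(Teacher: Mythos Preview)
Your overall strategy is exactly what the paper intends by ``Exercise in class field theory'': feed the standard class-field-theoretic computations of global and local cohomology into the long exact sequence coming from the cone defining $H^\bullet_\fl$. Your treatment of degrees $0$, $1$, and $3$ is fine (modulo the harmless thinking-aloud).

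However, your degree-$2$ argument contains real errors. First, $\Z_p^\times \otimes \Q_p/\Z_p$ is \emph{not} zero: for $p>2$ one has $\Z_p^\times \cong \mu_{p-1}\times \Z_p$, and $\Z_p \otimes_\Z \Q_p/\Z_p = \varinjlim_r \Z/p^r \cong \Q_p/\Z_p$. Second, and more seriously, the Pontryagin dual of $H^1(\Q_\ell,\Q_p/\Z_p(1)) = \Q_\ell^\times \otimes \Q_p/\Z_p$ is $\Z_p$ (only the free part $\ell^{\Z}$ survives the tensor with the divisible group $\Q_p/\Z_p$; the finite factor $\F_\ell^\times$ dies), not $\Z_p/(\ell-1)\Z_p$. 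So the duality you invoke does not produce the claimed answer.

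The clean fix is this. Since $\mu_{p^\infty}(\Q_p)=0$ for $p$ odd, local Tate duality gives $H^2_p(\Z_p)=0$, so the cone sequence yields $H^2_\fl(\Z_p)\cong H^2(\Z[1/Np],\Z_p)$ (you already know the map $H^1\to H^1_{p,\nfl}$ is surjective). Now the Poitou--Tate nine-term sequence for $M=\Z/p^r$ gives
\[
H^2(\Z[1/Np],\Z/p^r) \hookrightarrow \bigoplus_{\ell\mid Np} H^2(\Q_\ell,\Z/p^r) \twoheadrightarrow H^0(\Z[1/Np],\mu_{p^r})^\vee = 0,
\]
the injection coming from $\Sha^2=(\Sha^1(\mu_{p^r}))^\vee=0$. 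By local duality $H^2(\Q_\ell,\Z/p^r)\cong H^0(\Q_\ell,\mu_{p^r})^\vee = \mu_{p^r}(\Q_\ell)^\vee$, which is $0$ for $\ell=p$ and $\Z/\gcd(p^r,\ell-1)$ for $\ell\mid N$. Passing to the inverse limit over $r$ gives $H^2(\Z[1/Np],\Z_p)\cong \prod_{\ell\mid N}\Z_p/(\ell-1)\Z_p$. Note that the relevant local dual is $H^0$, not $H^1$, of $\Q_p/\Z_p(1)$.
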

\begin{proof}
Exercise in class field theory.
\end{proof}

\begin{prop}
\label{prop:computation of B}
There are isomorphisms
\[
H^1_{\fl}(\Zr(1)) \cong \Z[1/N]^\times \otimes \Zr \simeq \Zr^{\#\{\ell | N \mathrm{ prime}\}}
\]
and
\[
H^2_\fl(\Zr(1)) \cong \ker \left( \bigoplus_{\ell | N \mathrm{ prime}} \Zr \xrightarrow{\Sigma} \Zr \right)\simeq \Zr^{\#\{\ell | N \mathrm{ prime}\}-1}. 
\]
\end{prop}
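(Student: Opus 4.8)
The plan is to compute the two cohomology groups $H^1_\fl(\Zr(1))$ and $H^2_\fl(\Zr(1))$ by relating global finite-flat cohomology to ordinary Galois cohomology, using the Poitou--Tate/Euler characteristic machinery together with the local computation at $p$ from Lemma \ref{lem:flat subspaces}(2). The fundamental point is that $H^\bullet_\fl(\Zr(1))$ is the cohomology of a cone: it sits in a long exact sequence relating $H^\bullet(\Z[1/Np],\Zr(1))$, $H^\bullet_p(\Zr(1))$, and $H^\bullet_{p,\fl}(\Zr(1))$. So first I would write down this exact sequence, and identify, via Kummer theory, $H^1(\Z[1/Np],\Zr(1)) \cong \Z[1/Np]^\times \otimes \Zr$ and $H^1_p(\Zr(1)) \cong \Q_p^\times \otimes \Zr$, with the map between them being the obvious one (localize a global unit at $p$). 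By Lemma \ref{lem:flat subspaces}(2), the finite-flat local condition cuts out $\Z_p^\times \otimes \Zr \subset \Q_p^\times \otimes \Zr$; the quotient $\Q_p^\times/\Z_p^\times \otimes \Zr \cong \Zr$ records the $p$-adic valuation. Hence $H^1_\fl(\Zr(1))$ is the kernel of the composite $\Z[1/Np]^\times \otimes \Zr \to \Q_p^\times \otimes \Zr \to \Zr$, i.e.\ the elements of $\Z[1/Np]^\times \otimes \Zr$ with trivial valuation at $p$. Since $\Z[1/Np]^\times$ is generated by $-1$, $p$, and the primes $\ell \mid N$, killing the valuation at $p$ leaves exactly $\Z[1/N]^\times \otimes \Zr$, and this is free of rank $\#\{\ell \mid N\}$ over $\Zr$ (the torsion from $-1$ dies since $p$ is odd). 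This gives the first isomorphism.

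Next I would handle $H^2$. From the cone long exact sequence, $H^2_\fl(\Zr(1))$ fits between $\coker(H^1(\Z[1/Np],\Zr(1)) \to H^1_p/H^1_{p,\fl})$ and $H^2(\Z[1/Np],\Zr(1))$; the surjectivity of $H^1 \to H^1_p/H^1_{p,\fl}$ here is clear (the valuation map $\Z[1/Np]^\times \otimes \Zr \to \Zr$ is surjective, as $p$ itself maps to a generator), so in fact $H^2_\fl(\Zr(1)) \hookrightarrow H^2(\Z[1/Np],\Zr(1))$ and one needs to identify the image, or equivalently just compute $H^2_\fl$ directly. I would compute $H^2(\Z[1/Np],\Zr(1))$ by the Poitou--Tate sequence: $\Z_p(1)$ has dual $\Z_p(0)$, global Euler characteristic is $-1$ (for a module of rank one over $\Zr$), and the relevant $H^0$, $H^3$, and local terms are small. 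One knows $H^2(\Z[1/Np],\Zr(1)) \cong \Pic(\Z[1/Np]) \otimes \Zr \oplus (\text{a sum of local Brauer contributions})$; concretely it is $\bigoplus_{\ell \mid Np} \Zr \big/ \Zr$ (the kernel of the sum map on Brauer groups), reflecting class-field-theory triviality of $\Pic(\Z[1/Np])$. The finite-flat condition at $p$ removes the $\ell = p$ summand, leaving $\ker\big(\bigoplus_{\ell \mid N} \Zr \xrightarrow{\Sigma} \Zr\big)$, which has rank $\#\{\ell \mid N\} - 1$. This is the second claimed isomorphism.

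The main obstacle I anticipate is keeping the finite-flat bookkeeping at $p$ completely honest and making sure the cone/Poitou--Tate sequence is applied with the correct local conditions at $p$ versus the ordinary (unramified-outside-$S$) conditions at the other primes $\ell \mid N$ — in particular getting the $H^2$ term exactly right, including that $\Pic(\Z[1/Np])$ is trivial and that the only surviving obstruction is the sum-of-invariants constraint on the $\ell \mid N$ Brauer groups. A secondary technical point is to confirm that there is no $p$-torsion contamination from the unit $-1$ (fine since $p > 2$) and that the identification $H^1_{p,\fl}(\Zr(1)) = \Z_p^\times \otimes \Zr$ is compatible with the Kummer-theory identification globally — this is exactly Lemma \ref{lem:flat subspaces}(2), so I would simply cite it. Everything else is a standard exercise in Galois cohomology and class field theory, of the same flavor as Proposition \ref{prop: cohom of triv}, so I would state the Poitou--Tate input and leave the routine verifications to the reader.
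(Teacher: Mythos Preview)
Your proposal is correct and is precisely the argument the paper has in mind: the paper's own proof consists of the single sentence ``Exercise in Kummer theory,'' and you have written out that exercise in full, using the cone long exact sequence, the Kummer identification $H^1(\Z[1/Np],\Zr(1)) \cong \Z[1/Np]^\times \otimes \Zr$ together with Lemma \ref{lem:flat subspaces}(2) for $H^1_\fl$, and the Brauer-group description $H^2(\Z[1/Np],\Zr(1)) \cong \ker\bigl(\bigoplus_{\ell\mid Np}\Zr \xrightarrow{\Sigma} \Zr\bigr)$ combined with the kernel of projection to the $p$-component for $H^2_\fl$. The references to Poitou--Tate and Euler characteristics are not strictly needed here (everything reduces to the Kummer sequence and the global reciprocity law for Brauer groups), but they do no harm.
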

\begin{proof}
Exercise in Kummer theory.
\end{proof}

By Lemma \ref{lem:flat subspaces}, we have $H^1_{p,\fl}(\Zr(-1))=0$. Using the notation of \S\ref{subsec:duality}, we have $H^1_\fl(\Zr(-1))= H^1_{(p)}(\Zr(-1))$.

\begin{prop}
\label{prop:C computation}
We have 
\[
\Cm \cong H^2_{(N)}(\Z_p(2)) \cong H^1_N(\Z_p(2)) \simeq \bigoplus_{\ell | N \mathrm{ prime}} \Z_p/(\ell^2-1)\Z_p.
\]
\end{prop}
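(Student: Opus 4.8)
The plan is to compute $\Cm = C/\Jm C$ via the duality \eqref{eq: B and C are dual to cohomology}, which identifies $\Hom(\Cm, M) = H^1_\fl(M(-1))$ for every $\Z_p$-module $M$, so that $\Cm$ is the Pontryagin-type dual of the functor $M \mapsto H^1_\fl(M(-1))$; in particular it suffices to understand $H^1_\fl(\zp{s}(-1))$ for all $s$, or equivalently (by Lemma \ref{lem:flat subspaces}(1), which gives $H^1_{p,\fl}(\zp{s}(-1)) = 0$) the local-condition-at-$p$ cohomology $H^1_{(p)}(\zp{s}(-1))$ introduced in Appendix \ref{sec:galois cohomology appendix}. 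First I would invoke the remark just made in the text — that $H^1_\fl(\zp{s}(-1)) = H^1_{(p)}(\zp{s}(-1))$ — to reduce to computing the group $H^1_{(p)}(\zp{s}(-1))$, i.e.\ classes in $H^1(\Z[1/Np], \zp{s}(-1))$ that are locally trivial at $p$. Then I would apply Poitou--Tate duality (in the $\Zr$-coefficient form set up in \S\ref{subsec:duality} of the appendix): the group $H^1_{(p)}(\Z_p(-1))$ is dual to a group of the shape $H^1_N(\Z_p(2))$ with a strict condition at $N$ and an unrestricted (or dual-flat) condition at $p$, which is the first isomorphism $\Cm \cong H^2_{(N)}(\Z_p(2))$ together with the identification $H^2_{(N)}(\Z_p(2)) \cong H^1_N(\Z_p(2))$ coming from the local duality/Euler-characteristic bookkeeping at the primes dividing $N$.

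The main computational step is then the evaluation of $H^1_N(\Z_p(2))$. Here $H^1_N$ denotes the ``locally-at-$N$'' cohomology, which by the very definition of the subscript-$\ell$ notation in the appendix is built out of the local cohomology $H^1(\Q_\ell, \Z_p(2))$ for $\ell \mid N$. For a single prime $\ell \neq p$, local Tate duality and the Euler characteristic formula give $H^1(\Q_\ell, \Z_p(2)) \cong \Z_p/(\ell^2 - 1)\Z_p$: indeed $H^0(\Q_\ell, \Z_p(2))$ is finite of order $|\ell^2 - 1|_p^{-1}$ (the $\Z_p$-corank is zero since $\Z_p(2)$ has no $G_{\Q_\ell}$-invariants over $\Q_p$ when $\ell \neq 1$, which holds as $\ell \nmid p$ forces $\ell^2 \ne 1$ unless $\ell^2 \equiv 1$, accounted for by the $p$-part), $H^2(\Q_\ell, \Z_p(2))$ is dual to $H^0(\Q_\ell, \Q_p/\Z_p(-1))$ which is finite, and the local Euler characteristic is trivial, forcing $H^1$ to have the same finite order; a direct check of the unramified and ramified parts identifies it with $\Z_p/(\ell^2-1)\Z_p$. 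Summing over $\ell \mid N$ (in our running case $N$ is a single prime, but the statement is phrased for general $N$) yields $H^1_N(\Z_p(2)) \simeq \bigoplus_{\ell \mid N} \Z_p/(\ell^2-1)\Z_p$, which is the final displayed isomorphism.

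The hard part will be keeping the local conditions straight through the Poitou--Tate duality: one must verify that the flat condition at $p$ on the $(-1)$-twist side is \emph{exactly} dual to the strict/unramified condition at $p$ on the $(2)$-twist side (so that no extra contribution at $p$ survives), and this is precisely where Lemma \ref{lem:flat subspaces}(1) — the vanishing $H^1_{p,\fl}(\zp{s}(-1)) = 0$ — does the work, since a vanishing local condition dualizes to the \emph{full} local cohomology on the other side, which in the $H^2_{(N)}$ formulation means the $p$-place imposes no constraint and drops out, leaving only the places dividing $N$. I would also need to confirm, via the global Euler characteristic formula for $\Z_p(2)$ over $\Z[1/Np]$ together with the finiteness of the relevant $H^0$ and $H^3$ terms, that the chain of isomorphisms $\Cm \cong H^2_{(N)}(\Z_p(2)) \cong H^1_N(\Z_p(2))$ is an equality of finite groups and not merely of their orders; the appendix's duality package should make this formal once the local conditions are matched. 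Everything else — the computation of each local $H^1(\Q_\ell, \Z_p(2))$ — is the routine Kummer/Tate-duality exercise that the proof in the text defers to the reader.
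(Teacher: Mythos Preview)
Your overall strategy matches the paper's: identify $\Cm$ with $H^1_{(p)}(\Q_p/\Z_p(-1))^*$ via \eqref{eq: B and C are dual to cohomology} and Lemma~\ref{lem:flat subspaces}(1), apply the duality Theorem~\ref{thm:Poitou-Tate with constraints} to obtain $H^2_{(N)}(\Z_p(2))$, and then relate this to the purely local group $H^1_N(\Z_p(2))$, which is computed by Lemma~\ref{lem:local C comp}.

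The genuine gap is in the middle isomorphism $H^2_{(N)}(\Z_p(2)) \cong H^1_N(\Z_p(2))$. You describe this as ``local duality/Euler-characteristic bookkeeping,'' but it is not: it requires a nontrivial \emph{global} vanishing result, namely $H^1(\Z[1/Np],\Z_p(2)) = 0$. In the paper this is Lemma~\ref{lem:global C comp}, and its proof uses Soul\'e's Chern class isomorphism together with the computations $K_2(\Z)\simeq\Z/2\Z$ and $K_3(\Z)\simeq\Z/48\Z$, which have no $p$-torsion for $p>3$. Concretely, the cone long exact sequence for $H^\bullet_{(N)}$ reads
\[
H^1(\Z_p(2)) \to H^1_N(\Z_p(2)) \to H^2_{(N)}(\Z_p(2)) \to H^2(\Z_p(2)) \to H^2_{Np}(\Z_p(2)) \to H^3_{(N)}(\Z_p(2)),
\]
and you need $H^1(\Z_p(2))=0$ to get injectivity on the left, and then the equality of orders $\#H^2(\Z_p(2)) = \#H^2_{Np}(\Z_p(2))$ (again from Lemmas~\ref{lem:local C comp} and~\ref{lem:global C comp}) to conclude the rightmost displayed map is an isomorphism. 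The global Euler characteristic formula alone only relates orders of the various $H^i$; it cannot by itself produce the vanishing $H^1(\Z_p(2))=0$, which is the real arithmetic input here.

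A minor correction: $H^0(\Q_\ell,\Z_p(2))=0$ (the module $\Z_p(2)$ is torsion-free and Frobenius acts by $\ell^2\neq 1$), not of order $|\ell^2-1|_p^{-1}$. The computation you want goes through the finite-coefficient groups $H^i(\Q_\ell,\zp{r}(2))$ and then passes to the limit, exactly as in Lemma~\ref{lem:local C comp}.
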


Proposition \ref{prop:C computation} will follow from the duality Theorem \ref{thm:Poitou--Tate with constraints}, together with the following two lemmas.

\begin{lem}
\label{lem:local C comp}
Let $\ell$ be a prime different from $p$. Then $H^0(\Q_\ell,\Z_p(2)) =0$, 
\[
H^1(\Q_\ell,\Z_p(2)) \simeq \Z_p/(\ell^2-1) \Z_p, \quad \text{and} \quad  
H^2(\Q_\ell,\Z_p(2)) \simeq \Z_p/(\ell-1)\Z_p.
\]
Also, $H^2(\Q_p,\Z_p(2))=0$.
\end{lem}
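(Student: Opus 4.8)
The statement to prove is Lemma~\ref{lem:local C comp}: the computation of the local Galois cohomology groups $H^i(\Q_\ell,\Z_p(2))$ for primes $\ell \neq p$, together with the vanishing $H^2(\Q_p,\Z_p(2))=0$. The plan is to treat these as a sequence of standard local computations, organized around the local Euler characteristic formula and local Tate duality. For the prime $\ell \neq p$, since $p \neq \ell$, the module $\Z_p(2)$ is unramified, so I would first pass to the inflation-restriction sequence for $I_\ell \subset G_\ell$, which degenerates here because $H^i(I_\ell, \Z_p(2))$ is concentrated in degrees $0$ and $1$ (the tame quotient has cohomological dimension $1$ on a pro-$p'$ part together with a $\Z_p$ part). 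Concretely, $H^0(\Q_\ell,\Z_p(2)) = \Z_p(2)^{G_\ell} = (\Z_p)^{Fr_\ell = \ell^{-2}} = 0$ since $\ell^2 \not\equiv 1$ is not forced but $\ell^{-2} - 1$ is a unit... — more carefully, $H^0(\Q_\ell,\Z_p(2))$ is the kernel of $\ell^2 - 1$ acting on $\Z_p$ (up to sign in the normalization of the cyclotomic character), which vanishes unless $\ell^2 \equiv 1 \pmod{p}$; but even then the module is torsion-free, so $H^0 = 0$ outright. This is the cleanest starting point.

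For $H^1(\Q_\ell, \Z_p(2))$, I would use that for an unramified $\Z_p$-module $T$, there is an exact sequence $0 \to H^1(\Q_\ell^{\mathrm{nr}}/\Q_\ell, T^{I_\ell}) \to H^1(\Q_\ell, T) \to H^1(I_\ell, T)^{Fr_\ell}$, and since $T^{I_\ell} = T = \Z_p(2)$ here with $T_{I_\ell} = T$ as well, the first term is $\Z_p/(Fr_\ell - 1)\Z_p = \Z_p/(\ell^2 - 1)\Z_p$ (using $Fr_\ell$ acts by $\ell^{-2}$ or $\ell^2$ depending on convention — the quantity $\ell^2 - 1$ is what appears up to a unit), while the last term $H^1(I_\ell, \Z_p(2))^{Fr_\ell} = \Hom(I_\ell^{\mathrm{tame},p}, \Z_p(2))^{Fr_\ell} = (\Z_p(1))^{Fr_\ell = 1}$, which vanishes because $Fr_\ell$ acts on $\Z_p(1)$ by $\ell$ and $\ell - 1$ is a unit in $\Z_p$ when... no: $\ell \not\equiv 1 \pmod p$ need not hold. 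Hmm — in our global setting $\ell \mid N$ and we do not control $\ell \bmod p$. I should instead argue directly: $H^1(I_\ell, \Z_p(2)) \cong \Z_p(1)$ as $Fr_\ell$-module, and $H^0(Fr_\ell, \Z_p(1)) = \Z_p/(\ell-1)\Z_p$, $H^1(Fr_\ell, \Z_p(1)) = \Z_p/(\ell-1)\Z_p$; combining with $H^i(Fr_\ell, \Z_p(2))$ via the Hochschild–Serre spectral sequence gives $H^1(\Q_\ell, \Z_p(2))$ as an extension of $\Z_p/(\ell-1)$ by $\Z_p/(\ell^2-1)$, and since $(\ell-1) \mid (\ell^2-1)$ one checks the extension is such that the result is cyclic $\cong \Z_p/(\ell^2-1)\Z_p$ — or, more cleanly, just invoke local Tate duality $H^1(\Q_\ell,\Z_p(2)) \cong H^1(\Q_\ell, \Q_p/\Z_p(-1))^\vee$ and compute the latter directly as $(\Q_p/\Z_p(-1))^{G_\ell, \vee}$-related data. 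The value $H^2(\Q_\ell, \Z_p(2)) \cong H^0(\Q_\ell, \Q_p/\Z_p(-1))^\vee = (\Q_p/\Z_p)^{Fr_\ell = \ell}{}^\vee = (\Z_p/(\ell-1))^\vee \cong \Z_p/(\ell-1)\Z_p$ by local duality is the fastest route for that one.

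For $H^2(\Q_p,\Z_p(2))$: by local Tate duality over $\Q_p$, $H^2(\Q_p,\Z_p(2)) \cong H^0(\Q_p,\Q_p/\Z_p(-1))^\vee$. Now $H^0(\Q_p, \Q_p/\Z_p(-1)) = (\Q_p/\Z_p(-1))^{G_p}$, and since the cyclotomic character $\kcyc: G_p \to \Z_p^\times$ is surjective (as $\Q_p$ contains no nontrivial $p$-power roots of unity and $\Q_p(\mu_{p^\infty})/\Q_p$ is infinitely ramified), $(-1)$-twist has no $G_p$-invariants: for any $n$, there is $\sigma$ with $\kcyc(\sigma)^{-1} \not\equiv 1 \pmod{p^n}$, so the invariants vanish. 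Hence $H^2(\Q_p,\Z_p(2)) = 0$.

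The main obstacle I anticipate is bookkeeping the cyclotomic-character normalization and making sure the $\Z_p$-module structure (torsion-free vs.\ finite cyclic) of each group comes out exactly as stated — in particular confirming that $H^1(\Q_\ell,\Z_p(2))$ is cyclic of the asserted order rather than a product of two cyclic groups. The cleanest way to sidestep the extension-splitting issue is to compute everything via local Tate duality and Euler characteristics from $H^0$ and $H^2$ (which are unambiguous), then recover $H^1$: the local Euler characteristic formula over $\Q_\ell$ for $\ell \neq p$ gives $\#H^0 \cdot \#H^2 = \#H^1$ for finite coefficients, and taking inverse limits pins down $H^1(\Q_\ell,\Z_p(2))$ up to the module structure, while duality $H^1(\Q_\ell,\Z_p(2)) \cong H^1(\Q_\ell,\Q_p/\Z_p(-1))^\vee$ identifies it as cyclic. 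I would present the argument in this order: (i) $H^0$ by direct inspection; (ii) $H^2$ by local duality; (iii) $H^1$ by duality plus Euler characteristic; (iv) $H^2(\Q_p,\Z_p(2))=0$ by local duality and surjectivity of $\kcyc$ on $G_p$. Each step is short and standard, so I would not grind through the cocycle computations in the write-up.
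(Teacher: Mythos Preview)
Your overall plan is sound and matches what the paper does (the paper's entire proof is a citation to \cite[Thm.\ 7.3.10]{NSW2008}, so any correct direct computation is acceptable). However, there is a genuine slip in your Hochschild--Serre analysis that actually makes the argument \emph{easier} than you think, not harder.

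You write $H^0(Fr_\ell, \Z_p(1)) = \Z_p/(\ell-1)\Z_p$. This is wrong: $H^0$ is the submodule of \emph{invariants}, i.e.\ the kernel of $\ell - 1$ acting on the torsion-free module $\Z_p$, which is $0$ (the quotient $\Z_p/(\ell-1)$ is the \emph{coinvariants}, i.e.\ $H^1$). Consequently, in the inflation--restriction sequence
\[
0 \lra H^1(\hat\Z,\Z_p(2)) \lra H^1(\Q_\ell,\Z_p(2)) \lra H^0(\hat\Z, H^1(I_\ell,\Z_p(2))) = (\Z_p(1))^{Fr_\ell} = 0,
\]
the third term already vanishes, so $H^1(\Q_\ell,\Z_p(2)) \cong \Z_p(2)_{Fr_\ell} = \Z_p/(\ell^2-1)\Z_p$ immediately, and there is no extension problem to resolve. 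You can therefore drop the detour through local duality for $H^1$ and the worry about ``cyclic versus product of two cyclics.'' Your computations of $H^0$, $H^2$, and $H^2(\Q_p,\Z_p(2))$ via duality are fine as stated.
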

\begin{proof}
This follows from \cite[Thm.\ 7.3.10, pg.\ 400]{NSW2008}.
\end{proof}

\begin{lem}
\label{lem:global C comp}
For any $p>3$, there are isomorphisms
\[
H^2(\Z_p(2)) \cong \bigoplus_{\ell | N \mathrm{ prime}} \F_\ell^\times \otimes \Z_p \simeq \bigoplus_{\ell | N \mathrm{ prime}}\Z_p/(\ell-1)\Z_p.
\]
For $i \ne 2$, $H^i(\Z_p(2))=0$.
\end{lem}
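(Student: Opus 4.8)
The plan is to reduce, by an excision sequence on $\Spec\Z[1/p]$, to the statement that $H^i(\Z[1/p],\Z_p(2))=0$ for all $i$, and then to prove that using cohomological dimension in most degrees together with the theorems of Soul\'e and Quillen--Lichtenbaum (equivalently the Iwasawa Main Conjecture) in degrees $1$ and $2$. To begin, I would dispose of the easy degrees over $\Z[1/Np]$: because $p>3$ we have $\omega^2\ne 1$, so $\kcyc^2$ is nontrivial modulo $p$ and $H^0(\Z[1/Np],\Z_p(2))=0$; and since $p$ is odd, $\mathrm{cd}_p(G_{\Q,S})=2$ (see \cite{NSW2008}), so $H^i(\Z[1/Np],\Z_p(2))=0$ for $i\ge 3$. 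The same arguments give $H^0(\Z[1/p],\Z_p(2))=H^0(\Z[1/p],\Q_p/\Z_p(2))=0$ and $H^i(\Z[1/p],\Z_p(2))=0$ for $i\ge 3$.

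Next I would carry out the reduction to $\Z[1/p]$. The complement of $\Spec\Z[1/Np]$ in $\Spec\Z[1/p]$ is $\coprod_{\ell\mid N}\Spec\F_\ell$, so absolute purity (applied with $\Z/p^m$-coefficients and then passed to the limit) yields the localization sequence
\[
\cdots\to\bigoplus_{\ell\mid N}H^{i-2}(\F_\ell,\Z_p(1))\to H^i(\Z[1/p],\Z_p(2))\to H^i(\Z[1/Np],\Z_p(2))\to\bigoplus_{\ell\mid N}H^{i-1}(\F_\ell,\Z_p(1))\to\cdots.
\]
Since $\Fr_\ell$ acts on $\Z_p(1)$ by $\ell$ and $\ell-1$ is a nonzero, hence injective, element of $\Z_p$, we have $H^0(\F_\ell,\Z_p(1))=0$, $H^1(\F_\ell,\Z_p(1))\cong\Z_p/(\ell-1)\Z_p\cong\F_\ell^\times\otimes\Z_p$, and $H^j(\F_\ell,\Z_p(1))=0$ for $j\ge 2$. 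Granting the vanishing $H^i(\Z[1/p],\Z_p(2))=0$ for all $i$, the sequence collapses to isomorphisms $H^i(\Z[1/Np],\Z_p(2))\cong\bigoplus_{\ell\mid N}H^{i-1}(\F_\ell,\Z_p(1))$, which give $\bigoplus_{\ell\mid N}\F_\ell^\times\otimes\Z_p$ for $i=2$ and $0$ for $i\ne 2$, exactly the assertion of the lemma.

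It remains to prove $H^i(\Z[1/p],\Z_p(2))=0$ for all $i$. Degrees $0$ and $\ge 3$ are done above. In degree $1$, the vanishing $H^0(\Z[1/p],\Q_p/\Z_p(2))=0$ shows that $H^1(\Z[1/p],\Z_p(2))$ is torsion-free and embeds into $H^1(\Z[1/p],\Q_p(2))$, and the latter vanishes by Soul\'e's theorem together with Borel's computation that $K_3(\Z)$ is finite (equivalently, $\zeta(s)$ does not vanish at $s=-1$); hence $H^1(\Z[1/p],\Z_p(2))=0$. In degree $2$, I would invoke the Quillen--Lichtenbaum conjecture (known in weight $2$ for odd $p$) to identify $H^2(\Z[1/p],\Z_p(2))$ with $K_2(\Z[1/p])\{p\}$, and then use the $K$-theory localization sequence, which presents $K_2(\Z[1/p])$ as an extension of a subgroup of $\F_p^\times$ by a quotient of $K_2(\Z)=\Z/2$: its $p$-part is trivial because $p$ is odd, so $H^2(\Z[1/p],\Z_p(2))=0$. (Alternatively, $H^2(\Z[1/p],\Q_p(2))=0$ by Soul\'e and Poitou--Tate duality, whence $H^2(\Z[1/p],\Z_p(2))\cong H^1(\Z[1/p],\Q_p/\Z_p(2))$ is finite and, by the Main Conjecture of Mazur--Wiles, of order dividing the numerator of $\zeta(-1)=-\tfrac1{12}$, hence trivial for $p>3$.)

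The one step I expect to be a genuine obstacle is the vanishing of $H^2(\Z[1/p],\Z_p(2))$: the purely formal tools available -- the global Euler characteristic formula and Poitou--Tate duality -- only force $\mathrm{rank}_{\Z_p}H^1(\Z[1/p],\Z_p(2))=\mathrm{rank}_{\Z_p}H^2(\Z[1/p],\Z_p(2))$ and cannot on their own show either is zero, so one must bring in a deep theorem (Quillen--Lichtenbaum or the Main Conjecture). This is also the place where the hypothesis $p>3$ enters essentially, via the triviality of the $p$-part of $K_2(\Z)$ (respectively, the fact that $\zeta(-1)=-\tfrac1{12}$ is a $p$-adic unit) and via $\omega^2\ne 1$.
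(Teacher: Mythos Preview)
Your proof is correct and follows essentially the same approach as the paper: both reduce from $\Z[1/Np]$ to $\Z[1/p]$ via the localization/Leray sequence for the open immersion (the paper cites Soul\'e's book for this), and both deduce the vanishing of $H^i(\Z[1/p],\Z_p(2))$ for $i>0$ from the Chern class isomorphism with $K_{4-i}(\Z)\otimes\Z_p$ together with the computations $K_3(\Z)\simeq\Z/48\Z$ and $K_2(\Z)\simeq\Z/2\Z$. Your write-up is simply more detailed, separating out the torsion-freeness argument in degree~$1$ and the $K$-theory localization in degree~$2$, whereas the paper compresses this into a single parenthetical remark.
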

\begin{proof}
This follows from combining the excision spectral sequence associated to $\Spec(\Z[1/Np])\subset \Spec(\Z[1/p])$ (cf.\ \cite[Prop.\ 1 of III.1.3, pg.\ 18]{Soule}) with the fact that $H^i(\Z[1/p],\Z_p(2))=0$ for $i>0$ if $p>3$. (The Chern class map
\[
c_{i,n}: K_{2n-i}(\Z)\otimes \Z_p \lra H^i(\Z[1/p],\Z_p(n))
\]
is known to be isomorphism, where $K_3(\Z) \simeq \Z/48\Z$ and $K_2(\Z) \simeq \Z/2\Z$.)
\end{proof}

\begin{proof}[Proof of Proposition \ref{prop:C computation}]
By the isomorphism \eqref{eq: B and C are dual to cohomology} along with Lemma \ref{lem:flat subspaces}, we have 
\[
\Cm \cong H^1_{(p)}(\Q_p/\Z_p(-1))^*.
\]
By duality Theorem \ref{thm:Poitou--Tate with constraints}, we have
\[
\Cm \cong H^2_{(N)}(\Z_p(2)).
\]

By Lemma \ref{lem:global C comp}, $H^1(\Z_p(2))=0$. By the duality theorem, $H^3_{(N)}(\Z_p(2))=H^0_{(p)}(\Q_p/\Z_p(-1))^*=0$. Then the cone construction of $H_{(N)}^\bullet$ gives an exact sequence 
\[
0 \to H_N^1(\Z_p(2)) \to H^2_{(N)}(\Z_p(2)) \to H^2(\Z_p(2)) \to H^2_{N}(\Z_p(2)) \to 0.
\]
By Lemmas \ref{lem:local C comp} and \ref{lem:global C comp}, we see that $H^2(\Z_p(2))$ and $H^2_{N}(\Z_p(2))$ are both finite groups of the same order (which is the $p$-part of $\prod_{\ell | N \mathrm{ prime} }(\ell-1)$). Therefore the rightmost surjection in the exact sequence is an isomorphism. Hence we have a canonical isomorphism
\[
 H_N^1(\Z_p(2)) \isoto H^2_{(N)}(\Z_p(2)).
\]
Finally, Lemma \ref{lem:local C comp} gives the computation of $H_N^1(\Z_p(2))$.
\end{proof}

Finally, we complete the proof of Proposition \ref{prop:H^2_f in H^2}. We leave the case of $i=0,1$ to the reader, and sketch the proof of $i=-1$ in the next lemma.

\begin{lem}
\label{lem:surj of H1->H1_p in C coord}
For any $r>0$, there are isomorphisms
\[
H^1_p(\Zr(-1)) \simeq \Zr, \quad H^1_{(p)}(\Zr(-1)) \simeq \bigoplus_{\ell | N \mathrm{prime}} \Z_p/(\ell^2-1,p^r)\Z_p
\]
and there is an exact sequence
\[
0 \to H^1_{(p)}(\Zr(-1)) \to H^1(\Zr(-1)) \to H^1_p(\Zr(-1)) \to 0.
\]
In particular,
\[
\# H^1(\Zr(-1)) =p^r \cdot \prod_{\ell | N \mathrm{prime}} \#\Z_p/(\ell^2-1,p^r)\Z_p.
\]
\end{lem}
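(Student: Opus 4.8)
The plan is to reduce the statement to a comparison of three orders. First note that, by Lemma~\ref{lem:flat subspaces}(1), $H^1_{p,\fl}(\Zr(-1)) = 0$, so that $H^1_\fl(\Zr(-1)) = H^1_{(p)}(\Zr(-1)) = \ker\big(H^1(\Zr(-1)) \to H^1_p(\Zr(-1))\big)$. Hence injectivity and exactness at the left of the asserted sequence are formal from the cone construction of $H^\bullet_{(p)}$ (Appendix~\ref{sec:galois cohomology appendix}), and the only substantive point is surjectivity of $H^1(\Zr(-1)) \to H^1_p(\Zr(-1))$ --- which is precisely the case $i=-1$ of Proposition~\ref{prop:H^2_f in H^2}. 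I would compute $\#H^1_p(\Zr(-1))$, $\#H^1_{(p)}(\Zr(-1))$ and $\#H^1(\Zr(-1))$ separately and check that the last is the product of the first two; since $H^1_{(p)}(\Zr(-1))$ is the kernel of the map in question, this numerical identity forces surjectivity, and it is exactly the ``in particular'' clause.

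For the local group: since $p>3$, the mod $p$ characters $\omega^{-1}$ and $\omega^2$ of $G_p$ are nontrivial, so $H^0_p(\Zr(-1)) = H^0_p(\Zr(2)) = 0$, whence $H^2_p(\Zr(-1)) \cong H^0_p(\Zr(2))^\vee = 0$ by local Tate duality and $\#H^1_p(\Zr(-1)) = p^r$ by the local Euler characteristic formula. To upgrade this to $H^1_p(\Zr(-1)) \cong \Zr$, I would observe that $H^1(\Q_p, \Z_p(-1))$ is free of rank one over $\Z_p$ --- it has $\Z_p$-rank one by the Euler characteristic with $\Q_p$-coefficients and is torsion-free because $H^0_p(\Q_p/\Z_p(-1)) = 0$ --- while $H^2(\Q_p, \Z_p(-1)) = \varprojlim_r H^2_p(\Zr(-1)) = 0$; the long exact sequence of $0 \to \Z_p(-1) \xrightarrow{p^r} \Z_p(-1) \to \Zr(-1) \to 0$ then identifies $H^1_p(\Zr(-1)) \cong \Z_p/p^r\Z_p$.

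For the constrained global group, I would combine \eqref{eq: B and C are dual to cohomology} (taken with $M = \Zr$) with Proposition~\ref{prop:C computation}, which supplies $\Cm \cong \bigoplus_{\ell \mid N}\Z_p/(\ell^2-1)\Z_p$, to obtain
\[
H^1_{(p)}(\Zr(-1)) = H^1_\fl(\Zr(-1)) = \Hom(\Cm, \Zr) \cong \bigoplus_{\ell \mid N}\Z_p/(\ell^2 - 1,\, p^r)\Z_p.
\]

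The main step --- and the one I expect to be the principal obstacle --- is computing $\#H^1(\Zr(-1)) = \#H^1(\Z[1/Np], \Zr(-1))$. I would use the localization (Gysin) sequence for the open immersion $\Spec\Z[1/Np] \hookrightarrow \Spec\Z[1/p]$, whose closed complement is $\coprod_{\ell\mid N}\Spec\F_\ell$; absolute purity identifies the local cohomology terms with $\bigoplus_{\ell\mid N}H^{\bullet-1}(\F_\ell, \Zr(-2))$. Using $H^i(\Z[1/p], \Z_p(2)) = 0$ for all $i$ (as invoked in the proof of Lemma~\ref{lem:global C comp}), Artin--Verdier duality over $\Z[1/p]$, and the vanishing of the Tate cohomology $\hat H^\bullet(G_\R, \Z_p(2))$ (complex conjugation acts trivially on $\Z_p(2)$ and $p$ is odd), one finds $H^i(\Z[1/p], \Zr(-1)) = 0$ for $i \neq 1$ and $\#H^1(\Z[1/p], \Zr(-1)) = p^r$. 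Since $\mathrm{Fr}_\ell$ acts on $\Zr(-2)$ by the unit $\ell^{-2}$, one has $H^0(\F_\ell, \Zr(-2)) \cong \Z_p/(\ell^2 - 1,\, p^r)\Z_p$, so the degree-one part of the Gysin sequence collapses to
\[
0 \to H^1(\Z[1/p], \Zr(-1)) \to H^1(\Z[1/Np], \Zr(-1)) \to \bigoplus_{\ell\mid N}H^0(\F_\ell, \Zr(-2)) \to 0,
\]
giving $\#H^1(\Zr(-1)) = p^r\cdot\prod_{\ell\mid N}\#\Z_p/(\ell^2-1,\,p^r)\Z_p = \#H^1_p(\Zr(-1))\cdot\#H^1_{(p)}(\Zr(-1))$. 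As noted above, this yields the surjectivity, hence the exact sequence, and the order formula is then immediate. The delicate points to watch are the archimedean (Tate-cohomology) contributions in the Artin--Verdier computation over $\Z[1/p]$ and the degree bookkeeping in the Gysin sequence; everything else reduces to results already in hand.
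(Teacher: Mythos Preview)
Your proof is correct and follows essentially the same strategy as the paper: compute $H^1_p(\Zr(-1))$ and $H^1_{(p)}(\Zr(-1))$ separately (the paper obtains the latter from Proposition~\ref{prop:C computation} via the duality Theorem~\ref{thm:Poitou-Tate with constraints} rather than via \eqref{eq: B and C are dual to cohomology}, but these are equivalent), and then deduce surjectivity from a cardinality count ultimately resting on the vanishing of $H^i(\Z[1/p],\Z_p(2))$. The only organizational difference is that the paper, invoking ``the same techniques as in the proof of Proposition~\ref{prop:C computation},'' would dualize globally first and establish merely the lower bound $\#H^1(\Zr(-1)) \ge p^r\cdot\prod_{\ell\mid N}\#\Z_p/(\ell^2-1,p^r)\Z_p$, whereas you run the Gysin sequence for $\Zr(-1)$ directly and get the exact order; either suffices.
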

\begin{proof}
The isomorphism $H^1_p(\Zr(-1)) \simeq \Zr$ follows from $H^0_p(\Q_p/\Z_p(-1))=0$ and $H^1_p(\Q_p/\Z_p(-1)) \simeq \Q_p/\Z_p$ (see \cite[Thm.\ 7.3.10, pg.\ 400]{NSW2008}). Since $H^3_{(N)}(\Z_p(2))=0$, we have $H^2_{(N)}(\Zr(2))=H^2_{(N)}(\Z_p(2))\otimes \Zr$, so the description of $H^1_{(p)}(\Zr(-1))$ follows from Proposition \ref{prop:C computation} and the duality Theorem \ref{thm:Poitou--Tate with constraints}.

The proof is completed by considering the exact sequence
\begin{equation}
\label{eq:C-H1}
0 \to H^1_{(p)}(\Zr(-1)) \to H^1(\Zr(-1)) \to H^1_p(\Zr(-1))
\end{equation}
and the inequality 
\[
\# H^1(\Zr(-1)) \ge p^r \cdot \prod_{\ell | N \mathrm{prime}} \#\Z_p/(\ell^2-1,p^r)\Z_p. 
\]
This inequality follows from the instance $H^1(\Zr(-1)) \cong H^2_{(c)}(p^{-r}\Z/\Z(2))^*$ of Poitou--Tate duality, along with the three consecutive terms 
\[
0 = H^1(\Z_p(2)) \to H^1_p(\Z_p(2)) \oplus \bigoplus_{\mathrm{prime}\ \ell \mid N} H^1_\ell(\Z_p(2)) \to H^2_{(c)}(\Z_p(2)) 
\]
of the standard long exact sequence in Galois cohomology. Here the leftmost vanishing is recorded in Lemma \ref{lem:global C comp}, $H^1_\ell(\Z_p(2))$ is calculated in Lemma \ref{lem:local C comp}, and it is well-known that $H^1_p(\Z_p(2)) \simeq \Z_p$. 
\end{proof}

\subsection{Proof of Theorem \ref{thm:cohomology final answer} and Corollary \ref{cor:flat cohom of ad}} We now return to the convention that $N$ is prime and $p \mid (N-1)$.

Propositions \ref{prop:computation of B} and \ref{prop:C computation} give us part (1) of the theorem. Part (3) follows from part (2) and the fact that $J=B \cdot C$. It remains to show (2). We give the proof for $B$, the proof for $C$ being almost identical. The strategy will be to use the following version of Nakayama's lemma.

\begin{lem}
Let $(A,\m,k)$ be a local ring and $M$ be a finitely generated $A$-module. Then $M$ is cyclic if and only if the $k$-vector space $\Hom_A(M,k)$ is one-dimensional. If $M$ is cyclic, then an element $m \in M$ is a generator if and only if $\phi(m) \neq 0$ for some non-zero $\phi \in \Hom_A(M,k)$.
\end{lem}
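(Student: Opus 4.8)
This is a standard form of Nakayama's lemma. The plan is to prove both directions by reducing the statement to the ordinary Nakayama's lemma and basic linear algebra over the residue field $k$.

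\medskip

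\textbf{Proof sketch.} First I would dispense with the direction that is essentially the classical statement. Suppose $M$ is cyclic, say $M = Am$ for some $m \in M$. Then any $A$-linear map $M \to k$ is determined by the image of $m$, which can be any element of $k$ killed by $\Ann_A(m)$; since $\Ann_A(m)$ acts as zero on $k$ only when... more simply: the map $\Hom_A(M,k) \to k$, $\phi \mapsto \phi(m)$, is injective because $m$ generates, and it is surjective because $M/\m M$ is a quotient of $M$ on which any functional to $k$ extends --- concretely, $M \otimes_A k$ is a nonzero cyclic $k$-module, hence one-dimensional, and $\Hom_A(M,k) = \Hom_k(M\otimes_A k, k)$ is therefore also one-dimensional.

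\medskip

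For the converse, suppose $\Hom_A(M,k)$ is one-dimensional over $k$. Since $\Hom_A(M,k) \cong \Hom_k(M/\m M, k)$ and $M/\m M$ is a finite-dimensional $k$-vector space (as $M$ is finitely generated), this forces $\dim_k(M/\m M) = 1$. Pick any $m \in M$ whose image in $M/\m M$ is nonzero; then $M/\m M$ is generated by the image of $m$, so $M = Am + \m M$, and Nakayama's lemma (applicable because $M$ is finitely generated over the local ring $A$) gives $M = Am$, i.e. $M$ is cyclic. The same argument proves the last sentence: if $M$ is cyclic and $\phi \in \Hom_A(M,k)$ is nonzero, then $\phi$ does not vanish on $M/\m M$, so for any $m \in M$ with $\phi(m) \neq 0$ the image of $m$ in the one-dimensional space $M/\m M$ is nonzero, whence $m$ generates $M$ by Nakayama as before; conversely a generator $m$ has nonzero image in $M/\m M$, so $\phi(m) \neq 0$ for the nonzero functional $\phi$.

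\medskip

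There is no real obstacle here: the only point requiring any care is the identification $\Hom_A(M,k) \cong \Hom_k(M/\m M, k)$, which holds because $\m$ annihilates $k$ so every $A$-map $M \to k$ factors through $M/\m M$, and then one uses that a finite-dimensional $k$-vector space and its dual have the same dimension. Everything else is a direct invocation of the ordinary Nakayama lemma, using the standing hypothesis that $M$ is finitely generated.
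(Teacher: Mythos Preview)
Your proof is correct; the paper states this lemma without proof, simply labeling it ``the following version of Nakayama's lemma,'' so there is no approach to compare against. Your reduction via the identification $\Hom_A(M,k) \cong \Hom_k(M/\m M,k)$ followed by ordinary Nakayama is exactly the standard argument the authors have in mind. (One trivial caveat: the statement as written fails for $M=0$, which is cyclic but has $\Hom_A(M,k)=0$; your proof implicitly and correctly assumes $M \neq 0$ when asserting $M \otimes_A k$ is nonzero, and this is harmless in the paper's applications.)
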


Now we let $\m \subset R$ be the maximal ideal (so $\m = \Jm+pR$). Using \eqref{eq: B and C are dual to cohomology} we calculate 
\begin{equation}
\label{eq: B dual = H1}
\Hom_{R}(B,R/\m)=\Hom_{R}(\Bm,R/\m) \cong H^1_\fl(\F_p(1)).
\end{equation}
Proposition \ref{prop:computation of B} shows that this is a $1$-dimensional $\F_p$-vector space. Hence $B$ is a cyclic $R$-module. Moreover, Proposition \ref{prop:computation of B} implies that any cocycle generating $H^1_\fl(\F_p(1))$ is ramified at $N$.

Now, the maps in \eqref{eq: B dual = H1} are given as follows. Let $\phi \in \Hom_{R}(B,R/\m)$ be non-zero (and hence a generator). Then the corresponding extension of $1$ by $\F_p(1)$ is
\[
\sigma \mapsto \ttmat{\omega(\sigma)}{\phi(b_\sigma)}{0}{1}
\]
If $\phi(b_\gamma)$ were zero, then this extension would be trivial at $I_N$ and hence unramified at $N$. Since we know, by \eqref{eq: B dual = H1}, that this extension generates $H^1_\fl(\F_p(1))$ and that any such generator is ramified at $N$, we must have $\phi(b_\gamma) \neq 0$. The lemma then implies that $b_\gamma$ generates $B$. This completes the proof of the theorem.

To prove Corollary \ref{cor:flat cohom of ad}, first we see that its main statement for $i = \pm 1$ follows directly from Theorem \ref{thm:cohomology final answer} in light of \eqref{eq: B and C are dual to cohomology}. The main statement for $i=0$, along with statements (1) and (2), are basic class field theory. Statement (3) follows immediately from Lemma \ref{lem:flat subspaces}. 

\section{$R = \bT$ and Applications}
\label{sec:R=T}

In this section, we use the numerical criterion to prove that the map $R \to \bT$ constructed in Proposition \ref{prop:R to T} is an isomorphism. We also give further information about the structure of $R$ and the $R$-modules $B$ and $C$.

\subsection{Numerical criterion} We will use the strengthening of Wiles's numerical criterion \cite[Appendix]{wiles1995} due to Lenstra (see \cite[Criterion I, pg.~343]{lci}).

\begin{thm}[Wiles--Lenstra numerical criterion]
Let $\sO$ be a DVR and let $R$ and $T$ be augmented $\sO$-algebras with augmentation ideals $I_R$ and $I_T$ and assume that $T$ is finite and flat over $\sO$. Let $\pi: R \to T$ be a surjective homomorphism of augmented $\sO$-algebras. Let $\eta_T$ be the image of $\Ann_T(I_T)$ in $\sO$. 

Then $\mathrm{length}(I_R/I_R^2) \ge \mathrm{length}(\sO/\eta_T)$ with equality if and only if $\pi$ is an isomorphism of complete intersection rings.
\end{thm}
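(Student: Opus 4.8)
The plan is to recall the proof of this criterion, which is Wiles's numerical criterion \cite[Appendix]{wiles1995} in the sharpened form due to Lenstra \cite[Criterion~I]{lci}; I only sketch its shape, since the details are standard. For a finite flat augmented $\sO$-algebra $A$ with augmentation $\pi_A \colon A \to \sO$ and augmentation ideal $I_A$, write $\eta_A = \pi_A(\Ann_A(I_A)) \subseteq \sO$, consistently with the notation of the statement. First I would reduce to the case where $R$ too is finite flat over $\sO$: one may assume $\lth(I_R/I_R^2) < \infty$ (otherwise there is nothing to prove), and then $I_R/I_R^2$ is $\sO$-torsion, so $\pi$ factors through a finite flat augmented $\sO$-algebra quotient $R'$ of $R$ with $I_R/I_R^2 \isoto I_{R'}/I_{R'}^2$, so we may replace $R$ by $R'$. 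Since $\pi$ is surjective it induces a surjection $I_R/I_R^2 \onto I_T/I_T^2$, whence $\lth(I_R/I_R^2) \geq \lth(I_T/I_T^2)$; and $\pi(\Ann_R(I_R)) \subseteq \Ann_T(I_T)$, whence $\eta_R \subseteq \eta_T$. So the inequality follows from the \emph{core statement}: for any finite flat augmented $\sO$-algebra $T$ one has $\lth(I_T/I_T^2) \geq \lth(\sO/\eta_T)$, with equality if and only if $T$ is a complete intersection over $\sO$.

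For the core statement I would choose a minimal presentation $T \cong \sO\lb X_1, \dots, X_n \rb / (f_1, \dots, f_m)$ with $\pi_T(X_i) = 0$, where $n$ is the minimal number of generators of $I_T$. Flatness of $T$ over $\sO$ together with relative dimension zero forces $m \geq n$, with $m = n$ precisely when $T$ is a complete intersection over $\sO$. Reducing the conormal sequence at the augmentation identifies $I_T/I_T^2 \cong \coker\bigl(\bar J \colon \sO^m \to \sO^n\bigr)$, where $\bar J$ is the Jacobian $(\partial f_j/\partial X_i)$ evaluated at the origin; since this module is $\sO$-torsion, $\lth(I_T/I_T^2)$ equals the valuation of $\Fitt^0_\sO(I_T/I_T^2)$, the ideal of $n\times n$ minors of $\bar J$. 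The heart of the matter is the inclusion $\eta_T \subseteq \Fitt^0_\sO(I_T/I_T^2)$, which gives the inequality; when $m = n$ one checks that $\Ann_T(I_T)$ is generated by a single Koszul-type element whose $\pi_T$-image generates $(\det \bar J)$, so that $\eta_T = (\det \bar J) = \Fitt^0_\sO(I_T/I_T^2)$ and equality holds, whereas for $m > n$ the inclusion is strict, so equality in the theorem forces $T$ to be a complete intersection.

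Finally, for the equality case, suppose $\lth(I_R/I_R^2) = \lth(\sO/\eta_T)$. Applying the core inequality to $R$ gives
\[
\lth(\sO/\eta_T) \leq \lth(\sO/\eta_R) \leq \lth(I_R/I_R^2) = \lth(\sO/\eta_T),
\]
so all these lengths agree; hence $R$ is a complete intersection, $\eta_R = \eta_T$, and $I_R/I_R^2 \isoto I_T/I_T^2$. Now $R$ and $T$ are finite flat augmented $\sO$-algebras, both complete intersections of embedding dimension $n$, so each has a presentation as $\sO\lb X_1, \dots, X_n\rb$ modulo a regular sequence of length $n$; choosing the variables compatibly with $\pi$, the two relation sequences are related by $(a_1, \dots, a_n) = C \cdot (f_1, \dots, f_n)$ for some matrix $C$ over the power series ring. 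Comparing linear parts gives $\bar J_R = C(0)\bar J_T$, so $\eta_R = (\det C(0))\,\eta_T$; since $\eta_R = \eta_T$, the constant term $\det C(0)$ is a unit, $C$ is invertible over the power series ring, the two relation ideals coincide, and $\pi$ is an isomorphism, so $R = T$ is a complete intersection. The main obstacle in a self-contained account is the core statement, specifically the inclusion $\eta_T \subseteq \Fitt^0_\sO(I_T/I_T^2)$ and its sharpness, i.e.\ relating the $\eta$-invariant (defined via the annihilator $\Ann_T(I_T)$) to the Fitting ideal of the cotangent module: when $T$ is Gorenstein this is Wiles's duality argument using $\Ann_T(I_T) \cong \sO$, and the non-Gorenstein case is Lenstra's refinement, which I would cite rather than reprove.
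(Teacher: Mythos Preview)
The paper does not prove this theorem; it is stated with attribution to Wiles and Lenstra (citing \cite[Appendix]{wiles1995} and \cite[Criterion~I, pg.~343]{lci}) and immediately applied. So there is no proof in the paper to compare against, and your sketch is a reasonable summary of the literature argument.

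That said, there is a gap in your reduction step. You assert that when $\lth(I_R/I_R^2) < \infty$, the map $\pi$ factors through a finite flat augmented quotient $R'$ with $I_R/I_R^2 \isoto I_{R'}/I_{R'}^2$. This is unjustified and can fail: for instance $R = \sO\lb X\rb/(\varpi X)$ (with $\varpi$ a uniformizer) has $I_R/I_R^2 \cong \sO/\varpi$, yet its only $\sO$-flat augmented quotient is $\sO$ itself. You use this reduction in the equality case so that you may speak of $\eta_R$ and apply the core inequality to $R$, concluding that $R$ is a complete intersection presented by a regular sequence of length $n$ before you compare presentations.

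The fix is to bypass $\eta_R$ entirely. From equality you already have $I_R/I_R^2 \isoto I_T/I_T^2$ and, by the core statement, that $T$ is a complete intersection $P/(f_1,\dots,f_n)$ with $P = \sO\lb X_1,\dots,X_n\rb$. Lifting gives a surjection $P \onto R$ with kernel $J \subseteq (f_1,\dots,f_n)$, exactly as you say, but without assuming anything about the shape of $J$. The cotangent isomorphism forces the images of $J$ and of $(f_1,\dots,f_n)$ in $(X_1,\dots,X_n)/(X_1,\dots,X_n)^2 \cong \sO^n$ to have the same finite length; being nested, they coincide. Hence each $f_i \equiv g_i \pmod{(X)^2}$ for some $g_i \in J$, and writing $g_i = \sum_j c_{ij} f_j$ gives $C(0)\bar J_T = \bar J_T$ on linear parts. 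Since $\det\bar J_T$ generates $\eta_T \neq 0$, the image $\bar J_T\,\sO^n$ is a full-rank sublattice preserved by $C(0)$, which forces $C(0) \in \GL_n(\sO)$; thus $(g_1,\dots,g_n) = (f_1,\dots,f_n) \subseteq J$, so $J = (f_1,\dots,f_n)$ and $R = T$. This is essentially your final paragraph, but argued directly from $\eta_T$ rather than through an $\eta_R$ that has not been shown to exist.
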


We apply this to the map $R \to \bT$ constructed in \S\ref{subsec:R to T}. In this case, the DVR $\sO$ is $\Z_p$ and the augmentation ideals are $\Jm \subset R$ and $I \subset \bT$. Let $\eta \subset \Z_p$ be the  image of $\Ann_\bT(I)$ under the augmentation $\bT \to \Z_p$, so that
\[
\Z_p/\eta = \bT/(I+\Ann_\bT(I)).
\]
 By Theorem \ref{thm:congruence number} and Lemma \ref{lem:bT is a pull-back} we have
\[
\Z_p/\eta = \bT/(I+\Ann_\bT(I)) \cong \bT^0/I^0 \cong \Z_p/(N-1)\Z_p.
\] 

On the other hand, we have this consequence of Proposition \ref{prop:technical study of Jm} and Theorem \ref{thm:cohomology final answer}.
\begin{cor}
\label{cor:Jm=YR}
We have $\Jm= Y R$, $J = (\Jm)^2$ and $\Jm/(\Jm)^2 \cong \Z_p/(N-1)\Z_p$.
\end{cor}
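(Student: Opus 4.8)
The plan is that this corollary is purely formal given the two substantial inputs: the presentation of $\Jm/J$ from Corollary \ref{cor: size of Jm/J} and the principality of $J$ from Theorem \ref{thm:cohomology final answer}(3), together with the elementary algebra recorded in Proposition \ref{prop:technical study of Jm}. So I would carry out the following short chain of deductions.

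First I would invoke Theorem \ref{thm:cohomology final answer}(3), which says $J$ is principal with generator $b_\gamma c_\gamma$. By Proposition \ref{prop:technical study of Jm} we have $Y^2 = -b_\gamma c_\gamma$ in $R$, and since $-b_\gamma c_\gamma$ and $b_\gamma c_\gamma$ generate the same ideal, this gives $J = (Y^2) = Y^2 R$. Next I would use the other half of Proposition \ref{prop:technical study of Jm}, namely that the image of $Y$ in $\Jm/J$ generates that cyclic group, which is equivalent to the statement $\Jm = YR + J$. Combining these, $J = Y^2 R \subseteq YR$, so in fact $\Jm = YR$.

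Then $\Jm^2 = (YR)^2 = Y^2 R = J$, establishing the second assertion $J = \Jm^2$. Finally, the isomorphism $\Jm/\Jm^2 = \Jm/J \cong \Z_p/(N-1)\Z_p$ is just the restatement of Corollary \ref{cor: size of Jm/J} once we know $J = \Jm^2$.

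The ``main obstacle'' here is not in this corollary at all — everything genuinely difficult (the principality of $J$, which rests on the cyclicity of $B$ and $C$ as $R$-modules, and hence on the one-dimensionality of the relevant finite-flat $H^1$) has already been done in Section \ref{sec:cohom calculations}. The only thing to be careful about is bookkeeping: making sure one uses $\Jm = YR + J$ (the generator statement) \emph{before} concluding $\Jm = YR$, rather than circularly, and noting that $J = Y^2 R$ is contained in $YR$ so that the ``$+ J$'' term is absorbed.
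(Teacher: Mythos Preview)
Your proposal is correct and follows essentially the same argument as the paper: use Theorem \ref{thm:cohomology final answer}(3) and Proposition \ref{prop:technical study of Jm} to get $J = Y^2R \subset YR$ and $\Jm = YR + J = YR$, then deduce $J = \Jm^2$ and invoke Corollary \ref{cor: size of Jm/J} for the quotient. The only cosmetic difference is that the paper writes ``$J \subset YR$'' rather than explicitly noting $J = Y^2R$ first, but the logic is identical.
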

\begin{proof}
We already know by Proposition \ref{prop:technical study of Jm} that $\Jm = YR + J$, and by Theorem \ref{thm:cohomology final answer} that $J$ is generated by $b_\gamma c_\gamma$. Since $b_\gamma c_\gamma= -Y^2$, we see that $J \subset YR$ and so $\Jm = YR$. It also follows that $J=(\Jm)^2$, and, since we know by Corollary \ref{cor: size of Jm/J} that $\Jm/J \cong \Z_p/(N-1)\Z_p$, the last part follows as well.
\end{proof}

We can now apply the numerical criterion.

\begin{cor}
\label{cor:R=T}
The surjection $R \rsurj \bT$ from Proposition \ref{prop:R to T} is an isomorphism and both rings are complete intersections.
\end{cor}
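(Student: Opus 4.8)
The plan is simply to feed the pieces assembled just above into the Wiles--Lenstra numerical criterion, applied to the surjection $\pi : R \rsurj \bT$ of Proposition \ref{prop:R to T} with $\sO = \Z_p$, augmentation ideal $I_R = \Jm$ on the Galois side and $I_T = I$ on the Hecke side. The hypotheses are in place: $\pi$ is a surjection of augmented $\Z_p$-algebras by Proposition \ref{prop:R to T}, and $\bT$ is finite and flat over $\Z_p$ because the perfect pairing $M_2(N;\Z_p)_\mathrm{Eis} \times \bT \to \Z_p$ of \S\ref{subsubsec:mf} realizes $\bT$ as a finite free $\Z_p$-module.

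It then remains to compare the two lengths. On the analytic side, $\Z_p/\eta = \bT/(I + \Ann_\bT(I))$ by the definition of $\eta$; since $\Ann_\bT(I) = \ker(\bT \to \bT^0)$ by Lemma \ref{lem:bT is a pull-back}, this quotient is $\bT^0/I^0$, which equals $\Z_p/(N-1)\Z_p$ by Mazur's congruence number computation (Theorem \ref{thm:congruence number}). Thus $\lth_{\Z_p}(\Z_p/\eta) = v_p(N-1)$. On the algebraic side, Corollary \ref{cor:Jm=YR} --- which packages the outputs $\Jm = YR$ and $J = \Jm^2$ of the Galois cohomology work of \S\ref{sec:cohom calculations} (principality of the reducibility ideal, Theorem \ref{thm:cohomology final answer}, together with Proposition \ref{prop:technical study of Jm}) --- gives $\Jm/\Jm^2 \cong \Z_p/(N-1)\Z_p$, so $\lth_{\Z_p}(\Jm/\Jm^2) = v_p(N-1)$ as well.

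Since the two lengths agree, the inequality $\lth(\Jm/\Jm^2) \ge \lth(\Z_p/\eta)$ supplied by the numerical criterion is an equality, and the criterion concludes that $\pi$ is an isomorphism and that $R \cong \bT$ is a complete intersection. There is essentially no obstacle left in the proof of the corollary itself --- all the difficulty is upstream, in establishing $J = \Jm^2$ (equivalently, the principality of the reducibility ideal and the cohomological computations behind it) and in the numerical criterion --- and the only point in the present argument that requires a moment's attention is the identification of the $\eta$-invariant, where one must pass through the pullback description $\bT \cong \bT^0 \times_{\Z_p/(N-1)\Z_p} \Z_p$ to relate $\Ann_\bT(I)$ to the cuspidal quotient.
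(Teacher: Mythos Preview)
Your proof is correct and follows the same approach as the paper: both invoke the Wiles--Lenstra numerical criterion with the computation of $\Z_p/\eta$ (via Lemma \ref{lem:bT is a pull-back} and Theorem \ref{thm:congruence number}) and of $\Jm/\Jm^2$ (via Corollary \ref{cor:Jm=YR}) already in hand. The paper's proof is a one-line appeal to these calculations, while you spell out the identification of $\eta$ through the pullback description of $\bT$ in slightly more detail; there is no substantive difference.
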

\begin{proof}
This is immediate from the numerical criterion: we know that $\bT$ is a finite flat $\Z_p$-algebra and we have the calculations of $\Z_p/\eta$ and $\Jm/(\Jm)^2$.
\end{proof}

\begin{cor}
\label{cor:I and I0 are principal}
The ideals $I \subset \bT$ and $I^0 \subset \bT^0$ are principal. In particular, $\bT^0$ is a complete intersection.
\end{cor}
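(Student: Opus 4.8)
The plan is to read off everything from the isomorphism $R \isoto \bT$ of Corollary~\ref{cor:R=T}. That isomorphism is one of augmented $\Z_p$-algebras, so it identifies the augmentation ideal $\Jm$ of $R$ with the augmentation ideal $I$ of $\bT$. By Corollary~\ref{cor:Jm=YR} we have $\Jm = YR$, so $I = (\xi)$ is principal, where $\xi \in \bT$ denotes the image of $Y$. Applying the surjection $\bT \rsurj \bT^0$, the ideal $I^0$ is the image of the principal ideal $I$, hence is itself principal. This disposes of the first assertion.

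For the complete intersection statement, the plan is to produce an explicit hypersurface presentation of $\bT^0$. The maximal ideal of $\bT$ is $\m = I + p\bT = (\xi, p)$, and $\bT$ is $\m$-adically complete and module-finite over $\Z_p$; Nakayama applied to the Artinian local ring $\bT/p\bT$, together with completeness, shows that $\bT$ is topologically generated over $\Z_p$ by $\xi$, so there is a surjection $\Z_p\lb y\rb \rsurj \bT$ with $y \mapsto \xi$. Since $\bT$ is finite and flat over $\Z_p$ (Corollary~\ref{cor:R=T}), a standard Weierstrass preparation argument (reduce modulo $p$, where the kernel becomes $(y^k)$, lift a distinguished polynomial $G$ of degree $k = \mathrm{rank}_{\Z_p}\bT$, and compare ranks) identifies the kernel with the principal ideal $(G)$, so $\bT \cong \Z_p\lb y\rb/(G)$. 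The augmentation $\bT \to \Z_p$ has kernel $I = (y)$, so $\Z_p \cong \bT/(y) \cong \Z_p/(G(0))$, forcing $G(0) = 0$; write $G = yH$ with $H \in \Z_p[y]$ distinguished. By Lemma~\ref{lem:bT is a pull-back}, $\ker(\bT \rsurj \bT^0) = \Ann_\bT(I) = \Ann_\bT(\xi)$, and a direct computation in the domain $\Z_p\lb y\rb$ gives $\Ann_\bT(\xi) = (H)$, whence
\[
\bT^0 \cong \Z_p\lb y\rb / (H).
\]
Since $p \mid (N-1)$ we have $\bT^0 \neq 0$ (e.g.\ by Theorem~\ref{thm:congruence number}), so $H$ is a nonzero nonunit; thus $\bT^0$ is the quotient of the regular local ring $\Z_p\lb y\rb$ by a single nonzerodivisor, hence a complete intersection. (Along the way this recovers Mazur's presentation $\bT^0 \cong \Z_p\lb y\rb/(F(y))$.)

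The only step that is not purely formal is the passage from principality of $I$ to the one-variable hypersurface presentation of $\bT$: one must verify that $\bT$ is generated over $\Z_p$ by the single element $\xi$ (which comes from $\m = (\xi,p)$ and completeness) and that the resulting relation ideal in $\Z_p\lb y\rb$ is principal, which is precisely where finiteness and flatness over $\Z_p$ and Weierstrass preparation enter. Once this presentation is established, identifying $\Ann_\bT(\xi)$ inside $\Z_p\lb y\rb/(yH)$ and matching it with $\ker(\bT \rsurj \bT^0)$ via Lemma~\ref{lem:bT is a pull-back} is immediate.
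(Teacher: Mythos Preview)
Your proof is correct. The principality argument is identical to the paper's. For the complete intersection statement, however, you take a more explicit route: you construct the hypersurface presentation $\bT \cong \Z_p\lb y\rb/(yH)$ and then identify $\bT^0 \cong \Z_p\lb y\rb/(H)$ via $\ker(\bT \rsurj \bT^0) = \Ann_\bT(\xi) = (H)$. The paper instead argues abstractly: since $\bT^0$ is flat over $\Z_p$ and $\bT^0/I^0$ is finite, the principal generator of $I^0$ is a non-zero-divisor, and since $\bT^0/I^0 \cong \Z_p/(N-1)\Z_p$ is a complete intersection, so is $\bT^0$. Your approach is longer but yields more: it essentially reproduces the explicit presentation that the paper establishes separately in the next result (Corollary~\ref{cor:structure of R}). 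The paper's approach isolates the minimal input needed for the c.i.\ conclusion and defers the structural presentation to where it is actually used.
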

\begin{proof}
It follows from Corollary \ref{cor:Jm=YR} that $\Jm$ is principal. Since $R \to \bT$ is an isomorphism of augmented algebras, it follows that $\Jm \cong I$ and so $I$ is also principal. Then $I^0$ must also be principal. Since $\bT^0$ is a flat $\Z_p$-algebra and $\bT^0/I^0$ is finite, $I^0$ must be generated by a non-zero divisor. Since $\bT^0/I^0 = \Z_p/(N-1)\Z_p$ is complete intersection, $\bT^0$ is also complete intersection.
\end{proof}

We can also reprove Mazur's results regarding generators of $I$ (see Corollary \ref{cor:gens of I} below).
\subsection{Structure of $R$, $B$ and $C$}
\label{subsec:structure of R B and C}

We have this immediate corollary.

\begin{cor}
\label{cor:R reduced and finite flat}
The ring $R$ is reduced, and it is finite and flat as a $\Z_p$-algebra. 
\end{cor}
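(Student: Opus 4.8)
The plan is to transport everything across the isomorphism $R \isoto \bT$ of Corollary \ref{cor:R=T}: the properties ``reduced'', ``finite over $\Z_p$'', and ``flat over $\Z_p$'' all make sense for the abstract $\Z_p$-algebra and hence transfer along any isomorphism of $\Z_p$-algebras. So it suffices to recall why $\bT$ enjoys these properties. Reducedness is already part of Proposition \ref{prop:R to T}, where it follows from the semisimplicity of the Hecke operators $T_n$ with $(n,Np)=1$ on $M_2(N;\Z_p)_\mathrm{Eis}$. For finiteness and flatness, I would argue that $\bT'$ sits inside $\End_{\Z_p}(M_2(N;\Z_p)_\mathrm{Eis})$, which is a finite free $\Z_p$-module because $M_2(N;\Z_p)_\mathrm{Eis}$ is; hence $\bT'$ is a finitely generated torsion-free module over the DVR $\Z_p$, so it is finite free over $\Z_p$. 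Being a finite $\Z_p$-algebra, $\bT'$ is $p$-adically complete and semilocal, so it is the product of its completions at its maximal ideals (each containing $p$); the ring $\bT$ is the factor corresponding to $(I',p)$, hence a direct summand of the finite free $\Z_p$-module $\bT'$, hence itself finite free --- in particular finite and flat --- over $\Z_p$.

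There is no real obstacle: the substance lies entirely in the already-established isomorphism $R \cong \bT$ and in the elementary structure of $\bT$, the latter having in fact already been used in the application of the numerical criterion (Corollary \ref{cor:R=T}), where the finiteness and flatness of $\bT$ over $\Z_p$ were taken as input. The one small point to get right is that passing to the ``completion at $(I',p)$'' of the already-complete semilocal ring $\bT'$ amounts to cutting out an idempotent direct factor, so that freeness over $\Z_p$ is inherited.
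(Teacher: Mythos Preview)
Your proof is correct and follows the same approach as the paper, which simply invokes the isomorphism $R \isoto \bT$ of Corollary~\ref{cor:R=T} together with the corresponding properties of $\bT$ (from \S\ref{subsubsec:mf} and Proposition~\ref{prop:R to T}). You have merely spelled out in a bit more detail why $\bT$ is finite free over $\Z_p$; one small slip is that $\bT'$ is by definition a subalgebra of $\End_{\Z_p}(M_2(N;\Z_p))$ rather than of $\End_{\Z_p}(M_2(N;\Z_p)_\mathrm{Eis})$, but the argument goes through verbatim with this correction.
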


\begin{proof}
This follows from the isomorphism $R \isoto \bT$ and the corresponding properties for $\bT$ (\S\ref{subsubsec:mf}, Proposition \ref{prop:R to T}).
\end{proof}

In particular, it follows that any generator of $\Jm$ as an ideal is a generator for $R$ as a $\Z_p$-algebra. Similarly, any generator of $I$ will generate $\bT$ as a $\Z_p$-algebra, as well as its quotient $\bT^0$.

\begin{cor}
\label{cor:structure of R}
Let $Y \in R$ be the element described in Proposition \ref{prop:technical study of Jm}, so that $Y$ is a generator of $\Jm$. Let $g(y) \in \Z_p[y]$ be the monic minimal polynomial of $Y$, so that there is an isomorphism
\[
\Z_p[y]/(g(y)) \lrisom R
\]
given by $y \mapsto Y$. Then $g(y)=yf(y)$ for some $f(y) \in \Z_p[y]$ with $f(y) \equiv y^{\deg{f}} \pmod{p}$ and $f(0)\Z_p = (N-1)\Z_p$, and $\Ann_R(\Jm)$ is the image of the ideal $(f(y))$.
\end{cor}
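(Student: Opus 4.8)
The plan is to work with the presentation $R \cong \Z_p[y]/(g(y))$ and reduce every assertion to an elementary statement about the polynomial $g$ and the principal ideal it generates.

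\emph{The presentation.} By the remark following Corollary~\ref{cor:R reduced and finite flat}, the generator $Y$ of $\Jm$ generates $R$ as a $\Z_p$-algebra, so $y\mapsto Y$ defines a surjection $\Z_p[y]\twoheadrightarrow R$. Since $R$ is finite and flat over $\Z_p$, $Y$ is integral over $\Z_p$ and $R\hookrightarrow R\otimes_{\Z_p}\Q_p$, so the monic minimal polynomial $g(y)\in\Z_p[y]$ of $Y$ agrees with its minimal polynomial over $\Q_p$; the division algorithm by the monic $g$ then shows $\ker(\Z_p[y]\to R)=(g(y))$, giving the asserted isomorphism $\Z_p[y]/(g(y))\isoto R$ (and $\deg g=\mathrm{rank}_{\Z_p}R$).

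\emph{The three properties of $f$.} (i) The augmentation $R\to\Z_p$ kills $Y\in\Jm$, so under the identification it is the map $y\mapsto 0$, which is well-defined precisely when $g(0)=0$; hence $g(y)=y\,f(y)$ with $f$ monic of degree $\deg g-1$. (ii) Reducing mod $p$ gives $R/pR\cong\F_p[y]/(\bar g(y))$, an Artinian local ring whose maximal ideal is the image of $\m=\Jm+pR$, hence is generated by the image $\bar Y$ of $y$; so $\bar Y$ is nilpotent, which forces $\bar g(y)=y^{\deg g}$, and cancelling $y$ in $\F_p[y]$ gives $f(y)\equiv y^{\deg f}\pmod p$. (iii) Since $\Jm=YR$ is principal, $\Ann_R(\Jm)=\Ann_R(Y)$; lifting to $\Z_p[y]$, a class $h(Y)$ satisfies $Y\,h(Y)=0$ in $R$ iff $g(y)\mid y\,h(y)$ in $\Z_p[y]$ iff (cancelling $y$ in the domain $\Z_p[y]$) $f(y)\mid h(y)$, so $\Ann_R(Y)$ is exactly the image of $(f(y))$.

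\emph{Identifying $f(0)$.} Finally I would compute $\Jm/\Jm^2$ in two ways. The surjection $R\to YR/Y^2R$, $r\mapsto Yr$, has kernel $YR+\Ann_R(Y)=(y,f(y))R$, so $\Jm/\Jm^2\cong\Z_p[y]/(y,f(y),g(y))=\Z_p[y]/(y,f(y))=\Z_p/(f(0))$, using $g=yf\in(y)$. On the other hand $\Jm/\Jm^2\cong\Z_p/(N-1)\Z_p$ by Corollary~\ref{cor:Jm=YR}. Comparing yields $f(0)\Z_p=(N-1)\Z_p$. The content here is entirely bookkeeping: the crucial cancellations ($g\mid yh\Leftrightarrow f\mid h$, and its mod-$p$ analogue) rest on $\Z_p[y]$ and $\F_p[y]$ being domains, and one must be careful about which ring each ideal lives in; there is no new arithmetic input beyond the already-established facts that $R\cong\bT$ is finite flat over $\Z_p$, that $\Jm=YR$, and that $\Jm/\Jm^2\cong\Z_p/(N-1)\Z_p$.
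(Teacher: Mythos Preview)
Your proof is correct and follows essentially the same approach as the paper's, with more detail supplied at each step. The one minor difference is in identifying $f(0)$: the paper observes that $\Z_p/(f(0)) \cong R/(\Jm + \Ann_R(\Jm))$ and then transports this across $R\isoto\bT$ to $\bT/(I+\Ann_\bT(I)) = \Z_p/\eta = \Z_p/(N-1)\Z_p$, whereas you compute $\Jm/\Jm^2 \cong \Z_p/(f(0))$ directly and cite Corollary~\ref{cor:Jm=YR}; since $\Jm/\Jm^2 \cong R/(\Jm+\Ann_R(\Jm))$ for a principal $\Jm$, these are the same computation with different bookkeeping.
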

\begin{proof}
The fact that the map is an isomorphism is a standard exercise. The image of $(y)$ is the augmentation ideal $YR=\Jm$, so reducing modulo $(y)$ we obtain a $\Z_p$-algebra homomorphism $\Z_p/(g(0)) \to \Z_p$, which implies that $g(0)=0$, and so $g(y)=yf(y)$. The annihilator of $(y)$ is $(f(y))$, so the annihilator of $\Jm$ is the image of $(f(y))$. Since $R$ is local and $Y \in \Jm$, $g$ is distinguished and the congruence $f(y) \equiv y^{\deg f} \pmod{p}$ follows. Finally, under the isomorphism $R\risom \bT$, we see that $\Z_p/(f(0))$ corresponds to $\Z_p/\eta=\Z_p/(N-1)\Z_p$, so the valuation of $f(0)$ must equal that of $N-1$.
\end{proof}

We see that $\deg f = \mathrm{rank}_{\Z_p}(R)-1 = \mathrm{rank}_{\Z_p}(\bT^0)$.  We write $R^0=R/\Ann_R(\Jm)$, so that the isomorphism $R \isoto \bT$ induces $R^0 \isoto \bT^0$.

\begin{lem}
\ 
\begin{enumerate}
\item There are isomorphisms $J \simeq \Jm \simeq R^0$ of $R$-modules.
\item Any non-zero ideal $\mathfrak{a} \subset \Ann_R(\Jm)$ is of the form $p^i \Ann_R(\Jm)$ for some $i \ge 0$.
\end{enumerate}
\end{lem}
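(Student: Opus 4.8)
The plan is to use the explicit description of $R$ coming from Corollary \ref{cor:structure of R}, under which $R \cong \Z_p[y]/(yf(y))$ with $f(y) \equiv y^{\deg f} \pmod p$, $v_p(f(0)) = v_p(N-1) = t$, and $\Jm = YR$ corresponds to $(y)$, $\Ann_R(\Jm) = (f(y))$, and $R^0 = R/\Ann_R(\Jm) \cong \Z_p[y]/(f(y))$. First I would prove (1). We know by Theorem \ref{thm:cohomology final answer} that $J = (b_\gamma c_\gamma) = (-Y^2) = Y^2 R$, and by Corollary \ref{cor:Jm=YR} that $J = \Jm^2$, so to identify $J$, $\Jm$, and $R^0$ as abstract $R$-modules it suffices to identify $\Jm$. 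Since $\Jm = YR$ is generated by a single element, it is a cyclic $R$-module, so $\Jm \cong R/\Ann_R(Y) = R/\Ann_R(\Jm) = R^0$; sending $1 \mapsto Y$ gives the isomorphism. The same reasoning applied to $J = Y^2 R$ gives $J \cong R/\Ann_R(Y^2)$, and here the main point to check is $\Ann_R(Y^2) = \Ann_R(Y)$: this follows because in $\Z_p[y]/(yf(y))$ the annihilator of $y^2$ is $(f(y))$ (as $y^2 h \equiv 0$ iff $yf \mid y^2 h$ iff $f \mid yh$ iff, since $\gcd(f(y),y) $ is a unit mod the distinguished polynomial structure — $f(0) \ne 0$ in $\Q_p$ — $f \mid h$), which is exactly $\Ann_R(Y)$. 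Hence $J \cong \Jm \cong R^0$.

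Next I would prove (2). Working in $R^0 \cong \Z_p[y]/(f(y))$ with $f$ distinguished of degree $e := \deg f$ and $f \equiv y^e \pmod p$, the ring $R^0$ is a local $\Z_p$-order, finite free of rank $e$ over $\Z_p$, with maximal ideal $\m^0 = (p, y)$ and residue field $\F_p$. Now $\Ann_R(\Jm)$, viewed inside $R$, is the ideal $(f(y))R$; I claim that as an abstract ring (or $R$-module), $\Ann_R(\Jm) \cong \Z_p$ — indeed multiplication $\Z_p \to \Ann_R(\Jm)$, $1 \mapsto f(Y)$, is injective (as $R$ is $\Z_p$-flat and $f(Y) \ne 0$) and surjective by Corollary \ref{cor:structure of R}, since $\Ann_R(\Jm)$ is the image of $(f(y))$ and $y \cdot f(y) = 0$ in $R$ forces every element of that image to be a $\Z_p$-multiple of $f(Y)$. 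So any ideal $\mathfrak a \subset \Ann_R(\Jm)$ corresponds, under this identification $\Ann_R(\Jm) \cong \Z_p$, to a $\Z_p$-submodule of $\Z_p$; but we need more — we need $\mathfrak a$ to be an ideal of $R$, i.e.\ stable under multiplication by $R$. Since $R$ acts on $\Ann_R(\Jm) \cong \Z_p$ through the augmentation $R \to \Z_p$ (because $\Jm \cdot \Ann_R(\Jm) = 0$, so $R/\Jm = \Z_p$ acts), every $\Z_p$-submodule of $\Ann_R(\Jm)$ is automatically an $R$-submodule. Therefore the nonzero ideals $\mathfrak a \subseteq \Ann_R(\Jm)$ are exactly the submodules $p^i \Z_p \subseteq \Z_p$ for $i \ge 0$, i.e.\ $\mathfrak a = p^i \Ann_R(\Jm)$.

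The main obstacle, such as it is, is bookkeeping rather than conceptual: one must be careful about the distinction between ideals of $R$ and $\Z_p$-submodules, and verify cleanly that the $R$-action on the rank-one $\Z_p$-module $\Ann_R(\Jm)$ factors through the augmentation — this is the step that makes part (2) work, and it hinges on the identity $\Jm \cdot \Ann_R(\Jm) = 0$, which is immediate from the definition of $\Ann_R(\Jm)$. Once that is in place, both statements reduce to elementary manipulations with the polynomial presentation of Corollary \ref{cor:structure of R}; I expect the write-up to be short.
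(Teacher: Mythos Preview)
Your argument is correct. For part (2) your approach is essentially the paper's: identify $\Ann_R(\Jm)$ with a free rank-one $\Z_p$-summand of $R$ and note that its $\Z_p$-submodules are exactly the $p^i\Z_p$. Your extra step, observing that the $R$-action on $\Ann_R(\Jm)$ factors through the augmentation $R \to R/\Jm = \Z_p$, is correct but unnecessary: the statement only asks that every nonzero ideal contained in $\Ann_R(\Jm)$ have the form $p^i\Ann_R(\Jm)$, and for this it is enough to know that such an ideal is in particular a $\Z_p$-submodule of $\Z_p$.

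For part (1) you and the paper take genuinely different routes to the equality $\Ann_R(Y^2)=\Ann_R(Y)$. You argue directly in the polynomial ring: if $f(y)\mid yh(y)$ in $\Z_p[y]$ with $f$ monic and $f(0)\neq 0$, then evaluating at $y=0$ forces the cofactor to vanish at $0$, hence $f\mid h$. The paper instead invokes Corollary~\ref{cor:R reduced and finite flat}: since $R\cong\bT$ is reduced, one has $\Ann_R(Y^n)=\Ann_R(Y)$ for all $n\ge 1$ by the standard observation that $zY^n=0$ implies $(zY)^n=z^{n-1}(zY^n)=0$, hence $zY=0$. The paper's route is slicker and highlights the structural input ($R$ reduced, coming from $R=\bT$), whereas your computation is self-contained and does not rely on the modular side. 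Both are short; either would be acceptable.
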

\begin{proof}
(1) Since both ideals are principal, it suffices to show $\Ann_R(J)=\Ann_R(\Jm)$. But we know that $\Jm=YR$ and $J=Y^2R$, so this follows from the fact that $R$ is reduced (Corollary \ref{cor:R reduced and finite flat}).

(2) By Corollary \ref{cor:structure of R}, we may study subideals of $(f(y))$ in $\Z_p[y]/(yf(y))$. As $\Z_p$-modules, the ideal $(f(y))$ is a free direct summand of $\Z_p[y]/(yf(y))$ of rank $1$. Since any subideal must also by a sub-$\Z_p$-module, the lemma follows.
\end{proof}

\begin{cor}
\label{cor:structure of B and C}
The module $B$ is free of rank $1$ as an $R$ module and there is an isomorphism $C \simeq J$ of cyclic $R$-modules. In particular, the map $B \otimes_R C \to J$ is an isomorphism.
\end{cor}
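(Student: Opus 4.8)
*The module $B$ is free of rank $1$ as an $R$-module and there is an isomorphism $C \simeq J$ of cyclic $R$-modules. In particular, the map $B \otimes_R C \to J$ is an isomorphism.*

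The plan is to leverage the cyclicity of $B$ and $C$ (Theorem~\ref{thm:cohomology final answer}(2)) together with the explicit presentation $R \cong \Z_p[y]/(yf(y))$ of Corollary~\ref{cor:structure of R}, under which $\Jm = (y)$, $\Ann_R(\Jm) = (f(y))$ with $f(0)\Z_p = (N-1)\Z_p \neq 0$, and $J = \Jm^2 = (y^2)R$ is generated by $b_\gamma c_\gamma = -Y^2$ (Corollary~\ref{cor:Jm=YR}, Proposition~\ref{prop:technical study of Jm}). First I would record one commutative-algebra input: $\Jm \cap \Ann_R(\Jm) = 0$. Indeed, $R$ is $\Z_p$-flat (Corollary~\ref{cor:R reduced and finite flat}), so it embeds in $R[1/p] \cong \Q_p[y]/(yf(y))$, and since $f(0) \neq 0$ we have $\gcd_{\Q_p[y]}(y,f(y)) = 1$, so this ring splits as $\Q_p \times R^0[1/p]$ with $(y)$ and $(f(y))$ landing in complementary factors; hence the two ideals already meet in $0$ over $R$. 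I would also note $\Ann_R(Y^2) = \Ann_R(Y) = \Ann_R(\Jm)$, using that $R$ is reduced.

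For the freeness of $B$, write $B = R/\mathfrak{a}$ with $\mathfrak{a} = \Ann_R(b_\gamma)$. Reducing modulo $\Jm$ and using $\Bm \cong \Z_p$ (Theorem~\ref{thm:cohomology final answer}(1)) together with $R/\Jm \cong \Z_p$ forces $\mathfrak{a} \subseteq \Jm$. On the other hand $\mathfrak{a}\cdot b_\gamma = 0$ gives $\mathfrak{a}\cdot b_\gamma c_\gamma = 0$, i.e.\ $\mathfrak{a}\cdot Y^2 = 0$, so $\mathfrak{a} \subseteq \Ann_R(Y^2) = \Ann_R(\Jm)$. Combining, $\mathfrak{a} \subseteq \Jm \cap \Ann_R(\Jm) = 0$, so $B \cong R$ is free of rank $1$.

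For $C$, write $C = R/\mathfrak{b}$ with $\mathfrak{b} = \Ann_R(c_\gamma)$; exactly as above, $\mathfrak{b}\cdot c_\gamma = 0$ forces $\mathfrak{b} \subseteq \Ann_R(Y^2) = \Ann_R(\Jm)$. Since $B$ is free on $b_\gamma$, the surjection $B\otimes_R C \twoheadrightarrow J$ of Proposition~\ref{prop:reducibility} becomes the surjection $C = R/\mathfrak{b} \twoheadrightarrow J$ sending $c_\gamma$ to $-Y^2$; identifying $J = Y^2 R \cong R/\Ann_R(Y^2) = R^0$, this is just the tautological quotient $R/\mathfrak{b} \twoheadrightarrow R/\Ann_R(\Jm)$. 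Reducing modulo $\Jm$ yields a surjection $\Cm \twoheadrightarrow R^0/\Jm R^0$ in which both sides are isomorphic to $\Z_p/(N-1)\Z_p$ — the left by Theorem~\ref{thm:cohomology final answer}(1), the right because $f(0)\Z_p = (N-1)\Z_p$ — hence a surjection of $\Z_p$-modules of equal finite length, hence an isomorphism. Therefore $\mathfrak{b} + \Jm = \Ann_R(\Jm) + \Jm$, and a short argument using $\mathfrak{b} \subseteq \Ann_R(\Jm)$ and $\Jm \cap \Ann_R(\Jm) = 0$ then gives $\mathfrak{b} = \Ann_R(\Jm)$; thus $C \cong R^0 \cong J$. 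Finally $B\otimes_R C \cong R\otimes_R C \cong C \cong J$, and the natural map $B\otimes_R C \to J$ is a surjection between abstractly isomorphic finitely generated modules over the Noetherian ring $R$, hence an isomorphism.

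The only genuinely nontrivial step is the vanishing $\Jm \cap \Ann_R(\Jm) = 0$; once that and the three ``size'' identities ($\Bm \cong \Z_p$, $\Cm \cong \Z_p/(N-1)\Z_p$, and $f(0)\Z_p = (N-1)\Z_p$) are in hand, everything else is formal manipulation of cyclic modules plus a single length count.
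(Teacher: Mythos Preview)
Your proof is correct and follows essentially the same strategy as the paper: pin down the annihilators of the cyclic modules $B$ and $C$ by first observing that $\Ann_R(B),\Ann_R(C)\subset\Ann_R(J)=\Ann_R(\Jm)$ (from the surjection $B\otimes_R C\twoheadrightarrow J$), and then match against the known sizes $\Bm\simeq\Z_p$, $\Cm\simeq\Z_p/(N-1)\Z_p$.

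The difference lies in the auxiliary commutative-algebra input. The paper proves a lemma classifying all nonzero subideals of $\Ann_R(\Jm)$ as $p^i\Ann_R(\Jm)$, then computes $R/(p^i\Ann_R(\Jm))\otimes_R R/\Jm\simeq\Z_p/p^i(N-1)\Z_p$ and reads off the correct $i$. You instead isolate the single fact $\Jm\cap\Ann_R(\Jm)=0$, which for $B$ lets you conclude immediately (since you also show $\Ann_R(B)\subset\Jm$ from $\Bm\simeq\Z_p$), and for $C$ lets you upgrade the equality $\mathfrak{b}+\Jm=\Ann_R(\Jm)+\Jm$ to $\mathfrak{b}=\Ann_R(\Jm)$. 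Your route is marginally more streamlined for $B$ and avoids classifying all subideals, while the paper's lemma gives a slightly more uniform treatment of $B$ and $C$; both are short and the underlying content (that $(f(y))$ is a rank-one $\Z_p$-direct summand disjoint from $(y)$ in $\Z_p[y]/(yf(y))$) is the same. One minor remark: your final sentence invoking ``surjection between abstractly isomorphic modules'' is unnecessary, since you have already identified the map $B\otimes_R C\to J$ with the tautological quotient $R/\mathfrak{b}\to R/\Ann_R(\Jm)$, which is an isomorphism on the nose once $\mathfrak{b}=\Ann_R(\Jm)$.
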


\begin{proof}
The second sentence follows from the first, since we already have a surjection $B \otimes_R C \onto J$ and the first sentence implies that $B \otimes_R C \simeq J$ as $R$-modules.

By Theorem \ref{thm:cohomology final answer}, $B$ and $C$ are cyclic $R$-modules, so it suffices to show that $B$ is faithful as an $R$-module and that $\Ann_R(C) = \Ann_R(J)$. Since we have a surjection $B \otimes_R C \onto J$, we know that $\Ann_R(B)$ and $\Ann_R(C)$ are subideals of $\Ann_R(J)$. By the previous lemma, we have $\Ann_R(B)$ and $\Ann_R(C)$ are either zero or of the form $p^i\Ann_R(\Jm)$ for some $i\ge 0$.

Now, by Corollary \ref{cor:structure of R}, we have isomorphisms
\[
R/(p^i\Ann_R(\Jm)) \otimes_R R/\Jm \simeq \Z_p[y]/(y,p^if(y)) \simeq \Z_p/(p^if(0)) =\Z_p/p^{i}(N-1)\Z_p.
\]
On the other hand, we know by Theorem \ref{thm:cohomology final answer} that 
\[
B \otimes_R R/\Jm \simeq \Z_p, \quad C \otimes_R R/\Jm \simeq \Z_p/(N-1)\Z_p.
\]
It follows that $\Ann_R(B)=0$ and that $\Ann_R(C)=\Ann_R(\Jm)=\Ann_R(J)$.
\end{proof}

We have the following immediate consequence of foregoing statements. 

\begin{cor}
\label{cor:structure of C}
Let $e=\mathrm{rank}_{\Z_p}(\bT^0)$. Using the isomorphism $R \risom \Z_p[y]/(y f(y))$ of Corollary \ref{cor:structure of R},  the $R$-modules $J$, $\Jm$, $R^0$, and $C$ are isomorphic to $\Z_p[y]/(f(y))$. In particular, we have that $C/pC \simeq \F_p[y]/(y^e)$ as a module for $R/pR \risom \F_p[y]/(y^{e+1})$. 
\end{cor}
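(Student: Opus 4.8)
The plan is to deduce the statement by chaining together isomorphisms already established; no new computation is needed. First I would invoke Corollary~\ref{cor:structure of R}: under the isomorphism $R \risom \Z_p[y]/(yf(y))$ sending $Y \mapsto y$, the ideal $\Ann_R(\Jm)$ is the image of $(f(y))$, so $R^0 = R/\Ann_R(\Jm) \risom \Z_p[y]/(yf(y),f(y)) = \Z_p[y]/(f(y))$. Next, the Lemma immediately preceding Corollary~\ref{cor:structure of B and C} already records $J \simeq \Jm \simeq R^0$ as $R$-modules, and Corollary~\ref{cor:structure of B and C} records $C \simeq J$ as cyclic $R$-modules; composing these identifications gives $J \simeq \Jm \simeq R^0 \simeq C \simeq \Z_p[y]/(f(y))$, which is the first assertion. (If one prefers a self-contained route to $\Jm \simeq R^0$: since $\Jm = YR$, multiplication by $Y$ is a surjection $R \onto \Jm$ with kernel $\Ann_R(Y) = \Ann_R(\Jm)$, so $\Jm \risom R/\Ann_R(\Jm) = R^0$.)

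For the ``in particular'' clause I would reduce modulo $p$. By Corollary~\ref{cor:structure of R}, $f(y) \equiv y^{\deg f} \pmod p$, and, as noted just after that corollary, $\deg f = \mathrm{rank}_{\Z_p}(R) - 1 = \mathrm{rank}_{\Z_p}(\bT^0) = e$. Hence $R/pR \risom \F_p[y]/(yf(y)) \risom \F_p[y]/(y^{e+1})$, and applying $- \otimes_{\Z_p} \F_p$ to the $R$-linear isomorphism $C \risom \Z_p[y]/(f(y))$ gives $C/pC \risom \F_p[y]/(\bar f(y)) = \F_p[y]/(y^e)$. Because every isomorphism used is $R$-linear, this is an isomorphism of $R/pR$-modules, where $\F_p[y]/(y^e)$ carries the module structure coming from the ring surjection $\F_p[y]/(y^{e+1}) \onto \F_p[y]/(y^e)$.

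I do not anticipate a genuine obstacle: the argument is purely bookkeeping from foregoing statements. The two points that warrant a sentence of care are (i) identifying the kernel of multiplication-by-$Y$ on $R$ with $\Ann_R(\Jm)$, which is immediate from $\Jm = YR$, and (ii) checking that the cited isomorphisms respect the $R$-module (and hence $R/pR$-module) structures rather than merely the underlying abelian groups, which holds since each one is already $R$-linear.
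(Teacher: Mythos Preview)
Your proposal is correct and matches the paper's approach: the paper simply states that the corollary is an ``immediate consequence of foregoing statements,'' and you have spelled out precisely those foregoing statements (Corollary~\ref{cor:structure of R}, the Lemma giving $J \simeq \Jm \simeq R^0$, and Corollary~\ref{cor:structure of B and C}) and chained them together. The reduction modulo $p$ using $f(y) \equiv y^e \pmod{p}$ is handled correctly as well.
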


\begin{rem}
\label{rem:B and C up to isom} These results on the $R$-module structures of $B$ and $C$ are proven for an arbitrary choice of GMA structure on $E$, and so they hold for any choice of GMA structure. This is not surprising because the modules obtained for a different choice of GMA structure will be a priori isomorphic.
\end{rem}

\begin{rem}
	Using Proposition \ref{prop:bT0 structure of TaJ} and Corollary \ref{cor:structure of B and C}, one can prove that $(E,\rho,D)$ is \emph{ordinary} in the sense of \cite[Defn.\ 5.9.1]{WWE1}.
\end{rem}

\section{The Newton polygon of $\bT$ and a finer invariant}

By the results of the previous section, there are isomorphisms
\[
\bT \simeq \Z_p[y]/(yf(y)), \quad \bT^0 \simeq \Z_p[y]/(f(y)).
\]
These presentations are not canonical, but, as is well-known, the Newton polygon of $f(y)$ is a canonical invariant of $\bT^0$ (and, of course, it can be determined from the Newton polygon of $\bT$). Mazur \cite[\S II.19,\ pg.\ 140]{mazur1978} asked what can be said about this Newton polygon. In this section, we introduce a finer invariant than the Newton polygon and prepare some lemmas to relate it to deformation theory.

\subsection{Newton polygons} For this subsection, we fix $g(x)= \sum_{i=0}^m \alpha_i x^i \in \Z_p[x]$ a monic, distinguished polynomial (i.e.~ $v_p(\alpha_i)>1$ for $i<m$ and $\alpha_m=1$). Note that a coefficient may be zero, so it is possible that $v_p(\alpha_i) = \infty$ in what follows. We slightly abuse terminology by calling these valuations ``integers'' nonetheless.  

\begin{defn}
The \emph{Newton polygon} of $g(x)$ is the lower convex hull of the points $\{(i,v_p(\alpha_i)) : i=0,\dots,m\}$ in $\bR^2$, where a point is omitted when $v_p(\alpha_i) = \infty$. We denote it by $\mathrm{NP}(g)$. 
\end{defn}

Define a sequence $z_0, \dots, z_m$ inductively by 
\[
z_0 = v_p(\alpha_0), \qquad z_i = \min \{z_{i-1}, v_p(\alpha_i)\}\ \text{ for } i=1,\dots, m.
\]
Then it is an easy exercise to see that $\mathrm{NP}(g)$ is the lower convex hull of the points $\{(i,z_i) : i=0,\dots,m\}$. 

Let $T=\Z_p[x]/(g(x))$. We will call an element $y \in T$ a \emph{generator} if $(x)=(y)$ as ideals of $T$. Such an element will also generate $T$ as a $\Z_p$-algebra. Since $T$ is local, we can see that $y=ux$ for some unit $u \in T^\times$. Recall from \S \ref{subsec:conv} that $(\Z/p^r\Z)[\epsilon_i]:= (\Z/p^r\Z)[\epsilon]/(\epsilon^{i+1})$.

\begin{lem}
	\label{lem:NPcontrol}
For $i=0, \dots, m$, define $t_i$ (resp.\ $r_i$) to be the maximal integer $r$ such that there exists a surjective ring homomorphism
\[
\varphi: T \onto (\Z/p^r\Z)[\epsilon_i]
\]	
such that $\varphi(y)=\epsilon$ for some generator $y\in T$ (resp.\ such that $\varphi(x)=\epsilon$).  Then $t_i=r_i=z_i$ for $i=0,\dots, m$.
\end{lem}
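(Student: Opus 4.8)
The plan is to reduce everything to the statement that the two invariants $t_i$ and $r_i$ agree, and that the common value is the running-minimum sequence $z_i$. First I would dispose of the equality $t_i = r_i$. Given a surjection $\phi : T \onto (\Z/p^r\Z)[\epsilon_i]$ with $\phi(x) = \epsilon$, any generator $y \in T$ is of the form $y = ux$ with $u \in T^\times$, so $\phi(y) = \phi(u)\epsilon$; since $\phi(u)$ is a unit in $(\Z/p^r\Z)[\epsilon_i]$ and $\epsilon$ is nilpotent of order $i+1$, the element $\phi(u)\epsilon$ is again a generator of the maximal ideal modulo $\epsilon^2$, i.e.\ of the form $(\text{unit})\cdot\epsilon$. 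Composing $\phi$ with the $(\Z/p^r\Z)$-algebra automorphism of $(\Z/p^r\Z)[\epsilon_i]$ sending $\epsilon \mapsto \phi(u)^{-1}\epsilon$ (this is a well-defined automorphism because $\phi(u)$ is a unit and we may rescale the nilpotent generator) produces a surjection with $\phi(y) = \epsilon$. Running the same argument with the roles of $x$ and a generator $y$ swapped (note $x$ is itself a generator) shows $t_i = r_i$; so it suffices to compute $r_i$.

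Next I would compute $r_i$ directly from the presentation. A $\Z_p$-algebra homomorphism $\phi : \Z_p[x]/(g(x)) \to (\Z/p^r\Z)[\epsilon_i]$ with $\phi(x) = \epsilon$ exists if and only if $g(\epsilon) = 0$ in $(\Z/p^r\Z)[\epsilon_i]$, i.e.\ if and only if $\sum_{j=0}^{i} \alpha_j \epsilon^j = 0$, which holds if and only if $\alpha_j \equiv 0 \pmod{p^r}$ for all $j = 0, \dots, i$ (the powers $1, \epsilon, \dots, \epsilon^i$ being a $\Z/p^r\Z$-basis of $(\Z/p^r\Z)[\epsilon_i]$, and recalling $\alpha_m = 1$ so the higher terms do not appear once $i < m$; for $i = m$ one must also note $g$ is distinguished so no such $\phi$ with large $r$ can exist unless one is careful — I would handle $i<m$ and, if needed, observe that the $i=m$ case forces $r$ bounded by $v_p(\alpha_0)$ via the same divisibility, consistent with $z_m = \min_j v_p(\alpha_j)$). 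Such a $\phi$ is automatically surjective since $\epsilon$ generates $(\Z/p^r\Z)[\epsilon_i]$ as a $\Z_p$-algebra. Therefore $r_i$ is the largest $r$ with $p^r \mid \alpha_j$ for all $j \le i$, which is exactly $\min\{v_p(\alpha_0), \dots, v_p(\alpha_i)\} = z_i$ by the inductive definition of the $z_j$. (When all these valuations are infinite I allow $r_i = \infty$, matching the convention for $z_i$.)

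The main obstacle I anticipate is bookkeeping around the two degenerate features already flagged in the paper: coefficients $\alpha_j$ that vanish (so $v_p(\alpha_j) = \infty$ and one must be comfortable with $z_i = \infty$ and with surjections onto $(\Z/p^r\Z)[\epsilon_i]$ for arbitrarily large $r$), and the interplay between the truncation parameter $i$ (the $\epsilon^{i+1} = 0$ relation) and the degree $m$ of $g$. The clean way to organize this is: for $i < m$ the condition is precisely $\alpha_0 \equiv \cdots \equiv \alpha_i \equiv 0 \pmod{p^r}$; for $i \ge m$ the leading coefficient $\alpha_m = 1$ would force $\epsilon^m = -\sum_{j<m}\alpha_j\epsilon^j$, which combined with distinguishedness still makes the obstruction $\min_{j<m} v_p(\alpha_j)$, again equal to $z_i$. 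Once the two cases are separated the argument is the elementary linear-algebra computation above, so no serious difficulty remains beyond careful case analysis. I would close by remarking that this lemma feeds into the identification of $\mathrm{NP}(f)$ with the Massey-product data in the following subsection, via the standard fact recalled before the lemma that $\mathrm{NP}(g)$ is the lower convex hull of $\{(i, z_i)\}$.
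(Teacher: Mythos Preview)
Your approach is correct and in fact more direct than the paper's. The paper proceeds by induction on $i$: at the inductive step it uses the hypothesis $v_p(\alpha_j) \ge t_{i-1}$ for $j < i$ to simplify $T/(p^rT + x^{i+1}T)$ to $\zp{r}[x]/(\alpha_i x^i, x^{i+1})$, and then checks when this maps onto $(\Z/p^r\Z)[\epsilon_i]$ by $x \mapsto \epsilon u(\epsilon)$ for a unit $u$. Your argument avoids induction entirely by (a) reducing $t_i = r_i$ to an automorphism of the target, and (b) reading off $r_i$ from the single relation $g(\epsilon) = 0$. This is cleaner; the paper's route has the mild advantage of treating $t_i$ and $r_i$ simultaneously without invoking automorphisms of $(\Z/p^r\Z)[\epsilon_i]$.

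One small correction: the automorphism you write down, $\epsilon \mapsto \phi(u)^{-1}\epsilon$, does not send $\phi(y) = \phi(u)\epsilon$ to $\epsilon$ in general (try $i=3$, $\phi(u) = 1+\epsilon$: one gets $\epsilon - \epsilon^3 \ne \epsilon$). The point is that applying a ring automorphism to $\phi(u)\epsilon$ also changes the factor $\phi(u)$, not just $\epsilon$. The fix is immediate: since $\phi(y)$ is a unit multiple of $\epsilon$, it generates the ideal $(\epsilon)$ and satisfies $\phi(y)^{i+1} = 0$, so there is an automorphism $\psi$ with $\psi(\epsilon) = \phi(y)$; then $\psi^{-1}\circ\phi$ sends $y$ to $\epsilon$. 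Also, your worry about $i \ge m$ is moot: the statement only runs over $i = 0,\dots,m$, and at $i = m$ the leading term $\alpha_m = 1$ forces $r_m = 0 = z_m$ directly from your criterion $g(\epsilon) = 0$.
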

\begin{proof}
For $i=0$, a homomorphism $\varphi$ as in the statement must factor through $T/yT=T/xT= \Z_p/\alpha_0\Z_p$, and so we see $t_0=r_0=v_p(\alpha_0)=z_0$. By induction, we can assume the result for $i<n$ for some $1 \le n \le m$, and prove that $t_n=r_n=z_n$. Since the sequence $t_i$ is decreasing and since $r_{n-1} \le t_{n-1}$, we have $v_p(\alpha_i) \ge t_{n-1} \ge r_{n-1}$ for $i=0,\dots,n-1$. 

For $r \le t_{n-1}$, a homomorphism $\varphi: T \onto (\Z/p^r\Z)[\epsilon_n]$ with $\varphi(y)=\epsilon$ for a generator $y\in xT$ must factor through
\[
T/(p^rT+y^{n+1}T) = T/(p^rT+x^{n+1}T) = \zp{r}[x]/(\alpha_nx^n,x^{n+1}).
\]
For any generator $y$, there exists $u(x)=u_0+u_1x+\dots+u_{m-1}x^{m-1}$ with $u_0 \in \Z_p^\times$ such that $x=u(x)y$ in $T$. We see that there is a such a homomorphism $\varphi$ if and only if there is a homomorphism
\[
\zp{r}[x]/(\alpha_nx^n,x^{n+1}) \to (\Z/p^r\Z)[\epsilon_n]
\]
sending $x$ to $\epsilon u(\epsilon)$. Such a homomorphism exists if and only if $\alpha_n \epsilon^n u(\epsilon) = u_0 \alpha_n \epsilon^n$  is $0$ in $(\Z/p^r\Z)[\epsilon_n]$. Similarly, a homomorphism $\varphi$ such that $\varphi(x)=\epsilon$ exists if and only if $\alpha_n\epsilon^n$ is $0$ in $(\Z/p^r\Z)[\epsilon_n]$. Both of these happen if and only if $v_p(\alpha_n) \ge r$, so we see that $t_n=r_n = \min\{t_{n-1},v_p(\alpha_n)\} = \min\{z_{n-1},v_p(\alpha_n)\}=z_n$.
\end{proof}

Note that the integers $t_i$ are an invariant of the pair $(T,(x))$ of $T$ and the ideal $(x) \subset T$ generated by $x$. That is, we emphasize that $\{t_i\}$ do not depend on the particular choice of generator $x$ of $T$. The lemma implies that $\mathrm{NP}(g)$ is the lower convex hull of the points $\{(i,t_i) : i=0,\dots,m\}$, and hence is also an invariant of $(T,(x))$. In applications, $T$ will be $\bT$ or $\bT^0$ and $(x)$ will be the Eisenstein ideal. 

The following example witnesses the fact that the set $\{t_i\}$ is a strictly finer invariant than $\mathrm{NP}(g)$.
\begin{eg} 
Suppose that $g(x)=x^2+\alpha_1x+\alpha_0$, with $v_p(\alpha_0)=2$ and $v_p(\alpha_1)>0$. Then $\mathrm{NP}(g)$ must be the line segment from $(0,2)$ to $(2,0)$, but there are two possible values of $(t_0,t_1,t_2)$: either $(2,1,0)$ or $(2,2,0)$. Moreover, the two different possible values of $t_1$ encode information about $g(x)$. For example, if $g(x)$ is reducible and $t_1=2$, then the two roots of $g(x)$ can be additive inverses of each other, but not if $t_1=1$. This applies to the generators of $\bT$ given in Corollary \ref{cor:gens of I}. 
\end{eg}

\subsection{The Newton polygon of $\bT$}
\label{subsec:NPdef of t_i's} 

For the remainder of the paper, we will be interested in studying the integers $t_i$ associated by Lemma \ref{lem:NPcontrol} to $\bT$. Combining Lemma \ref{lem:NPcontrol} with the results of \S\ref{subsec:structure of R B and C}, we have the following.
\begin{prop}
\label{prop:NPcontrol of T}
Let $y \in \bT$ be a generator of $I$, so that $\Z_p[x]/(g(x)) \risom \bT$ via $x \mapsto y$, where $g(x)= \sum_{i=0}^{e+1} \alpha_i x^i \in \Z_p[x]$ is the monic minimal polynomial of $y$. Define $t_i$ inductively by $t_0 = v_p(\alpha_0)$ and $t_i = \min \{t_{i-1}, v_p(\alpha_i)\}$ for $i=1,\dots, e+1$. Then the sequence $\{t_i\}$ is independent of the choice of $y$, and $NP(g)$ is the lower convex hull of the set $\{(i,t_i)\}$. 

Moreover, for any $0 \le n \le e+1$ and any positive integer $s$, the following are equivalent:
\begin{enumerate}
\item $s \le t_n$,
\item For any generator $z \in \Jm$, there is a homomorphism $\varphi: R \onto \zp{s}[\epsilon_n]$ such that $\varphi(z)=\epsilon$.
\end{enumerate}
\end{prop}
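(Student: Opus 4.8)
The plan is to deduce Proposition~\ref{prop:NPcontrol of T} directly from Lemma~\ref{lem:NPcontrol} applied to the ring $T = \bT$, transported across the isomorphism $R \risom \bT$ of Corollary~\ref{cor:R=T}. First I would recall from Corollary~\ref{cor:structure of R} that a generator $y \in \bT$ of $I$ gives an isomorphism $\Z_p[x]/(g(x)) \risom \bT$ with $g(x) = x f(x)$ distinguished of degree $e+1$, where $e = \mathrm{rank}_{\Z_p}(\bT^0)$; write $g(x) = \sum_{i=0}^{e+1}\alpha_i x^i$. Since $\bT$ is local with maximal ideal containing $y$ (the augmentation sends $y$ to $0 \in \Z_p$, so $y \in I \subset \m$), the hypothesis of Lemma~\ref{lem:NPcontrol} that $g$ be monic and distinguished is satisfied because $\bT$ is finite flat over $\Z_p$ and reduced mod $p$ to $\F_p[y]/(y^{e+1})$ (Corollary~\ref{cor:structure of C}). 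Lemma~\ref{lem:NPcontrol} then gives $t_i = r_i = z_i$ where $z_i$ is the running-minimum sequence, and in particular the $t_i$ defined here coincide with the $t_i$ of that lemma; the independence of the choice of generator $y$ and the statement that $\mathrm{NP}(g)$ is the lower convex hull of $\{(i,t_i)\}$ are exactly the conclusions recorded after Lemma~\ref{lem:NPcontrol}.

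For the ``moreover'' part, I would translate condition (2) about $R$ into condition about $\bT$ via the isomorphism $R \risom \bT$ of augmented $\Z_p$-algebras (Corollary~\ref{cor:R=T}). Under this isomorphism $\Jm \subset R$ maps isomorphically to $I \subset \bT$ (as noted in the proof of Corollary~\ref{cor:I and I0 are principal}), and hence a generator $z$ of $\Jm$ corresponds to a generator $y$ of $I$; moreover the element $Y$ of Proposition~\ref{prop:technical study of Jm} is such a generator by Corollary~\ref{cor:Jm=YR}. A surjection $\phi \colon R \onto \zp{s}[\epsilon_n]$ with $\phi(z) = \epsilon$ is then the same data as a surjection $\bT \onto \zp{s}[\epsilon_n]$ sending the corresponding generator of $I$ to $\epsilon$. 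By Lemma~\ref{lem:NPcontrol} (the equality $t_n$ computed using generators $y$ with $yT = xT$), the maximal $r$ for which such a $\phi$ exists is exactly $t_n$; since $\zp{s}[\epsilon_n]$ with $s' \le s$ is a quotient of $\zp{s}[\epsilon_n]$, existence for $s$ is equivalent to $s \le t_n$. One subtlety to address: Lemma~\ref{lem:NPcontrol} phrases things in terms of generators $y$ with $\phi(y) = \epsilon$ for \emph{some} generator, whereas condition (2) quantifies over \emph{any} generator $z \in \Jm$; but this is harmless because all generators differ by a unit of $R$, and if one generator maps to $\epsilon$ under some $\phi \colon R \onto \zp{s}[\epsilon_n]$ then, post-composing with an automorphism of $\zp{s}[\epsilon_n]$ rescaling $\epsilon$ by the image of that unit, every generator can be made to map to $\epsilon$ — so the ``some generator'' and ``any generator'' formulations agree.

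I do not expect any genuine obstacle here: the proposition is essentially a dictionary entry, packaging Lemma~\ref{lem:NPcontrol} together with the structural results $R \risom \bT$ and the presentation $R \risom \Z_p[y]/(yf(y))$. The only point requiring a line of care is the generator-vs-generator quantifier issue just discussed, and the verification that the augmentation ideal $I$ (equivalently $\Jm$) sits inside the maximal ideal so that $g$ is genuinely distinguished — both of which follow immediately from results already in hand (Corollaries~\ref{cor:R=T}, \ref{cor:structure of R}, \ref{cor:structure of C}). So the proof is short: invoke Corollary~\ref{cor:structure of R} for the presentation, invoke Lemma~\ref{lem:NPcontrol} for $t_i = z_i$ and the convex-hull statement, and invoke the isomorphism $R \risom \bT$ carrying $\Jm$ to $I$ to rephrase the homomorphism-existence criterion in terms of $R$.
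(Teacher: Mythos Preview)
Your proposal is correct and matches the paper's approach exactly: the paper does not give an explicit proof, stating only that the proposition follows by ``combining Lemma~\ref{lem:NPcontrol} with the results of \S\ref{subsec:structure of R B and C},'' which is precisely what you unpack. Your treatment of the ``some generator'' versus ``any generator'' quantifier is the only point needing a word of justification, and your argument via an automorphism of $\zp{s}[\epsilon_n]$ is right (more precisely: if $\eta = \phi(u)\epsilon$ generates $(\epsilon)$, then $X \mapsto \eta$ defines an isomorphism $\zp{s}[X]/(X^{n+1}) \risom \zp{s}[\epsilon_n]$ whose inverse, composed with $\phi$, sends $z'$ to $\epsilon$).
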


Note that  $t_0 = \infty$, $t_1=v_p(N-1)$, $t_i>0$ for $i\le e$, and $t_{e+1}=0$.

\part{Massey products and deformations}

In this part, we use the results of the previous part to show that the tangent space of $R$ is $1$-dimensional and choose an explicit basis $D_1$ of this space. We consider three representations $\rho_0, \rho_0^c$ and $\rho_0^b$ that all have the same pseudorepresentation and relate the existence of deformations of these three representations to the structure of $R$ and to different Massey products.

For this whole part, we fix the integer $t=v_p(N-1) \geq 1$.

\section{The tangent space and cocycles}
\label{sec:cup}

In this section, we study the tangent space of $R$. Recall from \S \ref{subsec:conv} the notation that $\Z/p^s\Z[\epsilon_i]:= \Z/p^s\Z[\epsilon]/(\epsilon^{i+1})$. The numbering makes $\Hom(R,\Z/p^s\Z[\epsilon_i])$ the space of $i$-th order deformations modulo $p^s$. The tangent space (modulo $p^s$) is the space of first order deformations modulo $p^s$. 

Recall the other notations introduced in \S \ref{subsec:conv}, including the cyclotomic character $\kcyc$ and the element $\gamma \in I_N$.

\subsection{The tangent space of $R$ and generators of $\bT$}
We can describe the tangent space of $R$ using our explicit presentation of $R$ in Corollary \ref{cor:structure of R}. 

\begin{prop}
\label{prop: tangent space}
Let $t = v_p(N-1)$.
\begin{enumerate}
\item The $\F_p$-vector space $\Hom(R,\F_p[\epsilon_1])$ is $1$-dimensional. Any non-zero element of this space sends $\Jm$ to $(\epsilon)$ and $J$ to $0$.
\item There exists a local surjection $R \rsurj \zp{s}[\epsilon_1]$ if and only if $s \le t$. Any such surjection sends $\Jm$ to $(\epsilon)$ and $J$ to $0$.
\item Let $Y=1-d_\gamma$. Let $\varphi_1: R \to \zp{t}[\epsilon_1]$ be the unique homomorphism sending $Y$ to $\epsilon$. Let $a: G_{\Q,S} \to \zp{t}$ be the unique homomorphism factoring through $\Gal(\Q(\zeta_N)/\Q)$ and sending $\gamma$ to $1$. 

Then the pseudorepresentation $D_1:G_{\Q,S} \to  \zp{t}[\epsilon_1]$ associated to $\varphi_1$ is given by $\det(D_1)=\kcyc$ and 
\[
\tr(D_1)=(\kcyc+1)+ \epsilon a(\kcyc-1). 
\]
\end{enumerate}
\end{prop}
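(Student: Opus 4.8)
\medskip

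The plan is to deduce all three parts from the explicit presentation $R \cong \Z_p[y]/(yf(y))$ of Corollary \ref{cor:structure of R} together with the structure of $R^\red$ from Proposition \ref{prop:presentation of Rred}. For part (1), I would observe that a $\Z_p$-algebra homomorphism $R \to \F_p[\epsilon_1]$ is determined by where $Y$ goes, and since $f(y) \equiv y^{\deg f} \pmod p$ with $\deg f = e \geq 1$, the relation $yf(y)=0$ forces $Y \mapsto c\epsilon$ for some $c \in \F_p$; this gives a one-dimensional space. Any nonzero such map sends $\Jm = YR$ onto $(\epsilon)$ and sends $J = \Jm^2 = Y^2R$ to $0$ since $\epsilon^2 = 0$. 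For part (2), by Proposition \ref{prop:NPcontrol of T} (with $n=1$), a surjection $R \onto \zp{s}[\epsilon_1]$ with $Y \mapsto \epsilon$ exists if and only if $s \le t_1 = t = v_p(N-1)$; the claim about $\Jm$ and $J$ is identical to part (1). (One should note here that any local surjection $R \rsurj \zp{s}[\epsilon_1]$ carries the maximal ideal to $(\epsilon, p)$, hence carries the generator $Y$ of $\Jm$ to a generator of $(\epsilon,p)/(\epsilon,p)^2$ modulo $p$; after adjusting $Y$ by a unit we may assume $\phi(Y) = \epsilon$, which is what Proposition \ref{prop:NPcontrol of T} requires.)

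\medskip

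The substance is in part (3), computing the pseudorepresentation $D_1 = \varphi_1 \circ D$. Since $\det(D) = \kcyc$ by construction of $R$ (condition (4) of Proposition \ref{prop:universal property of R}), we immediately get $\det(D_1) = \kcyc$. For the trace, the idea is that $D_1$ is reducible: indeed $\varphi_1$ kills $J$ by part (2), so $\varphi_1$ factors through $R^\red = R/J \cong \Z_p[X]/(X^2,(N-1)X)$, and under this factorization $Y \mapsto X \mapsto \epsilon$ (using that $Y$ maps to $X$, as recorded in the proof of Proposition \ref{prop:technical study of Jm}). By Proposition \ref{prop:presentation of Rred}, the universal reducible deformation is $D^\red = \psi(\bar{\dia{-}}\kcyc \oplus \bar{\dia{-}}^{-1})$ where $\bar{\dia{\sigma}} = (1+X)^{m_\sigma}$ and $m_\sigma \in \zp{t}$ records the image of $\sigma$ in $\Gal(\Q(\zeta_N)/\Q)^{p-\mathrm{part}} = \langle g \rangle$. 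Pushing forward along $X \mapsto \epsilon$ in $\zp{t}[\epsilon_1]$ gives $\bar{\dia{\sigma}} \mapsto (1+\epsilon)^{m_\sigma} = 1 + m_\sigma \epsilon$. Now observe that $\sigma \mapsto m_\sigma$ is exactly the homomorphism $a$: it factors through $\Gal(\Q(\zeta_N)/\Q)^{p-\mathrm{part}}$ (a quotient of $\Gal(\Q(\zeta_N)/\Q)$), lands in $\zp{t}$, and sends $\gamma$ to $m_\gamma = 1$ by the choice of $g$ as the image of $\gamma$. Therefore $\bar{\dia{-}} = 1 + a\epsilon$, so $\bar{\dia{-}}^{-1} = 1 - a\epsilon$ (as $\epsilon^2=0$), and
\[
\tr(D_1) = \bar{\dia{-}}\kcyc + \bar{\dia{-}}^{-1} = (1+a\epsilon)\kcyc + (1-a\epsilon) = \kcyc + 1 + (\kcyc - 1)a\epsilon.
\]

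\medskip

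The main obstacle is the bookkeeping that identifies the pushforward of $\bar{\dia{-}}$ with $1 + a\epsilon$ and in particular the verification that $m_{(-)}$ as defined via the quotient to $\Gal(\Q(\zeta_N)/\Q)^{p-\mathrm{part}}$ agrees with the cocycle $a$ defined via $\Gal(\Q(\zeta_N)/\Q)$ and normalized by $a(\gamma)=1$; this comes down to the fact, established in the proof of Proposition \ref{prop:presentation of Rred}, that $g$ (the image of $\gamma$) generates $\Gal(\Q(\zeta_N)/\Q)^{p-\mathrm{part}}$, so that any two normalizations differ by a unit that the condition $a(\gamma)=1$ pins down. Once the two characters are matched up, the rest is the short $\epsilon^2=0$ computation above, and the uniqueness of $\varphi_1$ sending $Y \mapsto \epsilon$ follows from parts (1) and (2) since $R$ is generated over $\Z_p$ by $Y$.
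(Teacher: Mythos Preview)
Your proof is correct and follows essentially the same approach as the paper's own proof, which simply says that parts (1) and (2) are clear from the structure of $R$ in Corollary \ref{cor:structure of R}, and that part (3) follows from factoring through $R^\red$ and invoking Proposition \ref{prop:presentation of Rred}. You have spelled out the details the paper leaves implicit, including the explicit computation $(1+a\epsilon)\kcyc + (1-a\epsilon) = \kcyc + 1 + (\kcyc-1)a\epsilon$ and the identification of $m_{(-)}$ with $a$.
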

\begin{proof}
Parts (1) and (2) are clear from the structure of $R$ computed in Corollary \ref{cor:structure of R}. Since $J \subset \ker(\varphi_1)$, we see that $\varphi_1$ factors through $R^\red$, so part (3) follows from Proposition \ref{prop:presentation of Rred}.
\end{proof}

The following corollary was first proven by Mazur \cite[Prop.\ II.16.1, pg.~125]{mazur1978}. Recall that Mazur calls a prime number $\ell \ne N$ a \emph{good prime (for $N$ and $p$)} if both of the following are true: (i) $\ell \not \equiv 1 \pmod{p}$ and, (ii) $\ell$ is not a $p$-th power modulo $N$. 

\begin{cor}
\label{cor:gens of I}
Let $\ell \ne N$ be a prime number. Then $T_\ell-(\ell +1) \in I$ is a generator of the principal ideal $I$ if and only if $\ell$ is a good prime.
\end{cor}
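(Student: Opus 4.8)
The plan is to reduce the assertion to a one-line computation in the cotangent space of $R$, using the isomorphism $R \risom \bT$ of Corollary \ref{cor:R=T} (an isomorphism of augmented algebras, so that $I = \Jm$) and the description of the universal first-order deformation in Proposition \ref{prop: tangent space}.

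Write $z_\ell = T_\ell - 1 - \ell$. First I would record that $z_\ell \in I$: this follows from the $q$-expansion \eqref{eq:eisenstein series}, exactly as in the proof of Lemma \ref{lem:map R to T0}, together with Lemma \ref{lem:bT is a pull-back}. Next I would establish the criterion: \emph{for $z \in I = \Jm$, the element $z$ generates the principal ideal $I$ if and only if $\bar\varphi_1(z) \ne 0$}, where $\bar\varphi_1 : R \to \F_p[\epsilon_1]$ is the reduction modulo $p$ of the map $\varphi_1$ of Proposition \ref{prop: tangent space}(3). Indeed, by Corollary \ref{cor:Jm=YR} we have $\m\Jm = \Jm^2 + p\Jm = J + p\Jm$, which $\bar\varphi_1$ annihilates (it kills $J$ by Proposition \ref{prop: tangent space}(1) and $p\Jm$ since the target has characteristic $p$), so $\bar\varphi_1$ induces a map $\Jm/\m\Jm \to (\epsilon)$. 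Both sides are one-dimensional over $\F_p$ — the left-hand one by Corollary \ref{cor: size of Jm/J} together with $p \mid (N-1)$ — and the map is onto because $Y \in \Jm$ maps to $\epsilon$; hence it is an isomorphism, and the criterion follows from Nakayama's lemma.

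It remains to compute $\bar\varphi_1(z_\ell)$. Under $R \risom \bT$ the element $T_\ell$ is $\tr(D)(Fr_\ell)$, so $\varphi_1(T_\ell) = \tr(D_1)(Fr_\ell)$; using the formula $\tr(D_1) = \kcyc + 1 + (\kcyc - 1)a\epsilon$ from Proposition \ref{prop: tangent space}(3) and $\kcyc(Fr_\ell) = \ell$, one finds
\[
\bar\varphi_1(z_\ell) = (\ell - 1)\,\bar a(Fr_\ell)\,\epsilon \in \F_p[\epsilon_1],
\]
where $\bar a$ denotes $a$ reduced modulo $p$. This is nonzero exactly when $\ell \not\equiv 1 \pmod p$ and $\bar a(Fr_\ell) \ne 0$. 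The first condition is Mazur's condition (i). For the second, I would observe that $a$ factors through $\Gal(\Q(\zeta_N)/\Q) \cong (\Z/N\Z)^\times$, inducing an isomorphism of the maximal $p$-power quotient with $\zp{t}$ under which $Fr_\ell$ corresponds to $\ell \bmod N$; hence $\bar a(Fr_\ell)$ vanishes precisely when $\ell \bmod N$ lies in the unique index-$p$ subgroup of $(\Z/N\Z)^\times$, i.e.\ in the subgroup of $p$-th powers, i.e.\ when $\ell$ is a $p$-th power modulo $N$. Combining, $z_\ell$ generates $I$ if and only if $\ell \not\equiv 1 \pmod p$ and $\ell$ is not a $p$-th power modulo $N$, i.e.\ iff $\ell$ is a good prime.

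The only step requiring real care is the last one: tracking the normalization of the tautological character $a$ (fixed by $a(\gamma)=1$) through the identification $\Gal(\Q(\zeta_N)/\Q) \cong (\Z/N\Z)^\times$, and checking that vanishing of $\bar a$ is the $p$-th power condition (which uses cyclicity of $(\Z/N\Z)^\times$). Everything else is formal given the structural results on $R$ and $\bT$ already established.
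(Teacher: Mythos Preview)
Your argument is correct and essentially identical to the paper's for $\ell \ne p$, but it breaks at $\ell = p$. The identity $T_\ell = \tr(D)(Fr_\ell)$ that you invoke is only established in the paper (via Eichler--Shimura, Lemma \ref{lem:determinant of modular representation} and the proof of Proposition \ref{prop:R to T}) for $\ell \nmid Np$. At $\ell = p$ it is simply false: from Proposition \ref{prop:bT0 structure of TaJ} and Lemma \ref{lem:T_p and Fr_p} one has $\tr(D_\cT)(Fr_p) = \kcyc(Fr_p)U_p^{-1} + U_p$, whereas $T_p = pU_p^{-1} + U_p$, and these cannot agree because $\kcyc(Fr_p) \in \Z_p^\times$ while $p \notin \Z_p^\times$. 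Relatedly, your use of ``$\kcyc(Fr_\ell) = \ell$'' is meaningless at $\ell = p$ since $\kcyc$ is ramified there.

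The paper handles the case $\ell = p$ separately. One first observes that $T_p - p - 1$ and $U_p - 1$ have the same image in $\F_p[\epsilon_1]$ (using $T_p = U_p + pU_p^{-1}$), so it suffices to test whether $U_p - 1$ generates $I$. Then one expresses $U_p$ in terms of traces via the trick from the proof of Proposition \ref{prop:R to T}: choosing $\sigma \in I_p$ with $\omega(\sigma) \ne 1$ and setting $x = \kcyc(\sigma)$, one has $U_p = (1-x)^{-1}(\tr(D_\bT)(Fr_p\sigma) - x\,\tr(D_\bT)(Fr_p))$. Pushing this through $\bar\varphi_1$ and using that $a$ is unramified at $p$ gives $\bar\varphi_1(U_p - 1) = \bar a(Fr_p)\epsilon$, after which your class-field-theory argument for the condition $\bar a(Fr_p) \ne 0$ applies verbatim. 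Note also that the ``good prime'' condition (i), $\ell \not\equiv 1 \pmod p$, is automatic for $\ell = p$, consistent with the fact that no factor of $(\ell - 1)$ appears in this case.
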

Note that any generator of $I$ is also a generator of $\bT$ (and $\bT^0$) as a $\Z_p$-algebra. 

\begin{proof}
In this proof, for $x \in \Z_p$, we write $\bar{x}$ for the reduction modulo $p$ of $x$, and we define $\bar D_1=D_1 \otimes_{\zp{t}[\epsilon_1]} \F_p[\epsilon_1]$.

First, assume $\ell \ne p$. As in the proof of Proposition \ref{prop:R to T}, we have $\tr(D_\bT)(\Fr_\ell)=T_\ell$, so we see that $T_\ell-1-\ell$ is a generator of $I$ if and only if $\tr(\bar D_1)(\Fr_\ell)-1-\bar{\ell}$ is non-zero. Since
\[
\tr(\bar D_1)(\Fr_\ell)-(1 + \bar{\ell}) = (\bar{\ell}-1)\bar{a}(\Fr_\ell)\epsilon
\]
we see that $T_\ell-(1+\ell)$ is a generator of $I$ if and only if $(\bar{\ell}-1)\bar{a}(\Fr_\ell) \ne 0$, which happens if and only if $\ell \not \equiv 1 \pmod{p}$ and $\bar{a}(\Fr_\ell) \ne 0$. It follows from class field theory that $\bar{a}(\Fr_\ell) \ne 0$ if and only if $\ell$ is not a $p$-th power modulo $N$.

Now let $\ell=p$. Since $T_p=U_p+pU_p^{-1}$ we see that the images of $T_p-(p+1)$ and $U_p-1$ in $\F_p[\epsilon_1]$ are the same. In particular, $T_p-(p+1)$ generates $I$ if and only if $U_p-1$ generates $I$. Now let $\Fr_p \in G_p$ be a Frobenius element, choose $\sigma \in I_p$ such that $\omega(\sigma)\neq 1$, and let $x=\kcyc(\sigma)$. Then, as in the proof of Proposition \ref{prop:R to T}, we have
\[
U_p = \frac{1}{1-x} ( \tr(D_\bT)(\Fr_p\sigma) - x \tr(D_\bT)(\Fr_p) )
\]
 so $U_p-1$ generates $I$ if and only if
 \[
\frac{1}{1-\bar{x}} \left( \tr(\bar D_1)(\Fr_p\sigma) - \bar{x} \tr(\bar D_1)(\Fr_p) \right)\neq 1.
 \]
 Using the fact that $a$ is unramified at $p$, we see that
 \[
 \frac{1}{1-\bar{x}} \left( \tr(\bar D_1)(\Fr_p\sigma) - \bar{x} \tr(\bar D_1)(\Fr_p) \right)  = 1+\bar{a}(\Fr_p)\epsilon.
 \]
Hence we see that $U_p-1$ generates $I$ if and only if $\bar{a}(\Fr_p) \ne 0$ and the proof continues as above.
\end{proof}

\subsection{A normalization for certain cocycles} 
\label{subsec:matrix reps}

We now depart from the notation of \S\ref{subsec:GC} where $a,b$ and $c$ were cohomology classes chosen up to multiplication by $(\zp{t})^\times$. We let $a \in Z^1_\fl(\zp{t})$ be the cocycle defined in Proposition \ref{prop: tangent space}, let $b \in Z^1_\fl(\zp{t}(1))$ be a Kummer cocycle associated to a choice of $p^t$-th root of $N$, and let $c \in Z^1_\fl(\zp{t}(-1))$ be an element such that $c\vert_p = 0$ and whose image in $H^1_\fl(\zp{t}(-1))$ is a generator. Recall from Corollary \ref{cor:flat cohom of ad} that $H^1_\fl(\zp{t}(i)) \simeq \zp{t}$, and that the classes of $a,b$ and $c$ are generators. We have specified $a$ completely, and $b$ and $c$ up to a multiple of $(\zp{t})^\times$.

Next, as with $a$, we want to normalize $b$ and $c$ with respect to our choice of $\gamma \in I_N$ from \S\ref{subsec:notation for inertia}. Since $b(\gamma), c(\gamma) \not\equiv 0 \pmod{p}$, we can and do normalize so that $b(\gamma)=-1$ and $c(\gamma)=1$. Because of this choice we have 
\[
a(\gamma)^2+b(\gamma)c(\gamma)=0.
\]
Since $a,b,c$ are continuous homomorphisms on $I_N$, this implies that
\begin{equation}
\label{eq:normalization of a,b,c}
(a^2+bc)|_{I_N}=0.
\end{equation}

\begin{rem}
Note that these cocycles $a,b,c$ are not related to the elements $a_\sigma, b_\sigma, c_\sigma$ introduced in \eqref{eq: definition of abcds}. We write cochains in function notation (i.e.~$a(\sigma)$), so hopefully this does not cause confusion.
\end{rem}

\section{Matrix-valued deformations}
\label{sec: MVD}

Let $\rho_0: G_{\Q,S} \to  \GL_2(\zp{t})$ be the representation $\zp{t}(1) \oplus \zp{t}$. From the choice of the cocycles $a,b,c$ in \S \ref{subsec:matrix reps}, we have a first order deformation of $\rho_0$. Namely, let
\[
M = \ttmat{a}{b}{c}{-a} \in Z^1(\End(\zp{t}(1)\oplus \zp{t}))
\]
and define
\[
\rho_1 = (1 + M\epsilon) \rho_0 : G_{\Q,S}\to \GL_2(\zp{t}[\epsilon_1]).
\]
This is a finite-flat representation such that $\psi(\rho_1) = D_1$.

We specify two more representations $\rho_0^b, \rho_0^c: G_{\Q,S} \to  \GL_2(\zp{t})$ satisfying $\psi(\rho_0)= \psi(\rho_0^b)= \psi(\rho_0^c)$, namely
\[
\rho_0^b = \ttmat{\kcyc}{b}{0}{1}, \quad \rho_0^c =  \ttmat{\kcyc}{0}{\kcyc c}{1}
\]
In this section, we will consider deformations of $\rho_0, \rho_0^b$ and  $\rho_0^c$, and how they are related to $R$.

\subsection{Notation for deformations and Massey products} We define the notions of \emph{good}, \emph{very good}, and \emph{adapted} deformations.
\begin{defn}
Let $1\le r \le s$ and $0 \le n \le m$ be integers. Let $\nu: G_{\Q,S}\to \GL_2(\zp{s}[\epsilon_n])$ be a representation. A representation $\nu': G_{\Q,S}\to \GL_2(\zp{r}[\epsilon_m])$ is called an \emph{$m$-th order deformation of $\nu$ modulo $p^r$} if there is an isomorphism
\[
\nu' \otimes_{\zp{r}[\epsilon_m]} \zp{r}[\epsilon_n] \cong \nu \otimes_{\zp{s}[\epsilon_n]} \zp{r}[\epsilon_n].
\]
\end{defn}

\begin{defn}
\label{defn:good}
Let $r \le t$, and let $\rho_n: G_{\Q,S} \to \GL_2(\zp{r}[\epsilon_n])$ be a representation. We call $\rho_n$ \emph{good} if the following conditions are satisfied:
\begin{enumerate}
\item $\psi(\rho_n) \otimes_{\zp{r}[\epsilon_n]} \zp{r} = \psi(\rho_0) \otimes_{\zp{t}} \zp{r}$.
\item $\det(\rho_n)=\kcyc$.
\item $\rho_n|_p$ is finite-flat and upper-triangular.
\item $\tr(\rho_n)|_{I_N}=2$. 
\end{enumerate}
When $\rho_n$ is a good $n$-th order deformation of $\rho_1$ modulo $p^s$, we define $\chi_a(\rho_n),\chi_d(\rho_n): G_{\Q_p} \to (\zp{s}[\epsilon_n])^\times$ to be the diagonal characters of $\rho_n|_p$.
\end{defn}

Note that after assuming (2), (4) is equivalent to $\psi(\rho_n)\vert_{I_N}$ being trivial, cf.\ \S\ref{sssec:defR} and Proposition \ref{prop:universal property of R}. Thus a good representation induces a surjective homomorphism $R \onto \zp{r}[\epsilon_n]$ corresponding to $\psi(\rho_n)$. 

\begin{defn}
Suppose that $\rho_n$ is an $n$-th order deformation of $\rho_1$ modulo $p^s$ with $s\le t$. We call $\rho_n$ \emph{mildly ramified at $N$} if it satisfies
\[
\rho_n|_{I_N} = \left.\ttmat{1+a\epsilon}{b\epsilon}{c\epsilon}{1-a\epsilon}\right\vert_{I_N}.
\]
We call $\rho_n$ \emph{very good} if it is good and mildly ramified at $N$. 
\end{defn}

\begin{defn}
Let $1\le s \le t$ and let $\rho_n$ be an $n$-th order deformation of $\rho_1$ modulo $p^s$, and write $\rho_n$ as
\[
\rho_{n} = \left( 1+ \sum_{i=1}^{n} \ttmat{a_i}{b_i}{c_i}{d_i} \epsilon^i\right) \rho_0 .
\]
We say that an $n$-th order deformation $\rho_n^c$ of $\rho_0^c$ modulo $p^s$ is \emph{adapted to $\rho_n$} if
\[
\rho_n^c = \rho_0^c+ \sum_{i=1}^{n} \ttmat{\kcyc a_i}{b_{i-1}}{\kcyc c_{i+1}}{d_i} \epsilon^i
\]
for some $c_{n+1} \in C^1(\zp{s}(-1))$, where $b_0:=0$. In this situation, we call $c_{n+1}$ the \emph{cochain associated to} $\rho_n^c$, and we note that $\rho_n^c \mapsto c_{n+1}$ is a bijective correspondence between the set of $n$-th order deformations $\rho_n^c$ of $\rho_0^c$ modulo $p^s$ that are adapted to $\rho_n$ and the set of cochains $c_{n+1}$ satisfying 
\[
dc_{n+1} = \sum_{i=1}^n c_i \smile a_{n+1-i} + d_i \smile c_{n+1-i}.
\]

Similarly, we say that an $n$-th order deformation $\rho_n^b$ of $\rho_0^b$ modulo $p^s$ is \emph{adapted to $\rho_n$} if
\[
\rho_n^b = \rho_0^b+ \sum_{i=1}^{n} \ttmat{\kcyc a_i}{b_{i+1}}{\kcyc c_{i-1}}{d_i} \epsilon^i
\]
for some $b_{n+1} \in C^1(\zp{s}(1))$, where $c_0:=0$. We also call $b_{n+1}$ the \emph{cochain associated to} $\rho_n^b$, and note that there is a similar bijection $\rho_n^b \mapsto b_{n+1}$.
\end{defn}

One readily calculates that when $\rho^c_n$ and $\rho^b_n$ are adapted to $\rho_n$ as above, then there is an equality of pseudorepresentations 
\begin{equation}
\label{eq:psi-ad}
\psi(\rho_n) = \psi(\rho^c_n) = \psi(\rho^b_n) : G_{\bQ,S} \lra \zp{s}[\epsilon_n]. 
\end{equation}

We introduce some notation for Massey products, Massey powers, and their connection with deformations; see Appendix \ref{sec:massey appendix} for full details. For $s \le t$, let $M_s$ denote the image of $M$ in $Z^1(\End(\zp{s}(1) \oplus \zp{s}))$.  If $\rho_n$ is an $n$-th order deformation of $\rho_1$ modulo $p^s$, it provides a defining system $D$ for the Massey power $\dia{M_s}^{n+1}$. We will abuse notation and say \emph{$\dia{M}_D^{n+1}$ vanishes in $H^2(\End(\zp{s}(1) \oplus \zp{s}))$} if $\dia{M_s}_D^{n+1}=0$, and refer to the Massey relations for $\dia{M_s}_D^{n+1}$ as the \emph{Massey relations for $\dia{M}_D^{n+1}$ modulo $p^s$}. 

\begin{rem}
In the notation of the previous paragraph, if $r\leq s$, then $\rho_n \otimes_{\zp{s}[\epsilon_n]} \zp{r}[\epsilon_n]$ is a deformation of $\rho_1$ modulo $p^r$, which provides a defining system $D_r$ for the Massey power $\dia{M_r}^{n+1}$. Examining the definition, one sees that $\dia{M_r}_{D_r}^{n+1}$ is the image of $\dia{M_s}_D^{n+1}$ under the natural map 
\[
H^2(\End(\zp{s}(1) \oplus \zp{s})) \to H^2(\End(\zp{r}(1) \oplus \zp{r}))
\]
and similarly for the coordinate Massey relations. In particular, one sees that the following are equivalent:
\begin{enumerate}
\item $\dia{M_r}_{D_r}^{n+1}$ is $0$ in  $H^2(\End(\zp{r}(1) \oplus \zp{r}))$.
\item $\dia{M_s}_D^{n+1}$ is in the kernel of the natural map \[H^2(\End(\zp{s}(1) \oplus \zp{s})) \to H^2(\End(\zp{r}(1) \oplus \zp{r})).\]
\item $\dia{M_s}_D^{n+1}$ is in the image of the map \[H^2(\End(\zp{s}(1) \oplus \zp{s})) \to H^2(\End(\zp{s}(1) \oplus \zp{s}))\] induced by multiplication by $p^r$.
\end{enumerate}
This perhaps justifies the abuse of notation.
\end{rem}

\subsection{GMA lemmas}
\label{subsec:GMA lemma}
In this subsection, we consider homomorphisms from generalized matrix algebras to matrix algebras. This will be used to relate the existence of certain matrix-valued deformations to properties of $R$.

\begin{lem}
\label{lem:GMA lemma new}
Let $A$ be a commutative ring, let 
\[
E_A= \ttmat{A}{B_A}{C_A}{A}
\]
be an $A$-GMA, and let $\Phi: B_A \times C_A \to A$ be the $A$-bilinear map that induces the multiplication in $E_A$. Then there is a bijection between the set of $A$-GMA homomorphisms $E_A \to M_2(A)$ and the set of pairs $(\varphi_b: B_A \to A, \varphi_c: C_A \to A)$ of $A$-module homomorphisms satisfying $\Phi(b,c)=\varphi_b(b) \varphi_c(c)$ for all $b \in B_A$ and $c \in C_A$.
\end{lem}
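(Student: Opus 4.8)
The plan is to exploit the rigidity of the GMA structure: a homomorphism of $A$-GMAs $f: E_A \to M_2(A)$ must, by definition, preserve the coordinate decomposition, i.e.\ it restricts to $A$-module maps on each of the four entries of the matrix decomposition. Since the diagonal entries of both $E_A$ and $M_2(A)$ are $A$ itself, and an $A$-GMA homomorphism acts as the identity on the $A$-factors (it must carry the distinguished idempotents $e_1, e_2$ of $E_A$ to the standard matrix idempotents, and it is $A$-linear), the content of $f$ is concentrated in what it does on the off-diagonal modules $B_A$ and $C_A$. So first I would observe that giving $f$ is the same as giving a pair of $A$-linear maps $\varphi_b: B_A \to A$ and $\varphi_c: C_A \to A$, where $B_A = e_1 E_A e_2$ maps into $e_1 M_2(A) e_2 \cong A$ and similarly for $C_A$.

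Next I would check which pairs $(\varphi_b, \varphi_c)$ actually assemble into an algebra homomorphism. Writing a general element of $E_A$ as $\sm{x}{b}{c}{y}$ with $x,y \in A$, $b \in B_A$, $c \in C_A$, and defining $f\sm{x}{b}{c}{y} = \sm{x}{\varphi_b(b)}{\varphi_c(c)}{y} \in M_2(A)$, I would verify that $f$ respects multiplication. Additivity and $A$-linearity are immediate from the hypothesis that $\varphi_b, \varphi_c$ are $A$-module maps, and compatibility of the diagonal-with-off-diagonal products is automatic since the GMA module structure of $B_A$ over $A$ on both sides is the standard one. The only nontrivial constraint comes from multiplying an element of $B_A$ by an element of $C_A$: in $E_A$ this product is $\Phi(b,c) \in A$ sitting in the $(1,1)$-entry (and $\Phi(c,b)$ in the $(2,2)$-entry — but in a type $(1,1)$ GMA with scalar diagonal these are related so it suffices to treat one), while in $M_2(A)$ the product of $\sm{0}{\varphi_b(b)}{0}{0}$ and $\sm{0}{0}{\varphi_c(c)}{0}$ is $\sm{\varphi_b(b)\varphi_c(c)}{0}{0}{0}$. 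Hence $f$ is multiplicative precisely when $\Phi(b,c) = \varphi_b(b)\varphi_c(c)$ for all $b,c$, which is exactly the stated condition. Conversely, any $A$-GMA homomorphism $f$ arises this way: restrict $f$ to $B_A$ and $C_A$ to recover the pair, and the multiplicativity of $f$ forces the identity on $\Phi$. This gives the bijection, with inverse as described.

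The main obstacle — really the only subtlety — is bookkeeping about the diagonal and the idempotents: one must be careful that an $A$-GMA homomorphism is required to send the ordered idempotents $(e_1,e_2)$ of $E_A$ to the standard ones of $M_2(A)$ (this is what ``preserves the coordinate decomposition'' means, cf.\ Remark \ref{rem:idems} and the definition of GMA homomorphism in \S\ref{subsec:GMAs}), so that there is genuinely no freedom in the diagonal part and no freedom in how $B_A, C_A$ land in the off-diagonal slots. Once that is pinned down, the verification is a direct matrix computation with no real content, and I would present it compactly rather than expanding every entry.
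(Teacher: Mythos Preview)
Your proposal is correct and follows essentially the same argument as the paper's proof: restrict a GMA homomorphism to the off-diagonal pieces to obtain $(\varphi_b,\varphi_c)$, and conversely assemble such a pair into the map $\sm{1}{\varphi_b}{\varphi_c}{1}$, checking that multiplicativity is equivalent to $\Phi(b,c)=\varphi_b(b)\varphi_c(c)$. The paper's version is more terse but the content is identical.
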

\begin{proof}
The map sends an $A$-GMA homomorphism $\Psi:E_A \to M_2(A)$ to $(\Psi|_{B_A}, \Psi|_{C_A})$. The fact that $\Psi$ is an $A$-GMA homomorphism implies that $\Phi(b,c)=\Psi(b) \Psi(c)$ for all $b \in B_A$ and $c \in C_A$. Conversely, given a pair $(\varphi_b, \varphi_c)$, we can define a map of $A$-modules by
\[
E_A \xrightarrow{\sm{1}{\varphi_b}{\varphi_c}{1}} M_2(A),
\]
and we see that it is a homomorphism of $A$-GMAs if and only if $\Phi(b,c)=\varphi_b(b) \varphi_c(c)$ for all $b \in B_A$ and $c \in C_A$.
\end{proof}

For the universal Cayley--Hamilton $R$-algebra $E$ defined in \S\ref{subsec:construction of R}, the following lemma shows that deformations of $\rho_0$ give rise to GMA homomorphisms from $E$ to a matrix algebra.
\begin{lem}
	\label{lem:GMA lemma 2}
	Let $\rho_n: G_{\Q,S} \ra \GL_2(\zp{r}[\epsilon_n])$ be a good deformation of $\rho_0$ modulo $p^r$. Then there exists a GMA structure on the Cayley--Hamilton $R$-algebra $E$ such that the Cayley--Hamilton representation $\rho_n : E \ra M_2(\zp{r}[\epsilon_n])$ induced by $\rho_n$ is a homomorphism of GMAs.
\end{lem}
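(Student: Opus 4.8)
The plan is to produce the algebra homomorphism $\Psi\colon E \to M_2(A)$ underlying the induced Cayley--Hamilton representation by a universal-property argument, and then to adjust the GMA structure on $E$ so that $\Psi$ respects the standard matrix coordinates of $M_2(A)$; here I abbreviate $A = \zp{r}[\epsilon_n]$.

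For the first step, let $D_n = \psi(\rho_n)\colon G_{\Q,S} \to A$. Since $\rho_n$ is good, $D_n$ satisfies the hypotheses of Proposition \ref{prop:universal property of R}: the residual pseudorepresentation is $\Db$ by condition (1) of Definition \ref{defn:good}, $\det(D_n) = \kcyc$ by condition (2), and $D_n|_{I_N} = \psi(1\oplus1)$ by condition (4) together with $\kcyc|_{I_N}=1$. For finite-flatness at $p$, condition (3) says exactly that the standard faithful $A[G_p]$-module $A^{\oplus 2}$ attached to $\rho_n$ is a finite-flat representation, so, as in the proof of Corollary \ref{cor:GMA str on E_T}, Theorem \ref{thm:ffgs GMA is ff} shows that $(M_2(A), \rho_n, D_{M_2})$ is a finite-flat Cayley--Hamilton representation of $G_{\Q,S}$ with residual pseudorepresentation $\Db$, where $D_{M_2}$ denotes the characteristic-polynomial pseudorepresentation on $M_2(A)$; in particular $D_n$ is finite-flat. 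Hence $D_n$ defines a homomorphism $R \to A$, and applying the universal property in Theorem \ref{thm:existence of R_flat} to $(M_2(A), \rho_n, D_{M_2})$ yields an algebra homomorphism $E_{\Db,\fl} \to M_2(A)$ carrying $\rho_\fl(\sigma)$ to $\rho_n(\sigma)$ and lying over $R_{\Db,\fl} \to A$. As $R_{\Db,\fl}\to A$ kills $I_{\det}+I_{ss}$ (again by conditions (2) and (4)), this descends to an algebra homomorphism $\Psi\colon E \to M_2(A)$ with $\Psi\circ\rho = \rho_n$; this is the Cayley--Hamilton representation of $E$ induced by $\rho_n$.

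For the second step, a GMA structure on $E$ making $\Psi$ a homomorphism of GMAs is the same as a pair of orthogonal idempotents $e_1, e_2 \in E$ lifting $(1,0),(0,1)$ along $\chi_1\oplus\chi_2\colon E \to \F_p\times\F_p$ with $\Psi(e_i)$ equal to the standard diagonal idempotents $\varepsilon_i$ of $M_2(A)$. Since $\Db$ is multiplicity-free, $E$ admits orthogonal idempotents lifting $(1,0),(0,1)$ (Remark \ref{rem:idems}), and for any such choice $\Psi(e_i)$ is an orthogonal idempotent of $M_2(A)$ congruent to $\varepsilon_i$ modulo $\m_A$, because $\rho_n$ reduces to the diagonal representation $\bar\rho_0 = \chi_1\oplus\chi_2$. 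It then remains to conjugate $e_1, e_2$ by a unit $u \in E^\times$ so that $\Psi(ue_iu^{-1}) = \varepsilon_i$. I would construct $u$ by successive approximation along the nilpotent filtration of $A$: at each stage the idempotent relations force the discrepancy $\Psi(e_1)-\varepsilon_1$ to be off-diagonal, hence to lie in $\Psi(e_1Ee_2)\oplus\Psi(e_2Ee_1)$, and the needed correction is realized by a unit $1 + \beta + \gamma \in E^\times$ with $\beta \in e_1Ee_2$ and $\gamma \in e_2Ee_1$, using that $\rho_n$ is good and that $b(\gamma)=-1$, $c(\gamma)=1$ with $\rho_n(\gamma)$ unipotent and $\equiv 1 \pmod{\epsilon}$ (normalization of \S\ref{subsec:matrix reps}), so that $\Psi(b_\gamma)$ and $\Psi(c_\gamma)$ are units times $\epsilon$ and $\Psi$ reaches the off-diagonal pieces relevant to the correction.

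I expect the main obstacle to be precisely this last point --- verifying that the conjugation transporting $\Psi(e_i)$ to the standard idempotents can be carried out by a unit of $E^\times$, the analogous conjugation inside $\GL_2(A)$ being automatic. Everything preceding it is a formal unwinding of universal properties; the essential role of the hypothesis ``$\rho_n$ is good'' is exactly to make $\Psi$ surjective onto the off-diagonal pieces of $M_2(A)$ needed here. Alternatively, one could invoke the general passage from representations (whose pseudorepresentation factors through a Cayley--Hamilton quotient) to GMA-compatible homomorphisms developed in \cite{PG4} (cf.\ also \cite{WWE1}) in place of this hands-on construction.
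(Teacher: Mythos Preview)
The paper's own proof is a one-line citation to \cite[Thm.\ 3.2.2]{PG4}, so there is no detailed argument to compare against; your fallback suggestion at the end (invoke the general result from \cite{PG4}) \emph{is} the paper's proof.

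Your Step~1 is correct and more explicit than the paper: verifying that $\rho_n$ being good gives a finite-flat Cayley--Hamilton representation via Theorem~\ref{thm:ffgs GMA is ff}, and then invoking the universal property of $E_{\Db,\fl}$ to produce $\Psi$, is exactly the right unwinding.

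Your Step~2 correctly isolates the real content --- lifting the standard idempotents $\varepsilon_i \in M_2(A)$ to idempotents of $E$ through $\Psi$ --- but your proposed hands-on resolution has a gap. You invoke the normalizations $b(\gamma) = -1$, $c(\gamma) = 1$ from \S\ref{subsec:matrix reps} and conclude that $\Psi(b_\gamma), \Psi(c_\gamma)$ generate the off-diagonal pieces of $\mathrm{Im}(\Psi)$. But those normalizations belong to $\rho_1$, whereas the lemma is stated for an arbitrary good deformation of $\rho_0$; nothing in Definition~\ref{defn:good} forces $\rho_n$ to deform $\rho_1$ or to have any particular off-diagonal behaviour at $\gamma$. (Indeed $\rho_0$ itself is good, and there $\mathrm{Im}(\Psi)$ is purely diagonal.) Relatedly, your claim that ``the essential role of the hypothesis `$\rho_n$ is good' is exactly to make $\Psi$ surjective onto the off-diagonal pieces'' misreads the definition: goodness is about finite-flatness, determinant, and trace on $I_N$, not about the size of $\mathrm{Im}(\Psi)$. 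So the successive-approximation argument, as written, does not go through in the stated generality. The general fact you need --- that the universal Cayley--Hamilton algebra admits a GMA structure making the induced map to any matrix algebra a GMA homomorphism --- is precisely what \cite[Thm.\ 3.2.2]{PG4} supplies, and is what the paper cites.
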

\begin{proof}
The Cayley--Hamilton representation $\rho_n$ exists by virtue of the universal property of $E$: $\rho_n$ is finite-flat and induces a pseudorepresentation with the properties enumerated in Proposition \ref{prop:universal property of R}. For the rest, see \cite[Thm.\ 3.2.2]{WWE4}, for example.
\end{proof}

\subsection{Criteria for goodness and very goodness}
Let $1 \le s \le t$, and let $n \ge 1$ be an integer. Fix an $n$-th order \emph{good} deformation $\rho_n$ of $\rho_1$ modulo $p^s$. 

\begin{lem}
\label{lem:criteria for goodness}
Let $\rho_n^c$ and $\rho_n^b$ be $n$-th order deformations of $\rho_1^c$ and $\rho_n^b$ modulo $p^s$, respectively, that are adapted to $\rho_n$. Let $c_{n+1} \in C^1(\zp{s}(-1))$ and $b_{n+1} \in C^1(\zp{s}(1))$ be the associated cochains. Then
\begin{enumerate}
\item $\rho_n^c$ is good if and only if $c_{n+1}|_p=0$.
\item $\rho_n^b$ is good if and only if $b_{n+1}$ makes $\sum_{i=0}^n b_{i+1}|_p \epsilon^i$ define a finite-flat extension of $\chi_d(\rho_n)$ by $\chi_a(\rho_n)$.
\end{enumerate}
\end{lem}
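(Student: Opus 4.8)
The plan is to verify both equivalences by directly unpacking the definitions of "good" (Definition \ref{defn:good}) and "adapted to $\rho_n$," using the explicit matrix formulas. Recall that a deformation $\rho_n$ of $\rho_1$ modulo $p^s$ with $s \le t$ is automatically good in properties (1) and (2) as soon as we know it deforms $\rho_1$ (which already has the right residual pseudorepresentation and determinant $\kcyc$) — so the content of goodness for $\rho_n^c$ and $\rho_n^b$ is really conditions (3) (finite-flat and upper-triangular at $p$) and (4) ($\tr|_{I_N} = 2$). Since $\rho_n^c$ and $\rho_n^b$ are adapted to $\rho_n$, equation \eqref{eq:psi-ad} gives $\psi(\rho_n^c) = \psi(\rho_n^b) = \psi(\rho_n)$, and $\psi(\rho_n)$ is already known to be trivial on $I_N$ because $\rho_n$ is a deformation of $\rho_1$ (which comes from $D_1$, trivial on $I_N$). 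Thus condition (4) is automatic for both $\rho_n^c$ and $\rho_n^b$, and it remains only to analyze condition (3) at $p$.

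For part (1): by the adapted formula,
\[
\rho_n^c = \rho_0^c + \sum_{i=1}^n \ttmat{\kcyc a_i}{b_{i-1}}{\kcyc c_{i+1}}{d_i}\epsilon^i
\]
with $b_0 = 0$, and $\rho_0^c = \ttmat{\kcyc}{0}{\kcyc c}{1}$. So the upper-right entry of $\rho_n^c$ is $\sum_{i=1}^n b_{i-1}\epsilon^i = \sum_{i=2}^n b_{i-1}\epsilon^i$. Now I would observe that $\rho_n$ is upper-triangular at $p$ (part of its goodness), so the lower-left entries $c_i$ of $\rho_n$ vanish on $G_p$ for $1 \le i \le n$; the lower-left entry of $\rho_n^c|_p$ is $\kcyc c + \sum_{i=1}^n \kcyc c_{i+1}\epsilon^i$, which restricted to $p$ equals $\kcyc(c|_p + c_{n+1}|_p\epsilon^n)$ since $c_i|_p = 0$ for $i \le n$ and $c|_p = 0$ by our normalization in \S\ref{subsec:matrix reps}. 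Hence $\rho_n^c|_p$ is upper-triangular if and only if $c_{n+1}|_p = 0$. It remains to check that when $c_{n+1}|_p = 0$, the resulting upper-triangular representation is finite-flat at $p$: the diagonal characters are $\chi_a(\rho_n)$ and $\chi_d(\rho_n)$ (same as for $\rho_n$, which is finite-flat), the upper-right entry is a finite-flat extension class because it is (up to shift) built from the $b_i$'s which already appear in $\rho_n$, and one concludes by the standard fact that an upper-triangular representation with finite-flat diagonal and finite-flat extension class is finite-flat. I would cite the relevant stability of finite-flat extensions (as encoded in the definition of $\Ext^1_{G_p,\fl}$ and the companion paper \cite{PG4}).

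For part (2): again by the adapted formula the lower-left entry of $\rho_n^b$ is $\sum_{i=1}^n \kcyc c_{i-1}\epsilon^i$ with $c_0 = 0$, i.e.\ $\sum_{i=2}^n \kcyc c_{i-1}\epsilon^i$, and restricting to $p$ these $c_{i-1}|_p$ all vanish (since $i-1 \le n-1 < n$ and $\rho_n$ is upper-triangular at $p$). So $\rho_n^b|_p$ is already upper-triangular, with diagonal characters $\chi_a(\rho_n), \chi_d(\rho_n)$ and upper-right entry $b + \sum_{i=1}^n b_{i+1}\epsilon^i$; note the $i=0$ term $b = b_1$ and the shift make this precisely $\sum_{i=0}^n b_{i+1}|_p\epsilon^i$ after restricting to $p$ (writing $b_1 = b$). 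Then $\rho_n^b$ is finite-flat at $p$ exactly when this upper-right cochain, together with the already-finite-flat diagonal, defines a finite-flat extension of $\chi_d(\rho_n)$ by $\chi_a(\rho_n)$ — which is the stated condition. Here, since conditions (1),(2),(4) are automatic, goodness of $\rho_n^b$ \emph{is} precisely finite-flatness (and upper-triangularity, which we've just shown is free) at $p$.

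The main obstacle I anticipate is bookkeeping with the index shifts in the adapted formulas (the $b_{i-1}$ vs.\ $b_{i+1}$ and $c_{i+1}$ vs.\ $c_{i-1}$ patterns, and the conventions $b_0 = 0$, $c_0 = 0$, $b_1 = b$, $c_1 = c$) to confirm that "the lower entries vanish on $G_p$ below order $n$" and that "the order-$n$ obstruction to upper-triangularity is exactly $c_{n+1}|_p$." Once the shifts are pinned down, the finite-flatness input is essentially formal from the theory recalled in \S\ref{sec:ffps}: an upper-triangular deformation at $p$ is finite-flat iff its diagonal characters are finite-flat and its extension cochain lands in the finite-flat subspace $\Ext^1_{G_p,\fl}$, which is exactly how the two criteria are phrased.
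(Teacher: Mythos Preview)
Your proposal is correct and follows essentially the same route as the paper: reduce goodness to condition (3) at $p$ via $\psi(\rho_n^c)=\psi(\rho_n^b)=\psi(\rho_n)$, then check upper-triangularity and finite-flatness directly from the adapted matrix formulas. The one step you leave slightly soft is the ``if'' direction of (1): the paper makes this precise by observing that when $c_{n+1}|_p=0$, the extension class of $\rho_n^c|_p$ in $\Ext^1_{\zp{s}[\epsilon_n][G_p]}(\chi_d(\rho_n),\chi_a(\rho_n))$ is exactly $\epsilon$ times the class of $\rho_n|_p$, and Baer scalar multiplication (Remark~\ref{rem:Baer scalar}) preserves finite-flatness --- this is the sharp form of your ``(up to shift) built from the $b_i$'s'' remark.
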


\begin{proof}
By \eqref{eq:psi-ad}, we see that $\rho^c_n$ and $\rho^b_n$ are good if and only if $\rho^c_n\vert_p$ and $\rho^b_n\vert_p$, respectively, are finite-flat and upper-triangular. This shows the ``only if'' part of (1), and, since $\rho^b_n\vert_p$ is automatically upper-triangular, (2) is immediate. It remains to show that $\rho_n^c|_p$ is finite-flat if $c_{n+1}|_p=0$.

If $c_{n+1}|_p=0$, then $\rho_n^c|_p$ is the extension of $\chi_d(\rho_n)$ by $\chi_a(\rho_n)$ defined by $\sum_{i=1}^{n-1} b_i \epsilon^{i+1}$. Its class in $\Ext_{\zp{s}[\epsilon_n][G_p]}(\chi_d(\rho_n), \chi_a(\rho_n))$ is the scalar multiple by $\epsilon$ of the class of $\rho_n|_p$ (which is finite-flat). Since scalar multiplication on extensions preserves finite-flatness (see Remark \ref{rem:Baer scalar}), $\rho_n^c|_p$ is finite-flat. 
\end{proof}

\begin{lem}
\label{lem:criteria for very goodness}
Let $1 \le s \le t$, and let $n \ge 1$ be an integer. Suppose that $\rho_{n+1}$ is an $(n+1)$-st order deformation of $\rho_1$ modulo $p^s$. Write $\rho_{n+1}$ as 
\[
\rho_{n+1} =  \left(1+ \sum_{i=1}^{n+1} \ttmat{a_i}{b_i}{c_i}{d_i} \epsilon^i\right) \rho_0,
\]
so $a_1=a$, $b_1=b$, $c_1=c$ and $d_1=-a$, and $a_i,d_i \in C^1(\zp{s})$, $b_i \in C^1(\zp{s}(1))$, $c_i \in C^1(\zp{s}(-1))$.  Suppose also that $\rho_n := \rho_{n+1} \otimes_{\zp{s}[\epsilon_{n+1}]} \zp{s}[\epsilon_{n}]$ is very good. Then $\rho_{n+1}$ is very good if and only if these three conditions hold: 
\begin{enumerate}
\item[(i)] $c_{n+1}|_p=0$ and $a_{n+1}|_{I_p} = d_{n+1}|_{I_p}=0$, 
\item[(ii)] $\displaystyle \sum_{i=1}^{n+1} b_{i}|_p \epsilon^i$ defines a finite-flat extension of $\chi_d(\rho_{n+1})$ by $\chi_a(\rho_{n+1})$,
\item[(iii)] $a_{n+1}|_{I_N}=b_{n+1}|_{I_N}=c_{n+1}|_{I_N}=d_{n+1}|_{I_N}=0$.
\end{enumerate}
\end{lem}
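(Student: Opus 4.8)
The statement to prove is Lemma \ref{lem:criteria for very goodness}: given a very good $\rho_n$ and an $(n+1)$-st order deformation $\rho_{n+1}$ of $\rho_1$ lifting it, we must show $\rho_{n+1}$ is very good iff conditions (i), (ii), (iii) hold. The plan is to unwind the definitions of \emph{good} (Definition \ref{defn:good}) and \emph{mildly ramified at $N$} (hence \emph{very good}) term by term, using that everything already holds for $\rho_n$. The essential structural point is that since $\rho_{n+1}$ reduces to $\rho_n$ modulo $\epsilon^{n+1}$, every condition defining ``very good'' is automatic except in its $\epsilon^{n+1}$-component; so each of the four conditions in Definition \ref{defn:good} plus mild ramification becomes a single condition on the ``new'' cochains $a_{n+1}, b_{n+1}, c_{n+1}, d_{n+1}$.

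First I would handle goodness condition (1): the residual pseudorepresentation condition is about the reduction $\rho_{n+1} \otimes \zp{r}$, which equals the reduction of $\rho_n$ (they agree modulo $\epsilon$), so (1) is inherited and imposes nothing new. Condition (2), $\det(\rho_{n+1}) = \kcyc$: since $\det(\rho_n) = \kcyc$ already, writing $\rho_{n+1} = \rho_0(1 + \sum N_i \epsilon^i)$ with $N_i = \sm{a_i}{b_i}{c_i}{d_i}$, the $\epsilon^{n+1}$-coefficient of $\det$ gives a trace-type relation; combined with condition (4) below (which controls $\tr$ on $I_N$) and the fact that $\det$ only adds the constraint $\tr(N_{n+1}) = a_{n+1} + d_{n+1} = $ (a cocycle expression in lower terms) that is already forced, this contributes nothing separately — or more precisely it is subsumed by asking $a_{n+1} + d_{n+1}$ to match the determined value. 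I would note (as the paper does after Definition \ref{defn:good}) that given (2), condition (4) $\tr(\rho_{n+1})|_{I_N} = 2$ is equivalent to $\psi(\rho_{n+1})|_{I_N}$ trivial. Then condition (3) (finite-flat and upper-triangular at $p$) and condition (4)/mild ramification at $N$ are the two real constraints, and they split into exactly (i)+(ii) and (iii) respectively.

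For the $p$-adic part (conditions giving (i) and (ii)): upper-triangularity of $\rho_{n+1}|_p$ means the lower-left entry vanishes, i.e. the $\epsilon^{n+1}$-coefficient $c_{n+1}|_p = 0$ (lower coefficients vanish since $\rho_n$ is good). Given that, the diagonal characters $\chi_a(\rho_{n+1}), \chi_d(\rho_{n+1})$ are well-defined, and $\det = \kcyc$ plus the normalization forces their product; the condition that $\rho_{n+1}|_p$ is finite-flat decomposes into: the diagonal characters are themselves finite-flat (as in Ramakrishna's theory; using that $\kcyc$-related characters restricted to $I_p$ are controlled — this is where $a_{n+1}|_{I_p} = d_{n+1}|_{I_p} = 0$ enters, matching the known structure $\nu_1 = \kcyc\dia{-}$, $\nu_2 = \dia{-}^{-1}$ from Proposition \ref{prop:presentation of Rred} which are trivial on $I_p$ after twisting), giving the second half of (i); and the upper-right extension class $\sum b_i|_p \epsilon^i$ defines a finite-flat extension of $\chi_d$ by $\chi_a$, which is exactly (ii). I would cite Lemma \ref{lem:flat subspaces} and the discussion around finite-flat extensions for the fact that an upper-triangular $p$-adic representation with finite-flat diagonal and finite-flat extension class is finite-flat (this is essentially the closure of $\mathrm{ffgs}_{\Z_p}$ under extensions).

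For the $N$-adic part (giving (iii)): since $\rho_n$ is mildly ramified at $N$, the restriction $\rho_{n+1}|_{I_N}$ agrees with $\sm{1+a\epsilon}{b\epsilon}{c\epsilon}{1-a\epsilon}$ modulo $\epsilon^{n+1}$; mild ramification of $\rho_{n+1}$ demands the $\epsilon^{n+1}$-coefficient also match, i.e. $a_{n+1}|_{I_N} = b_{n+1}|_{I_N} = c_{n+1}|_{I_N} = d_{n+1}|_{I_N} = 0$, which is (iii). Finally I must check that, \emph{given} (iii), goodness condition (4) (i.e.\ $\tr(\rho_{n+1})|_{I_N} = 2$, equivalently $\psi(\rho_{n+1})|_{I_N}$ trivial) holds automatically — this follows because (iii) makes $\rho_{n+1}|_{I_N}$ equal to the very-good shape, whose pseudorepresentation restricted to $I_N$ one computes directly (using \eqref{eq:normalization of a,b,c}, $(a^2+bc)|_{I_N} = 0$) to be trivial. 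So ``very good'' $=$ good $+$ mildly ramified unpacks exactly to (i) $\wedge$ (ii) $\wedge$ (iii). \textbf{The main obstacle} I anticipate is the finite-flatness bookkeeping at $p$: correctly decomposing ``$\rho_{n+1}|_p$ finite-flat'' into the diagonal-character conditions ($a_{n+1}|_{I_p} = d_{n+1}|_{I_p} = 0$, part of (i)) plus the extension-class condition (ii), and justifying that no cross-term obstruction appears — this requires carefully invoking the structure of finite-flat characters and the behavior of finite-flat extension classes under the $\epsilon$-filtration, rather than a purely formal manipulation.
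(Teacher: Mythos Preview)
Your overall outline is correct and follows the same decomposition as the paper: the forward direction unpacks ``very good'' into (i), (ii), (iii), and the converse reassembles goodness and mild ramification from these conditions. Your treatment of upper-triangularity and finite-flatness at $p$ (giving (i) and (ii)), and of mild ramification at $N$ (giving (iii)), matches the paper's argument.

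However, there is a genuine gap in your handling of condition~(2), $\det(\rho_{n+1})=\kcyc$, in the converse direction. You write that this ``contributes nothing separately'' and is ``subsumed by asking $a_{n+1}+d_{n+1}$ to match the determined value,'' but this is not correct: we are \emph{given} $\rho_{n+1}$ satisfying (i), (ii), (iii), and must \emph{deduce} that its determinant is $\kcyc$. Since $\det(\rho_n)=\kcyc$, one has $\det(\rho_{n+1})=\kcyc(1+\delta\epsilon^{n+1})$ for some cocycle $\delta\in Z^1(\zp{s})$, and nothing in (i), (ii), (iii) forces $\delta=0$ pointwise. The paper's argument is: condition~(i) gives $\delta|_{I_p}=0$; condition~(iii) (mild ramification at $N$) together with the normalization \eqref{eq:normalization of a,b,c} gives $\det(\rho_{n+1})|_{I_N}=1-(a^2+bc)|_{I_N}\epsilon^2=1$, hence $\delta|_{I_N}=0$; therefore $\delta$ is an everywhere-unramified $\zp{s}$-valued cocycle of $G_{\Q,S}$, and such a cocycle must vanish (since $\Q$ has no nontrivial unramified abelian $p$-extensions). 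This last global step is the missing ingredient in your sketch. Ironically, the finite-flat bookkeeping you flagged as the main obstacle is essentially straightforward here; the subtle point is this determinant argument.
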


\begin{proof}
First assume that $\rho_{n+1}$ is very good. Then (iii) is clear from the definition of very good. Since $\rho_{n+1}$ is good, we have that $\rho_{n+1}\vert_p$ is finite-flat and upper-triangular. This implies that $c_{n+1}|_p=0$. Because $\chi_a(\rho_{n+1})$ (resp.\ $\chi_d(\rho_{n+1})$) is a finite-flat deformation of $\zp{s}(1)$ (resp.\ $\zp{s}$), which is equivalent to being an unramified deformation, we have (i). Then (ii) follows from $\rho_{n+1}\vert_p$ being finite-flat. 

Conversely, suppose that $\rho_{n+1}$ satisfies (i), (ii), and (iii). We have just explained why $\rho_{n+1}|_p$ is finite-flat and upper-triangular. Also $\rho_{n+1}$ is clearly mildly ramified at $N$ and $\tr(\rho_{n+1}|_{I_N})=2$.  Because $\rho_n$ is good, $\det(\rho_{n+1}) = \kcyc(1 + \epsilon^{n+1}\delta)$ for some $\delta \in Z^1(\zp{s})$. We observe that (i) implies $\delta\vert_{I_p} = 0$. Since $\rho_{n+1}$ is mildly ramified at $N$, we see that $\det(\rho_{n+1})\vert_{I_N}$ has the form $1-(a^2 + bc)\vert_{I_N}\epsilon^2 = 1$ by the normalizations of \eqref{eq:normalization of a,b,c}. Thus $\delta$ is unramified everywhere and consequently equals $0$. 
\end{proof}

\subsection{Residually lower-triangular deformations} We study deformations of $\rho_0^c$.

\begin{lem}
\label{lem:lower-triangular equivs}
Let $1\le r \le s \le t$, and let $n \ge 1$ be an integer. Suppose that $\rho_n$ is an $n$-th order deformation of $\rho_1$ modulo $p^s$, and suppose that $\rho_n$ is very good. Let $\varphi: R \onto \zp{s}[\epsilon_n]$ be the corresponding homomorphism, and let $D$ be the corresponding defining system for the Massey power $\dia{M}^{n+1}$. Write $\rho_{n,r} = \rho_n \otimes_{\zp{s}[\epsilon_n]}\zp{r}[\epsilon_n]$. Then the following are equivalent:
\begin{enumerate}
\item There is a surjective homomorphism $\varphi': R \onto \zp{r}[\epsilon_{n+1}]$ such that the following diagram commutes 
\[
\xymatrix{
R \ar[r]^-{\varphi'}  \ar[d]_-{\varphi} & \zp{r}[\epsilon_{n+1}] \ar[d] \\
\zp{s}[\epsilon_n] \ar[r] & \zp{r}[\epsilon_n],
}\]
where the unlabeled arrows are the quotient maps.
\item The $\zp{r}[\epsilon_n]$-module $C \otimes_{R,\varphi} \zp{r}[\epsilon_n]$ is free of rank $1$.
\item There is an $n$-th order deformation of $\rho_0^c$ modulo $p^r$ that is adapted to $\rho_{n,r}$.
\item There is an $n$-th order deformation of $\rho_0^c$ modulo $p^r$ that is adapted to $\rho_{n,r}$ and is good.
\item The Massey relation for $\dia{M}_D^{n+1}$ holds in the $(2,1)$-coordinate modulo $p^r$.
\end{enumerate}
\end{lem}
\begin{rem}
For $n=1$, we can take $\rho_n = \rho_1 \otimes_{\zp{t}[\epsilon_1]}\zp{s}[\epsilon_1]$, and the Massey relation for $\dia{M}_D^{2} = M \cup M$ in the $(2,1)$-coordinate is simply $c \cup a - a \cup c=0$. Using the skew-commutativity of the cup product, we see that this relation holds if and only if $a\cup c= 0$.
\end{rem}
\begin{proof}
In the proof, it will be helpful to induce an alternate characterization of $(2)$. First note that (2) only depends on the $R$-module structure of $C$, which is independent of the choice of GMA-structure on $E$. We apply Lemma \ref{lem:GMA lemma 2} to $\rho_{n,r}$, which defines a GMA structure on $E$, and we will write $E \risom\sm{R}{B}{C}{R}$ for this choice of GMA structure. We write $\Phi:B \times C \to R$ for the $R$-bilinear map coming from the multiplication in $E$. Write $C_{n,r} := C \otimes_{R,\varphi} \zp{r}[\epsilon_n]$ and $B_{n,r} := B \otimes_{R,\varphi} \zp{r}[\epsilon_n]$.

By Lemmas \ref{lem:GMA lemma new} and \ref{lem:GMA lemma 2}, the deformation $\rho_{n,r}$ defines $\zp{r}[\epsilon_n]$-module homomorphisms $\varphi_b: B_{n,r} \to  \zp{r}[\epsilon_n]$ and  $\varphi_c: C_{n,r} \to  \zp{r}[\epsilon_n]$, both having image $\epsilon\zp{r}[\epsilon_n]$, and satisfying $\Phi(b,c) = \varphi_b(b) \varphi_c(c)$ for all $b \in B_{n,r}$ and $c \in C_{n,r}$.

With this notation, we can see that (2) is equivalent to the following condition:
\begin{itemize}
\item[(2')] There is a homomorphism $\tilde{\varphi}_c: C_{n,r} \to  \zp{r}[\epsilon_n]$ of $\zp{r}[\epsilon_n]$-modules such that $\epsilon \cdot \tilde{\varphi}_c = \varphi_c$.
\end{itemize}
Indeed, if $C_{n,r}$ is free, then it has a generator $z$ such that $\varphi_c(z)=\epsilon$, and we can define $\tilde{\varphi}_c$ by $\tilde{\varphi}_c(z)=1$. Conversely, any such $\tilde{\varphi}_c$ must be surjective, so the fact that $C_{n,r}$ is cyclic implies that it must be free.

$(1) \Longleftrightarrow (2)$: Choose a generator $x \in R$ such that $\varphi(x)=\epsilon$. By Corollaries \ref{cor:structure of R} and \ref{cor:structure of C}, there is an isomorphism
\[
\Z_p[x]/(xg(x)) \isoto R
\]
for some distinguished monic polynomial $g(x) =\sum_{i=0}^{e} \beta_i x^{i}$, and an isomorphism of $R$-modules $C \simeq \Z_p[x]/(g(x))$. The existence of $\varphi$ implies that $v_p(\beta_i) \ge s$ for $i < n$. We see that
\[
C \otimes_{R,\varphi} \zp{r}[\epsilon_n] \simeq \frac{\zp{r}[\epsilon]}{(g(\epsilon),\epsilon^{n+1})} = \frac{\zp{r}[\epsilon]}{(\beta_n\epsilon^n,\epsilon^{n+1})}
\]
as an $\zp{r}[\epsilon_n]$-module. Hence $(2)$ is equivalent to $v_p(\beta_n) \ge r$, which is equivalent to $(1)$ by Lemma \ref{lem:NPcontrol}. 

$(2') \implies (3)$: Let $\tilde{\varphi}_c$ be as in (2'). Then we see that $\Phi(b,c) =(\epsilon \cdot \varphi_b)(b) \tilde{\varphi}_c(c)$ for all $b \in B_{n,r}$ and $c \in C_{n,r}$, so Lemma \ref{lem:GMA lemma new} implies that the pair $(\epsilon \cdot \varphi_b, \tilde{\varphi}_c)$ induces a GMA homomorphism $E \to M_2(\zp{r}[\epsilon_n])$. Pre-composing with $\rho^u : G_{\Q,S} \ra E^\times$, we obtain a representation $\rho_n^c: G_{\Q,S} \to \GL_2(\zp{r}[\epsilon_n])$ satisfying the conditions in $(3)$.

$(3) \implies (4)$: Let $\rho_n^c$ be a deformation of $\rho_0^c$ adapted to $\rho_{n,r}$, and let $c_{n+1} \in C^1(\zp{r}(-1))$ be the associated cochain. We have
\[
dc_{n+1} = \sum_{i=1}^n c_i \smile a_{n+1-i} + d_i \smile c_{n+1-i}.
\]
Since $\rho_n$ is very good, we have $c_i|_p=0$ for $i\le n$, and so $c_{n+1} \in Z^1_p(\zp{r}(-1))$. By Proposition \ref{prop:H^2_f in H^2}, the map
\[
H^1(\zp{r}(-1)) \onto H^1_p(\zp{r}(-1))
\]
is surjective. Then we can subtract an element of $Z^1(\zp{r}(-1))$ from $c_{n+1}$ to obtain an element $c_{n+1}'$ such that $dc_{n+1}=dc_{n+1}'$ and such that $c_{n+1}'|_{p}=0$. We let ${\rho_n^c}'$ be the deformation of $\rho_0^c$ that is adapted to $\rho_{n,r}$ associated to $c_{n+1}'$. By Lemma \ref{lem:criteria for goodness}, ${\rho_n^c}'$ is good.

$(4) \implies (2')$: Let $\rho_n^c$ be a good deformation of $\rho_0^c$ that is adapted to $\rho_{n,r}$. This induces an $\zp{r}[\epsilon_n]$-algebra homomorphism
\[
E \otimes_{R,\varphi} \zp{r}[\epsilon_n] \to M_2(\zp{r}[\epsilon_n]).
\]
By Lemma \ref{lem:GMA lemma new}, this defines a homomorphism $\tilde{\varphi}_c:C_{n,r} \to \zp{r}[\epsilon_n]$ that, from the definition of adapted, satisfies the condition of (2').

$(5) \Longleftrightarrow (3)$: This is Lemma \ref{lem:deformations from relations between coord masseys}.
\end{proof}

\subsection{Residually upper-triangular deformations}  We consider deformations of $\rho_0^b$.

\begin{lem}
\label{lem:upper-triangular equivs}
Let $1\le r \le s \le t$, and let $n \ge 1$ be an integer. Suppose that $\rho_n$ is an $n$-th order deformation of $\rho_1$ modulo $p^s$, and suppose that $\rho_n$ is very good. Let $\varphi: R \onto \zp{s}[\epsilon_n]$ be the corresponding homomorphism, and let $D$ be the corresponding defining system for the Massey power $\dia{M}^{n+1}$. Then the following are \textbf{true}:
\begin{enumerate}
\item The $\zp{s}[\epsilon_n]$-module $B \otimes_{R,\varphi} \zp{s}[\epsilon_n]$ is free of rank $1$.
\item There is a $n$-th order deformation of $\rho_0^b$ modulo $p^s$ that is adapted to $\rho_{n,r}$ and is good.
\item The Massey relation for $\dia{M}_D^{n+1}$ holds in the $(1,2)$-coordinate modulo $p^s$.
\end{enumerate}
\end{lem}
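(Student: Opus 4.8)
The plan is to prove Lemma \ref{lem:upper-triangular equivs} by reducing its three assertions to facts already established about the GMA $E$ and about the structure of $B$ and $C$, using the same GMA-homomorphism machinery as in the proof of Lemma \ref{lem:lower-trianglar equivalences}, but now exploiting the \emph{asymmetry} between $B$ and $C$ recorded in Corollary \ref{cor:structure of B and C}. The crucial input is that $B$ is \emph{free} of rank $1$ as an $R$-module (not merely cyclic), whereas $C$ is isomorphic to the proper ideal $J$; this is exactly why the analogues of conditions (1)--(3) here are unconditionally true rather than equivalent to a nontrivial vanishing statement.

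First I would apply Lemma \ref{lem:GMA lemma 2} to the very good deformation $\rho_n$ to fix a GMA structure $E \risom \sm{R}{B}{C}{R}$ for which $\rho_n : E \to M_2(\zp{s}[\epsilon_n])$ is a GMA homomorphism, and write $\Phi: B\times C\to R$ for the multiplication pairing. By Lemma \ref{lem:GMA lemma new}, $\rho_n$ corresponds to a pair $(\varphi_b,\varphi_c)$ of $\zp{s}[\epsilon_n]$-module maps out of $B_{n,s}:=B\otimes_{R,\varphi}\zp{s}[\epsilon_n]$ and $C_{n,s}:=C\otimes_{R,\varphi}\zp{s}[\epsilon_n]$, each with image $\epsilon\,\zp{s}[\epsilon_n]$, satisfying $\Phi(b,c)=\varphi_b(b)\varphi_c(c)$. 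For (1): since $B$ is free of rank $1$ over $R$ by Corollary \ref{cor:structure of B and C}, the base change $B_{n,s}$ is free of rank $1$ over $\zp{s}[\epsilon_n]$, giving (1) immediately. This means there is a generator $w\in B_{n,s}$ with $\varphi_b(w)=\epsilon$ and hence a module map $\tilde\varphi_b: B_{n,s}\to\zp{s}[\epsilon_n]$ with $\epsilon\tilde\varphi_b=\varphi_b$ — the ``upper-triangular'' analogue of condition (2$'$) in the previous lemma, which this time comes for free.

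For (2): running the argument of the implication $(2')\Rightarrow(3)\Rightarrow(4)$ from Lemma \ref{lem:lower-trianglar equivalences} with the roles of $B$ and $C$ exchanged, the pair $(\tilde\varphi_b,\epsilon\cdot\varphi_c)$ induces a GMA homomorphism $E\otimes_{R,\varphi}\zp{s}[\epsilon_n]\to M_2(\zp{s}[\epsilon_n])$; pre-composing with $\rho^u$ produces a deformation $\rho_n^b$ of $\rho_0^b$ adapted to $\rho_{n,s}$ (so a fortiori to $\rho_{n,r}$), with associated cochain $b_{n+1}\in C^1(\zp{s}(1))$. To promote it to a \emph{good} deformation I would invoke Lemma \ref{lem:criteria for goodness}(2): the only remaining condition is that $\sum_{i=0}^n b_{i+1}|_p\,\epsilon^i$ define a finite-flat extension of $\chi_d(\rho_n)$ by $\chi_a(\rho_n)$; but since $\rho_n$ itself is good, $\rho_n|_p$ is already finite-flat and upper-triangular, and the $b$-entry of the adapted deformation $\rho_n^b$ is, up to the $\epsilon$-shift, precisely the $b$-entry of $\rho_n$, so finite-flatness is inherited — here one argues exactly as in the last paragraph of the proof of Lemma \ref{lem:criteria for goodness}, using that scalar multiplication of extensions preserves finite-flatness (Remark \ref{rem:Baer scalar}) and Proposition \ref{prop:H^2_f in H^2} to adjust $b_{n+1}$ by a global cocycle without changing its restriction to $G_p$ if needed. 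Finally (3) follows from (2) together with Lemma \ref{lem:deformations from relations between coord masseys} (the same result quoted for the $(2,1)$-coordinate in the previous lemma, applied now to the $(1,2)$-coordinate), since the existence of an adapted good deformation $\rho_n^b$ is precisely equivalent to the Massey relation for $\dia{M}_D^{n+1}$ holding in the $(1,2)$-coordinate modulo $p^s$.

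The main obstacle I anticipate is the finite-flatness bookkeeping in step (2): one must check that the extension class attached to the $b$-entry of $\rho_n^b$ really is the $\epsilon$-multiple of a finite-flat class, which requires care about whether the adaptation shift $b_i\mapsto b_{i+1}$ interacts correctly with the local condition at $p$, and possibly an adjustment of $b_{n+1}$ by an element of $Z^1(\zp{s}(1))$ supported away from $p$ via the surjectivity in Proposition \ref{prop:H^2_f in H^2}. Everything else is a direct transcription of the $C$-side argument with $B$ and $C$ swapped, made unconditional by the freeness of $B$.
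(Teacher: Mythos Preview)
Your approach is essentially the same as the paper's: prove (1) from the freeness of $B$ over $R$ (Corollary \ref{cor:structure of B and C}), construct $\rho_n^b$ via the GMA homomorphism $(\tilde\varphi_b,\epsilon\cdot\varphi_c)$ just as in the $(2')\Rightarrow(3)$ step of Lemma \ref{lem:lower-trianglar equivalences}, and then deduce (3) from (2) via Lemma \ref{lem:deformations from relations between coord masseys}.

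Where you diverge from the paper is in the verification of goodness for $\rho_n^b$, and this is exactly the step you flagged as the ``main obstacle.'' You try to check Lemma \ref{lem:criteria for goodness}(2) at the level of cochains, arguing that the $b$-entry of $\rho_n^b$ is an $\epsilon$-multiple of a finite-flat extension and invoking scalar multiplication in Remark \ref{rem:Baer scalar}, with a possible adjustment of $b_{n+1}$ via Proposition \ref{prop:H^2_f in H^2}. This is shakier than you indicate: the adapted $\rho_n^b$ carries a genuinely new top-degree entry $b_{n+1}$ that is not captured by any $\epsilon$-shift of the $b$-entry of $\rho_n$, so the scalar-multiple argument from the proof of Lemma \ref{lem:criteria for goodness}(1) does not transfer directly.

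The paper bypasses this entirely. The representation $\rho_n^b$ is produced by composing $\rho^u : G_{\Q,S} \to E^\times$ with an algebra map $E \otimes_{R,\varphi} \zp{s}[\epsilon_n] \to M_2(\zp{s}[\epsilon_n])$, and $E$ is (the base-change of) the universal \emph{finite-flat} Cayley-Hamilton algebra. Hence $\rho_n^b$ is finite-flat at $p$ by construction (cf.\ Theorem \ref{thm:ffgs GMA is ff}), and since $\rho_n^b\vert_p$ is automatically upper-triangular, Lemma \ref{lem:criteria for goodness}(2) gives goodness at once. No cochain-level adjustment of $b_{n+1}$ is needed. Once you see this, the ``obstacle'' you anticipated disappears and the proof is a direct transcription of the $C$-side argument.
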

\begin{rem}
\label{rem: a cup b is zero}
For $n=1$ and $r=s=t$, we can take $\rho_n = \rho_1$, and the Massey relation for $\dia{M}_D^{2} = M \cup M$ in the $(1,2)$-coordinate is simply $b \cup a - a \cup b=0$. Using the skew-commutativity of the cup product, the lemma implies that $a \cup b=0$.
\end{rem}
\begin{proof}
By Lemma \ref{cor:structure of B and C}, $B$ is a free $R$-module of rank 1. Then (1) is clear, and (2) implies (3) by Lemma \ref{lem:deformations from relations between coord masseys}. To show (2), we follow the proof of Lemma \ref{lem:lower-triangular equivs}, and will use the same notation of $\varphi_b$ and $\varphi_c$ introduced there. As in that proof, (1) implies that there is a homomorphism $\tilde{\varphi}_b: B \to \zp{r}[\epsilon_n]$ such that $\epsilon \cdot \tilde{\varphi}_b= \varphi_b$. The data $(\tilde{\varphi}_b, \epsilon \cdot \varphi_c)$ give a GMA homomorphism $E \otimes_{R,\varphi} \zp{s}[\epsilon_n] \to M_2(\zp{s}[\epsilon_n])$, and this gives a finite-flat representation $\rho_n^b$ that is adapted to $\rho_{n,r}$. By Lemma \ref{lem:criteria for goodness}, $\rho_n^b$ is good.
\end{proof}

\subsection{Residually diagonal deformations} We consider deformations of $\rho_1$.

\begin{lem}
\label{lem:res diag defs}
Let $1\le r \le s \le t$, and let $n \ge 1$ be an integer. Suppose that $\rho_n$ is a very good $n$-th order deformation of $\rho_1$ modulo $p^s$. Let $D$ be the corresponding defining system for the Massey power $\dia{M}^{n+1}$. Then the following are equivalent:
\begin{enumerate}
\item There is an $(n+1)$-st order deformation of $\rho_n$ modulo $p^r$.
\item There is an $(n+1)$-st order deformation of $\rho_n$ modulo $p^r$ that is very good.
\item The Massey power $\dia{M}_D^{n+1}$ vanishes in $H^2(\End(\zp{r}(1)\oplus \zp{r}))$.
\end{enumerate}
\end{lem}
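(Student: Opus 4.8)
The plan is to dispatch $(2)\Rightarrow(1)$ as a triviality, to obtain $(1)\Leftrightarrow(3)$ directly from the definition of the Massey power through its defining system, and then to carry out the real work, which is to upgrade an arbitrary $(n+1)$-st order lift to a very good one. For $(1)\Leftrightarrow(3)$: writing the very good deformation $\rho_{n,r}:=\rho_n\otimes_{\zp{s}[\epsilon_n]}\zp{r}[\epsilon_n]$ as $\rho_0(1+\sum_{i=1}^n N_i\epsilon^i)$ with $N_i\in C^1(\End(\zp{r}(1)\oplus\zp{r}))$ and $N_1=M_r$ exhibits the defining system $D$ for $\dia{M_r}^{n+1}$, and by construction the obstruction to adjoining a term $N_{n+1}\epsilon^{n+1}$ — the class in $H^2(\End(\zp{r}(1)\oplus\zp{r}))$ of the cocycle assembled from $N_1,\dots,N_n$ — is exactly $\dia{M_r}_D^{n+1}$. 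Hence an $(n+1)$-st order deformation of $\rho_n$ modulo $p^r$ (with no goodness hypothesis) exists if and only if $\dia{M_r}_D^{n+1}=0$; this is the content of Appendix \ref{sec:massey appendix}, in the spirit of Lemma \ref{lem:deformations from relations between coord masseys}.

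For $(3)\Rightarrow(2)$: granted a lift $\rho_{n+1}'$ exists, I would upgrade it to a very good one. Two lifts of $\rho_{n,r}$ agreeing with it on the nose modulo $\epsilon^{n+1}$ differ by adding $N\epsilon^{n+1}$ for some $N\in Z^1(G_{\Q,S},\End(\zp{r}(1)\oplus\zp{r}))$, and this module decomposes as $\zp{r}^{\oplus 2}\oplus\zp{r}(1)\oplus\zp{r}(-1)$ according to the diagonal, upper, and lower entries of a $2\times 2$ matrix. I would use this freedom to force, in turn, the three conditions of Lemma \ref{lem:criteria for very goodness}. First fix the determinant: $\det(\rho_{n+1}')=\kcyc(1+\delta\epsilon^{n+1})$ for a homomorphism $\delta\colon G_{\Q,S}\to\zp{r}$, and twisting by the character $1-\tfrac12\delta\epsilon^{n+1}$ (legitimate since $p>3$) restores $\det=\kcyc$; thereafter only trace-zero cocycles are added, preserving the determinant. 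For condition (iii): the normalization \eqref{eq:normalization of a,b,c} forces the obstruction cocycle, hence $d N_{n+1}$, to vanish on $I_N$, so $N_{n+1}|_{I_N}$ is a cocycle; since on $I_N$ the modules $\zp{r}(\pm1)$, $\zp{r}$ carry trivial action and the cocycles $a,b,c$ restrict to generators of the corresponding $Z^1(I_N,-)$ (see \S\ref{subsec:matrix reps} and Corollary \ref{cor:flat cohom of ad}), subtracting a suitable trace-zero combination of global cocycles $a,b,c$ kills $N_{n+1}|_{I_N}$ without disturbing the lifting property. For conditions (i)--(ii) at $p$: because $\rho_n$ is very good, the lower entry $c_{n+1}$ is a cocycle on $G_p$, and I would remove $c_{n+1}|_p$ and render the diagonal extension finite-flat exactly as in the step $(3)\Rightarrow(4)$ of the proof of Lemma \ref{lem:lower-trianglar equivalences} and in Lemma \ref{lem:upper-triangular equivs}, using the surjectivity of $H^1(\zp{r}(-1))\to H^1_p(\zp{r}(-1))$ from Proposition \ref{prop:H^2_f in H^2} together with $H^1_{p,\fl}(\zp{r}(-1))=0$ (Lemma \ref{lem:flat subspaces}). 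With (i)--(iii) arranged, Lemma \ref{lem:criteria for very goodness} shows the resulting $\rho_{n+1}$ is very good, establishing $(2)$.

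The main obstacle is carrying out all of these corrections simultaneously and compatibly: each must be realized by a cocycle supported in the appropriate graded piece of $\End(\zp{r}(1)\oplus\zp{r})$, and the later corrections at $p$ must not undo the earlier one at $N$ or the determinant fix. In particular one must arrange the finite-flat condition at $p$, whose natural obstruction group is $H^2_\fl$, under the mere hypothesis that the Massey class vanishes in the ordinary group $H^2$. That this suffices is precisely the injectivity $H^2_\fl\hookrightarrow H^2$ of Proposition \ref{prop:H^2_f in H^2}, and it is what makes the local constraints contribute no obstruction beyond the global Massey power.
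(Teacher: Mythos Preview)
Your approach is the paper's, and correct in outline. Two points of comparison. First, the paper does not fix the determinant by an explicit twist; instead it observes (in the proof of Lemma \ref{lem:criteria for very goodness}) that once conditions (i)--(iii) hold, $\det(\rho_{n+1}) = \kcyc(1+\delta\epsilon^{n+1})$ with $\delta$ unramified everywhere, hence zero. This spares you the bookkeeping of restricting to trace-zero corrections throughout. Second, and more substantively, your order of operations (first $I_N$, then $p$) leaves the compatibility you flag genuinely unresolved: the global cocycles you subtract in the $p$-step, produced by Proposition \ref{prop:H^2_f in H^2}, may well be ramified at $N$ and undo the $I_N$-fix, and your invocation of $H^2_\fl \hookrightarrow H^2$ does not address this. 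The paper reverses the order --- first arrange (i) and (ii) at $p$ via Proposition \ref{prop:H^2_f in H^2} and Lemma \ref{lem:upper-triangular equivs}, then show the resulting top-degree cochains restrict to cocycles on $I_N$ (this is the Claim, using very goodness for $n\ge 2$ and the normalization \eqref{eq:normalization of a,b,c} for $n=1$), and finally kill those restrictions by subtracting multiples of $a,b,c$. The reason this second step cannot disturb the first is precisely that $a|_{I_p}=0$, $c|_p=0$, and $b$ is a finite-flat cocycle (so subtracting it preserves finite-flatness of the extension by the argument of Appendix \ref{sec:algebra appendix}). Swap your two steps and your argument goes through.
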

\begin{rem}
\label{rem:<M>^2 iff both cups}
For $n=1$, we can take $\rho_n = \rho_1 \otimes_{\zp{t}[\epsilon_1]}\zp{s}[\epsilon_1]$, and the Massey power $\dia{M}_D^{n+1} = \dia{M}_D^{2}$ is just the cup product 
\[
M \cup M = \ttmat{a \cup a + b \cup c}{a\cup b - b \cup a}{c \cup a - a \cup c}{a \cup a + c \cup b}.
\]
Using the skew-commutativity of the cup product and the fact that $a \cup b=0$ (Remark \ref{rem: a cup b is zero}), we see that $\dia{M}_D^{2}=0$ if and only if $a\cup c$ and  $b \cup c$ are both zero.
\end{rem}
\begin{proof}
First note that (1) is equivalent to (3) by Lemma \ref{lem:massey powers and deformations}, and clearly (2) implies (1). It remains to show that (1) implies (2). Fix a deformation $\rho_{n+1}: G_{\Q,S} \to \GL_2(\zp{r}[\epsilon_{n+1}])$ of $\rho_n$ and write
\[
\rho_{n+1} = \left(1+ \sum_{i=1}^{n+1} \ttmat{a_i}{b_i}{c_i}{d_i} \epsilon^i\right) \rho_0 
\]
with $a_1=a$, $b_1=b$, $c_1=c$ and $d_1=-a$, and $a_i,d_i \in C^1(\zp{r})$, $b_i \in C^1(\zp{r}(1))$, $c_i \in C^1(\zp{r}(-1))$. We will construct another deformation $\rho_{n+1}'$ of $\rho_n$ such that $\rho_{n+1}'$ satisfies the conditions (i), (ii), (iii) of Lemma \ref{lem:criteria for very goodness}. 

Since $\rho_{n+1}$ is a representation, we have
\begin{equation}
\tag{$*$}
da_{n+1} = \sum_{i=1}^{n} (a_i \smile a_{n+1-i} + b_i \smile c_{n+1-i})
\end{equation}
in $C^2(\zp{r})$. Since $\rho_n$ is very good, we have $a_i|_{I_p}=c_i|_{I_p}=0$ for $i=1,\dots,n$, so we see that $a_{n+1}|_{I_p}$ is a cocycle. Similarly, we can see that $d_{n+1}|_{I_p}$ and $c_{n+1}|_p$ are cocycles. As in the proof of $(3) \Rightarrow (4)$ in Lemma \ref{lem:lower-triangular equivs}, we can use Proposition \ref{prop:H^2_f in H^2} to show that, by subtracting a global cocycle, we can obtain elements $a_{n+1}',d_{n+1}'$ and $c_{n+1}'$ satisfying $a_{n+1}'|_{I_p}=d_{n+1}'|_{I_p}=c_{n+1}'|_p=0$.

Define $\chi_a' :=\kcyc(1+ \sum_{i=1}^n a_i|_p \epsilon^i + a_{n+1}'|_p\epsilon^{n+1})$ and $\chi_d' :=1+ \sum_{i=1}^n d_i|_p \epsilon^i + d_{n+1}'|_p\epsilon^{n+1})$, and note that they are unramified deformations of $\kcyc|_p$ and $1|_p$, respectively.

Now, applying Lemma \ref{lem:upper-triangular equivs}, we can find an $n$-th order deformation $\rho_n^b$ of $\rho_0^b$ modulo $p^s$ that is adapted to $\rho_{n,r}$ and is good. Let $b_{n+1}' \in C^1(\zp{r}(1))$ be the cochain associated to $\rho_n^b$, and note that $db_{n+1}'=db_{n+1}$. Since $\rho_n^b$ is good, $\rho_n^b|_p$ is a finite-flat extension of $\chi_d(\rho_n)$ by $\chi_a(\rho_n)$. Following Appendix \ref{sec:algebra appendix}, we see that $\sum_{i=1}^{n} b_{i}|_p\epsilon^{i}+b_{n+1}'|_p\epsilon^{n+1}$ is a finite-flat extension of $\chi_d'$ by $\chi_a'$, since it is obtained from $\rho_n^b|_p$ by first pulling back by $\chi_d' \onto \chi_d(\rho_n)$ and then pushing out along $\chi_a(\rho_n) \cong \epsilon \chi_a' \rinj \chi_a'$. 
	
We have now constructed cochains $a_{n+1}', b_{n+1}', c_{n+1}'$ and $d_{n+1}'$ such that $da_{n+1}' =da_{n+1}$, $db_{n+1}' =db_{n+1}$, $dc_{n+1}' =dc_{n+1}$, and $dd_{n+1}' =dd_{n+1}$, and satisfying conditions (i) and (ii) of Lemma \ref{lem:criteria for very goodness}. To show (iii), we use:
\begin{claim}  $a_{n+1}'|_{I_N}, b_{n+1}'|_{I_N},  c_{n+1}'|_{I_N},d_{n+1}'|_{I_N}$ are cocycles.
\end{claim}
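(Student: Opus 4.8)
The plan is to reduce the claim to the four coboundary identities satisfied by the degree-$(n+1)$ cochains of $\rho_{n+1}$, and then to annihilate each term using the mild ramification of $\rho_n$ at $N$.

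First I would observe that $a_{n+1}'-a_{n+1}$, $b_{n+1}'-b_{n+1}$, $c_{n+1}'-c_{n+1}$, $d_{n+1}'-d_{n+1}$ are global $1$-cocycles (they were produced precisely by subtracting such cocycles), so that $da_{n+1}'=da_{n+1}$ and likewise for $b,c,d$. Since restriction of cochains to $I_N$ commutes with the differential, it suffices to show $(da_{n+1})\vert_{I_N}=(db_{n+1})\vert_{I_N}=(dc_{n+1})\vert_{I_N}=(dd_{n+1})\vert_{I_N}=0$. The homomorphism property of $\rho_{n+1}$ gives the matrix identity $d\sm{a_{n+1}}{b_{n+1}}{c_{n+1}}{d_{n+1}}=\sum_{i=1}^{n}\sm{a_i}{b_i}{c_i}{d_i}\smile\sm{a_{n+1-i}}{b_{n+1-i}}{c_{n+1-i}}{d_{n+1-i}}$, whose $(1,1)$-entry is $(*)$, whose $(2,1)$-entry is the relation recorded when defining what it means for a deformation to be adapted to $\rho_n$, and whose other two entries are the evident analogues.

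The key input is mild ramification. Because $\kcyc$ is unramified at $N$ we have $\rho_0\vert_{I_N}=1$, so comparing coefficients of $\epsilon^i$ in $\rho_n\vert_{I_N}=\sm{1+a\epsilon}{b\epsilon}{c\epsilon}{1-a\epsilon}\vert_{I_N}$ yields $a_1\vert_{I_N}=a\vert_{I_N}$, $b_1\vert_{I_N}=b\vert_{I_N}$, $c_1\vert_{I_N}=c\vert_{I_N}$, $d_1\vert_{I_N}=-a\vert_{I_N}$, together with $a_i\vert_{I_N}=b_i\vert_{I_N}=c_i\vert_{I_N}=d_i\vert_{I_N}=0$ for $2\le i\le n$. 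Restricting the matrix identity above to $I_N$: in the summand indexed by $i$ the two factors carry indices $i$ and $n+1-i$, and for $n\ge 2$ at least one of these indices lies in $\{2,\dots,n\}$; since $(f\smile g)(\sigma,\tau)=f(\sigma)\cdot\bigl(\sigma\cdot g(\tau)\bigr)$ vanishes for $\sigma,\tau\in I_N$ as soon as $f\vert_{I_N}=0$ or $g\vert_{I_N}=0$, that whole summand restricts to $0$ on $I_N$. Hence for $n\ge 2$ all four coboundaries vanish on $I_N$, which proves the claim in that range.

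It remains to treat $n=1$, where the sum has the single term $i=1$, namely the restriction to $I_N$ of $M\smile M$ with $M=\sm{a}{b}{c}{-a}$. Here I would use that on $I_N$ the coefficient module is trivial (again since $\kcyc\vert_{I_N}=1$) and $a,b,c$ are homomorphisms, so $M\vert_{I_N}$ is the cocycle $\sigma\mapsto m_\sigma\,M(\gamma)$, where $m\colon I_N\to\zp{r}$ is the homomorphism sending $\gamma$ to $1$; consequently $(M\smile M)(\sigma,\tau)=M(\sigma)M(\tau)=m_\sigma m_\tau\,M(\gamma)^2$, and $M(\gamma)^2$ is the scalar matrix with entry $a(\gamma)^2+b(\gamma)c(\gamma)$, which is $0$ by the normalization \eqref{eq:normalization of a,b,c}. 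Thus $(da_2)\vert_{I_N}=(db_2)\vert_{I_N}=(dc_2)\vert_{I_N}=(dd_2)\vert_{I_N}=0$, finishing the claim. I do not expect a genuine obstacle here; the only point requiring care is keeping the matrix-multiplication order of the $\smile$-products straight, so that the $n=1$ computation lands exactly on \eqref{eq:normalization of a,b,c}.
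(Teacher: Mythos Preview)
Your proposal is correct and follows essentially the same approach as the paper: reduce to showing the coboundaries of $a_{n+1},b_{n+1},c_{n+1},d_{n+1}$ vanish on $I_N$, use mild ramification to kill all summands when $n\ge 2$, and for $n=1$ invoke the normalization \eqref{eq:normalization of a,b,c}. Your $n=1$ argument is in fact slightly tidier than the paper's --- you compute $M(\gamma)^2=0$ once as a matrix identity, whereas the paper computes each coordinate separately on elements $\sigma=x\gamma^i$, $\tau=y\gamma^j$ --- but the content is the same. One minor quibble: $b_{n+1}'$ was not literally obtained by subtracting a global cocycle from $b_{n+1}$ (it came from the adapted deformation $\rho_n^b$), but you only need $db_{n+1}'=db_{n+1}$, which the paper records explicitly.
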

\begin{proof}
Indeed, this is clear from (the analog of) $(*)$ if $n>1$, using the fact that $\rho_n$ is very good. For $n=1$, let $\sigma=x \gamma^i$ and $\tau=y \gamma^j$ with $x,y \in I_N^{\mathrm{non}\text{-}p}$ and $i,j \in \Z$. By our normalizations (see \eqref{eq:normalization of a,b,c}) we have $a(\sigma)=c(\sigma)=\bar{i}$ and $b(\sigma)=-\bar{i}$ (where $\bar{i} \in \zp{r}$ is the reduction of $i$). Then, by $(*)$, we have
\[
da_2'(\sigma,\tau) = a(\sigma)a(\tau)+b(\sigma)c(\tau) = \bar{i}\bar{j}-\bar{i}\bar{j}=0.
\]
Since pairs $(\sigma,\tau)$ of this type form a dense subset of $I_N \times I_N$, and since $da_2'$ is continuous, we see that we see that $a_2'|_{I_N}$ is a cocycle. The proof for $b,c$ and $d$ is similar.
\end{proof}
Subtracting a multiple of $a$ from $a_{n+1}'$, we can arrange so that $a_{n+1}'(\gamma)=0$ while maintaining the properties that $da_{n+1}'=da_{n+1}$, that $a_{n+1}'|_{I_p}=0$, and that $a_{n+1}'|_{I_N}$ is a cocycle. This implies that $a_{n+1}'|_{I_N}=0$, since $\gamma$ is a generator of the pro-$p$ part of $I_N$. 

Similarly, we can alter $b_{n+1}'$,  $c_{n+1}'$, and $d_{n+1}'$ so that they vanish on restriction to $I_N$, without changing their properties on restriction to $G_{\Q_p}$.

Now we define $\rho_{n+1}'$ to be
\[
\rho_{n+1}' =  \left(1+ \sum_{i=1}^{n} \ttmat{a_i}{b_i}{c_i}{d_i} \epsilon^i + \ttmat{a'_{n+1}}{b'_{n+1}}{c'_{n+1}}{d'_{n+1}} \epsilon^{n+1}\right) \rho_0.
\]
Since $da_{n+1}' =da_{n+1}$, $db_{n+1}' =db_{n+1}$, $dc_{n+1}' =dc_{n+1}$, and $dd_{n+1}' =dd_{n+1}$, we see that $\rho_{n+1}'$ is a deformation of $\rho_n$. By construction, we see that $\rho_{n+1}'$ satisfies the conditions (i), (ii), (iii) of Lemma \ref{lem:criteria for very goodness}, and so $\rho_{n+1}'$ is very good.
\end{proof}

\begin{rem}
\label{rem:global flat massey}
Another way to think of this proposition is that, morally speaking, the cup products and Massey products in this paper ``should be" valued in the global finite-flat cohomology group $H^2_\fl$ explained in \S\ref{sssec:global flat}. Then, for example, the unconditional vanishing of the Massey relation in the $(1,2)$-coordinate would follow from the fact that $H^2_\fl(\zp{t}(1))=0$ (see Proposition \ref{prop:computation of B}). 

More generally, the pattern of the arguments that relate Massey product vanishing to the existence of a global finite-flat representation has been
\begin{enumerate}
\item Choose a global cochain whose coboundary is the Massey product
\item Modify it by a global cocycle (so that its coboundary does not change) so that it is a finite-flat cocycle upon restriction to $G_p$. 
\end{enumerate}
We have developed a theory of cup products and Massey products in global finite-flat cohomology that would simplify such arguments. The same simplification can be achieved using a formulation in terms of $A_\infty$-operations, which induces a choice of Massey products compatible with this theory; for this, see \cite[Thm.\ 3.4.1 and \S12]{CarlAinf}. 

Since the relevant $H^1_\fl$ groups are $1$-dimensional in each coordinate (spanned by $a$, $b$, $c$, and $a$, respectively), the resulting Massey products are unambiguously defined (i.e.\ various choices of defining systems result in the same Massey product) and we would not need to consider specific defining systems. However, this theory would take several pages to properly develop. More importantly, it is not necessary for our arguments because Proposition \ref{prop:H^2_f in H^2} implies that it suffices to test a global finite-flat Massey condition (in $H^2_\fl$) as a global Massey condition (in $H^2$). An inductive procedure produces appropriate defining systems.
\end{rem}

\part{Massey products and arithmetic}

In this part, we study some analytic and algebraic number-theoretic interpretations of the vanishing of cup products and Massey products. We prove that some of the coordinate Massey relations considered in the previous part are equivalent to each other. Combining these equivalence with the results of the previous part, we prove our main result, interpreting the rank and Newton polygon of $\bT$ in terms of Massey products. The results of this part also explain how to deduce the main results of Calegari--Emerton \cite{CE2005} (for $p > 3$) and Merel \cite{merel1996} from our Theorem \ref{thm:cup products and rank 1}. 

For the entirety of Part 3, we continue to fix $t=v_p(N-1) \geq 1$, and also fix an integer $s$ with $1\le s \le t$ . We also let $\Delta=\Gal(\Q(\zeta_{p^s})/\Q) \cong (\Z/p^s\Z)^\times$.

\section{Cup products and arithmetic}
\label{subsec:cup products and Galois theory} 

In this section, we deduce a generalization of the main result of Calegari--Emerton \cite{CE2005}, relating $e = \mathrm{rank}_{\Z_p}(\bT^0)$ to certain class groups.

\subsection{Cup products and Galois theory} We let $\zeta_N^{(p^s)} \in \Q(\zeta_N)$ denote an element such that $[\Q(\zeta_N^{(p^s)}):\Q]=p^s$. Note that $\Q(\zeta_N^{(p^s)})$ is the fixed field of the kernel of the homomorphism $a:G_{\Q,S}\to\Z/p^t\Z \onto \Z/p^s\Z$.
 
\begin{prop}
\label{prop:cup gives class}

\begin{enumerate}
\item If $b \cup c =0$ in $H^2(\Z/p^s\Z)$, then $\Cl(\Q(N^{1/p^{s}}))[p^\infty]$ admits $\Z/p^{s}\Z \times \Z/p^{s}\Z$ as a quotient. 
\item If $a \cup c =0$  in $H^2(\Z/p^s\Z(-1))$, then $(\Cl(\Q(\zeta_N^{(p^s)},\zeta_{p^s}))[p^\infty] \otimes \Z_p(1))^\Delta$ admits $\Z/p^{s}\Z \times \Z/p^{s}\Z$ as a quotient. 
\end{enumerate}
\end{prop}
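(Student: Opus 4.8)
The plan is to convert the hypothesis $a\cup c=0$ into a reducible three-dimensional Galois representation, restrict it to the field $K':=\Q(\zeta_N^{(p^s)},\zeta_{p^s})$ --- where, since $K'$ contains $\zeta_{p^s}$ and the fixed field of $a$, the representation degenerates into a unipotent one with \emph{trivial} coefficients --- and read off its off-diagonal entries as two everywhere-unramified homomorphisms $G_{K'}\to\Z/p^s\Z$. Tracking the action of $\Gal(K'/\Q)=\Z/p^s\Z\times\Delta$ will place these homomorphisms in the eigenspace cut out by $(\,\cdot\otimes\Z_p(1))^\Delta$, and class field theory then turns two linearly independent such homomorphisms into the asserted $\Z/p^s\Z\times\Z/p^s\Z$ quotient. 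Part (1) follows the same pattern, with $b\cup c$ in place of $a\cup c$, with $K'$ replaced by the Galois closure $\Q(\zeta_{p^s},N^{1/p^s})$ of $\Q(N^{1/p^s})$, and with a prime-to-$p$ descent at the end; the absence of a twist and of $\Delta$-invariants in the conclusion of (1) is precisely because it is $\Q(N^{1/p^s})$ itself, not its Galois closure, that figures there.

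Concretely, $a\cup c\in H^2(\Z/p^s\Z(-1))$ is, up to sign, the obstruction to extending the extension of $\kcyc$ by $\Z/p^s\Z$ classified by $c$ along the non-split self-extension of the trivial module classified by $a$, so $a\cup c=0$ produces a cochain $z$ with $dz=-a\smile c$ and a homomorphism
\[
\rho:G_{\Q,S}\lra\GL_3(\Z/p^s\Z),\qquad \rho\sim\begin{pmatrix}1&a&z\\0&1&c\\0&0&\kcyc\end{pmatrix},
\]
the displayed entries being $a$, $c$, $z$ premultiplied by the relevant characters in the usual way. Its ramification is controlled: $a$ is unramified outside $N$ and cuts out $\Q(\zeta_N^{(p^s)})$; $c$ is unramified outside $N$ with $c|_{G_p}=0$ by the normalization of \S\ref{subsec:matrix reps}; and $\kcyc$ is unramified outside $p$. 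Using Proposition~\ref{prop:H^2_f in H^2} (surjectivity of $H^1\to H^1_p$ together with $H^1_{p,\fl}(\Z/p^s\Z(-1))=0$) I may subtract a global cocycle from $z$ to arrange $z|_{G_p}=0$ without altering $dz$. Restriction to $G_{K'}$ trivializes $a$ and $\kcyc$, so $\bar c:=c|_{G_{K'}}$ and $\bar z:=z|_{G_{K'}}$ are honest homomorphisms $G_{K'}\to\Z/p^s\Z$, and they are unramified at every place of $K'$: away from $Np$ automatically; at $p$ because $c|_{G_p}=z|_{G_p}=0$; at $N$ because the pro-$p$ inertia subgroup of $G_{K'}$ at $N$ is topologically generated by $\gamma^{p^s}$, on which $\bar c$ is zero (since $a$ and $c$ have the same kernel on $I_N^{\mathrm{pro}-p}$, both sending $\gamma$ to $1$) and $\bar z(\gamma^{p^s})=z(\gamma)p^s+\binom{p^s}{2}\equiv 0\pmod{p^s}$ (from the identity $z(\gamma^k)=z(\gamma)k+\binom{k}{2}$ forced by $dz|_{I_N}=-(a\smile c)|_{I_N}$ and $p$ odd). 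Thus $\bar c$ and $\bar z$ give classes in $\Hom(\Cl(K'),\Z/p^s\Z)$.

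It remains to control the $\Delta$-action and to prove independence. Although $\bar c$ and $\bar z$ are valued in the trivial module $\Z/p^s\Z$, conjugation by $\delta\in\Delta=\Gal(K'/\Q(\zeta_N^{(p^s)}))$ acts on them --- regarded inside $H^1(G_{K'},\Z/p^s\Z)\cong\Hom(\Cl(K'),\Z/p^s\Z)$ --- by the scalar $\kcyc(\delta)$; this is a one-line expansion of the cocycle relations for $c$ and $z$, using that a lift $\tilde\delta$ acts on $\Z/p^s\Z(-1)$-values by $\kcyc(\tilde\delta)$ and that $a(\tilde\delta)=0$. Since $\Hom(\Cl(K'),\Z/p^s\Z)^{(\kcyc)}=\Hom(\Cl(K')^{(\kcyc^{-1})},\Z/p^s\Z)$, two $\Z/p^s\Z$-independent such homomorphisms amount exactly to a $\Z/p^s\Z\times\Z/p^s\Z$ quotient of $\Cl(K')^{(\kcyc^{-1})}$, equivalently of $(\Cl(K')[p^\infty]\otimes\Z_p(1))^\Delta$. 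For independence, let $\bar a$ generate the complementary subgroup $\Gal(K'/\Q(\zeta_{p^s}))\simeq\Z/p^s\Z$, which commutes with $\Delta$; expanding the cocycle relations with a lift $\tilde a$ having $\kcyc(\tilde a)=1$ and $a(\tilde a)=u\in(\Z/p^s\Z)^\times$ yields $\bar a\cdot\bar c=\bar c$ and $\bar a\cdot\bar z=\bar z+u\bar c$, so $(\bar a-1)\bar z=u\bar c$. Hence if $\bar z$ were a $\Z/p^s\Z$-multiple of $\bar c$ we would get $u\bar c=(\bar a-1)(\,\text{a multiple of }\bar c\,)=0$, forcing $\bar c=0$. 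But $\bar c\ne 0$ unconditionally: $c$ generates $H^1_\fl(\Z/p^s\Z(-1))\simeq\Z/p^s\Z$ (Corollary~\ref{cor:flat cohom of ad}), whereas $H^1(\Gal(K'/\Q),\Z/p^s\Z(-1))=0$ for $p>3$ (the $\Delta$-action through $\kcyc^{-1}$ kills this cohomology, even after restricting to a Sylow $p$-subgroup), so $c$ cannot be inflated from $\Gal(K'/\Q)$. This proves (2).

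The conceptual input is the passage from $a\cup c=0$ to $\rho$, the use of Proposition~\ref{prop:H^2_f in H^2} to make $\rho$ unramified at $p$, and the identity $(\bar a-1)\bar z=u\bar c$ that pins down independence. I expect the remaining work to be essentially organizational: carrying out the several cocycle expansions and the $s>1$ cohomology vanishing without sign errors, the ramification computation at $N$, and, for part (1), arranging the prime-to-$p$ descent from $\Q(\zeta_{p^s},N^{1/p^s})$ to $\Q(N^{1/p^s})$.
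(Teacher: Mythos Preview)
Your approach to part (2) is essentially correct and parallels the paper's. The paper only writes ``Similar'' for (2), so your elaboration is a reasonable reconstruction: build the three-dimensional representation, arrange $z|_{G_p}=0$, restrict to $K'=\Q(\zeta_N^{(p^s)},\zeta_{p^s})$ where $a$ and $\kcyc$ trivialize, and read off $\bar c,\bar z$ as unramified homomorphisms. Your independence argument via $(\bar a-1)\bar z=u\bar c$ is clean and slightly different in flavor from what the paper does for (1). One small sharpening: where you write ``forcing $\bar c=0$'' you actually need $\bar c$ to have order exactly $p^s$ (so that $\alpha\bar c+\beta\bar z=0$ forces $\alpha=\beta=0$, not merely $\bar z\notin\langle\bar c\rangle$). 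But your inflation--restriction argument gives this too: if $p^r\bar c=0$ for some $r<s$, then $p^r c$ is inflated from $\Gal(K'/\Q)$, hence vanishes, contradicting that $c$ generates $H^1_\fl(\zp{s}(-1))\simeq\zp{s}$.

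Your sketch for part (1), however, has a genuine gap. The extension $\Q(\zeta_{p^s},N^{1/p^s})/\Q(N^{1/p^s})$ has degree $p^{s-1}(p-1)$, which is \emph{not} prime to $p$ once $s>1$, so the ``prime-to-$p$ descent'' you invoke does not exist. The paper avoids this entirely by working directly over $\Q(N^{1/p^s})$ rather than its Galois closure. The point is that for (1) the cochain $F$ satisfying $dF=b\smile c$ takes values in the \emph{trivial} module $\zp{s}$, and the Kummer cocycle $b$ already vanishes on $G_{\Q(N^{1/p^s})}$; hence $F|_{G_{\Q(N^{1/p^s})}}$ is a homomorphism to $\zp{s}$ with no need to trivialize $\kcyc$. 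Unramifiedness at $N$ comes from the observation that the image of $\nu|_{I_N}$ lies in the strictly upper-triangular Heisenberg group of exponent $p^s$, hence is cyclic of order $p^s$, so passing to $G_{\Q(N^{1/p^s})}$ (totally ramified of degree $p^s$ at $N$) kills it. The second independent class is \emph{not} $\bar c$ (which has twisted coefficients over $\Q(N^{1/p^s})$) but rather $a|_{G_{\Q(N^{1/p^s})}}$, i.e.\ the genus field $\Q(\zeta_N^{(p^s)},N^{1/p^s})/\Q(N^{1/p^s})$. Independence holds because $F\pmod{p^r}$ is not a cocycle for any $1\le r\le s$, so $K_F$ is linearly disjoint from the genus field.
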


\begin{proof}
Replace $a,b,c$ with their reductions modulo $p^{s}$. 

(1) Let $F \in C^1(\Z/p^s\Z)$ be a cochain satisfying $dF = b \smile c$. Since $c\vert_p=0$, we have that $F\vert_{I_p}$ is a cocycle. Just as in the proof of Lemma \ref{lem:lower-triangular equivs}, we can subtract an element of $Z^1(\Z/p^s\Z)$ from $F$ to ensure that $F\vert_{I_p}=0$. Moreover, since $dF \ne 0$, we have $F \not \in Z^1(\Z/p^s\Z)$.

Consider the function $\nu: G_{\Q,S} \to \GL_3(\Z/p^s\Z)$ given by
\[
 \sigma \mapsto 
\left( \begin{array}{ccc}
1 & c & F \\
0 & \kcyc & b \\
0 & 0 & 1
\end{array} \right).
\] 
Since $dF=b \smile c$, we see that $\nu$ is a homomorphism. Since $\kcyc$ is unramified at $N$, the image of $\nu\vert_{I_N}$ is unipotent. Since the unipotent radical of the upper-triangular Borel in $\GL_3(\Z/p^s\Z)$ has exponent $p^{s}$, and since $I_N^{\mathrm{pro}\text{-}p}$ is pro-cyclic, we see that the image of $\nu|_{I_N}$ is a cyclic group. Because $b\vert_{I_N}, c\vert_{I_N}$ induce surjective homomorphisms $I_N \rsurj \Z/p^s\Z$, this cyclic group has order $p^{s}$. Since $\Q(N^{1/p^{s}})/\Q$ is totally ramified at $N$, this implies that the restriction of $\nu$ to $G_{\Q(N^{1/p^{s}})}$ is unramified at $N$. 

At the start of \S\ref{subsec:matrix reps}, we chose $b$ to be a constant multiple of the Kummer cocycle corresponding to the chosen root $N^{1/p^t}$ of $N$. Since we have now reduced $b$ modulo $p^{s}$, $b : G_{\Q,S} \to \Z/p^s\Z(1)$ is given by 
\[
\sigma \mapsto \frac{\sigma(N^{1/p^{s}})}{N^{1/p^{s}}}.
\]
In particular, $b\vert_{G_{\Q(N^{1/p^{s}})}}=0$. This implies that $F|_{G_{\Q(N^{1/p^{s}})}} \in  Z^1(\Q(N^{1/p^{s}}), \Z/p^s\Z)$, and so it corresponds to a cyclic degree $p^{s}$ extension $K_F/\Q(N^{1/p})$ that is unramified outside $Np$. Since $F$ is chosen to be unramified at $p$ and $\nu|_{G_{\Q(N^{1/p^s})}}$ is unramified at $N$, we see that $K_F$ is actually unramified everywhere. By class field theory, $K_F$ is cut out by a surjection $\Cl(\Q(N^{1/p^s})) \rsurj \Z/p^s\Z$.

Finally, since the image of $F$ in $C^1(\Z/p^r\Z)$ is not a cocycle for any $1 \leq r \leq s$, we see that $K_F$ is linearly disjoint from the genus field of $\Q(N^{1/p^{s}})$, which is $\Q(\zeta_N^{(p^{s})},N^{1/p^{s}})$. Hence the two unramified degree $p^{s}$ extensions of $\Q(N^{1/p^{s}})$ given by $K_F$ and $\Q(\zeta_N^{(p^{s})},N^{1/p^{s}})$ correspond to linearly independent elements of $\Cl(\Q(N^{1/p^{s}}))[p^{s}]$ of order $p^{s}$.

(2) Similar.
\end{proof}

\section{Cup products and Merel's number} 
\label{sec:without modular forms}

 Let $G=(\Z/N\Z)^\times$, recall $1 \leq s \leq t = v_p(N-1)$, and let $I_G$ be the augmentation ideal in $(\Z/p^s\Z)[G]$. This section concerns Merel's number $\prod_{i=1}^{\frac{N-1}{2}} i^i$ that appears in Merel's Theorem \ref{thm:merel}, and its relation to cup products and to the ``zeta element'' 
\[
\zeta := \sum_{i \in (\Z/N\Z)^\times}B_2(\lfloor i/N \rfloor)  [i] \in (\Z/p^s\Z)[G],
\]
where $B_2(x)=x^2-x+1/6$ is the second Bernoulli polynomial and where $\lfloor i/N \rfloor \in [0,1) \cap \frac{1}{N}\Z$ is the fractional part of $i/N$. We give a direct proof (not using deformation theory or modular forms) that the following four statements are equivalent:
\begin{enumerate}
\item $a \cup c=0$ in $H^2(\Z/p^s\Z(-1))$
\item $b \cup c= 0$ in $H^2(\Z/p^s\Z)$
\item Merel's number is a $p^{s}$-th power modulo $N$
\item $\zeta \in I_G^2$, i.e.\ $\ord_s \zeta \geq 2$. 
\end{enumerate}

Combining this equivalence for $s = 1$ with Theorem \ref{thm:cup products and rank 1}, which we will prove in \S\ref{sec:main result}, we have a new proof of Merel's Theorem \ref{thm:merel} without considering the geometry of modular Jacobians. For $s=1$, Theorem \ref{thm:cup merel equiv} (without the equivalent condition (1)) was known to Calegari and Emerton in unpublished work, but they did not know Theorem \ref{thm:cup products and rank 1}. We thank them for sharing their unpublished note with us. 

The proof of Theorem \ref{thm:cup merel equiv} is in two steps: first to relate the vanishing of cup products to the non-vanishing of a certain Selmer group, and second to use Stickelberger theory to relate the element $\zeta$ to the Selmer group, as in the proof of Herbrand's theorem. The second step has already been carried out beautifully in the paper \cite{lecouturier2018} of Lecouturier, which we use as a reference.

\subsection{Cup products and Selmer groups}
In this section, we give a simple proof that $a \cup c =0$ in $H^2(\Z/p^s\Z(-1))$ if and only if $b \cup c =0$ in $H^2(\Z/p^s\Z)$, and relate this vanishing to the non-vanishing of certain Selmer groups. The proof relies on considering the cohomology of $G_N$, so we start with some remarks about it. We note that since $p^s \mid (N-1)$, there is a primitive $p^s$-th root of unity $\zeta_{p^s}$ in $\Q_N$; we fix a choice of $\zeta_{p^s} \in \Q_N$, and this determines isomorphisms $\Z/p^s\Z \isoto \Z/p^s\Z(i)$ of $G_N$-modules for all $i$, which we will use as identifications. 

By Tate duality, we have a canonical isomorphism $H^2_N(\Z/p^s(1)) \cong \Z/p^s\Z$, which we use as an identification. By Kummer theory, we have $H^1_N(\Z/p^s(1)) \cong \Q_N^\times \otimes \Z/p^s\Z$, and we let $\cL_N \subset H^1_N(\Z/p^s\Z(1))$ be the free rank-$1$ $\Z/p^s\Z$-summand spanned by the image of $N$ under this isomorphism. Using our identification of $H^1_N(\Z/p^s\Z(1))$ and $H^1_N(\Z/p^s\Z)$, and the canonical basis of $H^2_N(\Z/p^s(1))$, we can think of Tate duality as providing a symplectic pairing on the free rank-$2$ $\Z/p^s\Z$-module $H^1_N(\Z/p^s\Z)$. 

Finally, note that since $B^1_N(\Z/p^t\Z)=0$, we have $Z^1_N(\Z/p^s\Z)=H^1_N(\Z/p^s\Z)$ and we can (and will) safely conflate cocycles with their cohomology classes.

\begin{lem}
\label{lem:H2 to H2N injective}
For $i=0,-1$, the map $H^2(\zp{s}(i)) \to H^2_N(\zp{s}(i))$ is an isomorphism.
\end{lem}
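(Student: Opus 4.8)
The plan is to compare the global cohomology $H^2(\zp{s}(i))$ with the local cohomology at $N$ via the restriction map, using the computations of $H^2(\Z_p(2))$ already established (Lemma \ref{lem:global C comp}) together with local Tate cohomology at the various primes dividing $N$ (here $N$ is prime, so there is just one). First I would recall, via the $p>3$ vanishing $H^i(\Z[1/p],\zp{s}(j))=0$ for $i>0$ and the Leray/localization sequence for $\Spec(\Z[1/Np])\subset\Spec(\Z[1/p])$, that for $i=-1$ (i.e.\ the twist $\zp{s}(2)$ after Tate twisting, cf.\ the identifications made just before the lemma) one gets $H^2(\zp{s}(-1))\cong H^2_N(\zp{s}(-1))$ essentially by definition of the localization sequence, since the relevant global groups in adjacent degrees vanish. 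Concretely, the cone construction of $H^\bullet$ versus $H^\bullet_N$ gives an exact sequence whose flanking terms are $H^1(\Z[1/p],-)$-type groups that vanish, forcing the map to be an isomorphism. The case $i=0$ ($\zp{s}(1)$ after twisting) is handled the same way, using $H^2(\Z[1/p],\zp{s}(1))=0$ and $\Pic(\Z[1/p])$-type vanishing; here one must be slightly careful because $H^2(\Z[1/Np],\zp{s}(1))$ is \emph{not} zero, but its only contribution comes from the prime $N$, which is exactly what the localization sequence isolates, so the restriction to $H^2_N$ is still an isomorphism.

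More precisely, I would invoke the nine-term Poitou--Tate sequence (or just the localization exact triangle used repeatedly in Section~6, e.g.\ in the proof of Proposition~\ref{prop:C computation}): since $S=\{N,p,\infty\}$ and the only ``new'' prime relative to $\Z[1/p]$ is $N$, the natural map $H^2(\Z[1/Np],V)\to H^2_N(V)\oplus H^2_p(V)$ fits into a sequence controlled by $H^i(\Z[1/p],V)$. For $V=\zp{s}(1)$ and $V=\zp{s}$ (equivalently the twists $\zp{s}(2)$, $\zp{s}(1)$ relevant after the identification over $\Q_N$), the terms $H^1(\Z[1/p],V)$ and $H^2(\Z[1/p],V)$ vanish for $p>3$ by Lemma~\ref{lem:global C comp} and the Soulé/Borel computation cited there, and $H^2_p(V)=0$ in these cases (this is the vanishing $H^2(\Q_p,\Z_p(2))=0$ of Lemma~\ref{lem:local C comp} and its twist). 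Hence the localization map collapses to an isomorphism $H^2(\zp{s}(i))\isoto H^2_N(\zp{s}(i))$ for $i=0,-1$.

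The main obstacle I anticipate is bookkeeping the twists and making sure the relevant local term $H^2_p(\zp{s}(i))$ really does vanish in both the $i=0$ and $i=-1$ cases, and that no global $H^2(\Z[1/p],-)$ survives: the Soulé computation gives $H^i(\Z[1/p],\Z_p(n))=0$ for $i>0$ only for suitable $n$ and $p>3$, so I would double-check that the needed Tate twists ($n=1$ and $n=2$ in the notation of the surrounding section) fall in the allowed range. Once the vanishing of these flanking terms is confirmed, the isomorphism is formal from the exact triangle; I would present it as a short diagram chase rather than re-deriving the exact sequences. I would also remark that this lemma is the local-to-global input that lets the subsequent arguments reduce the vanishing of the global cup products $a\cup c$ and $b\cup c$ to a computation purely in $H^2_N$, where Tate's symplectic pairing on $H^1_N(\zp{s})$ is available.
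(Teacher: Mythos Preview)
Your proposal has a genuine gap stemming from a confusion of exact sequences and of Tate twists.

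First, the modules in the lemma are $\zp{s}$ (for $i=0$) and $\zp{s}(-1)$ (for $i=-1$); the identification $\zp{s}\cong\zp{s}(j)$ made just before the lemma is a \emph{local} isomorphism of $G_N$-modules only, and cannot be used to change the global coefficient module. So your invocation of $\zp{s}(1)$ and $\zp{s}(2)$ as the relevant global modules is incorrect. In particular, Lemma~\ref{lem:global C comp} only gives $H^i(\Z[1/p],\Z_p(2))=0$; it says nothing about $H^i(\Z[1/p],\zp{s})$ or $H^i(\Z[1/p],\zp{s}(-1))$, and indeed $H^1(\Z[1/p],\zp{s})\cong\zp{s}\neq 0$ (it is generated by the cyclotomic character modulo $p^s$).

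Second, and more seriously, the localization sequence for $\Spec(\Z[1/Np])\subset\Spec(\Z[1/p])$ does \emph{not} contain the restriction map $H^2(\Z[1/Np],V)\to H^2_N(V)=H^2(\Q_N,V)$ that you are trying to analyze. The localization sequence (as used in the proof of Lemma~\ref{lem:global C comp}, following Soul\'e) relates $H^i(\Z[1/p],V)$ to $H^i(\Z[1/Np],V)$ with third term coming from cohomology of the residue field $\F_N$, not from $H^i(\Q_N,V)$. The map in the lemma is the restriction to a decomposition group, which lives in the Poitou--Tate sequence for $S=\{N,p,\infty\}$, not in the localization sequence. So even with correct twists, the vanishing of $H^i(\Z[1/p],V)$ would not directly force the desired isomorphism.

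The paper's proof is quite different and more elementary. It first observes that $H^2(\zp{s}(i))\to H^2_{Np}(\zp{s}(i))$ is surjective because the next term in the Poitou--Tate sequence is $H^3_{(c)}(\zp{s}(i))\cong H^0(\zp{s}(1-i))^\vee=0$ (since $1-i\in\{1,2\}$). Hence $H^2(\zp{s}(i))\to H^2_N(\zp{s}(i))$ is surjective. It then shows the two groups have the same order $p^s$: on the local side this is immediate from Tate duality at $N$, and on the global side one applies the global Euler characteristic formula, using the explicit values $h^0(\zp{s})=p^s$, $h^1(\zp{s})=p^{2s}$, $h^0(\zp{s}(-1))=1$, and $h^1(\zp{s}(-1))=p^{2s}$ (the last from Lemma~\ref{lem:surj of H1->H1_p in C coord}). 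A surjection between finite groups of equal order is an isomorphism.
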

\begin{proof}
The map $H^2(\zp{s}(i)) \to H^2_{Np}(\zp{s}(i))$ is surjective because
\[
H^3_{(c)}(\zp{s}(i)) \cong H^0(\zp{s}(1-i))^*=0.
\]
Hence the map in question is surjective, so it is enough to show that the two groups have the same cardinality. We are reduced to showing that $\#H^2(\zp{s}(i))=p^s$.

Write $h^j(\Z/p^s\Z(i))=\#H^j(\zp{s}(i))$. By the global Euler characteristic formula (see, for example, \cite[Corollary 8.7.5, pg.~509]{NSW2008}), we have
\[
h^2(\Z/p^s\Z) = \frac{h^1(\Z/p^s\Z)}{h^0(\Z/p^s\Z)}, \quad  h^2(\Z/p^s\Z(-1)) = \frac{h^1(\Z/p^s\Z(-1))}{h^0(\Z/p^s\Z(-1)) \cdot p^s}.
\]
One sees easily that $h^1(\Z/p^s\Z)=p^{2s}$ and $h^0(\Z/p^s\Z)=p^s$, so $h^2(\Z/p^s\Z)=p^s$. We also have $h^0(\Z/p^s\Z(-1))=1$, and, by Lemma \ref{lem:surj of H1->H1_p in C coord}, $h^1(\Z/p^s\Z(-1))=p^{2s}$, so $h^2(\Z/p^s\Z(-1))=p^s$.
\end{proof}

\begin{prop}
\label{prop: cup commutativity}
For $i=0,1$, there is a commutative diagram
\[\xymatrix{
H^1(\Z/p^s\Z(i)) \times H^1(\Z/p^s\Z(-1)) \ar[d]^-{\vert _N} \ar[r]^-{\cup} & H^2(\Z/p^s\Z(i-1)) \ar[d]^\wr \\
H^1_N(\Z/p^s\Z(i)) \times H^1_N(\Z/p^s\Z(-1)) \ar[r]^-{\cup} & H^2_N(\Z/p^s\Z(i-1)).
}\]
In particular, for $x \in H^1(\Z/p^s\Z(i))$ and $y \in H^1(\Z/p^s\Z(-1))$, we have $x \cup y =0$ if and only if $x \vert_N \cup y \vert_N = 0$.
\end{prop}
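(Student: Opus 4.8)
The plan is to recognize that the square is nothing more than the functoriality of the cup product under restriction, and that its right-hand vertical arrow is an isomorphism by Lemma \ref{lem:H2 to H2N injective}; the displayed consequence is then immediate.

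\textbf{Commutativity of the square.} Let $\mathrm{res}_N$ denote restriction along the inclusion $G_N \hookrightarrow G_{\Q,S}$ of the chosen decomposition group at $N$, so that $x \mapsto x\vert_N$ is $\mathrm{res}_N$ on each factor. The $G_{\Q,S}$-equivariant pairing $\zp{s}(i) \otimes_{\zp{s}} \zp{s}(-1) \to \zp{s}(i-1)$ restricts to the corresponding pairing of $G_N$-modules, and at the level of (inhomogeneous) cochains one has $\mathrm{res}_N(x \smile y) = \mathrm{res}_N(x) \smile \mathrm{res}_N(y)$; passing to cohomology gives $\mathrm{res}_N(x \cup y) = (x\vert_N) \cup (y\vert_N)$ (this is a standard compatibility; see, e.g., \cite{NSW2008}). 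Since $i \in \{0,1\}$ we have $i - 1 \in \{-1, 0\}$, so Lemma \ref{lem:H2 to H2N injective} applies and the map $H^2(\zp{s}(i-1)) \to H^2_N(\zp{s}(i-1))$ — the right vertical arrow — is an isomorphism. This is exactly the asserted commutative square, with the isomorphism $\wr$ accounted for.

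\textbf{The consequence.} Because the right vertical map is an isomorphism, in particular injective, a class $x \cup y \in H^2(\zp{s}(i-1))$ is zero if and only if its image $(x\vert_N) \cup (y\vert_N) \in H^2_N(\zp{s}(i-1))$ is zero. This is the final assertion.

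\textbf{On the main difficulty.} There is no substantive obstacle: all the content is contained in Lemma \ref{lem:H2 to H2N injective}, whose proof rests on the global Euler characteristic formula together with the local input of Lemma \ref{lem:surj of H1->H1_p in C coord}. The only points requiring a little care are the Tate-twist bookkeeping — checking that $i-1$ lands in the range $\{0,-1\}$ covered by that lemma — and, if one wishes to read the bottom row of the diagram via the surrounding symplectic description of $H^1_N(\zp{s})$, the observation that the $G_N$-equivariant identifications $\zp{s} \isoto \zp{s}(j)$ fixed earlier via the chosen $\zeta_{p^s} \in \Q_N$ are compatible with the cup-product pairings.
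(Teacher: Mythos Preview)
Your proof is correct and follows exactly the same approach as the paper: the commutativity is just functoriality of the cup product under restriction, and the consequence follows because Lemma \ref{lem:H2 to H2N injective} makes the right vertical arrow an isomorphism. The paper's proof is simply ``The commutativity is clear, so this follows from the previous lemma,'' and your write-up merely spells out these two observations.
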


\begin{proof}
The commutativity is clear, so this follows from the previous lemma.
\end{proof}

\begin{lem}
\label{lem:aN in cLN}
Under our identification $H^1_N(\Z/p^s\Z)=H^1_N(\Z/p^s\Z(1))$, both of the elements $a|_N$ and $b|_N$ are generators of $\cL_N \subset H^1_N(\Z/p^s\Z)$.
\end{lem}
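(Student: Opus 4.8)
The plan is to deduce the lemma from local duality together with the ``self-duality'' of the line $\cL_N$. The claim for $b|_N$ is built into our choices: by construction $b$ is a unit multiple of the Kummer cocycle of $N^{1/p^t}$, so (after reducing mod $p^s$ and using $\zeta_{p^s}\in\Q_N$, which is where $p^s\mid N-1$ enters) the class $b|_N\in H^1_N(\zp{s}(1))\cong\Q_N^\times\otimes\zp{s}$ is a unit multiple of the image of $N$, hence a generator of $\cL_N$; the normalization $b(\gamma)=-1$ makes this unit explicit but is not logically needed here. So the real work is locating $a|_N$ inside $\cL_N$.

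First I would record the ``Lagrangian'' property: $\cL_N$ is a free rank-$1$ $\zp{s}$-direct summand of the free rank-$2$ module $H^1_N(\zp{s})$, and the symplectic (alternating, perfect) Tate pairing forces any such summand to equal its own annihilator — it is isotropic because generated by a single element, and a counting argument over $\zp{s}$ (using that it is a direct summand) then gives $\cL_N^\perp=\cL_N$. Equivalently, with the Tate pairing written as $H^1_N(\zp{s})\times H^1_N(\zp{s}(1))\to H^2_N(\zp{s}(1))\cong\zp{s}$, one has $\cL_N^\perp=\{\,x\in H^1_N(\zp{s}): x\cup b|_N=0\,\}$, and this subgroup is identified with $\cL_N$ under the fixed isomorphism $H^1_N(\zp{s})\cong H^1_N(\zp{s}(1))$.

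It then suffices to show (i) $a|_N\cup b|_N=0$ in $H^2_N(\zp{s}(1))\cong\zp{s}$, and (ii) $a|_N$ has order $p^s$. For (ii): $a(\gamma)=1$ generates $\zp{s}$, so already $a|_{I_N}$ is surjective onto $\zp{s}$. For (i): $a|_N\in H^1_N(\zp{s})=\Hom(G_N,\zp{s})$ is the character cutting out the degree-$p^s$ totally ramified extension $\Q_N(\zeta_N^{(p^s)})/\Q_N\subset\Q_N(\zeta_N)$, while $b|_N$ is a unit multiple of the Kummer class of $N$. By the standard compatibility of the local cup product with the reciprocity map, $a|_N\cup b|_N$ equals, up to a unit, the value of $a|_N$ on $\mathrm{rec}_{\Q_N}(N)$, i.e.\ the Artin symbol of $N$ in $\Gal(\Q_N(\zeta_N^{(p^s)})/\Q_N)$. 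Now $N=\Phi_N(1)=\prod_{i=1}^{N-1}(1-\zeta_N^i)=N_{\Q_N(\zeta_N)/\Q_N}(1-\zeta_N)$, so $N$ lies in the norm group of $\Q_N(\zeta_N)/\Q_N$, hence in the norm group of the subextension, so this Artin symbol is trivial and $a|_N\cup b|_N=0$. Combined with the Lagrangian property, $a|_N\in\cL_N^\perp=\cL_N$, and by (ii) it is a generator.

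The main obstacle is bookkeeping rather than mathematics: one must ensure that the isomorphism $H^2_N(\zp{s}(1))\cong\zp{s}$ from Tate duality is the invariant-map normalization for which the cup-product/reciprocity formula holds, and keep track of the unit scalars hidden in the various ``unit multiple'' phrases (the scaling of $b$, the sign in the reciprocity formula, the $\zeta_{p^s}$-identification). None of these affect the argument, since we only ever need a cup product to vanish and a class to have order exactly $p^s$.
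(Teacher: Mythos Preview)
Your proof is correct, and it takes a genuinely different route from the paper's argument for the key step $a|_N \cup b|_N = 0$.

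The paper deduces $a|_N \cup b|_N = 0$ from the \emph{global} vanishing $a \cup b = 0$ in $H^2(\zp{s}(1))$, which was established earlier (Lemma~\ref{lem:upper-triangular equivs}) as a consequence of the deformation-theoretic fact that $B$ is a free $R$-module of rank~$1$; the local vanishing then follows from the compatibility of global and local cup products (Proposition~\ref{prop: cup commutativity}). Your argument instead computes the local cup product directly via the reciprocity map: $a|_N \cup b|_N$ is (up to unit) $a|_N(\mathrm{rec}_{\Q_N}(N))$, and this vanishes because $N = N_{\Q_N(\zeta_N)/\Q_N}(1-\zeta_N)$ is a norm from $\Q_N(\zeta_N)$, hence a fortiori from the subextension $\Q_N(\zeta_N^{(p^s)})$ cut out by $a|_N$.

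Your approach is more elementary and self-contained for this lemma in isolation: it needs only local class field theory and the cyclotomic identity $\Phi_N(1)=N$, and does not rely on any of the $R=\bT$ machinery. The paper's approach has the advantage of reusing a global fact already in hand, and of fitting into the broader pattern where vanishing of Massey relations in the $(1,2)$-coordinate is unconditional; but as a standalone proof of this lemma, yours is cleaner.
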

\begin{proof}
We know that neither $a|_N$ nor $b|_N$ is divisible by $p$ because their value on $\gamma$ is $\pm 1$. So it will suffice to show that $a|_N, b|_N \in \cL_N$. 

We have $b|_N \in \cL_N$ by Proposition \ref{prop:computation of B}. Since the Tate pairing is symplectic, to show that $a|_N \in \cL_N$, it is enough to show that $a|_N \cup b|_N = 0$. But we know that $a \cup b=0$ by Lemma \ref{lem:upper-triangular equivs}, so we are done by the previous proposition.
\end{proof}

Let $H^1_\Sigma(\Z/p^s\Z(-1))$ denote the Selmer group
\[
H^1_\Sigma(\Z/p^s\Z(-1)) := \ker \left(H^1(\Z/p^s\Z(-1)) \to H^1_p(\Z/p^s\Z(-1)) \oplus H^1_N(\Z/p^s\Z)/\cL_N\right).
\]
Let $H^1_{\Sigma^\perp}(\Z/p^s\Z(2))$ denote the ``dual'' Selmer group
\[
H^1_{\Sigma^\perp}(\Z/p^s\Z(2)) := \ker \left(H^1(\Z/p^s\Z(2)) \to H^1_N(\Z/p^s\Z)/\cL_N \right).
\]

\begin{prop}
\label{prop:cup selmer equiv}
The following are equivalent:
\begin{enumerate}
\item $a \cup c = 0$ in $H^2(\Z/p^s\Z(-1))$
\item $b \cup c = 0$ in $H^2(\Z/p^s\Z)$
\item The image of $c|_N$ in $H^1_N(\Z/p^s\Z(-1))$ is in the subgroup $\cL_N$ 
\item $H^1_\Sigma(\Z/p^s\Z(-1)) \simeq \Z/p^{s}\Z$
\item $H^1_{\Sigma^\perp}(\Z/p^s\Z(2)) \simeq \Z/p^{s}\Z$
\item There is an element $x \in H^1(\Z/p^s\Z(2))$ with non-zero image in $H^1(\Z/p\Z(2))$ such that $x|_N \in \cL_N$.
\end{enumerate}
\end{prop}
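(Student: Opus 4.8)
The plan is to prove the six equivalences by establishing a cycle of implications, exploiting the local-to-global reduction provided by Proposition \ref{prop: cup commutativity} and the structural facts about $H^1_N$ collected in Lemma \ref{lem:aN in cLN}. First, the equivalence $(1)\Leftrightarrow(2)$ is essentially a restatement of $(3)$: by Proposition \ref{prop: cup commutativity}, $a\cup c=0$ iff $a|_N\cup c|_N=0$ in $H^2_N(\zp{s}(-1))$, and similarly $b\cup c=0$ iff $b|_N\cup c|_N=0$. Since Lemma \ref{lem:aN in cLN} identifies both $a|_N$ and $b|_N$ as generators of the rank-$1$ summand $\cL_N\subset H^1_N(\zp{s})$, and the Tate pairing on $H^1_N(\zp{s})$ is symplectic (so that $\cL_N$ is its own orthogonal complement), the vanishing of either $a|_N\cup c|_N$ or $b|_N\cup c|_N$ is equivalent to $c|_N\perp \cL_N$, which is equivalent to $c|_N\in\cL_N$ (again since $\cL_N=\cL_N^\perp$). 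This gives $(1)\Leftrightarrow(2)\Leftrightarrow(3)$ in one stroke.

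Next I would treat $(3)\Leftrightarrow(4)$. The Selmer group $H^1_\Sigma(\zp{s}(-1))$ is the subgroup of $H^1(\zp{s}(-1))$ of classes that are finite-flat at $p$ (recall $H^1_{p,\fl}(\zp{s}(-1))=0$ by Lemma \ref{lem:flat subspaces}, so this is $H^1_{(p)}$) and land in $\cL_N$ at $N$. We already know from Corollary \ref{cor:flat cohom of ad}(3) that $H^1_\fl(\zp{s}(-1))=H^1_{(p)}(\zp{s}(-1))\cong\zp{s}$, generated by the class of $c$. So $H^1_\Sigma(\zp{s}(-1))$ is either $0$ or all of $\zp{s}\cdot[c]$ (it cannot be a proper nonzero subgroup, since a class $p^j[c]$ lies in $\cL_N$ at $N$ iff $[c]$ does, as $\cL_N$ is a direct summand and $p^j$ acts injectively on the relevant quotient — more carefully, $c|_N$ generates a cyclic subgroup of $H^1_N(\zp{s}(-1))\cong\zp{s}$ of some order $p^{s-v}$ with $v$ the divisibility of $c|_N$, and one checks that being in $\cL_N$ modulo this is the condition). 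Thus $H^1_\Sigma(\zp{s}(-1))\cong\zp{s}$ iff $c|_N\in\cL_N$, which is $(3)$. For $(4)\Leftrightarrow(5)$ I would invoke a Poitou–Tate / Greenberg–Wiles duality for the Selmer and dual Selmer systems: the local conditions defining $\Sigma$ and $\Sigma^*$ are exact annihilators of each other under local Tate duality at each place (at $p$, the finite-flat condition on $\zp{s}(-1)$ is $0$ and the dual condition on $\zp{s}(2)$ is everything; at $N$, $\cL_N$ is self-dual). The Greenberg–Wiles formula then equates $\#H^1_\Sigma(\zp{s}(-1))$ with $\#H^1_{\Sigma^*}(\zp{s}(2))$ up to the product of local Euler-characteristic factors, which one computes (using $H^0(\zp{s}(-1))=0$, the relevant $H^0,H^2$ at the archimedean and bad places) to be trivial, giving $\#H^1_\Sigma = \#H^1_{\Sigma^*}$; combined with the fact that each group is cyclic and killed by $p^s$ (since $H^1_{\Sigma^*}(\zp{s}(2))\subset H^1(\zp{s}(2))$ which, by an argument like Lemma \ref{lem:H2 to H2N injective} on Euler characteristics and $H^i(\Z[1/p],\Z_p(2))=0$, is itself small), this upgrades to an isomorphism.

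Finally, $(5)\Leftrightarrow(6)$: the condition $x|_N\in\cL_N$ with $x\in H^1(\zp{s}(2))$ is exactly membership in $H^1_{\Sigma^*}(\zp{s}(2))$, so $(6)$ says this group contains an element whose reduction mod $p$ is nonzero, i.e. an element not in $pH^1_{\Sigma^*}(\zp{s}(2))$ — equivalently (since the group is cyclic over $\zp{s}$), that $H^1_{\Sigma^*}(\zp{s}(2))$ has a generator surviving mod $p$, which happens iff $H^1_{\Sigma^*}(\zp{s}(2))\cong\zp{s}$; this is $(5)$. (One must check the group is nonzero and cyclic, not merely that it has a mod-$p$-nonzero element, but cyclicity follows from the duality with $H^1_\Sigma(\zp{s}(-1))\subset\zp{s}\cdot[c]$ which is manifestly cyclic.)

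\textbf{Main obstacle.} I expect the crux to be the duality step $(4)\Leftrightarrow(5)$: one needs to set up the Selmer and dual Selmer conditions so that they are genuine exact orthogonal complements at \emph{every} place (including the subtlety that the finite-flat condition at $p$ for $\zp{s}(-1)$ is the zero subgroup, matching the full $H^1_p$ for $\zp{s}(2)$, which requires Lemma \ref{lem:flat subspaces} and the appropriate local duality at $p$), and then verify that the Euler-characteristic correction terms in Greenberg–Wiles all cancel — this bookkeeping, together with promoting "same cardinality" to "isomorphic to $\zp{s}$" via cyclicity, is where the real work lies. The other implications are formal once Proposition \ref{prop: cup commutativity}, Lemma \ref{lem:aN in cLN}, and Corollary \ref{cor:flat cohom of ad} are in hand.
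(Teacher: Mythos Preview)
Your proof is correct and follows essentially the same arc as the paper's, with one cosmetic difference in the step $(4)\Leftrightarrow(5)$. The equivalences $(1)\Leftrightarrow(2)\Leftrightarrow(3)$ and $(3)\Leftrightarrow(4)$ are handled exactly as in the paper (via Proposition~\ref{prop: cup commutativity}, Lemma~\ref{lem:aN in cLN}, and the fact that $H^1_{(p)}(\zp{s}(-1))$ is free of rank one on $[c]$). One caution: your parenthetical claim that $H^1_\Sigma(\zp{s}(-1))$ ``cannot be a proper nonzero subgroup'' of $\zp{s}\cdot[c]$ is false (multiplication by $p^j$ is \emph{not} injective on $H^1_N/\cL_N\simeq\zp{s}$), but you do not actually use this; the equivalence $(3)\Leftrightarrow(4)$ follows directly since $[c]\in H^1_\Sigma$ iff $c\vert_N\in\cL_N$, and $[c]$ generates $H^1_{(p)}$.

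For $(4)\Leftrightarrow(5)$ you invoke the Greenberg--Wiles formula to get $\#H^1_\Sigma=\#H^1_{\Sigma^*}$ and then upgrade via cyclicity. The paper instead uses the duality of Theorem~\ref{thm:Poitou-Tate with constraints} to identify $H^1_\Sigma(\zp{s}(-1))$ with the Pontryagin dual of $H^2_{\Sigma^*}(\zp{s}(2))$, and then reads off the equivalence of $(4)$, $(5)$, and $(6)$ simultaneously from the four-term exact sequence
\[
0 \to H^1_{\Sigma^*}(\zp{s}(2)) \to H^1(\zp{s}(2)) \to H^1_N(\zp{s}(2))/\cL_N \to H^2_{\Sigma^*}(\zp{s}(2)) \to 0,
\]
in which the two middle terms are each isomorphic to $\zp{s}$ (the first by Lemma~\ref{lem:global C comp}, the second by Lemma~\ref{lem:local C comp}). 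This is the same content as your Greenberg--Wiles computation, just packaged inside the cone formalism already set up in Appendix~\ref{sec:galois cohomology appendix}; it has the advantage that the Euler-characteristic bookkeeping you flag as the ``main obstacle'' is absorbed into Theorem~\ref{thm:Poitou-Tate with constraints} and the single exact sequence above, and that $(5)\Leftrightarrow(6)$ falls out of the same sequence rather than requiring a separate cyclicity argument.
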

\begin{rem}
\label{rem:<M>^2 and cups}
By Remark \ref{rem:<M>^2 iff both cups}, and using the notation from there, we see that all these items are also equivalent to $\dia{M}_D^2$ being zero in $H^2(\End(\Z/p^s\Z(1) \oplus \Z/p^s\Z))$. 
\end{rem}

\begin{proof}
The equivalence of $(1)$-$(3)$ follows from Proposition \ref{prop: cup commutativity}, Lemma \ref{lem:aN in cLN}, and the fact that the Tate pairing is symplectic. 

By the definition of $H^1_\Sigma(\Z/p^s\Z(-1))$, we have
\[
H^1_\Sigma(\Z/p^s\Z(-1))=\{x \in H^1_{(p)}(\Z/p^s\Z(-1)) \ | \ x|_N \in \cL_N\}.
\]
Since $H^1_{(p)}(\Z/p^s\Z(-1)) \simeq \Z/p^s\Z$ is generated by $c$, we see that (3) and (4) are equivalent.

By duality (Theorem \ref{thm:Poitou--Tate with constraints}), we have $H^1_\Sigma(\Z/p^s\Z(-1)) = H^2_{\Sigma^\perp}(\Z/p^s\Z(2))^*$, so (4) is equivalent to $H^2_{\Sigma^\perp}(\Z/p^s\Z(2)) \simeq \zp{s}$. Here $H^2_{\Sigma^\perp}(\Z/p^s\Z(2))$ fits into an exact sequence
\begin{gather*}
0 \lra H^1_{\Sigma^\perp}(\Z/p^s\Z(2)) \lra  H^1(\Z/p^s\Z(2)) \lra H^1_N(\Z/p^s\Z(2))/\cL_N \\ 
\lra H^2_{\Sigma^\perp}(\Z/p^s\Z(2)) \lra H^2(\Z/p^s\Z(2)) \lra H^2_{Np}(\Z/p^s\Z(2)) \lra 0.
\end{gather*}
As in the proof of Proposition \ref{prop:C computation}, the last map $H^2(\Z/p^s\Z(2)) \to H^2_{Np}(\Z/p^s\Z(2))$ is an isomorphism, so we have an exact sequence
\begin{gather*}
0 \lra H^1_{\Sigma^\perp}(\Z/p^s\Z(2)) \lra H^1(\Z/p^s\Z(2)) \lra \\ 
H^1_N(\Z/p^s\Z(2))/\cL_N \lra 
H^2_{\Sigma^\perp}(\Z/p^s\Z(2)) \lra 0.
\end{gather*}
By Lemma \ref{lem:global C comp}, we have $H^1(\Z/p^s\Z(2)) \cong H^2(\Z_p(2))[p^s] \simeq \Z/p^s\Z$, and we see that $x \in H^1(\Z/p^s\Z(2))$ is a generator if and only if its image in $H^1(\Z/p\Z(2))$ is non-zero. Since $H^1_N(\Z/p^s\Z(2))/\cL_N$ is also free $\Z/p^s\Z$-module of rank $1$ (see Lemma \ref{lem:local C comp}), this gives the equivalence of (4)-(6).
\end{proof}

In the end, we use condition (6) to relate cup products to Merel's number.

\subsection{Results of Lecouturier} 

We follow \cite{lecouturier2018}. Choose a surjective homomorphism $\log : \Z_N^\times \rsurj \Z/p^s\Z$; it factors through a map $\F_N^\times \rsurj \Z/p^s\Z$, which we also denote by $\log$. Note that Merel's number is a $p^{s}$-th power modulo $N$ if and only if $\sum_{i=1}^{\frac{N-1}{2}} i \log(i)=0$ in $\Z/p^s\Z$. 

\begin{lem} 
\label{lem:lecouturiers}
We have the equality
\[
\sum_{i=1}^{N-1} i^2 \log(i)=- \frac{4}{3} \sum_{i=1}^{\frac{N-1}{2}} i \log(i)
\]
in $\Z/p^s\Z$.
\end{lem}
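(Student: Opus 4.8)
The plan is to exploit that $\log\colon\F_N^\times\to\zp{s}$ is a group homomorphism together with the congruence $N\equiv 1\pmod{p^s}$. Write $S=\sum_{i=1}^{N-1}i^2\log(i)$, $T=\sum_{i=1}^{(N-1)/2}i\log(i)$, and introduce the auxiliary sum $T_2=\sum_{i=1}^{(N-1)/2}i^2\log(i)$; throughout, $i$ and $i^2$ are honest integers and $\log(i)$ denotes $\log$ of the class of $i$. First I would record three preliminary facts: (a) $\log(-1)=0$, since $2\log(-1)=\log(1)=0$ and $2\in(\zp{s})^\times$; (b) $\sum_{i=1}^{(N-1)/2}\log(i)=0$, which follows from Wilson's theorem $\prod_{i=1}^{N-1}i\equiv-1$, the pairing $i\leftrightarrow N-i$, and (a); and (c) the vanishing of the low-degree power sums $\tfrac{N-1}{2}$, $P_1:=\sum_{i=1}^{(N-1)/2}i=\tfrac{N^2-1}{8}$, and $P_2:=\sum_{i=1}^{(N-1)/2}i^2=\tfrac{N(N^2-1)}{24}$ in $\zp{s}$, which reduces to $p^s\mid N-1$ together with $24\mid N^2-1$ (valid since $N$ is a prime $>3$, hence coprime to $6$) and $\gcd(p,6)=1$.

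The heart of the argument is a ``multiplication-by-$2$'' identity. Since $i\mapsto 2i\bmod N$ is a permutation of $\{1,\dots,N-1\}$, one has
\[
\sum_{i=1}^{N-1}(2i\bmod N)^2\,\log(2i\bmod N)=S.
\]
On the other hand I would compute the left-hand side directly by splitting the range at $(N-1)/2$: on the lower half $2i\bmod N=2i$ and $\log(2i)=\log 2+\log i$, while on the upper half, writing $i=N-k$, one gets $2i\bmod N=N-2k$ and $\log(N-2k)=\log 2+\log k$ by (a). Expanding $(N-2k)^2=N^2-4Nk+4k^2$, reducing modulo $p^s$ via $N\equiv 1$, and discarding the terms proportional to $\sum\log i$, $P_1$, $P_2$, $\tfrac{N-1}{2}$ using (b) and (c), the right-hand computation collapses to $8T_2-4T$. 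Performing the same (simpler) range-split directly on $S$ gives $S=2T_2-2NT\equiv 2T_2-2T$. Comparing the two yields $8T_2-4T=2T_2-2T$, hence $3T_2=T$ in $\zp{s}$; since $3$ is invertible ($p>3$), $T_2=T/3$, and substituting back gives $S=2T_2-2T=-\tfrac{4}{3}T$, which is the claim.

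The only delicate point — and the step I expect to be the main (and only) obstacle — is the bookkeeping in the range-splitting: one must track exactly which power-sum coefficients appear, check each vanishes in $\zp{s}$ (this is where $p>3$ enters, via $\gcd(p,6)=1$ and the invertibility of $3$), and confirm that the representative conventions used for $\zeta$ and in Lecouturier's normalization are consistent with treating indices as integers in $\{1,\dots,N-1\}$. None of this is deep, but it must be carried out cleanly; an essentially equivalent alternative would be to phrase the computation in terms of the Stickelberger element $\sum_i B_2(\lfloor i/N\rfloor)[i]$ and the distribution relation for $B_2$, which automates the cancellation of the low-degree terms.
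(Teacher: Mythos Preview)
Your argument is correct. The multiplication-by-$2$ permutation trick, combined with the vanishing of $\tfrac{N-1}{2}$, $P_1$, $P_2$, and $\sum_{i\le (N-1)/2}\log i$ in $\zp{s}$, gives exactly the two expressions $S\equiv 8T_2-4T$ and $S\equiv 2T_2-2T$ you claim; eliminating $T_2$ yields the lemma. Every step checks: $\log(-1)=0$ because $p$ is odd; Wilson plus the pairing $i\leftrightarrow N-i$ gives $\sum_{i\le (N-1)/2}\log i=0$; the power sums $\tfrac{N-1}{2}$, $\tfrac{N^2-1}{8}$, $\tfrac{N(N^2-1)}{24}$ all vanish modulo $p^s$ since $p>3$ forces $N\ge 5$ (hence $(N,6)=1$ and $24\mid N^2-1$) and $p\nmid 24$; and the final division by $3$ is legitimate for the same reason.

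The paper itself gives no proof here --- it simply cites \cite[Lem.~11]{lecouturier2016}. So your write-up supplies a self-contained argument where the paper defers to the literature. Your method is presumably close in spirit to Lecouturier's (these identities among $\sum i^k\log i$ are all accessible by the same permutation-and-reduce technique), but in any case it stands on its own. The closing remark about representative conventions is unnecessary for this lemma: the statement is purely about integer-weighted sums of $\log$-values, and your interpretation is the intended one.
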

\begin{proof}
This is \cite[Prop.\ 1.2]{lecouturier2018}. 
\end{proof}

Let $\Lambda: \Q_N^\times \otimes_\Z \Z/p^s\Z \to \Z/p^s\Z$ be defined by $\Lambda(N^kx\otimes \alpha) = \alpha \log(x)$ for $k \in \Z$, $x \in \Z_N^\times$ and $\alpha \in \Z/p^s\Z$.

Choose a prime ideal $\mathfrak{n} \subset \Z[\zeta_p]$ lying over $N$, so that the completion of $\Q(\zeta_p)$ at $\mathfrak{n}$ is $\Q_N$. For $x \in \Q(\zeta_p)$, let $x_\mathfrak{n} \in \Q_N$ denote the image in this completion. Finally, for a $\Z[\frac{1}{p-1}][\Gal(\Q(\zeta_p)/\Q)]$-module $M$ and a character $\chi:\Gal(\Q(\zeta_p)/\Q)\to \overline{\Q}^\times$, let $M_\chi$ denote the $\chi$-eigenspace.

\begin{prop}
\label{prop:lecouturiers}
There is an element $\mathcal{G} \in (\Z[1/Np,\zeta_p]^\times \otimes \Z_p)_{\omega^{-1}}$ such that
\[
\Lambda(\mathcal{G}_\mathfrak{n}) = - \frac{2}{3} \sum_{i=1}^{\frac{N-1}{2}} i \log(i).
\]
and whose natural image in $(\Z[1/Np,\zeta_p]^\times \otimes \F_p)_{\omega^{-1}}$ is non-trivial.
\end{prop}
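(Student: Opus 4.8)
\textbf{Proof proposal for Proposition~\ref{prop:lecouturiers}.}

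The plan is to construct $\mathcal{G}$ explicitly as a cyclotomic-type $p$-unit, following the approach to Stickelberger elements and Gauss sums used in \cite{lecouturier2016}. First I would take the Stickelberger element for $\Q(\zeta_N,\zeta_p)/\Q$ (or rather its restriction to the relevant field), and use it to produce an element of $\Z[1/Np,\zeta_p]^\times \otimes \Z_p$ by applying the Stickelberger ideal to a cyclotomic unit or Gauss sum; the key point is that the $\omega^{-1}$-component of the Stickelberger element is, up to a unit, $\sum_{i \in (\Z/N\Z)^\times} B_1(\langle i/N\rangle)\,\omega^{-1}(i)$-type data, which ties it to the sum $\sum i\log(i)$. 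Concretely, the image of a Gauss sum $g(\chi)$ attached to a character $\chi$ of $\F_N^\times$ of $p$-power order, viewed modulo a prime above $N$, has valuation computed by Stickelberger's theorem, and its ``unit part'' measured by $\Lambda$ is governed by the Gross--Koblitz-type formula or, more elementarily here, by an explicit logarithmic derivative computation. I would assemble $\mathcal{G}$ from such Gauss sums (or from circular units $1-\zeta_N^a$) in the $\omega^{-1}$-eigenspace.

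The second step is the explicit evaluation of $\Lambda(\mathcal{G}_\mathfrak{n})$. Here I would expand $\log$ applied to the chosen unit at the prime $\mathfrak{n} \mid N$, and recognize the resulting sum as a Bernoulli-type sum over $(\Z/N\Z)^\times$. Using $B_2(x) = x^2 - x + 1/6$ and the relation between the second Bernoulli polynomial and $\sum i^2\log(i)$, the computation reduces to $\sum_{i=1}^{N-1} i^2 \log(i)$, at which point Lemma~\ref{lem:lecouturiers} converts it to $-\frac{4}{3}\sum_{i=1}^{(N-1)/2} i\log(i)$; tracking the constant carefully should produce the factor $-\frac{2}{3}$ claimed. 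The sign and the precise rational constant are the kind of thing that requires care but no deep idea, so I would verify the normalization of $\Lambda$, of $\log$, and of the Stickelberger element all at once against a small example.

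The final step is to check that the image of $\mathcal{G}$ in $(\Z[1/Np,\zeta_p]^\times \otimes \F_p)_{\omega^{-1}}$ is non-trivial. Since $\Lambda$ factors through the mod $p$ reduction (as $\log$ does), this would follow the moment one knows that $\Lambda(\mathcal{G}_\mathfrak{n}) \ne 0$ modulo $p$ in \emph{some} situation; but of course the proposition asserts non-triviality unconditionally (not assuming $\mathrm{ord}_s\zeta = 1$), so instead I would argue that $\mathcal{G}$ generates, up to $p$-th powers, a nonzero class because its valuation at primes above $N$ (computed by Stickelberger) is a unit multiple of a generator of $\F_N^\times \otimes \Z/p^s\Z$ in the $\omega^{-1}$-eigenspace, hence $\mathcal{G}$ itself cannot be a $p$-th power in $\Z[1/Np,\zeta_p]^\times$. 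The main obstacle I anticipate is precisely this last point: pinning down that the constructed $p$-unit is genuinely nontrivial mod $p$ in the eigenspace requires either a clean Stickelberger valuation computation or an appeal to the structure of $(\Z[1/Np,\zeta_p]^\times \otimes \F_p)_{\omega^{-1}}$ (e.g.\ via Kummer theory and the fact that $H^1_\fl(\F_p(1))$ is one-dimensional and ramified at $N$, from Proposition~\ref{prop:computation of B} and the proof of Theorem~\ref{thm:cohomology final answer}), and keeping the eigenspace projections consistent throughout is where errors would most likely creep in.
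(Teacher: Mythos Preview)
Your proposal is essentially the paper's approach: the element $\mathcal{G}$ is taken to be (the image of) Lecouturier's $\mathcal{G}_{-1}$, a product of conjugates of Gauss sums lying in the $\omega^{-1}$-eigenspace, and the value $\Lambda(\mathcal{G}_\mathfrak{n})$ is computed via the Gross--Koblitz formula together with Lecouturier's identity $\sum_{j=1}^{p-1} j\log\Gamma_N(j/p) = -\tfrac{2}{3}\sum_{i=1}^{(N-1)/2} i\log(i)$ (which, as the paper notes, can also be recovered from Lemma~\ref{lem:lecouturiers}). For the non-triviality, your valuation idea is exactly what is used: the Gross--Koblitz expression shows $(\mathcal{G})_\mathfrak{n} = (-N\otimes y)\cdot(\text{unit})$ with $y\in\Z_p^\times$, so $\mathcal{G}_\mathfrak{n}$ is not in the image of $\Z_N^\times\otimes\Z_p$ and hence survives modulo $p$; there is no need to invoke $H^1_\fl(\F_p(1))$ (which would indeed be circular).
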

\begin{proof}
Let $e_{\omega^{-1}} \cdot \mathcal{G} \in (\Z[1/N,\zeta_{Np}]^\times \otimes \Z_p)_{\omega^{-1}}$ be the element defined in \cite[\S3.3]{lecouturier2018}; it is a product of conjugates of Gauss sums. By \cite[Prop.\ 3.4]{lecouturier2018}, we actually have $e_{\omega^{-1}} \cdot \mathcal{G} \in (\Z[1/N,\zeta_{p}]^\times \otimes \Z_p)_{\omega^{-1}}$. 

Let $\beta = \sum_{i=1}^{p-1} \omega(i)i \in \Z_p$; as is well-known, $\beta = py$ for some $y \in \Z_p^\times$. Using the Gross--Koblitz formula, Lecouturier computes that
\begin{equation}
\label{eq: LGK}
(e_{\omega^{-1}} \cdot \mathcal{G})_\mathfrak{n} = (-N \otimes y)\cdot \left(\prod_{i=1}^{p-1} \Gamma_N \left(\frac{i}{p}\right)\otimes \omega(i) \right) \in \Q_N^\times \otimes \Z_p
\end{equation}
where $\Gamma_N$ is the $N$-adic Gamma function. In particular, $(e_{\omega^{-1}} \cdot \mathcal{G})_\mathfrak{n}$ is not in the image of $\Z_N^\times \otimes \Z_p\to \Q_N^\times \otimes \Z_p$, so its image in $(\Z[1/Np,\zeta_p]^\times \otimes \F_p)_{\omega^{-1}}$ is non-trivial.

Finally, the formula for $\Lambda((e_{\omega^{-1}} \cdot \mathcal{G})_\mathfrak{n})$ follows by \eqref{eq: LGK} and the formula
\[
\sum_{j=1}^{p-1} j \log \left( \Gamma_N \left(\frac{j}{p}\right) \right) =  - \frac{2}{3} \sum_{i=1}^{\frac{N-1}{2}} i \log(i)
\]
obtained by combining \cite[Lem.\ 4.3]{lecouturier2018} (with $\chi=\omega^{-1}$) with Lemma \ref{lem:lecouturiers}.
\end{proof}

\subsection{Merel's number and the zeta element}

Recall the zeta element $\zeta \in \Z/p^s\Z[G]$ defined at the start of this section. Let $\ord_s \zeta$ denoted the greatest integer $r$ such that $\zeta \in I_G^r$, where $I_G \subset \Z/p^s\Z[G]$ is the augmentation ideal. We now show, following Lecouturier, how Theorem \ref{thm:merel and zeta} follows from Merel's result (Theorem \ref{thm:merel}).

\begin{lem}
\label{lem:merel and zeta}
The following are equivalent:
\begin{enumerate}
\item Merel's number is a $p^s$-th power modulo $N$
\item $\ord_s \zeta \ge 2$.
\end{enumerate}
\end{lem}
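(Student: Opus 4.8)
The statement is purely about the element $\zeta = \sum_{i \in (\Z/N\Z)^\times} B_2(\lfloor i/N\rfloor)[i] \in \zp{s}[G]$ and its order of vanishing in the augmentation ideal. The natural approach is to track $\zeta$ through the graded quotients $I_G^r/I_G^{r+1}$, which are governed by the standard isomorphisms $I_G/I_G^2 \cong G \otimes_\Z \zp{s}$ and, more generally, by the divided-power structure of the augmentation filtration for a cyclic group. I would first reduce to the cyclic $p$-group $G^{p\text{-part}} \cong \Z/p^t\Z$: since $p^s \mid (N-1)$, the projection $\zp{s}[G] \to \zp{s}[G^{p\text{-part}}]$ is compatible with augmentation ideals (the prime-to-$p$ part of $G$ is invertible), and $\ord_s$ is computed after this projection. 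So I would work with the image $\bar\zeta \in \zp{s}[\Z/p^t\Z]$.

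The key computation is the image of $\bar\zeta$ in $I_G/I_G^2 \cong \Z/p^t\Z \otimes \zp{s}$. Under the identification sending a group element $[i]$ to its class $\log(i)$ (for a fixed surjection $\log: G \to \zp{s}$), one has $[i] \equiv 1 + \log(i)(\text{generator}) \pmod{I_G^2}$, so $\bar\zeta \equiv \left(\sum_i B_2(\lfloor i/N\rfloor)\right) + \left(\sum_i B_2(\lfloor i/N\rfloor)\log(i)\right)\cdot(\text{generator}) \pmod{I_G^2}$. The constant term $\sum_i B_2(\lfloor i/N\rfloor)$ equals (up to the known normalization) $L(-1,\triv) \cdot (\text{something})$, and the discussion in the introduction records that $L(-1,\triv) = \frac{1-N}{12}$ vanishes in $\zp{s}$ because $p^s \mid (N-1)$ and $p > 3$; this gives $\ord_s\zeta \geq 1$. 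Then $\ord_s\zeta \geq 2$ if and only if the linear coefficient $\sum_{i=1}^{N-1} B_2(i/N)\log(i)$ also vanishes in $\zp{s}$. Expanding $B_2(i/N) = (i/N)^2 - (i/N) + 1/6$ and using that $\sum_i \log(i) = 0$ and $\sum_i i \log(i) = 0$ in $\zp{s}$ (the latter because $i \mapsto N - i$ is a bijection of $(\Z/N\Z)^\times$ fixing $\log$ up to sign... one must be a little careful here), the expression collapses, modulo a nonzero constant in $\zp{s}^\times$ coming from $1/N^2$, to a multiple of $\sum_{i=1}^{N-1} i^2 \log(i)$. By Lemma~\ref{lem:lecouturiers} this equals $-\frac43 \sum_{i=1}^{(N-1)/2} i\log(i)$, and $\frac43 \in \zp{s}^\times$ since $p > 3$. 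Hence $\ord_s\zeta \geq 2$ if and only if $\sum_{i=1}^{(N-1)/2} i\log(i) = 0$ in $\zp{s}$, which by the remark preceding Lemma~\ref{lem:lecouturiers} (``Merel's number is a $p^s$-th power modulo $N$ if and only if $\sum_{i=1}^{(N-1)/2} i\log(i) = 0$'') is precisely condition (1).

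The main obstacle, and the part requiring genuine care rather than formula-pushing, is the bookkeeping of which Bernoulli/power sums vanish in $\zp{s}$ and which are units: I need $\sum_i i\log(i) \equiv 0$ and to correctly identify the unit constants (powers of $N$, the $1/6$ and $1/N^2$ factors, $\frac43$) as lying in $\zp{s}^\times$, which uses $p \nmid N$ and $p > 3$ throughout. One subtlety is that $\log$ is only a homomorphism on $G = (\Z/N\Z)^\times$, not additive in $i$ as an integer, so the manipulation ``$\sum i\log(i)$'' must be done carefully as a sum over residues, exactly as Lecouturier does; I would simply cite \cite[Lem.~11]{lecouturier2016} (our Lemma~\ref{lem:lecouturiers}) for the one identity that genuinely needs it and otherwise reduce everything to elementary congruences. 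A clean way to organize the whole argument is to note that $\zeta$ is, up to sign and the unit $N^{-2}$, congruent mod $I_G^2$ to the image of the Stickelberger-type element whose linear term Lecouturier computes, so that most of the work is already done in \cite{lecouturier2016}.
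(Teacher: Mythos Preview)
Your approach is correct and is essentially the same as the paper's proof: compute the image of $\zeta$ in $I_G/I_G^2 \cong G \otimes_\Z \zp{s} \buildrel{\log}\over\lrisom \zp{s}$, expand $B_2$, use that $\sum_i \log(i) = \sum_i i\log(i) = 0$ in $\zp{s}$, and reduce to the vanishing of $\sum_i i^2\log(i)$, which is handled by Lemma~\ref{lem:lecouturiers}. Your preliminary reduction to $G^{p\text{-part}}$ is harmless but unnecessary (the paper works directly with $G$, since $G \otimes \zp{s}$ already kills the prime-to-$p$ part), and your unit $1/N^2$ can be replaced by $1$ since $N \equiv 1 \pmod{p^s}$; the vanishing of $\sum_i i\log(i)$ that you flag follows from the substitution $i \mapsto N-i$ together with $\log(-1)=0$ (as $p$ is odd).
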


\begin{proof}
We first note that $\zeta \in I_G$ because $s \leq t = v_p(N-1)$. Furthermore, we recall that there is an isomorphism
\[
 I_G/I_G^2 \isoto G \otimes_\Z \Z/p^s\Z \xrightarrow{\log} \Z/p^s\Z
\]
sending $[g] -1 \in I_G$ to $\log(g)$ for $g \in G$. Under this isomorphism, $\zeta \pmod{I_G^2}$ is sent to
\[
\sum_{i=1}^{N-1} (i^2-i+1/6)\log(i).
\]
One sees easily that $\sum_{i=1}^{N-1} \log(i)$ and $\sum_{i=1}^{N-1} i\log(i)$ are both $0$ in $\Z/p^s\Z$. Then $\ord_2 \zeta \ge 2$ if and only if $\sum_{i=1}^{N-1} i^2\log(i)=0$. The lemma now follows from Lemma \ref{lem:lecouturiers}.
\end{proof}

\subsection{Hochschild--Serre arguments} 
Let $\Delta=\Gal(\Q(\zeta_{p^s})/\Q) \cong (\Z/p^s\Z)^\times$ and let $\Delta^0 \cong (\Z/p\Z)^\times$ denote the prime-to-$p$ subgroup, so $\Delta\cong \Delta^0 \times \Delta_p$, where $\Delta_p \simeq \Z/p^{s-1}\Z$. 

\begin{lem}
Let $n$ be an integer such that $(p-1) \nmid n$. Then, for all $i\ge 0$, we have
\[
H^i(\Delta,\Z/p^s\Z(n))=0.
\]
\end{lem}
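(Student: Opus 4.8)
The plan is to exploit the fact that $\Delta \cong (\Z/p^s\Z)^\times$ is a \emph{cyclic} group (recall that $p$ is odd), so that its cohomology is $2$-periodic and entirely governed by the action of a single generator. Fix a generator $\sigma_0$ of $\Delta$ and set $g = \kcyc(\sigma_0) \in (\Z/p^s\Z)^\times$. Since $\kcyc$ restricts to an isomorphism $\Delta \isoto (\Z/p^s\Z)^\times$, the element $g$ generates $(\Z/p^s\Z)^\times$, and by definition of the twist, $\sigma_0$ acts on $\Z/p^s\Z(n)$ by multiplication by $g^n$.

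First I would record the arithmetic translation of the hypothesis. The image $\bar g$ of $g$ in $(\Z/p\Z)^\times = \F_p^\times$ is a primitive root modulo $p$, hence of order $p-1$, so $\bar g^{\,n} = 1$ if and only if $(p-1)\mid n$. Thus the assumption $(p-1)\nmid n$ is equivalent to $g^n - 1 \not\equiv 0 \pmod p$, i.e.\ to $g^n - 1$ being a unit in $\Z/p^s\Z$. Consequently the endomorphism $\sigma_0 - 1$ of the $\Z/p^s\Z$-module $\Z/p^s\Z(n)$, which is multiplication by $g^n - 1$, is bijective.

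It then remains to invoke the standard periodic complex computing the cohomology of the cyclic group $\Delta = \langle \sigma_0 \rangle$ (see, e.g., \cite{NSW2008}): for any $\Delta$-module $M$ one has $H^0(\Delta, M) = \ker(\sigma_0 - 1 \colon M \to M)$, and for $i \ge 1$ the groups $H^i(\Delta, M)$ are $2$-periodic, with $H^{2j}(\Delta,M) \cong M^\Delta / N_\Delta M$ and $H^{2j-1}(\Delta,M) \cong \ker(N_\Delta)/(\sigma_0-1)M$, where $N_\Delta = \sum_{\delta \in \Delta}\delta$. Applying this with $M = \Z/p^s\Z(n)$: since $\sigma_0 - 1$ acts invertibly we get $M^\Delta = \ker(\sigma_0-1) = 0$ and $(\sigma_0-1)M = M$, and these force every group in the list to vanish, giving $H^i(\Delta, \Z/p^s\Z(n)) = 0$ for all $i \ge 0$. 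The only point requiring any care is the elementary equivalence between $(p-1)\nmid n$ and the invertibility of $g^n - 1$ modulo $p^s$; everything else is formal. (Alternatively one could run the Hochschild--Serre spectral sequence for $1 \to \Delta_p \to \Delta \to \Delta^0 \to 1$, whose $E_2$-page degenerates because $\#\Delta^0 = p-1$ is prime to $p$, reducing the claim to the vanishing of the $\Delta^0$-invariants of $H^i(\Delta_p, \Z/p^s\Z(n))$, which again follows from the invertibility of $g^n - 1$; but the cyclic argument is shorter.)
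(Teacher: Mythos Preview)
Your proof is correct. The paper takes the alternative route you mention at the end: it decomposes $\Delta = \Delta^0 \times \Delta_p$ with $\Delta^0 \simeq (\Z/p\Z)^\times$ of order prime to $p$, so that $H^i(\Delta,\Z/p^s\Z(n)) = H^i(\Delta_p, H^0(\Delta^0,\Z/p^s\Z(n)))$, and then observes that a generator of $\Delta^0$ acts on $\Z/p^s\Z(n)$ by $\zeta_{p-1}^n$ with $\zeta_{p-1}^n \not\equiv 1 \pmod p$, forcing $H^0(\Delta^0,\Z/p^s\Z(n)) = 0$. Your argument instead exploits directly that the full group $\Delta$ is cyclic (since $p$ is odd) and that a generator acts by the unit $g^n - 1$, then reads off all cohomology from the periodic resolution. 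Both rest on the same arithmetic observation that $(p-1)\nmid n$ makes $g^n - 1$ a $p$-adic unit; your version avoids the Hochschild--Serre bookkeeping at the cost of invoking periodicity, while the paper's version only ever needs $H^0$ of a coefficient module but spends a line on the spectral sequence reduction.
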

\begin{proof}
Since $\Delta^0 \subset \Delta$ is prime-to-$p$, we have
\[
H^i(\Delta,\Z/p^s\Z(n)) = H^i(\Delta_p, H^0(\Delta^0,\Z/p^s\Z(n))).
\]
Let $\zeta_{p-1} \in (\Z/p^s\Z)^\times$ be a primitive $(p-1)$-st root of unity. Then
\[
H^0(\Delta^0,\Z/p^s\Z(n)) = \{ x \in \Z/p^s\Z \ | \ \zeta_{p-1}^nx=x\}.
\]
Since $(p-1) \nmid n$, we see that $\zeta_{p-1}^n \not \equiv 1 \pmod{p}$. Hence $H^0(\Delta^0,\Z/p^s\Z(n))=0$.
\end{proof}

\begin{lem}
\label{lem: Hochs-Serre}
Let $n$ be an integer such that $(p-1) \nmid n$. Then 
\[
H^1(\Z/p^s\Z(n))= H^1(\Z[1/Np,\zeta_{p^s}],\Z/p^s\Z(n))^\Delta.
\]
\end{lem}
\begin{proof}
Note that $H^0(\Z[1/Np,\zeta_{p^s}],\Z/p^s\Z(n)) \cong \Z/p^s\Z(n)$ as $\Delta$-modules. Then, by the Hochschild--Serre spectral sequence, there is an exact sequence
\begin{align*}
H^1(\Delta,\Z/p^s\Z(n)) &  \lra H^1(\Z/p^s\Z(n)) \lra H^1(\Z[1/Np,\zeta_{p^s}],\Z/p^s\Z(n))^\Delta\\
 & \lra H^2(\Delta,\Z/p^s\Z(n)),
\end{align*}
so this follows from the previous lemma.
\end{proof}

\subsection{Merel's number and cup products}
We can now complete the proof of the following theorem.

\begin{thm}
\label{thm:cup merel equiv}
The following are equivalent:
\begin{enumerate}
\item $a \cup c =0$ in $H^2(\Z/p^s\Z(-1))$
\item $b \cup c=0$ in $H^2(\Z/p^s\Z)$
\item Merel's number is a $p^s$-th power modulo $N$
\item $\ord_s(\zeta) \ge 2$.
\end{enumerate}
\end{thm}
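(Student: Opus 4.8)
The plan is to establish the equivalence of (1)--(4) by combining the purely cohomological work already done with Lecouturier's Stickelberger-theoretic computation. First I would record that (1) $\Leftrightarrow$ (2) is already proven in Proposition \ref{prop:cup selmer equiv}, so the real content is to tie these vanishing statements to Merel's number (3) and the order of vanishing of $\zeta$ (4). The equivalence (3) $\Leftrightarrow$ (4) is exactly Lemma \ref{lem:merel and zeta}, which is already in hand. So the proof reduces to one genuinely new implication-cycle: linking the algebraic condition ``$c|_N \in \cL_N$'' (item (3) of Proposition \ref{prop:cup selmer equiv}, equivalently item (1) or (2) of the theorem) to the arithmetic condition that Merel's number is a $p^s$-th power mod $N$.

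The bridge is condition (6) of Proposition \ref{prop:cup selmer equiv}: there exists $x \in H^1(\Z/p^s\Z(2))$ with nonzero image in $H^1(\Z/p\Z(2))$ such that $x|_N \in \cL_N$. I would use the element $\mathcal{G} \in (\Z[1/Np,\zeta_p]^\times \otimes \Z/p^s\Z)_{\omega^{-1}}$ produced in Proposition \ref{prop:lecouturiers} to build such an $x$. Concretely: via Kummer theory and the Hochschild--Serre identification of Lemma \ref{lem:Hoth-Serre} (applied with $n = 2$, using $p > 3$ so that $(p-1) \nmid 2$), the $\omega^{-1}$-eigenspace $(\Z[1/Np,\zeta_{p^s}]^\times \otimes \Z/p^s\Z)_{\omega^{-1}}$ maps to $H^1(\Z[1/Np,\zeta_{p^s}],\Z/p^s\Z(2))^\Delta \cong H^1(\Z/p^s\Z(2))$ (note the Tate twist: a Kummer class of a unit in the $\omega^{-1}$-eigenspace, after the twist by the chosen $p^s$-th root of unity implicit in the identifications, lands in the $(2)$-twist). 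Call the resulting class $x$. Since $\mathcal{G}$ has nonzero image in $(\Z[1/Np,\zeta_p]^\times \otimes \F_p)_{\omega^{-1}}$, the class $x$ has nonzero image in $H^1(\Z/p\Z(2))$, so $x$ is a generator of $H^1(\Z/p^s\Z(2)) \cong \Z/p^s\Z$. Then I would compute $x|_N$ inside $H^1_N(\Z/p^s\Z(2)) \cong \Q_N^\times \otimes \Z/p^s\Z$: localizing the Kummer class of $\mathcal{G}$ at the prime $\mathfrak{n}$ gives the image $\mathcal{G}_\mathfrak{n} \in \Q_N^\times \otimes \Z/p^s\Z$, and the condition $x|_N \in \cL_N$ is precisely the statement that $\Lambda(\mathcal{G}_\mathfrak{n}) = 0$, where $\Lambda$ is the splitting map killing $\cL_N = \langle N \rangle$. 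By the formula of Proposition \ref{prop:lecouturiers}, $\Lambda(\mathcal{G}_\mathfrak{n}) = -\tfrac{2}{3}\sum_{i=1}^{(N-1)/2} i\log(i)$, which vanishes in $\Z/p^s\Z$ if and only if $\sum i\log(i) = 0$, i.e.\ if and only if Merel's number is a $p^s$-th power modulo $N$ (since $p > 3$ makes $2/3$ a unit). This gives (6) $\Leftrightarrow$ (3), hence closes the loop.

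Assembling: (1) $\Leftrightarrow$ (2) $\Leftrightarrow$ (6) by Proposition \ref{prop:cup selmer equiv}, (6) $\Leftrightarrow$ (3) by the computation above with $\mathcal{G}$, and (3) $\Leftrightarrow$ (4) by Lemma \ref{lem:merel and zeta}. One should also double-check at the outset that $\zeta \in I_G$ and $c|_N$ makes sense, but these are already noted in the surrounding text. The main obstacle I anticipate is bookkeeping the various identifications of twists and eigenspaces: the choice of $\zeta_{p^s} \in \Q_N$ pins down isomorphisms $\Z/p^s\Z \cong \Z/p^s\Z(i)$ locally at $N$ but not globally, so one must be careful that the globally-defined generator $x$ of $H^1(\Z/p^s\Z(2))$, when localized at $N$, really does match the Kummer class of $\mathcal{G}_\mathfrak{n}$ under the chosen local identification, and that the Hochschild--Serre descent genuinely lands the $\omega^{-1}$-part in the $(2)$-twist rather than some other twist. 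Once the $\omega$-bookkeeping is pinned down, everything else is a direct substitution of the formulas already established by Lecouturier.
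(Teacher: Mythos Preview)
Your proposal is correct and follows essentially the same approach as the paper: reduce via Proposition \ref{prop:cup selmer equiv} and Lemma \ref{lem:merel and zeta} to the equivalence between condition (6) of Proposition \ref{prop:cup selmer equiv} and (3), then build the required generator $x \in H^1(\Z/p^s\Z(2))$ from Lecouturier's element $\mathcal{G}$ via Hochschild--Serre (Lemma \ref{lem:Hoth-Serre}) and Kummer theory, and finish by reading off $x|_N \in \cL_N \Leftrightarrow \Lambda(\mathcal{G}_\mathfrak{n})=0$ from Proposition \ref{prop:lecouturiers}. The paper makes your twist-and-eigenspace bookkeeping concern precise by writing down the commutative diagram relating $(\Z[1/Np,\zeta_p]^\times \otimes \Z_p)_{\omega^{-1}}$ to $(\Z[1/Np,\zeta_{p^s}]^\times \otimes \Z/p^s\Z(1))^\Delta \cong H^1(\Z/p^s\Z(2))$ and its reduction modulo $p$, but this is exactly the check you anticipated.
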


\begin{proof}
By Proposition \ref{prop:cup selmer equiv} and Lemma \ref{lem:merel and zeta}, we are reduced to showing that Merel's number is a $p^s$-th power modulo $N$ if and only if there exists some $x \in H^1(\Z/p^s\Z(2))$ with non-zero image in $H^1(\Z/p\Z(2))$ such that $x|_N \in \cL_N$.

By Lemma \ref{lem: Hochs-Serre} (and with the notation there), we have
\[
H^1(\Z/p^s\Z(2))= H^1(\Z[1/Np,\zeta_{p^s}],\Z/p^s\Z(2))^\Delta = (H^1(\Z[1/Np,\zeta_{p^s}],\Z/p^s\Z(1))(1)^\Delta.
\]
Then, by Kummer theory, we have an isomorphism
\[
\iota: H^1(\Z/p^s\Z(2)) \cong (\Z[1/Np,\zeta_{p^s}]^\times \otimes \Z/p^s\Z(1))^\Delta.
\]
There is a commutative diagram
\[\xymatrix{
(\Z[1/Np,\zeta_{p}]^\times \otimes \Z_p)_{\omega^{-1}} \ar[r]^-j \ar[d] & (\Z[1/Np,\zeta_{p^s}]^\times \otimes \Z/p^s\Z(1))^\Delta \ar[r]^-\sim \ar[d] & H^1(\Z/p^s\Z(2)) \ar[d] \\
(\Z[1/Np,\zeta_{p}]^\times \otimes \F_p)_{\omega^{-1}} \ar@{=}[r] & (\Z[1/Np,\zeta_{p}]^\times \otimes \Z/p\Z(1))^\Delta \ar[r]^-\sim & H^1(\Z/p\Z(2)), 
}\]
where $j$ is induced by the inclusion $\Z[1/Np,\zeta_{p}]^\times \subset \Z[1/Np,\zeta_{p^s}]^\times$. Letting $x=\iota^{-1}(j(\mathcal{G}))$, where $\mathcal{G}$ is as in Proposition \ref{prop:lecouturiers}, we see that the image of $x$ in $H^1(\Z/p\Z(2))$ is non-zero. We have $x|_N \in \cL_N$ if and only if $\Lambda(\mathcal{G}_\mathfrak{n}) =0$, and by Proposition \ref{prop:lecouturiers}, this happens if and only if Merel's number is a $p^s$-th power modulo $N$.
\end{proof}
 
Taking $s=1$ in the theorem gives Proposition \ref{prop:cup equivalence with zeta} from the introduction.
 
\section{Equivalence of Massey products}

In the previous section, we gave a direct algebraic proof that $a \cup c =0$ if and only if $b \cup c =0$. In this section, we prove the analogous result for higher Massey powers -- namely, that the Massey relations in the $(1,1)$, $(2,1)$ and $(2,2)$ coordinates are all equivalent.

To state the result, we fix $n \geq 2$ and $s \le t$ and assume that we have a very good deformation $\rho_n: G_{\Q,S} \to \zp{s}[\epsilon_n]$ of $\rho_1 \otimes_{\zp{t}} \zp{s}$, which we write as
\[
\rho_n = \ttmat{\kcyc}{0}{0}{1} + \sum_{i=1}^n \ttmat{\kcyc a_i}{b_i}{\kcyc c_i}{d_i} \epsilon^i
\]
with $a_1 = a \pmod{p^{s}}$, etc. Let $D$ be the associated defining system for the Massey power $\dia{M}^{n+1}$. 

\begin{prop}
\label{prop:massey equiv}
The $\Z/p^{s}\Z$-valued 1-cochains $a_n|_N,b_n|_N,c_n|_N, d_n|_N$ are 1-cocycles, i.e.\ they lie in $Z^1_N(\zp{s})$. In addition,
\begin{enumerate}
\item $a_n|_N =-d_n|_N$ in $H^1_N(\zp{s})$,
\item $a_n|_N = -b_n|_N$ in $H^1_N(\zp{s})$,
\item The Massey relation for $\dia{M}_D^{n+1}$ in the $(1,1)$-coordinate holds modulo $p^s$ if and only if $c_{n}|_N\equiv -b_n|_N$ in $H^1_N(\zp{s})$,
\item The Massey relation for $\dia{M}_D^{n+1}$ in the $(2,1)$-coordinate holds modulo $p^s$ if and only if $c_{n}|_N \equiv a_n|_N$ in $H^1_N(\zp{s})$
\item The Massey relation for $\dia{M}_D^{n+1}$ in the $(2,2)$-coordinate holds modulo $p^s$ if and only if $c_{n}|_N \equiv -b_n|_N$ in $H^1_N(\zp{s})$. 
\end{enumerate}
In particular, the Massey relation for $\dia{M}_D^{n+1}$ modulo $p^s$ in the $(1,1)$, $(2,1)$ and $(2,2)$-coordinates are all equivalent to $\dia{M}_D^{n+1}$ vanishing in $H^2(\End(\zp{s}(1)\oplus \zp{s}))$.
\end{prop}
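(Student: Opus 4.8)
The plan is to prove Proposition~\ref{prop:massey equiv} by a sequence of local computations at $N$, leveraging the fact that $\rho_n$ is \emph{very good} (so all entries $a_i, b_i, c_i, d_i$ vanish on $I_N$ for $i \le n-1$) together with the structure of $H^1_N(\zp{s})$ as a rank-$2$ free module carrying a symplectic Tate pairing.

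\textbf{Step 1: the entries are cocycles at $N$.}
Restrict the representation identity $\rho_n(\sigma\tau) = \rho_n(\sigma)\rho_n(\tau)$ to $I_N$ and read off the $\epsilon^n$-coefficient. Since $\rho_n$ is very good, $a_i|_{I_N} = b_i|_{I_N} = c_i|_{I_N} = d_i|_{I_N} = 0$ for $1 \le i \le n-1$, so the only surviving cross terms in the Massey relation at order $n$ involve the order-$1$ cochains $a, b, c, -a$ paired with themselves. As in the Claim inside the proof of Lemma~\ref{lem:res diag defs}, the normalization \eqref{eq:normalization of a,b,c} makes $(a^2 + bc)|_{I_N} = 0$, and a direct check on a dense set of pairs $(\sigma,\tau) \in I_N \times I_N$ (writing $\sigma = x\gamma^i$, $\tau = y\gamma^j$ with $x,y \in I_N^{\mathrm{non}-p}$) shows $da_n|_{I_N} = db_n|_{I_N} = dc_n|_{I_N} = dd_n|_{I_N} = 0$. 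Hence these are classes in $H^1_N(\zp{s})$ (using $Z^1_N(\zp{s}) = H^1_N(\zp{s})$).

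\textbf{Step 2: the a priori relations (1) and (2).}
Relation (1), $a_n|_N = -d_n|_N$, follows from $\det(\rho_n) = \kcyc$: the $\epsilon^n$-coefficient of the determinant identity forces $a_n + d_n$ to be a coboundary (in fact zero after restriction to $I_N$, since $\kcyc$ is unramified at $N$). Relation (2), $a_n|_N = -b_n|_N$, is the deeper input: it comes from Lemma~\ref{lem:upper-triangular equivs}, which produces (unconditionally) a good $n$-th order deformation $\rho_n^b$ of $\rho_0^b$ adapted to $\rho_n$; comparing the $(1,1)$- and $(1,2)$-entries of $\rho_n^b$ restricted to $I_N$, or equivalently using that the associated cochain $b_{n+1}$ can be chosen to vanish on $I_N$ as in the Claim in Lemma~\ref{lem:res diag defs}, gives the identity after restriction. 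Alternatively one uses $a|_N \cup b|_N = 0$ and Lemma~\ref{lem:aN in cLN} to place both in $\cL_N$.

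\textbf{Step 3: translating the coordinate Massey relations into relations at $N$.}
For each coordinate $(i,j)$, the Massey relation for $\dia{M}_D^{n+1}$ asserts that a certain explicit global $2$-cochain (a sum of cup products of the $a_i, b_i, c_i, d_i$) is a coboundary. Restricting to $G_N$, using that $H^2(\zp{s}(k)) \isoto H^2_N(\zp{s}(k))$ for $k = 0, -1$ (Lemma~\ref{lem:H2 to H2N injective}) and the analogous surjectivity/injectivity facts, the relation becomes an equation in $H^2_N$, which by the symplectic Tate pairing on $H^1_N(\zp{s})$ becomes a linear relation among $a_n|_N, b_n|_N, c_n|_N, d_n|_N$ modulo $\cL_N$. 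Carrying this out coordinate by coordinate, again discarding all terms with indices $< n$ (which vanish at $N$ by very-goodness) and using Steps~1--2, yields precisely the stated equivalences: $(1,1)$ and $(2,2)$ both reduce to $c_n|_N \equiv -b_n|_N$, and $(2,1)$ reduces to $c_n|_N \equiv a_n|_N$. Since $a_n|_N \equiv -b_n|_N$ by (2), all three conditions coincide. Finally, by Proposition~\ref{prop:massey equiv part (the coordinate equivalence cited as Proposition \ref{prop:massey equiv}} --- more precisely by the coordinate-wise criterion from Appendix~\ref{sec:massey appendix} --- the vanishing of $\dia{M}_D^{n+1}$ in $H^2(\End(\zp{s}(1)\oplus \zp{s}))$ is equivalent to the vanishing of any one of its coordinate Massey relations, giving the last sentence.

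\textbf{Main obstacle.} The delicate point is Step~3: bookkeeping which cup-product terms in each coordinate Massey relation survive restriction to $G_N$, and verifying that the surviving $2$-cocycle corresponds, under Tate duality, to the claimed linear combination of $a_n|_N, b_n|_N, c_n|_N, d_n|_N$ modulo $\cL_N$. One must be careful that the defining system $D$ coming from $\rho_n$ is used consistently, that the ``lower'' terms genuinely vanish at $N$ (this is exactly where very-goodness, as opposed to mere goodness, is essential), and that the identification of $H^2$ with $H^2_N$ is valid for the relevant twist. Once the $(2,1)$-coordinate case is done carefully, the $(1,1)$ and $(2,2)$ cases are essentially formal variants, and the reduction to a single scalar relation via the symplectic pairing closes the argument.
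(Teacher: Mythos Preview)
Your proposal has a fundamental gap: you systematically conflate restriction to the inertia group $I_N$ with restriction to the full decomposition group $G_N$. The statement is about $a_n|_N, b_n|_N, c_n|_N, d_n|_N$ as cocycles on $G_N$, and the Massey relations must be analyzed in $H^2_N$ (cohomology of $G_N$), because it is the map $H^2(\zp{s}(k)) \to H^2_N(\zp{s}(k))$ that is an isomorphism. Very-goodness only tells you that $a_i|_{I_N}, b_i|_{I_N}, c_i|_{I_N}, d_i|_{I_N}$ vanish for $2 \le i \le n$ (and \emph{not} for $i=1$, contrary to your claim in Step~1); it says nothing about the restrictions to $G_N$. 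Thus your Step~1 only shows $da_n|_{I_N} = 0$, not $da_n|_{G_N} = 0$; your Step~2 argument for (1) only gives $a_n|_{I_N} = -d_n|_{I_N}$; and in Step~3 you cannot ``discard all terms with indices $< n$'' when computing cup products on $G_N$, since the $a_i|_N$ for $2 \le i < n$ are generally nonzero on Frobenius.

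The paper's proof repairs exactly this by running an induction on $n$ whose hypothesis is the chain of equalities $a_i|_N = c_i|_N = -b_i|_N = -d_i|_N$ in $Z^1_N(\zp{s}) = H^1_N(\zp{s})$ for all $i < n$. The base case $i=1$ uses that $a|_N, b|_N, c|_N$ all lie in the rank-one summand $\cL_N$ (Lemma~\ref{lem:aN in cLN} and Proposition~\ref{prop:cup selmer equiv}) together with the normalizations $a(\gamma) = c(\gamma) = -b(\gamma)$. With this inductive hypothesis in hand, the cross terms in $da_n|_N$, in the determinant identity, and in each coordinate Massey $2$-cocycle cancel pairwise on $G_N$ (not because they vanish, but because e.g.\ $a_i|_N \smile a_{n-i}|_N + b_i|_N \smile c_{n-i}|_N = c_i|_N \smile c_{n-i}|_N - c_i|_N \smile c_{n-i}|_N = 0$). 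The symplectic Tate pairing then reduces each surviving equation to a relation among $a_n|_N, b_n|_N, c_n|_N, d_n|_N$ modulo $\cL_N$, and very-goodness is finally used to conclude that an unramified class lying in $\cL_N$ must be zero. Your outline has the right ingredients (Tate pairing, $\cL_N$, the unconditional $(1,2)$-relation), but the missing inductive hypothesis on $G_N$ is the idea that makes the computation go through.
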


\begin{proof}
The final statement of the proposition follows from Lemmas \ref{lem:upper-triangular equivs} and \ref{lem:massey iff all massey coords}.

As in the previous section, since $H^1_N(\zp{s})=Z^1_N(\zp{s})$, we conflate 1-cocycles with their cohomology classes. Since $n \ge 2$, by Lemma \ref{lem:res diag defs}, we have $M \cup M=0$, which implies that $a \cup c =0$. By Proposition \ref{prop:cup selmer equiv}, this implies that  $c|_N \in \cL_N \subset H^1_N(\zp{s})$. Then, by Lemma \ref{lem:aN in cLN}, we have $a|_N,b|_N,c|_N,d|_N \in \cL_N$, where $d=-a$. We may write them as multiples of $[N]$, the Kummer class of $N$. Write $b|_N = x[N]$ with $x \in (\zp{t})^\times$. By our normalizations \eqref{eq:normalization of a,b,c}, we have $a|_N=c|_N=-x[N]$ and $d|_N=x[N]$, as elements of $Z^1_N(\zp{s})$.

By induction, we may assume that $a_i|_N =c_i|_N = -b_i|_N =-d_i|_N$ for $i=1,\dots,n-1$. We first prove that $a_n|_N$ is a cocycle. Note that
\[
d a_n = \sum_{i=1}^{n-1} a_i \smile a_{n-i} +b_i \smile c_{n-i}.
\]
Restricting to $G_N$, we have by induction
\begin{align*}
d a_n |_N & = \sum_{i=1}^{n-1} a_i|_N\smile a_{n-i}|_N +b_i|_N\smile c_{n-i}|_N\\
& = \sum_{i=1}^{n-1} c_i|_N\smile c_{n-i}|_N +(-c_i|_N)\smile c_{n-i}|_N =0. \\
\end{align*}
Hence $a_n|_N \in Z_N^1(\zp{s})$. Similarly for $b_n,c_n,d_n$.

\begin{enumerate}[leftmargin=2em]
\item Since $\det(\rho_n)=\kcyc$, we have
\[
a_n+d_n = \sum_{i=1}^{n-1} b_ic_{n-i} -a_i d_{n-i}. 
\]
Using the induction hypotheses, this implies that
\begin{align*}
a_n|_N+d_n|_N &= \sum_{i=1}^{n-1} b_i|_Nc_{n-i}|_N -a_i|_N d_{n-i}|_N  \\
&= \sum_{i=1}^{n-1} (-c_i|_N)c_{n-i}|_N -(c_i|_N)(-c_{n-i}|_N)   = 0.
\end{align*}
\item By Lemma \ref{lem:upper-triangular equivs}, the Massey relation for $\dia{M}_D^{n+1}$ in the $(1,2)$-coordinate holds. In other words, the class of the cocycle
\[
\sum_{i=1}^{n} a_i \smile b_{n-i+1} + b_i \smile d_{n-i+1}
\]
is $0$ in $H^2(\Z/p^{s}\Z(1))$. Restricting to $H^2_N(\zp{s}(1))$ and applying (1) and the induction hypotheses, we find 
\begin{align*}
0 & = \sum_{i=1}^{n}  a_i|_N \smile b_{n-i+1}|_N + b_i|_N \smile d_{n-i+1}|_N \\
& = \sum_{i=2}^{n-1} \left( c_i|_N \smile (-c_{n-i+1}|_N) + (-c_i|_N) \smile (-c_{n-i+1}|_N) \right) \\
& \hspace{.5in} + x( -[N] \smile b_{n}|_N + [N] \smile (-a_{n}|_N) + a_n|_N \smile [N] + b_n|_N \smile [N]) \\
& =  x( -[N] \smile b_{n}|_N + [N] \smile (-a_{n}|_N) + a_n|_N \smile [N]+ b_n|_N \smile [N]) \\
& = x( -[N] \smile (a_n|_N+b_n|_N) + (a_n|_N+b_n|_N) \smile [N]) \\
& = 2x (a_n|_N+b_n|_N) \smile [N].
\end{align*}
Since $2x \in (\Z/p^{s}\Z)^\times$, the fact that the Tate pairing is symplectic implies that the class of $a_n|_N+b_n|_N$ in $H^1_N(\Z/p^{s}\Z)/\cL_N$ is zero. Since $\rho_n$ is very good, both $a_n$ and $b_n$ are unramified at $N$, so this implies that $a_n|_N=-b_n|_N$.
\item Let $\alpha \in Z^2(\Z/p^{s}\Z)$ denote the cocycle
\[
\alpha = \sum_{i=1}^{n} a_i \smile a_{n+1-i} + b_i \smile c_{n+1-i}.
\]
The Massey relation for $\dia{M}^{n+1}$ in the $(1,1)$-coordinate holds modulo $p^s$ if and only if $[\alpha] =0$ in $H^2(\Z/p^{s}\Z)$. By Lemma \ref{lem:H2 to H2N injective}, this is equivalent to the equation $[\alpha|_N] =0$ in $H^2_N(\Z/p^{s}\Z)$. Using (2), this equation can be simplified to $2x[N] \smile (b_n\vert_N + c_n\vert_N) = 0$. Then we apply the same kind of final argument as in the proof of (2). 
\end{enumerate}
The remaining parts are similar.
\end{proof}

\section{Main result}
\label{sec:main result}

Let $e=\mathrm{rank}_{\Z_p}(\bT^0)$. Recall the sequence $t= t_1 \ge \dots \ge t_e >t_{e+1}=0$, defined in Proposition \ref{prop:NPcontrol of T}. This sequence is invariant of $\bT$ that determines its Newton polygon, but may even be finer than it. In this section, we complete the proof of our main theorem, which is an inductive procedure: assuming we know $t_1, \dots, t_n$, we describe $t_{n+1}$ in terms of Massey products.

\begin{thm}
\label{thm:main in reorg}
Let $n$ and $s$ be integers such that $1 \le n \le e$ and $1 \le s \le t_{n}$. Then there is a very good $n$-th order deformation $\rho_n$ of $\rho_1$ modulo $p^{t_n}$. 

Fix such an $\rho_n$, and let $D$ be the corresponding defining system for the Massey power $\dia{M}^{n+1}$ modulo $p^{t_n}$. Then the following are equivalent:
\begin{enumerate}
\item We have $s \le t_{n+1}$.
\item The Massey power $\dia{M}_D^{n+1}$ vanishes in $H^2(\End(\zp{s}(1) \oplus \zp{s}))$.
\end{enumerate}
\end{thm}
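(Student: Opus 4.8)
The plan is to prove Theorem \ref{thm:main in reorg} by an inductive dévissage that reduces the statement about the ring $R$ (equivalently $\bT$, by Corollary \ref{cor:R=T}) to the concrete deformation-theoretic criteria already assembled in \S\ref{sec:cup}--\S\ref{sec:without modular forms}. The existence of the very good $n$-th order deformation $\rho_n$ modulo $p^{t_n}$ is handled first, by induction on $n$: the base case $n=1$ is $\rho_1$ itself (which is finite-flat with $\psi(\rho_1)=D_1$, and mildly ramified by the normalizations \eqref{eq:normalization of a,b,c}), and the inductive step is exactly the implication $(1)\Rightarrow(2)$ of Lemma \ref{lem:res diag defs}: since by Proposition \ref{prop:NPcontrol of T} applied at order $n$ there is a surjection $R \onto \zp{t_n}[\epsilon_n]$, there is an $n$-th order deformation of $\rho_{n-1}$ modulo $p^{t_n}$, hence (upgrading with Lemma \ref{lem:res diag defs}) a very good one. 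One must check that the homomorphism $R \onto \zp{t_n}[\epsilon_n]$ provided by Proposition \ref{prop:NPcontrol of T}(2) actually produces a deformation of $\rho_1$ and not merely of $\rho_0$: this follows because any good deformation restricts along $R \onto R^\red$ compatibly with $D_1$, using Proposition \ref{prop: tangent space}(3) and the fact that $R/\m^2$ is one-dimensional.

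For the equivalence itself, fix $\rho_n$ very good of order $n$ modulo $p^{t_n}$ and let $\varphi: R \onto \zp{t_n}[\epsilon_n]$ be the corresponding surjection and $D$ the induced defining system. I would argue the chain $(1) \Leftrightarrow (1')\Leftrightarrow (2)$, where $(1')$ is: "there is a surjection $\varphi': R \onto \zp{s}[\epsilon_{n+1}]$ lifting $\varphi \bmod \epsilon_{n+1}$ and reducing mod $p^s$." The equivalence $(1)\Leftrightarrow(1')$ is Proposition \ref{prop:NPcontrol of T} (with $x$ taken to be a generator $Y$ of $\Jm$, noting $t_{n+1}$ is defined exactly by this lifting property at order $n+1$). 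The equivalence $(1') \Leftrightarrow (2)$ is then extracted from Lemma \ref{lem:lower-trianglar equivalences}, whose condition (1) is precisely $(1')$ here (with $r=s$), and whose condition (5) says the $(2,1)$-coordinate Massey relation for $\dia{M}_D^{n+1}$ holds modulo $p^s$. Finally, Proposition \ref{prop:massey equiv} (valid since $n \ge 2$; for $n=1$ one invokes Remark \ref{rem:<M>^2 iff both cups} together with Lemma \ref{lem:upper-triangular equivs} for the automatic vanishing of the $(1,2)$-coordinate) shows that the $(2,1)$-coordinate Massey relation holds modulo $p^s$ if and only if the full matrix Massey power $\dia{M}_D^{n+1}$ vanishes in $H^2(\End(\zp{s}(1)\oplus \zp{s}))$, which is exactly (2).

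A few bookkeeping points must be addressed. One must track the difference between "modulo $p^s$" and "modulo $p^{t_n}$" carefully: the defining system $D$ lives at level $p^{t_n}$, but the Massey relations are tested modulo $p^s$ with $s \le t_n$, so the functoriality discussion in the Remark following the definition of $M_s$ (the three equivalent formulations via reduction maps and multiplication by $p^r$) is what legitimizes passing between levels; I would cite it explicitly. One also needs that $\dia{M}_D^{n+1}$ being well-defined requires $\dia{M}_D^{n}=0$, which is guaranteed by the very-goodness of $\rho_n$ (this is built into Lemma \ref{lem:lower-trianglar equivalences} and the surrounding setup). Lastly, independence of the conclusion from the choice of $\rho_n$ (and hence of $D$) follows because any two very good $\rho_n$ modulo $p^{t_n}$ induce the same surjection $R \onto \zp{t_n}[\epsilon_n]$ up to the action of $\mathrm{Aut}$ of the target fixing $\epsilon$, and the vanishing of a coordinate Massey relation is insensitive to such changes (Remark \ref{rem:defining systems} / the discussion in Appendix \ref{sec:massey appendix}).

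The main obstacle I anticipate is not any single deep step but the careful alignment of hypotheses in Lemma \ref{lem:lower-trianglar equivalences}: that lemma is stated for $1 \le r \le s \le t$ with a fixed very good $\rho_n$ modulo $p^s$, whereas here the natural ambient level is $t_n$ rather than a generic $s$, and the roles of "$r$" and "$s$" in the lemma must be matched to "$s$" and "$t_n$" in the theorem. Making sure the commuting square in condition (1) of that lemma is exactly the lifting condition defining $t_{n+1}$ — and that no spurious obstruction is introduced when $s < t_n$ — is where I would spend the most care; everything else is an assembly of results already proved, principally Proposition \ref{prop:NPcontrol of T}, Lemma \ref{lem:lower-trianglar equivalences}, Lemma \ref{lem:res diag defs}, and Proposition \ref{prop:massey equiv}.
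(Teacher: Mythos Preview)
Your proposal is essentially correct and follows the same approach as the paper: the equivalence $(1)\Leftrightarrow(2)$ is obtained by chaining Proposition~\ref{prop:NPcontrol of T}, Lemma~\ref{lem:lower-trianglar equivalences} (conditions (1) and (5)), and Proposition~\ref{prop:massey equiv} (with Remark~\ref{rem:<M>^2 and cups} for $n=1$), and the existence of the very good $\rho_n$ is by induction via Lemma~\ref{lem:res diag defs}. You have correctly identified every ingredient and the alignment of parameters in Lemma~\ref{lem:lower-trianglar equivalences} that you flag as the main obstacle is exactly right (the lemma's $r,s$ become the theorem's $s,t_n$).

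The one structural issue is the order in which you prove the two parts. You propose to establish existence of $\rho_n$ first and the equivalence second, but your inductive step for existence contains the unjustified implication ``surjection $R\onto\zp{t_n}[\epsilon_n]$ exists, therefore there is an $n$-th order deformation of $\rho_{n-1}$ modulo $p^{t_n}$.'' A surjection from $R$ only gives a \emph{pseudorepresentation}; producing an honest $\GL_2$-valued deformation of $\rho_{n-1}$ requires passing through the Massey vanishing, i.e.\ through Lemma~\ref{lem:lower-trianglar equivalences} and Proposition~\ref{prop:massey equiv} applied at index $n-1$ --- which is exactly the equivalence part of the theorem one level down. The paper handles this cleanly by proving the equivalence $(1)\Leftrightarrow(2)$ first (assuming $\rho_n$ exists), and then in the inductive step for existence simply \emph{applying} that already-proven equivalence at index $n-1$: since $t_n\le t_n$ trivially, condition (1) holds at level $n-1$ with $s=t_n$, hence $\dia{M}^n_{D'}$ vanishes modulo $p^{t_n}$, and Lemma~\ref{lem:res diag defs} produces the very good $\rho_n$. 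Reordering your argument this way closes the gap with no new ideas needed. (Your aside about ``deformation of $\rho_1$ vs.\ $\rho_0$'' is a separate, more minor point and does not address this.)
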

\begin{proof}
We first prove that (1) and (2) are equivalent, assuming the existence of $\rho_n$. Let $\varphi: R \onto \zp{t_n}[\epsilon_n]$ be the corresponding surjective homomorphism. Let $z \in \Jm$ be a generator such that $\varphi(z)=\epsilon$. Then (1) is equivalent to
\begin{enumerate}
\item[(1)'] There is a surjective homomorphism $\varphi': R \onto \zp{s}[\epsilon_{n+1}]$ such that the following diagram commutes 
\[
\xymatrix{
R \ar[r]^-{\varphi'}  \ar[d]_-{\varphi} & \zp{s}[\epsilon_{n+1}] \ar[d] \\
\zp{t_n}[\epsilon_n] \ar[r] & \zp{s}[\epsilon_n],
}\]
where the unlabeled arrows are the quotient maps.
\end{enumerate}
Indeed, by Proposition \ref{prop:NPcontrol of T}, (1) implies the existence of a homomorphism $\varphi'$ such that $\varphi'(z)=\epsilon$, and such a homomorphism makes the diagram commute. Conversely, any $\varphi'$ as in (1)' must satisfy $\varphi(z')=\epsilon$ for some generator $z' \in \Jm$, which, by Proposition \ref{prop:NPcontrol of T}, implies (1).

By Lemma \ref{lem:lower-triangular equivs}, (1)' is equivalent to the Massey relation for $\dia{M}_D^{n+1}$ in the $(2,1)$-coordinate modulo $p^s$. This is equivalent to (2) by Proposition \ref{prop:massey equiv} (and by Remark \ref{rem:<M>^2 and cups} for $n=1$).

Now we prove that $\rho_n$ exists by induction on $n$, the base case $n=1$ being vacuous. Assume that $\rho_{n-1}$ exists modulo $p^{t_{n-1}}$, and let $D'$ be the corresponding corresponding defining system for the Massey power $\dia{M}^{n}$ modulo $p^{t_{n-1}}$. By the equivalence of (1) and (2) already proven, we see that the Massey power $\dia{M}_{D'}^{n}$ vanishes in $H^2(\End(\zp{t_{n}}(1) \oplus \zp{t_{n}}))$. By Lemma \ref{lem:res diag defs}, there is a very good $n$-th order deformation of $\rho_{n-1}$ modulo $p^{t_n}$, which we can take as $\rho_n$.
\end{proof}

\begin{rem}
\label{rem:defining systems}
Note that since (1) in the theorem does not depend on the choice of defining system $D$, the vanishing behavior of the Massey power $\dia{M}_D^{n+1}$ does not depend on the choice of $D$ (as long as it is associated to a very good $n$-th order deformation).
\end{rem}

We now explain how to deduce the results stated in the introduction from this main theorem. For $n=1$ in the theorem, we let $\rho_n=\rho_1$ and observe that $e \ge2 $ if and only if $t_2 >0$ if and only if $\dia{M}_D^{2}=M \cup M$ vanishes in $H^2(\End(\F_p(1) \oplus \F_p))$. This is equivalent to $b \cup c=0$ and to $a \cup c=0$ (see Remark \ref{rem:<M>^2 and cups}). This proves Theorem \ref{thm:cup products and rank 1}. Corollary \ref{cor:class groups and rank 1} follows from this and Proposition \ref{prop:cup gives class}. 

Finally, to prove Theorem \ref{thm:newton poly}, we note that if Merel's number is not a $p^2$-th power, then Theorem \ref{thm:cup merel equiv} implies that $b \cup c$ is non-zero in $H^2(\zp{2})$, which implies that $M \cup M$ is non-zero in $H^2(\End(\zp{2}(1) \oplus \zp{2}))$. By the main theorem, this implies that $t_2\le 1$, which implies Theorem \ref{thm:newton poly} by standard properties of Newton polygons.

\part{Appendices}
In the appendices, we collect some formal results. With the possible exception of \S \ref{subsec:massey coords} and \S \ref{subsec:ff cohom}, the contents are standard and will be known to experts. We include them here for completeness and to fix notation.

\appendix

\section{Massey products}
\label{sec:massey appendix}
Massey products are a generalization of cup products. They were first introduced in topology by Massey and Uehara--Massey \cite{massey1958,MU1957}. For an introduction to the subject, see Kraines \cite{kraines} and May \cite{may1969}. Massey products are closely related to $A_\infty$-operations; see e.g.\ \cite[Part 2]{CarlAinf} for this relation, and the connection with deformation theory. For applications of Massey products in Galois cohomology, see Sharifi \cite{sharifi2007}. 

In this section, we collect some statements that we will need and define ``Massey powers.'' We do not give proofs, as all the results either follow immediately from the definitions or by a purely formal computation.

In this section, we let $G$ be a group, $A$ be a ring, and $V$ a $A[G]$-module equipped with a pairing $V \otimes V \to V$. Given $a \in C^i(G,V)$, $b \in C^j(G,V)$ we let $a \smile b \in C^{i+j}(G,V)$ denote the composite of the usual cup product with the pairing $V \otimes V \to V$:
\[
C^i(G,V) \times C^j(G,V) \to C^{i+j}(G,V \otimes V) \to C^{i+j}(G,V).
\]

\subsection{Massey products} 

\begin{defn}
Let $a_1, \dots, a_n \in C^1(G,V)$ be cochains. We say that a set $D=\{a(i,j): 1 \le i \le j \le n, (i,j) \ne (1,n)\} \subset C^1(G,V)$ is a \emph{defining system for the Massey product} $\dia{a_1,\dots, a_n}$ if
\begin{enumerate}
\item $a(i,i)=a_i$ for all $i=1,\dots n$, and
\item $\displaystyle da({i,j}) = \sum_{k=i}^{j-1} a(i,k)\smile a(k+1,j)$ for all $i,j$.
\end{enumerate}
In particular, (1) and (2) for $i=j$ imply that $da(i,i)=da_i=0$ for all $i$.

If $D$ is a defining system for the Massey product $\dia{a_1,\dots, a_n}$, then we note that 
\[
c(D)=\sum_{k=1}^{n-1} a(1,k)\smile a(k+1,n) 
\]
is an element of $Z^2(G,V)$ and we let $\dia{a_1,\dots, a_n}_D \in H^2(G,V)$ be the class of $c(D)$. We let
\[
\dia{a_1,\dots, a_n}= \{ \dia{a_1,\dots, a_n}_D  \} \subset H^2(G,V)
\]
where $D$ ranges over all defining systems.

We say that $\dia{a_1,\dots, a_n}$ is \emph{defined} if it is non-empty (i.e.~ if there exists a defining system). We say that $\dia{a_1,\dots, a_n}$ \emph{vanishes} if $0 \in \dia{a_1,\dots, a_n}$.
\end{defn}

It is known that the set $\dia{a_1,\dots, a_n}$ only depends on the cohomology classes of $a_1, \dots, a_n$ \cite[Thm.\ 3]{kraines}.

\begin{eg}
If $n=2$, then the Massey product is defined if and only if $a_1, a_2 \in Z^1(G,V)$. If they are, then $D=\{a(1,1)=a_1,a(2,2)=a_2\}$ is the only defining system, and $\dia{a_1,a_2}_D=[a_1 \smile a_2]$.
\end{eg}

\begin{eg}
\label{example:massey products and unipotent upper-triangular}
Take $V=A$ with trivial $G$-action. Suppose that $D=\{a(i,j): 1 \le i \le j \le n, (i,j) \ne (1,n)\} \subset C^1(G,A)$ is a defining system. Condition (2) implies that the the cochains $\nu_1, \nu_2 \in C^1(G,M_{n}(A))$ given by
\[
\nu_1 = \left( \begin{array}{ccccc}
1 & a(1,1) & a(1,2) & \cdots  & a(1,n-1) \\
0 & 1 & a(2,2) & \cdots & a(2,n-1) \\
\cdots \\
0 & \cdots &   & 1 & a(n-1,n-1) \\
0 & \cdots &  & 0  &  1
\end{array} \right)
\]
and
\[
\nu_2 = \left( \begin{array}{ccccc}
1 & a(2,2) & a(2,3) & \cdots  & a(2,n) \\
0 & 1 & a(3,3) & \cdots & a(3,n) \\
\cdots \\
0 & \cdots &   & 1 & a(n,n) \\
0 & \cdots &  & 0 &  1
\end{array} \right)
\]
are cocycles (i.e.~ $\nu_1$ and $\nu_2$ are homomorphisms). Notice that $\nu_1$ and $\nu_2$ have a $n-1 \times n-1$-submatrix in common. The class $\dia{a_1,\dots, a_n}_D \in H^2(G,A)$ measures the obstruction to concatenating $\nu_1$ and $\nu_2$, in the following sense. If $\dia{a_1,\dots, a_n}_D=0$, then there exists $a \in C^1(G,A)$ such that $da=c(D)$ and the cochain $\nu \in C^1(G,M_{n+1}(A))$ given by
\[
\nu= \left( \begin{array}{cccccc}
1 & a(1,1) & a(1,2) & \cdots  & a(1,n-1) & a\\
0 & 1 & a(2,2) & \cdots & a(2,n-1) & a(2,n) \\
\cdots \\
0 & \cdots &   & 1 & a(n-1,n-1) & a(n-1,n) \\
0 & \cdots &  & &  1 & a(n,n) \\
0 & \cdots & & & 0 &  1
\end{array} \right)
\]
is a cocycle. Moreover, if $\dia{a_1,\dots, a_n}_D \neq 0$, then no such $\nu$ exists.
\end{eg}

\subsection{Massey powers}
\label{subsec: Massey powers}

We remark that ``Massey power'' is not standard terminology; we use it to refer to certain Massey products. More precisely, a Massey power is not merely a Massey product of a set of identical 1-cochains, but also requires a symmetry in the defining system that is not required by the definition of a Massey product. These symmetries naturally occur in the defining systems induced by deformations, as discussed in \S\ref{sec: MVD}. 

\begin{defn}
Let $a \in C^1(G,V)$ be a cochain, and let $m_1, \dots, m_{k-1} \in C^1(G,V)$. We say that $D:=\{m_1, \dots, m_{k-1}\}$ is a \emph{defining system for the Massey power} $\dia{a}^k$ if the set
\[
\tilde{D}=\{a(i,j)=m_{j-i+1} : 1 \le i \le j \le k, (i,j) \ne (1,k)\}
\]
is a defining system for the Massey product $\dia{a, \dots, a}$ (with $a$ repeated $k$ times). If $D$ is a defining system for the Massey power $\dia{a}^k$, then we let $\dia{a}^k_D=\dia{a,\dots,a}_D$, and we let $c(D):=c(\tilde{D})$. We let 
\[
\dia{a}^k = \{ \dia{a}^k_D\} \subset H^2(G,V)
\]
where $D$ ranges over defining systems for the Massey powers. Note that $\dia{a}^k \subset \dia{a, \dots, a}$.
\end{defn}

Note that, for $D=\{m_1, \dots, m_{k-1}\} \subset C^1(G,V)$, $D$ is a defining system for the Massey power $\dia{a}^k$ if and only if $m_1=a$ and, for all $i=1, \dots, k-1$, we have
\[
dm_i = \sum_{j=1}^{i-1} m_j \smile m_{i-j}.
\]
We also note that, for such $D$, we have
\[
c(D) = \sum_{j=1}^{k-1} m_j \smile m_{k-j}.
\]

\begin{lem}
	\label{lem:massey powers and deformations}
	Let $\nu: G \to \GL_n(A)$ be a representation, and let $V=\End(\nu)$. Let $M_1, \dots, M_{r} \in C^1(G,V)$ and let $M=M_1$, and, for $i=1, \dots, r$, define $\nu_i: G \to \GL_n(A[\epsilon_i])$ by
	\[
	\nu_i = \nu + \sum_{j=1}^i M_j\epsilon^j.
	\] 
	Assume that $\nu_{r-1}$ is a homomorphism. Then $D=\{M_1,\dots, M_{r-1}\}$ is a defining system for $\dia{M}^r$, and $\nu_r$ is a homomorphism if and only if $dM_r=c(D)$ (in which case $\dia{M}^r_D=0$).
\end{lem}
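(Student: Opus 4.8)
\emph{Proof proposal.} The plan is to expand the homomorphism identity $\nu_i(\sigma\tau)=\nu_i(\sigma)\nu_i(\tau)$ into its graded pieces in $\epsilon$ and match them with the defining‑system equations for a Massey power. Concretely, I would write $\nu_i(\sigma)=\nu(\sigma)+\sum_{j=1}^i M_j(\sigma)\epsilon^j$, which lands in $\GL_n(A[\epsilon_i])$ because the $\epsilon$‑part is a nilpotent perturbation of $\nu(\sigma)\in\GL_n(A)$, multiply out $\nu_i(\sigma)\nu_i(\tau)$, and compare the coefficient of $\epsilon^m$ on the two sides for $0\le m\le i$. The $\epsilon^0$‑coefficient recovers the standing hypothesis that $\nu$ is a homomorphism; for $1\le m\le i$, after rewriting the deformation multiplicatively as $\nu_i=\nu\cdot\big(1+\sum_j \nu^{-1}M_j\,\epsilon^j\big)$ (i.e.\ passing to $\End(\nu)$‑valued cochains with its adjoint $G$‑action and matrix‑multiplication pairing, the pairing that defines $\smile$ on $C^\bullet(G,\End(\nu))$), the $\epsilon^m$‑coefficient rearranges into exactly
\[
dM_m=\sum_{j=1}^{m-1} M_j\smile M_{m-j}\quad\text{in }C^2(G,\End(\nu)).
\]

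With this dictionary in place the rest is pure bookkeeping. First, ``$\nu_{r-1}$ is a homomorphism'' is equivalent to the system of equations $dM_m=\sum_{j=1}^{m-1}M_j\smile M_{m-j}$ for $1\le m\le r-1$ (the $m=1$ equation being $dM_1=0$). By the definition of a defining system for the Massey power $\dia{M_1}^r$ recalled in \S\ref{sec:massey appendix} — a set $\{m_1,\dots,m_{r-1}\}$ with $m_1=M_1$ and $dm_i=\sum_{j=1}^{i-1}m_j\smile m_{i-j}$ — this says precisely that $D=\{M_1,\dots,M_{r-1}\}$ is a defining system for $\dia{M_1}^r$. Second, granting that $\nu_{r-1}$ is a homomorphism, $\nu_r$ is a homomorphism if and only if, in addition, the $\epsilon^r$‑coefficient of $\nu_r(\sigma\tau)=\nu_r(\sigma)\nu_r(\tau)$ vanishes, which by the same computation is $dM_r=\sum_{j=1}^{r-1}M_j\smile M_{r-j}=c(D)$. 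In that case $c(D)=dM_r$ is a coboundary, so $\dia{M_1}^r_D=[c(D)]=0$ in $H^2(G,\End(\nu))$, as asserted.

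The only delicate point is the dictionary of the first paragraph: I must be careful about the module structure on $\End(\nu)$ (the conjugation action through $\nu$), about where the factors $\nu(\sigma)^{\pm1}$ end up when one conjugates $1+\sum M_j\epsilon^j$ past $\nu$, and about the sign conventions for $d$ and $\smile$ in the cochain DGA, so that the rearranged identity comes out with the displayed signs and with the $M_j$ themselves (not sign‑twisted variants) forming the defining system. This is entirely mechanical — one simply tracks coefficients — but it is where whatever care is needed lives; once it is carried out, both equivalences, and the vanishing of $\dia{M_1}^r_D$, follow formally.
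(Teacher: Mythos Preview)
Your proposal is correct; the paper itself gives no proof of this lemma, remarking at the start of Appendix~\ref{sec:massey appendix} that the results there ``either follow immediately from the definitions or by a purely formal computation.'' Your $\epsilon$-expansion of the homomorphism identity is exactly that formal computation, and your closing caveat about tracking the conjugation $G$-action on $\End(\nu)$ through the $\nu(\sigma)^{\pm1}$ factors is the only place any care is needed.
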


\subsection{Coordinates of matrix Massey products}
\label{subsec:massey coords} 
In the situation of the previous lemma, if $\nu$ is a reducible representation, it is interesting to consider the matrix coordinates of the Massey power, as we now explain. For the rest of this section, we fix two characters $\chi_1, \chi_2: G \to A^\times$, and let $\nu=\chi_1 \oplus \chi_2$. We also fix $M\in Z^1(G,\End(\nu))$. 

\begin{defn}
\label{defn:massey coordinates}
Let $M\in Z^1(G,\End(\nu))$ and let $D=\{M_1,\dots,M_{r-1}\}$ be a defining system for the Massey power $\dia{M}^r$ in $H^2(G, \End(\nu))$. Write $M_i$ as
\[
M_i = \ttmat{\chi_1a_{11}^{(i)}}{\chi_2a_{12}^{(i)}}{\chi_1a_{21}^{(i)}}{\chi_2a_{22}^{(i)}}.
\]
where we think of $a_{11}^{(i)}$ and $a_{22}^{(i)}$ as elements of $C^1(G,A)$ and $a_{12}^{(i)}$ and $a_{21}^{(i)}$ as elements of $C^1(G,\chi_1^{-1}\chi_2)$ and $C^1(G,\chi_1\chi_2^{-1})$, respectively.

Consider the matrix
\[\tag{$*$}
\sum_{j=1}^{r-1} \ttmat{a_{11}^{(j)} \smile a_{11}^{(r-j)}+ a_{12}^{(j)} \smile a_{21}^{(r-j)}}
{a_{11}^{(j)} \smile a_{12}^{(r-j)}+ a_{12}^{(j)} \smile a_{22}^{(r-j)}}
{a_{21}^{(j)} \smile a_{11}^{(r-j)}+ a_{22}^{(j)} \smile a_{21}^{(r-j)}}
{a_{21}^{(j)} \smile a_{12}^{(r-j)}+ a_{22}^{(j)} \smile a_{22}^{(r-j)}} 
\]
as an element in
\[
\ttmat{Z^2(G,A)}
{Z^2(G,\chi_1^{-1}\chi_2)}
{Z^2(G,\chi_1\chi_2^{-1})}
{Z^2(G,A)}.
\]
For $s,t\in\{1,2\}$, we say that \emph{the Massey relation for $\dia{M}^r_D$ holds in the $(s,t)$-coordinate} if the $(s,t)$-coordinate of the matrix $(*)$ vanishes in cohomology. 

For example, the Massey relation for $\dia{M}^r_D$ holds in the $(2,1)$-coordinate if and only if
\[
\sum_{j=1}^{r-1} a_{21}^{(j)} \smile a_{11}^{(r-j)}+ a_{22}^{(j)} \smile a_{21}^{(r-j)} \in B^2(G,\chi_1\chi_2^{-1}).
\]
\end{defn}

\begin{lem}
\label{lem:massey iff all massey coords}
Let $D=\{M_1,\dots,M_{r-1}\}$ be a defining system for the Massey power $\dia{M}^r$ in $H^2(G, \End(\nu))$. The Massey relation for $\dia{M}_D^r$ holds in the $(s,t)$-coordinate for all $s,t\in \{1,2\}$ if and only if $\dia{M}_D^r=0$.
\end{lem}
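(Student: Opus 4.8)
The plan is to reduce the statement to two elementary facts: that the defining cochain $c(D)$ decomposes along a direct sum decomposition of $\End(\nu)$ as a $G$-module, and that $H^2(G,-)$ commutes with finite direct sums. First I would record the decomposition underlying Definition \ref{defn:massey coordinates}. Writing $\nu = \chi_1 \oplus \chi_2$ and using matrix coordinates, extracting the scalar twists by $\chi_1$ and $\chi_2$ from the columns exactly as in that definition, one obtains an isomorphism of $A[G]$-modules
\[
\End(\nu) \cong A \oplus (\chi_1^{-1}\chi_2) \oplus (\chi_1\chi_2^{-1}) \oplus A,
\]
the four summands being the four matrix-coordinate pieces. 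Since $C^\bullet(G,-)$ is additive, this induces a decomposition of $C^\bullet(G,\End(\nu))$ as a direct sum of complexes, so every $z \in Z^2(G,\End(\nu))$ has a unique expression as a matrix of cocycles valued in these four summands, and $[z]=0$ in $H^2(G,\End(\nu))$ if and only if each of the four coordinate classes vanishes.

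Second, I would carry out the routine matrix computation identifying the coordinate cochains of $c(D) = \sum_{j=1}^{r-1} M_j \smile M_{r-j}$ (which lies in $Z^2(G,\End(\nu))$ by the definition of the Massey power, \S\ref{sec:massey appendix}) with the entries of the matrix $(*)$ displayed in Definition \ref{defn:massey coordinates}. This is just the expansion
\[
(M_j \smile M_{r-j})_{s,t} = \sum_{u} (M_j)_{s,u} \smile (M_{r-j})_{u,t}
\]
together with the bookkeeping of how the $\chi_1,\chi_2$-twists compose under $\smile$. This twist-tracking is the only place that requires care, but it is entirely formal: one simply checks that each product of a cochain in the $(s,u)$-summand with a cochain in the $(u,t)$-summand lands, after factoring out the appropriate column twist, in the $(s,t)$-summand, so that the $a_{su}^{(j)}\smile a_{ut}^{(r-j)}$ terms assemble into precisely the entries of $(*)$.

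Finally, combining the two steps: under the identification $H^2(G,\End(\nu)) \cong H^2(G,A)^{\oplus 2} \oplus H^2(G,\chi_1^{-1}\chi_2) \oplus H^2(G,\chi_1\chi_2^{-1})$, the class $\dia{M_1}_D^r = [c(D)]$ corresponds to the tuple of classes of the entries of $(*)$, and the vanishing of $\dia{M_1}_D^r$ is equivalent to the simultaneous vanishing of all four of these classes. By Definition \ref{defn:massey coordinates} the latter is exactly the assertion that the Massey relation for $\dia{M_1}_D^r$ holds in the $(s,t)$-coordinate for every $s,t \in \{1,2\}$, which is the claimed equivalence. I do not expect any genuine obstacle here; the lemma is a formal consequence of the definitions, and the main (minor) thing to get right is the character-twist accounting in the middle step.
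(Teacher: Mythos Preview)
Your proposal is correct and is precisely the formal argument the paper has in mind: the appendix explicitly states that no proofs are given there because the results ``either follow immediately from the definitions or by a purely formal computation,'' and the direct-sum decomposition of $\End(\nu)$ together with additivity of $H^2(G,-)$ is exactly that formal computation.
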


The purpose of the $(s,t)$-Massey relations is that they are useful for comparing Massey products for different representations with the same semi-simplification.

With the notation as above, define a function $\nu'$ by
\[
\nu'=\ttmat{\chi_1}{0}{\chi_1a_{21}^{(1)}}{\chi_2}.
\]
Since $M_1$ is a cocycle, $\nu'$ is a homomorphism.

\begin{prop}
\label{prop:relations between coord masseys}
Let $r>1$ and let $D=\{M_1,\dots,M_{r-1}\}$ be a defining system for the Massey power $\dia{M}^r$ in $H^2(G,\End(\nu))$. Define $a_{st}^{(i)}$ as in Definition \ref{defn:massey coordinates}. For $1 \leq i<r-1$, define $M_i'$ by the formula
\[
M_i' = \ttmat{\chi_1a_{11}^{(i)}}{\chi_2a_{12}^{(i-1)}}{\chi_1a_{21}^{(i+1)}}{\chi_2a_{22}^{(i)}}.
\]
with $a_{12}^{(0)}=0$ and let $M'=M_1'$. Then 
\begin{enumerate}
\item $D'=\{M_1',\dots,M_{r-2}'\}$ is a defining system for $\dia{M'}^{r-1}$ in $H^2(G,\End(\nu'))$, and
\item $\dia{M'}^{r-1}_{D'}=0$ in $H^2(G,\End(\nu'))$ if and only if Massey relation for $\dia{M}^r_D$ holds in the $(2,1)$-coordinate.
\end{enumerate} 
\end{prop}

\begin{lem}
\label{lem:deformations from relations between coord masseys}
Let $\{M_1,\dots,M_{r-1}\} \subset C^1(G,V)$, and suppose that $\nu_{r-1}: G \to \GL_2(A[\epsilon_{r-1}])$ is a homomorphism, where 
\[
\nu_{r-1} = \nu + \sum_{j=1}^{r-1} M_j \epsilon^j.
\]
Define $M_i'$ as in Proposition \ref{prop:relations between coord masseys}. Choose an element $a \in C^1(G,\chi_1^{-1}\chi_2)$ and define
\[
M_{r-1}' =  \ttmat{\chi_1a_{11}^{(r-1)}}{\chi_2a_{12}^{(r-2)}}{\chi_1a}{\chi_2a_{22}^{(r-1)}}.
\]
For $i=1,\dots, r-1$, define $\nu_i': G \to \GL_2(A[\epsilon_{i}])$ by
\[
\nu_i' = \nu' + \sum_{j=1}^{i} M_j' \epsilon^j.
\]
Then $\nu_i'$ is a homomorphism for $i<r-1$, and $\nu_{r-1}'$ is a homomorphism if and only if
\[
da= \sum_{j=1}^{r-1} a_{21}^{(j)} \smile a_{11}^{(r-j)}+ a_{22}^{(j)} \smile a_{21}^{(r-j)}.
\]
\end{lem}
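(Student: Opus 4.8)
The proof will be a formal unwinding, built on Lemma~\ref{lem:massey powers and deformations} and Proposition~\ref{prop:relations between coord masseys}, followed by a matrix-coordinate computation.

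First, by Lemma~\ref{lem:massey powers and deformations} the hypothesis that $\nu_{r-1}=\nu+\sum_{j=1}^{r-1}M_j\epsilon^j$ is a homomorphism is exactly the statement that $\{M_1,\dots,M_{r-1}\}$ is a defining system for $\dia{M_1}^r$. By Proposition~\ref{prop:relations between coord masseys}(1), $\{M_1',\dots,M_{r-2}'\}$ is then a defining system for $\dia{M_1'}^{r-1}$; unravelling the definition of a defining system for a Massey power, this says precisely that $dM_i'=\sum_{j=1}^{i-1}M_j'\smile M_{i-j}'$ for $1\le i\le r-2$, i.e.\ that $\nu_i'$ is a homomorphism for every $i<r-1$. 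This is the first assertion.

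For the ``if and only if'', I would apply Lemma~\ref{lem:massey powers and deformations} once more, now to $\nu'$ and $\{M_1',\dots,M_{r-1}'\}$ with $r-1$ in the role of $r$: since $\nu_{r-2}'$ is a homomorphism, $\nu_{r-1}'$ is a homomorphism if and only if $dM_{r-1}'=c(D')$ with $c(D')=\sum_{j=1}^{r-2}M_j'\smile M_{r-1-j}'$. Since $\nu'$ is residually lower triangular, $\End(\nu')$ is a GMA and this cochain identity can be read in its four matrix coordinates. The chosen cochain $a$ occurs only as the $(2,1)$-entry of $M_{r-1}'$, and (because every summand of $c(D')$ involves only $M_1',\dots,M_{r-2}'$, and conjugation by the lower-triangular $\nu'$ never feeds the $(2,1)$-entry of a matrix into its $(1,1)$-, $(1,2)$- or $(2,2)$-entry) the $(1,1)$-, $(1,2)$- and $(2,2)$-coordinate identities do not involve $a$. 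A direct computation — reindexing the sums, using the convention $a_{12}^{(0)}=0$, and invoking the homomorphism relations already in hand for $\nu_{r-1}$ and its truncations — shows that those three identities hold automatically. The remaining $(2,1)$-coordinate identity, after cancelling the diagonal characters attached to the entries, is
\[
da=\sum_{j=1}^{r-1}\bigl(a_{21}^{(j)}\smile a_{11}^{(r-j)}+a_{22}^{(j)}\smile a_{21}^{(r-j)}\bigr),
\]
which is the asserted equation; this is consistent with Proposition~\ref{prop:relations between coord masseys}(2), the present lemma being the cochain-level record of which $1$-cochain $a$ provides the primitive witnessing $\dia{M_1'}^{r-1}_{D'}=0$.

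The main obstacle is the matrix-coordinate bookkeeping of the previous paragraph: keeping the two index shifts built into the $M_i'$ (the $(1,2)$-slot uses $a_{12}^{(i-1)}$, the $(2,1)$-slot uses $a_{21}^{(i+1)}$) consistent throughout the cup-product expansion, carrying the character twists on the off-diagonal entries correctly, and checking that the boundary contributions — the degenerate terms killed by $a_{12}^{(0)}=0$ together with the top-index terms $a_{\bullet}^{(r-1)}$ — cancel, so that the three non-$(2,1)$ coordinate identities reduce to relations guaranteed by the hypotheses rather than to genuine new constraints. Everything else follows immediately from the definitions in Appendix~\ref{sec:massey appendix}.
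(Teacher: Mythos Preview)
The paper does not give a proof of this lemma: the appendix explicitly says ``We do not give proofs, as all the results either follow immediately from the definitions or by a purely formal computation.'' Your proposal supplies exactly the kind of argument the authors have in mind, and the logical structure is correct: invoke Lemma~\ref{lem:massey powers and deformations} and Proposition~\ref{prop:relations between coord masseys}(1) for the first claim, then Lemma~\ref{lem:massey powers and deformations} again for the biconditional, reducing to a coordinate-by-coordinate check of $dM_{r-1}'=\sum_{j=1}^{r-2}M_j'\smile M_{r-1-j}'$.

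One small caution: your heuristic that ``conjugation by the lower-triangular $\nu'$ never feeds the $(2,1)$-entry into the other entries'' is suggestive but not literally the mechanism at work (conjugation by a strictly lower-triangular matrix does move the $(2,1)$-entry into the diagonal). The honest reason the $(1,1)$, $(1,2)$, and $(2,2)$ identities hold automatically is cleanest to see via the formal conjugation trick: with $P=\operatorname{diag}(\epsilon,1)$, one has $P\,\nu_{r-1}\,P^{-1}\equiv \nu_{r-1}'-\sm{0}{0}{\chi_1 a}{0}\epsilon^{r-1}$ (using $a_{12}^{(0)}=0$), so the three coordinate identities not involving $a$ are exactly the corresponding identities for $\nu_{r-1}$, which hold by hypothesis. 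This replaces the index bookkeeping you flag as the main obstacle with a one-line observation. Either route is fine; both are ``purely formal'' in the sense the paper intends.
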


\section{Galois cohomology - generalities}
\label{sec:galois cohomology appendix}

In this section, we use cone constructions to define cochain complexes that compute Galois cohomology with various local conditions. In particular, we discuss the compactly supported, partially compactly supported, and finite-flat variants. The idea to consider derived versions of Selmer groups is due to Nekov\'a\u{r} \cite{nekovar2006}. For a more down-to-earth treatment (and all that will be needed here), see \cite[App.~B]{GV2018}, where they use the notation of fundamental groups $\pi_1^\et(\Z[1/Np])$ (resp.\ $\pi_1^\et(\Q_\ell)$) in place of our $G_{\Q,S}$ (resp.\ $G_\ell$).

While $N$ usually denotes a prime in the main text, here we allow it to be a squarefree integer prime to $p$, as in \S\S\ref{subsec: calc H1p-fl}-\ref{subsec: cohom compute}.

\subsection{Notation from homological algebra} If $(C^\bullet,d)$ is a cochain complex, we let $Z^i(C^\bullet)=\ker(d:C^i \to C^{i+1})$ and $B^i(C^\bullet)=\mathrm{im}(d:C^{i-1}\to C^i)$. Let $(C[i]^\bullet,d[i])$ be the complex $C[i]^j=C^{j-i}$ with differential $d[i]=(-1)^id$. If $f:A^\bullet\to B^\bullet$ is a map of cochain complexes, we let $\mathrm{Cone}(f)^\bullet$ be the complex $\mathrm{Cone}(f)^i=B^i \oplus A^{i+1}$ and $d(b,a)=(db-f(a),-da)$. Then there is an exact sequence
\[
0\lra B^\bullet \xrightarrow{b \mapsto (b,0)} \mathrm{Cone}(f)^\bullet \xrightarrow{(a,b) \mapsto a} A[-1]^\bullet \lra 0.
\]

\subsection{Notation for group cochains and cohomology groups}
\label{subsec:cohomology notation} Let $G$ be a topological group, and let $M$ be a continuous $G$-module. Let $C^\bullet(G,M)$ denote the complex of continuous inhomogeneous cochains.

For $N' \mid Np$, we define 
\begin{align*}
C^\bullet(-):= C^\bullet(\Z[1/Np],-) := C^\bullet(G_{\Q,S}, -),& \quad C^\bullet_\ell(-):= C^\bullet(\Q_\ell,-) := C^\bullet(G_\ell, -), \\
 C^\bullet_\loc(-) : = \bigoplus_{\ell \mid Np \ \mathrm{ prime}} C^\bullet_\ell(-),& \quad
 C^\bullet_{N'}(-) : = \bigoplus_{\ell \mid N' \ \mathrm{ prime}} C^\bullet_\ell(-)
\end{align*}
We let $x \mapsto x|_{N'}$ denote the restriction map $C^\bullet(-) \to C^\bullet_{N'}(-)$. We let 
\[
C_{(c)}^\bullet(M) = \mathrm{Cone}(C^\bullet(M) \to C^\bullet_\loc(M))[1], \ \  C_{(N')}^\bullet(M) = 
\mathrm{Cone}(C^\bullet(M) \to C^\bullet_{N'}(M))[1]
\]

The associated cohomology groups are $H^i_{\star}(-):=H^i(C_\star^\bullet(-))$, where $\star$ is one of the symbols $\{ -, \ell, N', \mathrm{loc}, (c), (\ell), (N')\}$. We call $H^i_{(c)}(-)$ compactly supported Galois cohomology, in analogy with the geometric situation.

\subsection{Duality theories}
\label{subsec:duality} Let $M$ denote a $p$-power torsion $G_{\Q,S}$-module, and let $M^*$ denote the Pontryagin dual of $M$. We have the the following duality theorem of Poitou--Tate, which resembles Poincar\'e duality.
\begin{thm}
\label{thm:Poitou-tate}
For $i=0,\dots,3$, the cup product induces a perfect paring
\[
H^i(M)  \times H^{3-i}_{(c)}(M^*(1)) \to \Q_p/\Z_p.
\]
\end{thm}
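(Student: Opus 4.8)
The plan is to recognize Theorem~\ref{thm:Poitou-tate} as the classical global duality theorem of Poitou and Tate, transported through the cone construction that defines $C^\bullet_{(c)}$. First I would make the pairing explicit: the Pontryagin evaluation $M \otimes M^* \to \Q_p/\Z_p$, twisted by $\kcyc$, gives a $G$-equivariant map $M \otimes M^*(1) \to \Q_p/\Z_p(1)$, and composing the cochain-level cup product with it produces $C^\bullet(M) \otimes C^\bullet(M^*(1)) \to C^\bullet(\Q_p/\Z_p(1))$. Since this is compatible with restriction to each local cochain complex $C^\bullet_\ell$, it induces a pairing of cones $C^\bullet(M) \otimes C^\bullet_{(c)}(M^*(1)) \to C^\bullet_{(c)}(\Q_p/\Z_p(1))$ (one must track the signs in the cone differential, but this is routine). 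Finally one identifies the fundamental class $H^3_{(c)}(\Q_p/\Z_p(1)) \cong \Q_p/\Z_p$: because $p$ is odd, $\Spec \Z[1/Np]$ has $p$-cohomological dimension $2$, so $H^3(\Q_p/\Z_p(1))=0$; the cone long exact sequence then exhibits $H^3_{(c)}(\Q_p/\Z_p(1))$ as the cokernel of $H^2(\Q_p/\Z_p(1)) \to \bigoplus_{\ell \mid Np} H^2_\ell(\Q_p/\Z_p(1))$, and the $p$-part of the Hasse--Brauer reciprocity sequence of global class field theory identifies this cokernel with $\Q_p/\Z_p$ via the sum of local invariant maps (archimedean contributions are trivial since $p>2$).

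With the pairing in hand, perfectness is a five-lemma argument resting on two classical inputs: local Tate duality \cite[Ch.~VII]{NSW2008}, giving a perfect pairing $H^i_\ell(M) \times H^{2-i}_\ell(M^*(1)) \to \Q_p/\Z_p$ for each finite $\ell \mid Np$, hence $H^i_{loc}(M)^* \cong H^{2-i}_{loc}(M^*(1))$; and the nine-term Poitou--Tate exact sequence together with global Tate duality \cite[Ch.~VIII]{NSW2008}, giving $H^i(M)^* \cong H^{2-i}_{??}(M^*(1))$ in the global terms. One then compares the cone long exact sequence for $M^*(1)$,
\[
\cdots \to H^{3-i}_{(c)}(M^*(1)) \to H^{3-i}(M^*(1)) \to H^{3-i}_{loc}(M^*(1)) \to H^{4-i}_{(c)}(M^*(1)) \to \cdots,
\]
with the Pontryagin dual of the cone long exact sequence for $M$, the cup-product pairing supplying the vertical comparison maps; local duality handles the local terms, Poitou--Tate the global ones, and the five lemma yields $H^{3-i}_{(c)}(M^*(1)) \cong H^i(M)^*$. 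Equivalently, and more cleanly, one may phrase the entire statement in the derived category as the quasi-isomorphism $\RGammac(M^*(1)) \simeq \RHom_{\Z_p}(\RGamma(M),\Q_p/\Z_p)[-3]$ of \cite{nekovar2006} (see also \cite[Appendix]{galatius-ventatesh}), of which Theorem~\ref{thm:Poitou-tate} is the statement on cohomology; I would probably present it this way and then unwind.

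The one genuinely delicate point --- and the reason this belongs in an appendix rather than being proved from scratch --- is the \emph{compatibility} claim: that the pairing written down via cochain cup products and the cone construction agrees, up to the standard signs, with the canonical pairing appearing in the Poitou--Tate sequence (equivalently, that the vertical maps in the five-lemma diagram really are the ones induced by duality). This is entirely routine but notationally heavy, and I would simply cite \cite[Ch.~VIII]{NSW2008} and \cite{nekovar2006} for it. All remaining steps --- the cohomological dimension bound, the fundamental class computation, and the diagram chase --- are short and formal.
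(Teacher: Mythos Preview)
Your proposal is a correct and well-organized sketch of the standard proof of Poitou--Tate duality in the cone/derived framework; in particular the fundamental-class identification via Hasse--Brauer reciprocity, the reduction to local Tate duality plus the nine-term sequence, and the five-lemma (or derived-category) packaging are exactly how this is done in \cite{nekovar2006} and \cite[Ch.~VIII]{NSW2008}.

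The paper, however, does not prove Theorem~\ref{thm:Poitou-tate} at all: it is stated in the appendix as a known input (``We have the following duality theorem of Poitou--Tate, which resembles Poincar\'e duality'') and used as a black box, with the cone formalism merely fixing notation. So there is nothing to compare your argument against; you have supplied a proof where the paper only supplies a citation. If you were writing this up, your final paragraph is the right calibration: the compatibility of the cup-product pairing with the canonical Poitou--Tate pairing is the only genuinely tedious point, and citing \cite{nekovar2006} or \cite[Appendix]{galatius-ventatesh} for it is standard practice.
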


Then duality theory with ``local constraints" gives the following generalization of Theorem \ref{thm:Poitou-tate}. 

\begin{thm}
\label{thm:Poitou--Tate with constraints}
For any divisor $N' \mid Np$ and $i=0,\dots,3$, the cup product induces a perfect paring
\[
H^i_{(N')}(M)  \times H^{3-i}_{(Np/N')}(M^*(1)) \lra \Q_p/\Z_p.
\]
\end{thm}
\begin{proof}
This is a special case of the duality theorem of \cite[App.~B]{GV2018}. In the notation of that theorem, we take the set of primes $S$ to be the primes dividing $Np$, and, for any divisor $n$ of $Np$, we define a condition $\cL_{n}$ by
\[
C_{\cL_{n}}(\Q_\ell, M) = \left \{  \begin{array}{ll}
0 & \text{ if } \ell \mid n \\
C(\Q_\ell,M)  & \text{ if } \ell \nmid n
\end{array}
\right.
\]
for all primes $\ell$ dividing $Np$. Then we see that $H^*_{\cL_{N'}}(M)=H^*_{(N')}(M)$, and that the dual condition $\cL_{N'}^\perp$ is given by $\cL_{Np/N'}$. The theorem follows from \cite[Thm.\ B.1]{GV2018}.
\end{proof}

\subsection{Extensions of finite flat group schemes and cohomology}
\label{subsec:ff cohom}

Let $\mathcal{G}/\Z_p$ be a finite flat group scheme such that $\nu_\mathcal{G} :=\mathcal{G}(\bQp)$ is free of finite rank as a $\Z/p^r$-module. Then there is a subgroup
\[
\Ext_{\fl}^1(\nu_\mathcal{G},\nu_\mathcal{G}) \subset \Ext^1_{\Z/p^r\Z[G_p]}(\nu_\mathcal{G},\nu_\mathcal{G}) \cong H^1_p(\End(\nu_\mathcal{G}))
\]
coming from extensions in the category of finite flat group schemes over $\Z_p$ that are killed by $p^r$. We denote 
\[
H^1_{p,\fl}(\End(\nu_\mathcal{G})) := \Ext_{\fl}^1(\nu_\mathcal{G},\nu_\mathcal{G}).
\]

We also want to define $H^1_{p,\fl}(-)$ in two other specific cases. If $\mathcal{G} \simeq \mu_{p^r}\otimes_{\Z/p^r\Z} A$ for some $\Z/p^r\Z$-module $A$, then $\nu_\mathcal{G}\simeq A(1)$. We write $H^1_{p,\fl}(A(1))$ for the subgroup
\[
\Ext_{\fl}^1(\Zr,A(1)) \subset \Ext^1_{\Z/p^r\Z[G_p]}(\Zr,A(1)) \cong H^1_{p}(A(1))
\]
and $H^1_{p,\fl}(A(-1))$ for the subgroup
\[
\Ext_{\fl}^1(A(1),\Zr) \subset \Ext^1_{\Z/p^r\Z[G_p]}(A(1),\Zr) \cong H^1_{p}(A(-1)).
\]

Now suppose $V$ is a $G_{\Q,S}$-module such that $V|_{G_p}$ is isomorphic to either $\nu_\mathcal{G}$, $A(1)$, or $A(-1)$, as above. We wish to define cochain complexes $C_{p,\fl}^\bullet(V)$ and $C_\fl^\bullet(V)$ such that 
\begin{enumerate}
\item $H^1(C_{p,\fl}^\bullet(V)) = H^1_{p,\fl}(V)$
\item $H^1(C_\fl^\bullet(V)) = \ker(H^1(V) \to H^1_p(V)/H^1_{p,\fl}(V))$
\item $H^2(C_\fl^\bullet(V))$ controls obstructions to global finite-flat deformations.
\end{enumerate}

\subsubsection{The flat and non-flat local cochain complexes} We define $C_{p,\fl}^\bullet(V)$ by
\[
C_{p,\fl}^i(V)= \left \{ \begin{array}{ll}
C_p^0(V) & \text{if } i=0 \\
Z_{p,\fl}^1(V) & \text{if } i=1 \\
0 & \text{if } i\ge 2
\end{array}
\right.
\]
where 
\[
Z_{p,\fl}^1(V) := \ker(Z^1_p(V) \to H^1_p(V)/H^1_{p,\fl}(V)).
\]
Then it is clear that $C_{p,\fl}^\bullet(V) \subset C_p^\bullet(V)$ is a subcomplex, and that $H^1(C_{p,\fl}^\bullet(V)) = H^1_{p,\fl}(V)$. We define $H^i_{p,\fl}(V) : = H^i(C_{p,\fl}^\bullet(V))$.

We define 
\[
C_{p,\nfl}^\bullet(V) = \mathrm{Cone}(C_{p,\fl}^\bullet(V) \to C_p^\bullet(V))
\]
and $H^i_{p,\nfl}(V) : = H^i(C_{p,\nfl}^\bullet(V))$. Then we have 
\[
H^0_{p,\nfl}(V)=0, \qquad H^2_{p,\nfl}(V)=H^2_{p}(V),
\]
and an exact sequence 
\[
0 \lra  H^1_{p,\fl}(V) \lra H^1_{p}(V)  \lra  H^1_{p,\nfl}(V)  \lra 0.
\]
\subsubsection{The global finite-flat cochain complex} 
\label{sssec:global flat}
Let $(-)|_{p,\fl}: C^\bullet(V) \to C_{p,\nfl}^\bullet(V)$ denote the composition
\[
C^\bullet(V) \buildrel{\vert_p}\over\lra C_{p}^\bullet(V) \lra C_{p,\nfl}^\bullet(V).
\]
Let 
\[
C_\fl^\bullet(V) : = \mathrm{Cone}(C^\bullet(V) \xrightarrow{|_{p,\fl}} C_{p,\nfl}^\bullet(V))[1].
\]
We call the resulting cohomology $H^\bullet_\fl(V):= H^\bullet(C_\fl^\bullet(V))$ \emph{global flat cohomology}. The long exact sequence of the cone is 
\begin{align*}
0=H^0_{p,\nfl}(V) &\ \lra H^1_\fl(V) \lra H^1(V) \lra  H^1_{p,\nfl}(V) \\ & \lra H^2_\fl(V) \lra H^2(V) \lra H^2_{p,\nfl}(V) \lra H^3_\fl(V) \lra 0.
\end{align*}
Take note of the isomorphisms 
\[
H^1_{p,\nfl}(V) \cong \frac{H^1_{p}(V)}{H^1_{p,\fl}(V)}, \quad H^2_{p,\nfl}(V) \cong H^2_p(V), \text{ and } 
H^3_\fl(V) \cong H^3_{(p)}(V),
\]
which are useful interpretations of terms of the sequence.

We will often refer to $Z^1_\fl(V)$, which we will take to be the kernel of $Z^1(V) \ra H^1_p(V)/H^1_{p, \fl}$. (This is part of the data of a cocycle in the cone defining $H^1_\fl(V)$.)

\section{Operations in homological algebra in terms of cocycles}
\label{sec:algebra appendix}

In this section, we show that some standard operations on representations, described in terms of matrices and cocycles, behave nicely with finite-flat cohomology. The reason is that these operations correspond to operations on extensions in a general exact category, and so can be done equally well in the category of finite flat group schemes of fixed exponent over a scheme, which is an full additive subcategory of the category of abelian category of \emph{fppf}-sheaves of abelian groups of that exponent, and is closed under extensions (see \cite[Prop.\ III.17.4, pg.~110]{oort1966}).

Below $\cC$ will denote any exact category. This means $\cC$ is an additive category equipped with a class of pairs of composable morphisms $A \to X \to B$ that should be thought of as exact sequences, and satisfy certain axioms -- for a precise definition, see \cite{buhler2010}, for example. For our purposes, it suffices to assume that $\cC$ is a full additive subcategory of an abelian category that is closed under extensions.

\subsection{Pushout}

Suppose we have short exact sequences
\begin{align*}
\cE': 0 \lra C \lra X \buildrel{j}\over\lra B \lra 0 \\
\cE: 0 \lra X \buildrel{i}\over\lra X' \lra A \lra 0.
\end{align*}
in the exact category $\cC$. Then, by the axioms of an exact category, the pushout $X''$ of $i$ and $j$ sits in an exact sequence
\[
0 \lra B \lra X'' \lra A \lra 0
\]
where $X'' \to A$ is induced by the composite $X' \times B \to X' \to A$. We called this extension the pushout of $\cE$ by $\cE'$.

\begin{eg}
\label{eg:pushout}
Let $\cC$ be the exact category of representations of a group $G$ in projective $R$-modules of constant finite rank, where $R$ is a commutative ring. We explain how to interpret the pushout construction in terms of matrices. Write an object $A$ of $\cC$ as a pair $(V_A,\rho_A)$ with $V_A$ a finite constant rank projective $R$-module and $\rho_A:G \to \GL(V_A)$ a homomorphism.

Suppose we have $A,B,C,X,X'$ as above in this category. Then we may write $X$ in block matrix form as
\[
\rho_X = \ttmat{\rho_C}{\rho_{\cE'}}{0}{\rho_B}
\]
and $X'$ as
\[
\rho_{X'} =\left( \begin{matrix}
\rho_A & 0 & 0 \\
\rho_{\cE,1} & \rho_C & \rho_{\cE'} \\
 \rho_{\cE,2} & 0 & \rho_{B}
\end{matrix}\right).
\]
Direct computation as in Example \ref{eg:baer in terms of matrices} below shows that the pushout of $\cE$ by $\cE'$ is given by the block matrix
\[
\ttmat{\rho_A}{0}{\rho_{\cE,2}}{\rho_B}.
\]
\end{eg}
\subsection{Pullback}
Suppose we have short exact sequences
\begin{align*}
\cE': 0 \lra B \lra X \buildrel{j}\over\lra A \lra 0 \\
\cE: 0 \lra C \buildrel{i}\over\lra Y \lra A \lra 0.
\end{align*}
in the exact category $\cC$. Then, by the axioms of an exact category, the pullback $Z=X \times_A Y$ sits in an exact sequence
\[
0 \lra B \lra Z \lra Y \lra 0.
\]
We call this the pullback of $\cE$ along $\cE'$.

Suppose $Y$ gives an extension $\cE$ of $A$ by $C$ and $X$ gives an extension $\cE'$ of $A$ by $B$. Then we can construct the pullback extension of $Y$ by $B$ as follows. Let $Z=X \times_A Y$. The map $Z \to Y$ is an epimorphism with kernel isomorphic to $B$. We call the resulting extension of $Y$ by $B$ the pullback of $\cE$ along $\cE'$.

\begin{eg}
\label{eg:pullback}
We return to the category of finite rank representations from the previous example, and retain the notation there. Suppose we have $A,B,C,X,Y$ as above in this category. Then we may write $Y$ in block matrix form as
\[
\rho_Y = \ttmat{\rho_A}{0}{\rho_{\cE}}{\rho_C}
\]
and $X$ as
\[
\rho_X = \ttmat{\rho_A}{0}{\rho_{\cE'}}{\rho_B}
\]

By direct computation as in Example \ref{eg:baer in terms of matrices} below, we see that the pullback of $\cE$ along $\cE'$ is given by the block matrix
\[
\left(
\begin{matrix}
\rho_A & 0 & 0 \\
\rho_{\cE} & \rho_C & 0\\
\rho_{\cE'} & 0 & \rho_B
\end{matrix}\right).
\]
\end{eg}

\subsection{Baer sum}

Suppose we have short exact sequences
\begin{align*}
\cE: \ \ & 0 \lra B \buildrel{i}\over\lra X \buildrel{j}\over\lra A \lra 0 \\
\cE': \ \ & 0 \lra B \buildrel{i'}\over\lra X' \buildrel{j'}\over\lra A \lra 0
\end{align*}
in $\cC$. Then, by the axioms of an exact category, the direct sum $\cE \oplus \cE'$ is an extension of $A\oplus A$ by $B \oplus B$. The Baer sum $\cE + \cE'$ is the extension of $A$ by $B$ obtained by pulling back $\cE \oplus \cE'$ by the diagonal $A \to A \oplus A$ and then pushing out the result by the sum map $B\oplus B \to B$.

In an abelian category, there is an alternate construction of $\cE + \cE'$ given as follows. There is a skew diagonal map $\Delta^s: B \to X \times_A X'$ given by $\Delta^s=i \times (-i')$. Let $Y=\coker(\Delta^s)$. The composite $X \times_A X' \to X' \xrightarrow{j'} A$ induces an epimorphism $Y \to A$ whose kernel is isomorphic to $B$. The resulting extension of $A$ by $B$ is defined to be $\cE + \cE'$.

\begin{eg}
\label{eg:baer in terms of matrices}
We return to the category $\cC$ of the previous examples. We explain how to interpret the Baer sum construction in terms of matrices.

Recall that we write an object $A$ of $\cC$ as a pair $(V_A,\rho_A)$. For an extension $\cE$ of $A$ by $B$ as above, we can choose a decomposition $V_X = V_A \oplus V_B$, and write $\rho_X$ in block matrix form as
\[
\rho_X = \ttmat{\rho_A}{0}{\rho_\cE}{\rho_B}
\]
with $\rho_\cE \in Z^1(G, \Hom(V_A,V_B))$; the extension $\cE$ is determined by this cocycle. 

Now, given two extensions $\cE$ and $\cE'$ of $A$ by $B$, to describe the extension $\cE + \cE'$, we need only describe the cocycle $\rho_{\cE + \cE'}$. We claim that it is given by $\rho_{\cE + \cE'} = \rho_\cE+\rho_{\cE'}$. Indeed, for $Y=\coker(\Delta^s)$ as above, we can write a $V_Y$ as a direct sum $V_Y = \{(b,0)|b \in V_B\} \oplus \{(a,a)|a \in V_A\}$. Then for $\sigma \in G$ and $a \in V_A$, the cocycle $\rho_{\cE + \cE'}$ is defined by the formula
\[
\rho_Y(\sigma)(a,a) = (\rho_A(\sigma)a,\rho_A(\sigma)a)+(\rho_{\cE + \cE'}(\sigma)a,0).
\]
On the other hand, we compute that
\begin{align*}
\rho_Y(\sigma)(a,a) & = (\rho_A(\sigma)a+ \rho_\cE(\sigma)a,\rho_A(\sigma)a+\rho_{\cE'}(\sigma)a) \\
& = (\rho_A(\sigma)a,\rho_A(\sigma)a)+(\rho_{\cE}(\sigma)a,\rho_{\cE'}(\sigma)a) \\
& = (\rho_A(\sigma)a,\rho_A(\sigma)a)+(\rho_{\cE}(\sigma)a+\rho_{\cE'}(\sigma)a, 0),
\end{align*}
using the fact that $(-\rho_{\cE'}(\sigma)a,\rho_{\cE'}(\sigma)a)=0$ in $V_Y$.
\end{eg}

\begin{rem}
\label{rem:Baer scalar}
In this situation, there is also a ``Baer scalar product'' defining an $R$-module structure on $\Ext^1_\cC(B,A)$. For $r \in R$ and $\cE \in \Ext^1_\cC(B,A)$ as above, the extension $r \cdot \cE$ is obtained as a quotient of the direct sum $X \oplus A$.
\end{rem}

\subsection{Application to finite-flat representations}
We apply the above examples to the case of finite-flat deformations. Let $R$ be a commutative ring of finite cardinality.

\begin{lem}
\label{lem:cocycles from flat reps are flat}
Let $\nu: G_p \to \GL_n(R)$ be a finite-flat representation, and let $\nu_r: G_p \to \GL_n(R[\epsilon_r])$ be a finite-flat deformation of $\nu$ for some $r \ge 1$. Let $x  \in C^1(G_p,\End(\nu))$, and let $\nu_r' = \nu_r+x\epsilon^r$. Then $\nu_r'$ is a finite-flat representation if and only if $x \in Z^1_\fl(G_p,\End(\nu))$.
\end{lem}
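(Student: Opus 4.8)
The plan is to translate the operation $\nu_r\rightsquigarrow\nu_r'=\nu_r+x\epsilon^r$ into the language of extensions and then to invoke that $\mathrm{ffgs}_{\Z_p}$, the category of finite flat group schemes over $\Z_p$ of a fixed $p$-power exponent, is a full additive subcategory of the abelian category of \emph{fppf} abelian sheaves over $\Spec\Z_p$ which is closed under subobjects, quotients and extensions, together with the cocyclic descriptions of pullback, pushout and Baer sum recorded in Appendix~\ref{sec:algebra appendix}. Since $p>3$, I will also use that this category is abelian and that its generic fibre functor to $G_p$-modules is exact and fully faithful (Raynaud/Fontaine); these categorical inputs are available from the companion paper~\cite{PG4} and from \cite{oort1966}.

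First I would dispose of the homomorphism condition. Because $r\ge1$ forces $\epsilon^{2r}=0$ in $R[\epsilon_r]$, a direct expansion shows that the identity $\nu_r'(\sigma\tau)=\nu_r'(\sigma)\nu_r'(\tau)$ holds for all $\sigma,\tau$ if and only if $x\in Z^1(G_p,\End(\nu))$, using that $\nu_r$ is a homomorphism and that $x(\sigma)\epsilon^r$ depends only on $x(\sigma)\bmod\epsilon$. Since $Z^1_\fl(G_p,\End(\nu))\subseteq Z^1(G_p,\End(\nu))$ by definition, it then remains to prove: assuming $x\in Z^1(G_p,\End(\nu))$, the representation $\nu_r'$ is finite-flat if and only if $[x]$ lies in $H^1_{p,\fl}(\End(\nu))=\Ext^1_\fl(\nu,\nu)$.

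Next I would compare $\nu_r'$ with $\nu_r$. Put $\nu_{r-1}=\nu_r\bmod\epsilon^r$; then $\epsilon^r R[\epsilon_r]^n$ is a $G_p$-stable submodule of both $\nu_r$ and $\nu_r'$, is isomorphic to $\nu$ and carries the same action in each case, and the quotient is $\nu_{r-1}$ in each case. This yields short exact sequences $0\to\nu\to\nu_r\to\nu_{r-1}\to0$ and $0\to\nu\to\nu_r'\to\nu_{r-1}\to0$, in which $\nu_{r-1}$ is finite-flat (a quotient of the finite-flat module $\nu_r$) and the reduction $\pi\colon\nu_{r-1}\to\nu$ is induced by a morphism in $\mathrm{ffgs}_{\Z_p}$. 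Reading off the block matrices of $\nu_r$ and $\nu_r'$ in a common $R$-module splitting and comparing with the cocycle computations of Examples~\ref{eg:pushout}, \ref{eg:pullback} and~\ref{eg:baer in terms of matrices}, one finds $[\nu_r']-[\nu_r]=\pi^*[x]$ in $\Ext^1_{\Z_p[G_p]}(\nu_{r-1},\nu)$, the pullback of $[x]\in\Ext^1_{\Z_p[G_p]}(\nu,\nu)$ along $\pi$. The ``if'' direction is then immediate: if $[x]$ is finite-flat so is $\pi^*[x]$, and then $[\nu_r']=[\nu_r]+\pi^*[x]$ is a finite-flat extension of the finite-flat modules $\nu_{r-1}$ and $\nu$, hence $\nu_r'$ is finite-flat.

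The hard part is the converse, namely that $\pi^*$ \emph{reflects} finite-flatness. When $r=1$ the map $\pi$ is the identity and there is nothing to prove. For $r\ge2$, set $K=\ker\pi\cong\epsilon\nu_{r-1}$, which is finite-flat, and compare the long exact sequences $\Ext^\bullet_{\Z_p[G_p]}(-,\nu)$ and $\Ext^\bullet_\fl(-,\nu)$ attached to $0\to K\to\nu_{r-1}\xrightarrow{\pi}\nu\to0$. Because the generic fibre functor is exact and fully faithful, $\Ext^1_\fl(-,-)\hookrightarrow\Ext^1_{\Z_p[G_p]}(-,-)$ is injective, $\Hom_\fl(K,\nu)=\Hom_{\Z_p[G_p]}(K,\nu)$, and the two long exact sequences are compatible; in particular $\ker\pi^*$ equals the image of the connecting map out of $\Hom(K,\nu)$ and so lies in $\Ext^1_\fl(\nu,\nu)$. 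A short diagram chase then shows that $\pi^*[x]\in\Ext^1_\fl(\nu_{r-1},\nu)$ forces $[x]\in\Ext^1_\fl(\nu,\nu)$, which finishes the proof. I expect the only genuinely delicate point to be checking that these categorical inputs apply in the $p^r$-torsion generality at hand rather than only for $p$-torsion objects; since $p>3$ this causes no trouble, and I would cite \cite{PG4} and \cite{oort1966} for them rather than reprove them.
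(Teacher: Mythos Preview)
Your argument is correct and the identification $[\nu_r']-[\nu_r]=\pi^*[x]$ in $\Ext^1(\nu_{r-1},\nu)$ is exactly the crux of the matter; the paper also uses it, though without naming the pullback $\pi^*$ explicitly. Your ``if'' direction then coincides with the paper's: pullback and Baer sum preserve finite-flatness, so $[\nu_r']=[\nu_r]+\pi^*[x]$ is finite-flat when $[x]$ is.

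For the ``only if'' direction the two arguments diverge. You run a diagram chase comparing the six-term $\Ext$ sequences for $0\to K\to\nu_{r-1}\xrightarrow{\pi}\nu\to 0$ in the two categories, which requires exactness of the flat row at $\Ext^1_\fl(\nu_{r-1},\nu)$, hence that $\mathrm{ffgs}_{\Z_p}$ is abelian (Raynaud, using $p>2$). The paper instead writes the Baer difference $Z=\pi^*[x]$ as an explicit block matrix, observes that the sub $X=\ker(Z\twoheadrightarrow\nu)$ splits as $K\oplus\nu$, and applies the pushout of Example~\ref{eg:pushout} along the projection $X\to\nu$ to recover the $2\times 2$ extension with cocycle $x$ directly inside $\mathrm{ffgs}_{\Z_p}$. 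This needs only the exact-category operations (Baer sum, pushout) and avoids appealing to Raynaud. Your approach is cleaner conceptually and explains \emph{why} $\pi^*$ reflects finite-flatness, while the paper's is more self-contained and stays within the framework set up in Appendix~\ref{sec:algebra appendix}. Both are valid; the trade-off is an external input (Raynaud) versus a concrete matrix computation.
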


\begin{proof}
We can think of a free $R[\epsilon_r]$-module of rank $n$ as being an $R$-module of rank $n(r+1)$ with additional structure. In this way, we can apply the two examples above to this situation. We write $\nu_r =\nu+ \sum_{i=1}^r x_i \epsilon^i$ with $x_i \in C^1(G_p,\End(\nu))$. Let $\nu_{r-1} = \nu_r/\epsilon^r \nu_r$, and $\nu_{r-2}=\nu_r/\epsilon^{r-1}\nu_r$ (so $\nu_{r-2}=0$ if $r=1$).

First suppose that $\nu_r'$ is a finite-flat representation, and let $x_r'=x_r+x$. Then since $\epsilon^{r}\nu_{r} \cong \epsilon^{r}\nu'_{r} \cong \nu$, and $\nu'_{r}/\epsilon^{r}\nu_{r}  \cong \nu_{r-1}$, we can consider $\nu_{r}$ and $\nu_{r}'$ as being extensions of $\nu_{r-1}$ by $\nu$. In block matrix form, they look like
\[
\sbox0{$\begin{matrix}x_{r} & x_{r-1} & \cdots & x_1 \end{matrix}$}
\sbox1{$\begin{matrix}x'_{r} & x_{r-1} & \cdots & x_1 \end{matrix}$}
\sbox2{$\left(\begin{array}{cccc}
0&0&0&0 \\
0&0&0&0 \\
0&0&0&0 \\
0&0&0&0
\end{array}\right)$}
\nu_{r}=\left(
\begin{array}{c|c}
  \vphantom{\usebox{2}}\makebox[\wd0]{$\nu_{r-1}$}& 0\\
  \hline
\usebox{0}& \nu
\end{array}
\right), \
\nu'_{r}=\left(
\begin{array}{c|c}
  \vphantom{\usebox{2}}\makebox[\wd0]{$\nu_{r-1}$}& 0\\
  \hline
\usebox{1}& \nu
\end{array}
\right).
\]
By Example \ref{eg:baer in terms of matrices}, the Baer difference extension is given by 
\[
\sbox0{$\begin{matrix}{\small x} & 0 & \cdots & 0 \end{matrix}$}
\sbox1{$\begin{matrix}x'_{r+1} & x_{r} & \cdots & x_1 \end{matrix}$}
\sbox2{$\begin{array}{c|ccc}
\nu &0&\cdots&0 \\
x_1&\nu&0&\cdots \\
\cdots&\cdots&\cdots&\cdots \\
x_r&x_{r-1}&\cdots &\nu
\end{array}$}
\left(
\begin{array}{c|c}
\vphantom{\usebox{2}}\makebox[\wd0]{$\nu_r$}& 0\\
  \hline
\usebox{0}& \nu
\end{array}
\right) =
\left(
\begin{array}{c|ccc|c}
\nu & 0 & \cdots & 0 & 0 \\ 
\hline
x_1 & \nu & 0 & \cdots & 0 \\
\cdots & \cdots  & \cdots & \cdots & \cdots \\
x_{r-1} & x_{r-2} & \cdots & \nu & 0 \\
\hline
x & 0 & \cdots & 0 & \nu
\end{array}
\right).
\]
As in Example \ref{eg:pushout}, we can pushout to obtain an extension of $\nu$ by $\nu$ whose cocycle is given by $x$. Since the Baer sum and pushout can be done in any exact category, we could equally well do these operations to the finite-flat groups schemes giving rise to $\nu_{r}$ and $\nu_{r}'$, and obtain an extension of finite flat group schemes whose cocycle is $x$. This implies that $x$ is a finite-flat cocycle.

Conversely, suppose that $x \in Z^1_\fl(G_p,\End(\nu))$. Then $x$ gives rise to an extension $\cE_x$ of $\nu$ by $\nu$. As above, we can consider $\nu_r$ as an extension of $\nu_{r-1}$ by $\nu$. We can also think of $\nu_{r-1}$ as an extension $\cE_{r-1}$ of $\nu$ by $\nu_{r-2}$. By Example \ref{eg:pushout}, the pullback extension $\cE$ of $\cE_{r-1}$ along $\cE_x$ can be written in block matrix form as
\[
\sbox0{$\begin{matrix}x &0  & \cdots &0 \end{matrix}$}
\sbox1{$\begin{matrix}x'_{r+1} & x_{r} & \cdots & x_1 \end{matrix}$}
\sbox2{$\left(\begin{array}{cccc}
0&0&0&0 \\
0&0&0&0 \\
0&0&0&0 \\
0&0&0&0
\end{array}\right)$}
\left(
\begin{array}{c|c}
  \vphantom{\usebox{2}}\makebox[\wd0]{$\nu_{r-1}$}& 0\\
  \hline
\usebox{0}& \nu
\end{array}
\right)
\]
Then, by Example \ref{eg:baer in terms of matrices}, we see that the Baer sum of $\cE$ with $\nu_{r}$ is given by the same matrix as $\nu_{r}'$. As above, we see that the representation obtained from pullback and Baer sum is finite-flat, so this implies that $\nu_{r}'$ is finite-flat.
\end{proof}

\bibliographystyle{alpha}
\bibliography{CWEbib-2019-PG3}

\end{document}